\newcommand\bSI[1]{{\small[\SI{}{#1}]}}
\newlength\unitwdth
\newlength\numwdth
\newlength\tdima
\newcommand\SIdescr[2]{%
    \setlength\tdima{\linewidth}%
    \addtolength\tdima{\@totalleftmargin}%
    \addtolength\tdima{-\dimen\@curtab}%
    \addtolength\tdima{-\unitwdth}%
    \addtolength\tdima{-\numwdth}%
    \parbox[t]{\tdima}{%
        #1
        \leaders\hbox{$\m@th\mkern \@dotsep mu\hbox{\tiny.}\mkern \@dotsep mu$}%
        \hfill
        \ifhmode\strut\fi
        \makebox[0pt][l]{%
            \makebox[\unitwdth][l]{}%
            \makebox[\numwdth][r]{#2}}}}
\newcommand{\Z}{\mathbb{Z}}
\newcommand{\N}{\mathbb{N}}
\newcommand{\R}{\mathbb{R}}
\newcommand{\CC}{\mathbb{C}}
\newcommand{\fa}{ \quad \text{ for all }}
\newcommand{\eps}{\varepsilon}
\newcommand{\Realization}{\mathrm{R}}
\newcommand{\sgn}{\operatorname{sign}}
\let\emptyset\varnothing
\newcommand{\prob}{\sigma}
\newcommand{\CalF}{\mathcal{F}}
\newcommand{\CalG}{\mathcal{G}}
\newcommand{\Schwartz}{\mathcal{S}}
\DeclareMathOperator*{\supp}{supp}
\newcommand{\with}{\,:\,}
\newcommand{\identity}{\mathrm{id}}
\DeclareMathOperator{\spn}{span}
\newcommand{\Indicator}{{\mathds{1}}}
\newcommand{\ArchitectureSpace}{\mathcal{RNN}_\varrho^\Omega}
\newcommand{\FirstN}[1]{\{1,\dots,#1\}}
\newcommand{\conc}{{\raisebox{2pt}{\tiny\newmoon} \,}}
\newcommand{\cN}{\mathcal{NN}}
\newcommand{\cRN}{\mathcal{RNN}}
\DeclareMathOperator{\Lip}{Lip}
\newtheorem{theorem}{Theorem}[section]
\newtheorem*{theorem*}{Theorem}
\newtheorem{remark}[theorem]{Remark}
\newtheorem{definition}[theorem]{Definition}
\newtheorem{proposition}[theorem]{Proposition}
\newtheorem{lemma}[theorem]{Lemma}
\newtheorem{corollary}[theorem]{Corollary}
\newtheorem*{remark*}{Remark}
\newtheorem*{proposition*}{Proposition}
\numberwithin{equation}{section}
\definecolor{darkcandyapplered}{rgb}{0.64, 0.0, 0.0}
\title{Topological properties of the set of functions\\
       generated by neural networks of fixed size}
\author{Philipp Petersen\footnotemark[2] \footnotemark[1] \\ Universit\"at Wien
\and Mones Raslan\footnotemark[3] \footnotemark[1] \\ TU Berlin
\and Felix Voigtlaender\footnotemark[4] \footnotemark[1]\\ KU Eichst\"att--Ingolstadt}
\begin{document}
\maketitle

\begin{abstract}
  We analyze the topological properties of the set of
  functions that can be implemented by neural networks of a fixed size.
  Surprisingly, this set has many undesirable properties.
  It is highly non-convex, except possibly for a few exotic activation functions.
  Moreover, the set is not closed with respect to $L^p$-norms,
  $0 < p < \infty$, for all practically-used activation functions, and also not
  closed with respect to the $L^\infty$-norm for all practically-used activation
  functions except for the ReLU and the parametric ReLU.
  Finally, the function that maps a family of weights to the function computed by the
  associated network is not inverse stable for every practically used activation function.
  In other words, if $f_1, f_2$ are two functions realized by neural networks
  and if $f_1, f_2$ are close in the sense that $\|f_1 - f_2\|_{L^\infty} \leq \eps$ for $\eps > 0$,
  it is, regardless of the size of $\eps$, usually not possible to find weights $w_1, w_2$
  close together such that each $f_i$ is realized by a neural network with weights $w_i$.
  Overall, our findings identify potential causes for issues in the
  training procedure of deep learning such as no guaranteed convergence,
  explosion of parameters, and slow convergence.
\end{abstract}

\noindent
\textbf{Keywords:} Neural networks, general topology, learning, convexity, closedness.
\smallskip

\noindent
\textbf{Mathematics Subject Classification:} 54H99,  
68T05, 
52A30. 

\renewcommand{\thefootnote}{\fnsymbol{footnote}}
\footnotetext[1]{All three authors contributed equally to this work.}
\footnotetext[2]{Institut für Mathematik,
Universit\"at Wien,
Oskar-Morgenstern-Platz 1,
1090 Vienna,
Austria,
e-mail: \texttt{philipp.petersen@univie.ac.at}}
\footnotetext[3]{Institut für Mathematik,
Technische Universit\"at Berlin,
Straße des 17.~Juni 136,
10623 Berlin,
Germany,
e-mail: \texttt{raslan@math.tu-berlin.de}}
\footnotetext[4]{Department of Scientific Computing,
Catholic University of Eichstätt-Ingolstadt,
Kollegiengebäude I Bau B,
Ostenstraße 26,
85072 Eichstätt,
Germany,
e-mail: \texttt{felix.voigtlaender@ku.de}}

\renewcommand{\thefootnote}{\arabic{footnote}}

\section{Introduction}\label{sec:Introduction}

\textit{Neural networks}, introduced in 1943 by McCulloch and Pitts \cite{MP43},
are the basis of every modern machine learning algorithm based on \emph{deep learning}
\cite{Goodfellow-et-al-2016, LeCun2015DeepLearning, schmidhuber2015deep}.
The term \emph{deep learning} describes a variety of methods that are
based on the data-driven manipulation of the weights of a neural network.
Since these methods perform spectacularly well in practice, they have become the
state-of-the-art technology for a host of applications including image
classification \cite{Huang2017DenselyCC, simonyan2014very, Krizhevsky2012Imagenet},
speech recognition \cite{hinton2012deep, dahl2012context, wu2016stimulated},
game intelligence \cite{silver2017mastering, usunier2016episodic, yannakakis2017artificial},
and many more.

This success of deep learning has encouraged many scientists to pick up research
in the area of neural networks after the field had gone dormant for decades.
In particular, quite a few mathematicians have recently investigated the
properties of different neural network architectures, hoping that this can
explain the effectiveness of deep learning techniques.
In this context, mathematical analysis has mainly been conducted in the context
of statistical learning theory \cite{Cucker02onthe}, where the overall success of a learning method
is determined by the approximation properties of the underlying function class,
the feasibility of optimizing over this class, and the generalization capabilities of the class,
when only training with finitely many samples.

In the \emph{approximation theoretical} part of deep learning research,
one analyzes the expressiveness of deep neural network architectures.
The universal approximation theorem
\cite{Cybenko1989, Hornik1989universalApprox,PinkusUniversalApproximation}
demonstrates that neural networks can approximate \emph{any} continuous function,
as long as one uses networks of increasing complexity for the approximation.
If one is interested in approximating more specific function classes
than the class of all continuous functions, then
one can often quantify more precisely how large the networks have to be to
achieve a given approximation accuracy for functions from the restricted class.
Examples of such results are \cite{Barron1993, Mhaskar:1996:NNO:1362203.1362213,
Mhaskar1993, YAROTSKY2017103, boelcskeiNeural, PetV2018OptApproxReLU}.
Some articles \cite{Montufar:2014:NLR:2969033.2969153, cohen2016expressive,
pmlr-v70-safran17a, PetV2018OptApproxReLU, YarotskyPhaseDiagram} study in particular in which
sense \emph{deep} networks have a superior expressiveness
compared to their shallow counterparts, thereby partially
explaining the efficiency of networks with many layers in deep learning.

Another line of research studies the training procedures employed in deep learning.
Given a set of training samples, the training process is an \emph{optimization problem}
over the parameters of a neural network, where a loss function is minimized.
The loss function is typically a non-linear, non-convex function of the weights of the network,
rendering the optimization of this function highly challenging
\cite{blum1989training, judd1987learning, bartlett2002hardness}.
Nonetheless, in applications, neural networks are often trained successfully
through a variation of stochastic gradient descent.
In this regard, the energy landscape of the problem was studied and found to allow convergence
to a global optimum, if the problem is sufficiently overparametrized; see
\cite{nguyen2017loss,allen2018convergence,venturi2018neural,chizat2018global,freeman2016topology}.

The third large area of mathematical research on deep neural networks
is analyzing the so-called \emph{generalization} error of deep learning.
In the framework of statistical learning theory \cite{Cucker02onthe,MohriFoundations},
the discrepancy between the \emph{empirical} loss and the \emph{expected} loss of a classifier
is called the generalization error.
Specific bounds for this error for the class of deep neural networks were analyzed for instance in
\cite{AnthonyBartlett, bartlett2002rademacher}, and in more specific settings for instance in
\cite{bartlett2017spectrally, bartlett2019nearly}.

\medskip

In this work, we study neural networks from a different point of view.
Specifically, we study the \emph{structure of the set of functions implemented by neural networks
of fixed size}.
These sets are naturally (non-linear) subspaces of classical function spaces like $L^p(\Omega)$
and $C(\Omega)$ for compact sets $\Omega$.

Due to the size of the networks being fixed, our analysis is inherently non-asymptotic.
Therefore, our viewpoint is fundamentally different from the analysis in the framework
of statistical learning theory. Indeed, in approximation theory,
the expressive power of networks growing in size is analyzed.
In optimization, one studies the convergence properties of iterative algorithms---usually that
of some form of stochastic gradient descent.
Finally, when considering the generalization capabilities of deep neural networks,
one mainly studies how and with which probability the empirical loss of a classifier converges
to the expected loss, for increasing numbers of random training samples and depending on the sizes
of the underlying networks.

Given this strong delineation to the classical fields, we will see that our point of view
yields interpretable results describing phenomena in deep learning
that are not directly explained by the classical approaches.
We will describe these results and their interpretations in detail
in Subsections~\ref{sec:NonConvIntro}--\ref{sec:NonStableIntro}.

We will use standard notation throughout most of the paper without explicitly introducing it.
We do, however, collect a list of used symbols and notions in Appendix~\ref{sub:Notation}.
To not interrupt the flow of reading, we have deferred several auxiliary results
to Appendix~\ref{sec:auxResult} and all proofs and related statements
to Appendices~\ref{app:Convex}--\ref{app:InvStab}.

Before we continue, we formally introduce the notion of spaces of neural networks of fixed size.

\subsection*{Neural networks of fixed size: basic terminology}

To state our results, it will be necessary to distinguish between a \emph{neural network}
as a set of weights and the associated function implemented by the network,
which we call its \emph{realization}.
To explain this distinction, let us fix numbers $L, N_0, N_1, \dots, N_{L} \in \N$.
We say that a family $\Phi = \big( (A_\ell,b_\ell) \big)_{\ell = 1}^L$ of matrix-vector tuples
of the form $A_\ell \in  \R^{N_{\ell} \times N_{\ell-1}}$ and $b_\ell \in \R^{N_\ell}$
is a \textbf{neural network}.
We call $S\coloneqq(N_0, N_1, \dots, N_L)$ the \textbf{architecture} of $\Phi$;
furthermore $N(S)\coloneqq \sum_{\ell = 0}^L N_\ell$ is called the \textbf{number of neurons of $S$}
and $L = L(S)$ is the \textbf{number of layers of $S$}.
We call $d\coloneqq N_0$ the \textbf{input dimension} of $\Phi$ and throughout this introduction
we assume that the \textbf{output dimension} $N_L$ of the networks is equal to one.
For a given architecture $S$, we denote by $\cN(S)$
the \textbf{set of neural networks with architecture $S$}.

Defining the realization of such a network $\Phi = \big( (A_\ell,b_\ell) \big)_{\ell=1}^L$
requires two additional ingredients: a so-called \textbf{activation function}
$\varrho : \R \to \R$, and a domain of definition $\Omega \subset \R^{N_0}$.
Given these, the \textbf{realization of the network} $\Phi = \big( (A_\ell,b_\ell) \big)_{\ell=1}^L$
is the function
\begin{align*}
  \Realization_\varrho^\Omega \left( \Phi \right) :
  \Omega \to \R , \ \
  x \mapsto x_L \, ,
\end{align*}
where $x_L$ results from the following scheme:
\begin{equation*}
  \begin{split}
    x_0 &\coloneqq x, \\
    x_{\ell} &\coloneqq \varrho(A_{\ell} \, x_{\ell-1} + b_\ell),
    \quad \text{ for } \ell = 1, \dots, L-1,\\
    x_L &\coloneqq A_{L} \, x_{L-1} + b_{L},
  \end{split}
\end{equation*}
and where $\varrho$ acts componentwise;
that is, $\varrho(x_1,\dots,x_d) := (\varrho(x_1),\dots,\varrho(x_d))$.
In what follows, we study topological properties of sets of realizations of
neural networks with a \emph{fixed size}.
Naturally, there are multiple conventions to specify the size of a network.
We will study the
\textbf{set of realizations of networks with a given architecture $S$ and activation function $\varrho$};
that is, the set
$\cRN_{\varrho}^{\Omega}(S) \coloneqq \{\Realization_\varrho^\Omega(\Phi) \colon \Phi \in \cN(S) \}$.
In the context of machine learning, this point of view is natural,
since one usually prescribes the network architecture,
and during training only adapts the weights of the network.

Before we continue, let us note that the set $\cN(S)$ of all neural networks
(that is, the network weights) with a fixed architecture forms a finite-dimensional vector space,
which we equip with the norm
\[
  \|\Phi\|_{\cN(S)}
  \coloneqq \| \Phi \|_{\mathrm{scaling}} + \max_{\ell = 1,\dots,L} \|b_\ell\|_{\max}
  \qquad \text{for} \qquad \Phi = \big( (A_\ell, b_\ell) \big)_{\ell=1}^L \in \cN (S) ,
\]
where $\| \Phi \|_{\mathrm{scaling}} \coloneqq \max_{\ell = 1,\dots,L } \| A_\ell \|_{\max}$.
If the specific architecture of $\Phi$ does not matter,
we simply write $\|\Phi\|_{\mathrm{total}}\coloneqq  \|\Phi\|_{\cN(S)}$.
In addition, if $\varrho$ is continuous, we denote the \textbf{realization map} by
\begin{equation}
  \Realization^{\Omega}_{\varrho} :
  \cN(S) \to     C(\Omega ; \R^{N_L}), ~
  \Phi                 \mapsto \Realization^{\Omega}_{\varrho} (\Phi).
  \label{eq:RealizationMapping}
\end{equation}

While the activation function $\varrho$ can in principle be chosen arbitrarily, a couple
of particularly useful activation functions have been established in the literature.
We proceed by listing some of the most common activation functions,
a few of their properties, as well as references to articles using these
functions in the context of deep learning.
We note that all activation functions listed below are non-constant, monotonically increasing,
\emph{globally} Lipschitz continuous functions.
This property is much stronger than the assumption of \emph{local} Lipschitz continuity
that we will require in many of our results.
Furthermore, all functions listed below belong to the class $C^\infty(\R\setminus \{0\}).$

\begin{centering}
\renewcommand{\arraystretch}{2}
\begin{tabularx}{\linewidth}{|l|l|l|l|}
  \hline
    \textbf{Name}
  & \textbf{Given by}
  & \vtop{\hbox{\strut \textbf{Smoothness}/} \hbox{\strut \textbf{Boundedness}}}
  & \textbf{Cit.}
  \\ \hline
    rectified linear unit (ReLU)
  & $\max\{0,x\}$
  & $C(\R)$ / Unbounded
  & \cite{NairHinton}
  \\ \hline
    parametric ReLU
  & $\max\{ax,x\}$ for some $a \geq 0$, $a \neq 1$
  & $C(\R)$ / Unbounded
  & \cite{ParReLu}
  \\ \hline
    exponential linear unit
  & $x\cdot \chi_{x\geq 0}(x) + (\exp(x)-1)\cdot \chi_{x<0}(x)$
  & $C^1(\R)$ / Unbounded
  & \cite{ExpLU}
  \\ \hline
    softsign
  & $\frac{x}{1+|x|}$
  & $C^1(\R)$ / Bounded
  & \cite{Bergstra+2009}
  \\ \hline
    \vtop{\hbox{\strut inverse square root}\hbox{\strut linear unit }}
  & $x\cdot \chi_{x\geq 0}(x) + \frac{x}{\sqrt{1+ax^2}}\cdot \chi_{x<0}(x)$
    for $a>0$
  & $C^2(\R)$ / Unbounded
  & \cite{ISRLU}
  \\ \hline
    inverse square root unit
  & $\frac{x}{\sqrt{1 + a x^2}}$ for some $a>0$
  & Analytic / Bounded
  & \cite{ISRLU}
  \\ \hline
    sigmoid / logistic
  & $\frac{1}{1+\exp(-x)}$
  & Analytic / Bounded
  & \cite{HaykinLogFunction}
  \\ \hline
    tanh
  & $\frac{\exp(x)-\exp(-x)}{\exp(x)+\exp(-x)}$
  & Analytic / Bounded
  & \cite{MaasTanh}
  \\ \hline
    arctan
  & $\arctan(x)$
  & Analytic / Bounded
  & \cite{ArctanLiao}
  \\ \hline
    softplus
  & $\ln(1+\exp(x))$
  & Analytic / Unbounded
  & \cite{GlorotBordesBengio}
  \\ \hline
   \caption{Commonly-used activation functions and their properties}
   \label{tab:ActFunctions}
  \end{tabularx}
\end{centering}


In the remainder of this introduction, we discuss our results concerning the topological properties
of the sets of realizations of neural networks with fixed architecture and their interpretation
in the context of deep learning.
Then, we give an overview of related work.
We note at this point that it is straightforward to generalize all of the results in this paper
to neural networks for which one only prescribes the total number of neurons and layers
and not the specific architecture.

For simplicity, we will always assume in the remainder of this introduction that
$\Omega \subset \R^{N_0}$ is compact with non-empty interior.

\subsection{Non-convexity of the set of realizations}\label{sec:NonConvIntro}

We will show in Section~\ref{sec:Shape} (Theorem~\ref{thm:NoConvexityEver})
that, for a given architecture $S$, the set $\cRN_{\varrho}^{\Omega}(S)$ \emph{is not convex},
except possibly when the activation function is a polynomial, which is clearly not the case for
any of the activation functions that are commonly used in practice.


In fact, for a large class of activation functions (including the ReLU
and the standard sigmoid activation function), the set $\cRN_{\varrho}^{\Omega}(S)$
turns out to be \emph{highly non-convex} in the sense that for every
$r \in [0,\infty)$, the set of functions having uniform distance at most $r$ to
any function in $\cRN_{\varrho}^{\Omega}(S)$ is not convex.
We prove this result in Theorem~\ref{thm:epsconv} and Remark~\ref{rem:EpsilonConvexity}.

This non-convexity is undesirable, since for non-convex sets,
there do not necessarily exist well-defined projection operators onto them.
In classical statistical learning theory \cite{Cucker02onthe},
the property that the so-called regression function can be \emph{uniquely} projected
onto a convex (and compact) hypothesis space greatly simplifies the learning problem; see
\cite[Section~7]{Cucker02onthe}.
Furthermore, in applications where the realization of a network---%
rather than its set of weights---is the quantity of interest (for example when a network is used
as an Ansatz for the solution of a PDE, as in \cite{lagaris1998artificial, weinan2017deep}),
our results show that the Ansatz space is non-convex.
This non-convexity is inconvenient if one aims for a convergence proof of the
underlying optimization algorithm, since one cannot apply convexity-based fixed-point theorems.
Concretely, if a neural network is optimized by stochastic gradient descent
so as to satisfy a certain PDE, then it is interesting to see if there even exists a network
so that the iteration stops.
In other words, one might ask whether gradient descent on the set of neural networks
(potentially with bounded weights) has a fixed point.
If the space of neural networks were convex and compact,
then the fixed-point theorem of Schauder would guarantee the existence of such a fixed point.

\medskip{}

\subsection{(Non-)closedness of the set of realizations}
\label{sec:NonCloseIntro}

For any fixed architecture $S$, we show in Section~\ref{sec:Closed} (Theorem~\ref{thm:nonclosed})
that $\cRN_{\varrho}^{\Omega}(S)$ \emph{is not a closed subset of $L^p (\mu)$ for $0 < p < \infty$},
under very mild assumptions on the measure $\mu$ and the activation function $\varrho$.
The assumptions concerning $\varrho$ are satisfied for all activation functions used in practice.

For the case $p = \infty$, the situation is more involved: For all activation
functions that are commonly used in practice---\emph{except for the (parametric) ReLU}---%
the associated sets $\cRN_{\varrho}^{\Omega}(S)$ \emph{are non-closed
also with respect to the uniform norm}; see Theorem~\ref{thm:GeneralNonClosednessC}.
For the (parametric) ReLU, however, the question of closedness
of the sets $\cRN_{\varrho}^{\Omega}(S)$ remains mostly open.
Nonetheless, in two special cases, we prove in Section~\ref{sec:ClosReLUNet}
that the sets $\cRN_{\varrho}^{\Omega}(S)$ are closed.
In particular, for neural network architectures with two layers only,
Theorem~\ref{thm:closedReLU} establishes the \emph{closedness of $\cRN_{\varrho}^{\Omega}(S)$,
where $\varrho$ is the (parametric) ReLU}.

A practical consequence of the observation of non-closedness can be identified with the help
of the following argument that is made precise in Subsection~\ref{sec:ConsOfNonClosed}:
We show that the set
\[
  \left\{
    \mathrm{R}_\varrho^\Omega (\Phi)
    \,:\,
    \Phi = ((A_\ell,b_\ell))_{\ell=1}^L \text{ has architecture } S
    \text{ with } \|A_\ell\| + \|b_\ell\| \leq C
  \right\}
\]
of realizations of neural networks with a fixed architecture and
all affine linear maps bounded in a suitable norm, \emph{is} always closed.
As a consequence, we observe the following phenomenon of exploding weights:
If a function $f$ is such that it does not have a best approximation in $\cRN^\Omega_\varrho(S)$,
that is, if there does not exist $f^*\in \cRN^\Omega_\varrho(S)$ such that
\[
  \|f^* - f\|_{L^p(\mu)}
  = \tau_f
  \coloneqq \inf_{g \in {\cRN_{\varrho}^{\Omega}(S)}}
              \|f-g\|_{L^p(\mu)},
\]
then for any sequence of networks $(\Phi_n)_{n \in \N}$ with architecture $S$ satisfying
$\|f - \Realization_\varrho^\Omega (\Phi_n)\|_{L^p(\mu)} \to \tau_f$,
the weights of the networks $\Phi_n$ cannot remain uniformly bounded as $n \to \infty$.
In words, if $f$ does not have a best approximation in the set of neural networks of fixed size,
then every sequence of realizations approximately minimizing the distance to $f$
will have exploding weights.
Since $\cRN_{\varrho}^{\Omega}(S)$ is not closed, there do exist functions $f$ which do not have
a best approximation in $\cRN_{\varrho}^{\Omega}(S)$.

Certainly, the presence of large coefficients will make the numerical optimization
increasingly unstable.
Thus, exploding weights in the sense described above are highly undesirable in practice.

The argument above discusses an approximation problem in an $L^p$-norm.
In practice, one usually only minimizes ``empirical norms''.
We will demonstrate in Proposition~\ref{prop:ExplodingWeights} that also in this situation,
for increasing numbers of samples, the weights of the neural networks
that minimize the empirical norms necessarily explode under certain assumptions.
Note that the set-up of having a fixed architecture and a potentially unbounded
number of training samples is common in applications where neural networks are trained
to solve partial differential equations.
There, training samples are generated during the training process
\cite{lagaris1998artificial, weinan2018deep}.

\subsection{Failure of inverse stability of the realization map}
\label{sec:NonStableIntro}

As our final result, we study (in Section~\ref{sec:InverseStability})
the stability of the realization map $\Realization_\varrho^\Omega$
introduced in Equation~\eqref{eq:RealizationMapping},
which maps a family of weights to its realization.
Even though this map will turn out to be continuous
from the finite dimensional parameter space to $L^p (\Omega)$ for any
$p \in (0,\infty]$, we will show that it is \emph{not} inverse stable.
In other words, for two realizations that are very close
in the uniform norm, there do not always exist network weights associated with these
realizations that have a small distance.
In fact, Theorem~\ref{thm:InverseStability} even shows that there exists a sequence of realizations
of networks converging uniformly to $0$, but such that every sequence of weights
with these realizations is necessarily unbounded.

For both of these results---continuity and no inverse stability---we only need
to assume that the activation function $\varrho$ is Lipschitz continuous and not constant.

These properties of the realization map pinpoint a potential problem
that can occur when training a neural network:
Let us consider a regression problem, where a network is iteratively updated by
a (stochastic) gradient descent algorithm trying to minimize a loss function.
It is then possible that at some iterate the loss function exhibits a very small error,
even though the associated network \emph{parameters} have a large distance to the optimal parameters.
This issue is especially severe since a small error term leads to small steps
if gradient descent methods are used in the optimization.
Consequently, convergence to the very distant optimal weights will be
slow even if the energy landscape of the optimization problem
happens to be free of spurious local minima.

\subsection{Related work}

\paragraph{Structural properties:}

The aforementioned properties of non-convexity and non-closedness have, to some extent,
been studied before.
Classical results analyze the spaces of shallow neural networks,
that is, of $\mathcal{RNN}_\varrho^\Omega (S)$ for $S = (d, N_0, 1)$, so that $L = 2$.
For such sets of shallow networks, a property that has been extensively studied
is to what extent $\mathcal{RNN}_\varrho^\Omega (S)$ has the \emph{best approximation property}.
Here, we say that $\mathcal{RNN}_\varrho^\Omega (S)$ has the best approximation property,
if for every function $f \in L^p(\Omega)$, $1 \leq p \leq \infty$,
there exists a function $F(f) \in \mathcal{RNN}_\varrho^\Omega (S)$
such that $\| f - F(f) \|_{L^p} = \inf_{g \in \ArchitectureSpace (S)} \| f - g \|_{L^p}$.
In \cite{kainen1999approximation} it was shown that even if a minimizer always exists,
the map $f \mapsto F(f)$ is necessarily discontinuous.
Furthermore, at least for the Heaviside activation function, there does exist a (non-unique)
best approximation; see \cite{kainen2000best}.

Additionally, \cite[Proposition~4.1]{Girosi1990} demonstrates, for shallow networks as before,
that for the logistic activation function $\varrho(x) = (1 + e^{-x})^{-1}$,
the set $\mathcal{RNN}_\varrho^\Omega (S)$ does not have the best approximation property
in $C(\Omega)$.
In the proof of this statement, it was also shown that $\mathcal{RNN}_\varrho^\Omega (S)$
is not closed.
Furthermore, it is claimed that this result should hold for every non-linear activation function.
The previously mentioned result of \cite{kainen2000best} and Theorem~\ref{thm:closedReLU} below
disprove this conjecture for the Heaviside and ReLU activation functions, respectively.

\paragraph{Other notions of (non-)convexity:}

In deep learning, one chooses a loss function
${\mathcal{L}: C(\Omega) \to [0,\infty)}$, which is then minimized over the set of neural networks
$\cRN_{\varrho}^{\Omega}(S)$ with fixed architecture $S$.
A typical loss function is the empirical square loss, that is,
\[
  E_N(f) \coloneqq \frac{1}{N} \sum_{i = 1}^N |f(x_i) - y_i|^2,
\]
where $(x_i,y_i)_{i=1}^N \subset \Omega \times \R$, $N \in \N$.
In practice, one solves the minimization problem \emph{over the weights of the network};
that is, one attempts to minimize the function
$\mathcal{L} \circ \Realization_\varrho^\Omega : \cN(S) \to [0,\infty)$.
In this context, to assess the hardness of this optimization problem,
one studies whether $\mathcal{L} \circ \Realization_\varrho^\Omega$ is convex,
the degree to which it is non-convex, and if one can find remedies to alleviate the problem
of non-convexity, see for instance
\cite{BaldiHornik,NguyenHein17,venturi2018neural,Rotskoff2018NeuralNA, zhang2017convexified,bach2017breaking, jacot2018neural,
freeman2016topology,mei2018mean}.

It is important to emphasize that this notion of non-convexity describes properties
\emph{of the loss function}, in contrast to the non-convexity \emph{of the sets of functions}
that we analyze in this work.

\section{Non-convexity of the set of realizations}
\label{sec:Shape}

In this section, we analyze the convexity of the set of all neural network realizations.
In particular, we will show that this set is highly non-convex for all practically used
activation functions listed in Table~\ref{tab:ActFunctions}.
First, we examine the convexity of the set $\cRN^{\Omega}_{\varrho}(S)$:

\begin{theorem}\label{thm:NoConvexityEver}
  Let $S = (d, N_1, \dots, N_L)$ be a neural network architecture with $L \in \N_{\geq 2}$
  and let $\Omega \subset \R^d$ with non-empty interior.
  Moreover, let $\varrho: \R \to \R$ be locally Lipschitz continuous.

  If $\cRN_\varrho^\Omega(S)$ is convex, then $\varrho$ is a polynomial.
\end{theorem}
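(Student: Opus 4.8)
The plan is to prove the contrapositive: assuming $\varrho$ is not a polynomial, I will exhibit two networks $\Phi_0, \Phi_1 \in \cN(S)$ whose realizations $f_0 = \Realization_\varrho^\Omega(\Phi_0)$ and $f_1 = \Realization_\varrho^\Omega(\Phi_1)$ have the property that the midpoint $\tfrac12(f_0 + f_1)$ is not in $\cRN_\varrho^\Omega(S)$. The simplest setting is to work with ``one-dimensional'' building blocks: pick an affine coordinate $t \mapsto \langle a, x\rangle + c$ restricted to a segment inside $\interior \Omega$, so the problem reduces to a statement about realizations of scalar input. Since $L \geq 2$, every architecture of width $N_1, \dots, N_{L-1}$ contains, as a sub-case of its realizations (by zeroing out all but a few neurons and making later layers act as identity-like affine maps up to constants), the functions of the form $x \mapsto \alpha \cdot \varrho(\langle a,x\rangle + c) + \beta$. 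So it suffices to find $\varrho$-dependent obstructions already at this level, or more precisely to find a target function (a sum of two such bumps) that requires ``too many'' genuinely nonlinear degrees of freedom.

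The key structural input I would use is a classical fact, due to Leshno--Lin--Pinkus--Schocken and used in universal-approximation proofs: a locally Lipschitz (hence locally integrable) function $\varrho$ that is \emph{not} a polynomial has the property that, after mollification, it is $C^\infty$ and still non-polynomial, so it has derivatives of all orders that do not vanish identically; equivalently, the closure of $\spn\{x \mapsto \varrho(ax+b) : a, b \in \R\}$ contains all polynomials, and in particular $\varrho$ restricted to no interval agrees with a polynomial of bounded degree. The concrete consequence I want: there exist points and scalings at which $\varrho$ is ``strictly convex-like'' or at least not affine, so that the two-parameter family $(a, b) \mapsto \varrho(a \cdot + b)$ traces out a curve in function space that is not contained in any finite-dimensional affine subspace. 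I would then choose $f_0$ and $f_1$ to be realizations using disjoint ``active regions'' — e.g. $f_0$ nonlinear on a left sub-interval and affine on the right, $f_1$ affine on the left and nonlinear on the right — arranged so that $\tfrac12(f_0+f_1)$ is simultaneously genuinely nonlinear on \emph{both} sub-intervals. A counting/dimension argument on the number of ``nonlinear kinks'' an architecture-$S$ realization can produce (bounded in terms of $N(S)$) then shows the midpoint needs strictly more resources than $S$ provides, contradicting membership in $\cRN_\varrho^\Omega(S)$.

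Concretely I would carry out the steps as follows. First, reduce to $d = 1$ and $\Omega = [0,1]$ by restriction along a line segment in $\interior\Omega$ and by noting that a convex set stays convex under this restriction; also reduce to $N_L = 1$, which is already assumed in the introduction. Second, set up the ``local nonlinearity'' notion: for an interval $I$ say $g$ is affine on $I$ if it coincides with a degree-$\leq 1$ polynomial there. Third, establish an upper bound: any $g \in \cRN_\varrho^\Omega(S)$ can be written so that $[0,1]$ decomposes into at most $M = M(S)$ subintervals on each of which $g$ behaves like a composition of few copies of $\varrho$ with affine maps; more usefully, I would instead argue via a \emph{local} expansion — near a point where the innermost layer's pre-activations avoid $0$, $g$ is $C^k$ and its $k$-th Taylor coefficients are polynomial functions of the weights. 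Fourth, using non-polynomiality of $\varrho$, pick $f_0, f_1 \in \cRN_\varrho^\Omega(S)$ such that $h \coloneqq \tfrac12(f_0 + f_1)$ has, say, a jump in some derivative, or more robustly: $h$ agrees on two disjoint intervals $I_1, I_2$ with two ``independent'' shifted/scaled copies $c_1 \varrho(a_1 t + b_1)$ and $c_2 \varrho(a_2 t + b_2)$ whose joint realization would force the network to have disjoint groups of neurons active on $I_1$ and on $I_2$, exceeding the budget $N(S)$. Conclude that $h \notin \cRN_\varrho^\Omega(S)$, so $\cRN_\varrho^\Omega(S)$ is not convex.

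The main obstacle is Step four: making precise the ``resource counting'' that forbids the midpoint from lying in $\cRN_\varrho^\Omega(S)$, \emph{without} assuming anything about $\varrho$ beyond local Lipschitzness and non-polynomiality. Activation functions like ReLU (piecewise linear, so kink-counting works cleanly) and analytic sigmoids (where ``kinks'' must be replaced by a unique-continuation / analyticity argument: an analytic realization that agrees with $c_1\varrho(a_1 t+b_1)$ on $I_1$ is globally determined, and cannot also match an independent $c_2\varrho(a_2 t + b_2)$ on $I_2$) require genuinely different arguments, so the proof likely splits into cases or needs a unifying lemma — e.g. a statement that for non-polynomial locally Lipschitz $\varrho$, the map from weights to germs of realizations at a generic point has bounded ``rank'', which caps the dimension of the set of attainable germs, whereas midpoints of realizations can be chosen to land outside that set. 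I would expect the write-up to first handle a clean representative case to fix ideas and then invoke the density/non-polynomiality characterization of Leshno et al. to handle the general activation, with the bookkeeping of ``which neurons are active where'' being the fiddly technical heart.
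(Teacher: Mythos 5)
You have the right contrapositive and, at the very end, you even gesture at the correct mechanism (``the map from weights to germs has bounded rank, which caps the dimension of the set of attainable... whereas midpoints can be chosen to land outside''), but Step four --- the only step that actually does the work --- is left unproven, and the two concrete techniques you offer for it demonstrably do not cover the hypothesis. Kink-counting requires $\varrho$ to be piecewise smooth with isolated singularities; unique continuation requires $\varrho$ to be analytic. A locally Lipschitz non-polynomial $\varrho$ need be neither, and you acknowledge as much without supplying the ``unifying lemma.'' There is a second, independent gap: even granting a dimension bound on $\cRN_\varrho^\Omega(S)$, you still need to \emph{produce} enough linearly independent realizations to exceed it when $\varrho$ is not a polynomial. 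The Leshno--Lin--Pinkus--Schocken density theorem does not hand you this directly (density of the span of $\{\varrho(a\cdot+b)\}$ in $C(K)$ concerns shallow networks and says nothing a priori about which functions a \emph{fixed} architecture $S$ realizes), and your proposal never addresses it.

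The paper closes both gaps as follows. First, $\cRN_\varrho^\Omega(S)$ is closed under scalar multiplication, so convexity forces it to contain the full linear span of any finite subset; hence it suffices to bound the dimension of any linear subspace $V \subset \cRN_\varrho^\Omega(S)$. That bound is exactly your ``bounded rank'' idea, made rigorous via measure theory rather than germs: the realization map is locally Lipschitz from the $D$-dimensional parameter space (with $D = \sum_\ell (N_{\ell-1}+1)N_\ell$), and a locally Lipschitz map into $\R^{D+1}$ (obtained by evaluating at $D+1$ well-chosen points) has image of Lebesgue measure zero, so it cannot be surjective --- whence $\dim V \leq D$. Second, to get infinitely many linearly independent realizations from non-polynomiality, the paper uses that $\cRN_\varrho^{\R^d}(S)$ is translation- and dilation-invariant; a finite-dimensional translation-invariant subspace of $C(\R)$ consists of polynomial-exponentials (Anselone--Korevaar), dilation-invariance kills the exponentials, and an approximate-identity construction (Proposition~\ref{prop:Identity}) shows $\varrho$ itself lies in the closure of the relevant function class, forcing $\varrho$ to be a polynomial. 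Neither of these two ingredients appears in your write-up, and without them the argument does not go through for a general locally Lipschitz activation.
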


\begin{remark*}
  (1) It is easy to see that all of the activation functions in Table~\ref{tab:ActFunctions}
  are locally Lipschitz continuous, and that none of them is a polynomial.
  Thus, the associated sets of realizations are never convex.

  \medskip{}

  (2) In the case where $\varrho$ is a polynomial, the set $\cRN_\varrho^\Omega(S)$
  might or might not be convex.
  Indeed, if $S = (1, N, 1)$ and $\varrho(x) = x^m$, then it is not hard to see that
  $\cRN_\varrho^\Omega(S)$ is convex if and only if $N \geq m$.
\end{remark*}

\begin{proof}
The detailed proof of Theorem~\ref{thm:NoConvexityEver}
is the subject of Appendix~\ref{app:NoConvex}.
Let us briefly outline the proof strategy:
\begin{itemize}
  \item[1.] We first show in Proposition~\ref{prop:starshaped} that
            $\cRN_\varrho^\Omega(S)$ is \emph{closed under scalar multiplication},
            hence \emph{star-shaped with respect to the origin, i.e., 0 is a center.}%
            \footnote{{A subset $A$ of some vector space $V$ is called \textbf{star-shaped},
            if there exists some $f\in A$ such that for all $g \in A$, also
            $\{\lambda f + (1 - \lambda)g \colon \lambda \in [0,1]\} \subset A$.
            The vector $f$ is called a \textbf{center of $A$}.}}


  \item[2.]  Next, using the local Lipschitz continuity of $\varrho$,
             we establish in Proposition~\ref{prop:Centres} that the maximal
             \emph{number of linearly independent centers} of the set $\cRN_\varrho^\Omega(S)$
             is finite.
             Precisely, it is bounded by the number of parameters of the underlying neural networks,
             given by $\sum_{\ell = 1}^L (N_{\ell-1} + 1) N_{\ell}$.

  \item[3.] A direct consequence of Step~2 is that if  $\cRN_\varrho^\Omega(S)$ is convex,
            then it can only contain a finite number of linearly independent functions;
            see Corollary~\ref{cor:NoConvexity}.

  \item[4.] Finally, using that $\cRN_\varrho^{\R^d}(S)$ is a
            \emph{translation-invariant subset of} $C(\R^d)$,
            we show in Proposition~\ref{prop:NetworksUsuallyProduceManyLinearlyIndependentFunctions}
            that $\cRN_\varrho^{\R^d}(S)$ (and hence also $\cRN_\varrho^{\Omega}(S)$)
            contains \emph{infinitely many linearly independent functions},
            if $\varrho$ is not a polynomial.
            \qedhere
\end{itemize}
\end{proof}

In applications, the non-convexity of $\cRN^\Omega_\varrho(S)$ might not be as problematic
as it first seems.
If, for instance, the set $\cRN^\Omega_\varrho(S) + B_\delta(0)$ of functions that can be approximated
up to error $\delta > 0$ by a neural network with architecture $S$ was convex,
then one could argue that the non-convexity of $\cRN^\Omega_\varrho(S)$ was not severe.
Indeed, in practice, neural networks are only trained to minimize a certain empirical loss function,
with resulting bounds on the generalization error which are typically of size
$\eps = \mathcal{O}(m^{-1/2})$, with $m$ denoting the number of training samples.
In this setting, one is not really interested in ``completely minimizing''
the (empirical) loss function, but would be content with finding a function
for which the empirical loss is $\eps$-close to the global minimum.
Hence, one could argue that one is effectively working with a hypothesis space of the form
$\cRN^\Omega_\varrho(S) + B_\delta(0)$, containing all functions
that can be represented up to an error of $\delta$ by neural networks of architecture $S$.

To quantify this potentially more relevant notion of convexity of neural networks,
we define, for a subset $A$ of a vector space $\mathcal{Y}$, the \textbf{convex hull of $A$} as
\[
  \mathrm{co}(A)
  \coloneqq \bigcap_{B \subset \mathcal{Y} \text{ convex and } B \supset A}
              B \, .
\]
For $\eps > 0$, we say that a subset $A$ of a normed vector space $\mathcal{Y}$ is
$\eps$\textbf{-convex in} $(\mathcal{Y},\|\cdot\|_{\mathcal{Y}})$, if
\[
  \text{co}(A) \subset A + B_\eps(0) \, .
\]
Hence, the notion of $\eps$-convexity asks whether the convex hull of a set is contained
in an enlargement of this set.
%
Note that if $\cRN_\varrho^\Omega(S)$ is dense in $C(\Omega)$,
then its closure is trivially $\eps$-convex for all $\eps > 0$.
Our main result regarding the $\eps$-convexity of neural network sets shows that
this is the only case in which $\overline{\cRN_\varrho^\Omega(S)}$
is $\eps$-convex for any $\eps > 0$.

\begin{theorem}\label{thm:epsconv}
  Let $S = (d, N_1, \dots, N_{L-1}, 1)$ be a neural network architecture with $L \geq 2$,
  and let $\Omega \subset \R^d$ be compact.
  Let $\varrho: \R \to \R$ be continuous but not a polynomial,
  and such that $\varrho'(x_0) \neq 0$ for some $x_0 \in \R$.


  Assume that $\cRN_\varrho^{\Omega}(S)$ is \emph{not} dense in $C(\Omega)$.
  Then there does not exist \emph{any} $\eps > 0$ such that $\overline{\cRN_\varrho^{\Omega}(S)}$
  is $\eps$-convex in $\big( C(\Omega), \|\cdot\|_{\sup} \big)$.
\end{theorem}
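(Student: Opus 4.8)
Throughout write $\mathcal M := \cRN_\varrho^\Omega(S)$. The plan is to argue by contradiction: assume that $\overline{\mathcal M}$ is $\eps$-convex for some $\eps > 0$, i.e. $\mathrm{co}(\overline{\mathcal M}) \subset \overline{\mathcal M} + B_\eps(0)$, and deduce first that $\overline{\mathcal M}$ is convex and then that it must equal all of $C(\Omega)$, contradicting the non-density hypothesis. The basic structural input is that $\mathcal M$ is closed under scalar multiplication: scaling the last (affine) layer $(A_L,b_L)$ of a network by $c$ scales its realization by $c$ (this is Proposition~\ref{prop:starshaped}). Since $0 \in \mathcal M$ and multiplication by a fixed $c \neq 0$ is a homeomorphism of $C(\Omega)$, both $\overline{\mathcal M}$ and $\mathrm{co}(\overline{\mathcal M})$ are again closed under scalar multiplication (the latter because $\mathrm{co}(cA) = c\,\mathrm{co}(A)$).

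First I would prove convexity of $\overline{\mathcal M}$ by a rescaling trick. For $t \in (0,1]$, multiplying the $\eps$-convexity inclusion by $t$ and using scaling-invariance gives $\mathrm{co}(\overline{\mathcal M}) = t\,\mathrm{co}(\overline{\mathcal M}) \subset t\,\overline{\mathcal M} + t B_\eps(0) = \overline{\mathcal M} + B_{t\eps}(0)$. Since $\overline{\mathcal M}$ is closed, intersecting over all $t>0$ yields $\mathrm{co}(\overline{\mathcal M}) \subset \bigcap_{t>0}(\overline{\mathcal M} + B_{t\eps}(0)) = \overline{\mathcal M}$, so $\overline{\mathcal M}$ is convex.

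Next I would show that every ridge function $x \mapsto c\,\varrho(a\cdot x + b) + e$, for $a \in \R^d$ and $b,c,e \in \R$, belongs to $\overline{\mathcal M}$. Replacing $\varrho$ by $x \mapsto \alpha\,\varrho(x + \gamma) + \delta$ with suitable $\alpha \neq 0$ and $\gamma,\delta \in \R$ changes neither the architecture $S$ nor the set $\mathcal M$ (the affine modifications are absorbed into the weights and biases of consecutive layers), so I may assume $\varrho(0) = 0$ and $\varrho'(0) = 1$, hence $\varrho(h) = h + o(h)$ as $h \to 0$; here the hypothesis $\varrho'(x_0) \neq 0$ enters. For $L = 2$ the claim is immediate since $c\,\varrho(a\cdot x+b)+e \in \mathcal M$. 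For $L \geq 3$ I would build networks whose hidden layers keep only the first neuron active: layer $1$ outputs $(\varrho(a\cdot x+b),0,\dots,0)$, layer $\ell$ for $2 \le \ell \le L-1$ sends $y \mapsto (\varrho(\mu_\ell\, y_1),0,\dots,0)$ for scalars $\mu_\ell > 0$, and the output layer returns $\nu\, y_{L-1,1} + e$ with $\nu := c/(\mu_2\cdots\mu_{L-1})$. As $\Omega$ is compact, $\varrho(a\cdot x+b)$ is bounded on $\Omega$, so when $\mu_2,\dots,\mu_{L-1} \to 0$ all intermediate pre-activations tend to $0$ uniformly; feeding this into $\varrho(h)=h+o(h)$ layer by layer, the realizations converge uniformly on $\Omega$ to $c\,\varrho(a\cdot x+b)+e$. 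Combining the two parts: by convexity $\overline{\mathcal M} = \mathrm{co}(\overline{\mathcal M})$, which contains the convex hull of the set of all ridge functions $\{c\,\varrho(a\cdot x+b)+e\}$; that set is closed under scalar multiplication, so its convex hull equals its linear span and in particular contains $\spn\{\varrho(a\cdot x+b) : a\in\R^d,\ b\in\R\}$. By the universal approximation theorem \cite{Cybenko1989, Hornik1989universalApprox, PinkusUniversalApproximation}, this span is dense in $(C(\Omega),\|\cdot\|_{\sup})$ because $\varrho$ is continuous and not a polynomial; as $\overline{\mathcal M}$ is closed we get $\overline{\mathcal M} = C(\Omega)$, contradicting that $\cRN_\varrho^\Omega(S)$ is not dense in $C(\Omega)$.

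The soft functional-analytic steps are straightforward; the one place requiring genuine care is the construction in the $L \ge 3$ case — making $\varrho(h) = h + o(h)$ quantitative via the modulus of differentiability of $\varrho$ at $0$, and controlling how the resulting errors propagate (and get amplified by $\nu$) through the composition of $L-2$ nearly-linear hidden layers. I expect this uniform-error bookkeeping to be the main technical obstacle; everything else reduces to elementary convexity manipulations and a citation of universal approximation.
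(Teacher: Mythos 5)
Your proposal is correct and follows essentially the same route as the paper: the scaling-invariance of $\cRN_\varrho^\Omega(S)$ is used to upgrade $\eps$-convexity to genuine convexity of the closure, convexity plus scaling-invariance makes the closure a closed linear subspace containing all two-layer realizations $x\mapsto c\,\varrho(\langle a,x\rangle+b)+e$, and the universal approximation theorem of \cite{PinkusUniversalApproximation} then forces density, contradicting the hypothesis. The only difference is organizational: where you construct the near-identity chain of $L-2$ scalar layers by hand (small pre-scaling $\mu_\ell$, compensating post-scaling $\nu$, with the multiplicative $(1+o(1))$ error bookkeeping), the paper delegates exactly this construction to its Proposition~\ref{prop:Identity}, and it derives the failure of $\eps$-convexity for \emph{every} $\eps$ by first exhibiting one bad $\eps_0$ and rescaling the witness rather than by your equivalent contrapositive rescaling of the inclusion itself.
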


\begin{remark*}
  All closures in the theorem are taken in $C(\Omega)$.
\end{remark*}

\begin{proof}
  The proof of this theorem is the subject of Appendix~\ref{app:epsconv}.
  It is based on showing that if $\overline{\cRN_\varrho^{\Omega}(S)}$ is $\eps$-convex
  for some $\eps > 0$, then in fact $\overline{\cRN_\varrho^{\Omega}(S)}$ is convex,
  which we then use to show that $\overline{\cRN_\varrho^{\Omega}(S)}$ contains all realizations
  of two-layer neural networks with activation function $\varrho$.
  As shown in \cite{PinkusUniversalApproximation}, this implies that $\cRN_\varrho^{\Omega}(S)$
  is dense in $C(\Omega)$, since $\varrho$ is not a polynomial.
\end{proof}

\begin{remark}\label{rem:EpsilonConvexity}
  While it is certainly natural to expect that
  $\overline{\cRN_\varrho^{\Omega}(S)} \neq C(\Omega)$ should hold for most
  activation functions $\varrho$, giving a reference including large classes
  of activation functions such that the claim holds is not straightforward.
  We study this problem more closely in Appendix~\ref{app:nonDenseNetworkSets}.

  To be more precise, from 
  Proposition~\ref{prop:MostActivationFunctionsDoNotYieldDenseRealisations}
  it follows that the ReLU, the parametric ReLU, the exponential linear unit,
  the softsign, the sigmoid, and the $\tanh$ yield realization sets
  $\cRN_\varrho^{\Omega}(S)$ which are \emph{not} dense in $L^p(\Omega)$
  and in $C(\Omega)$.

  The only activation functions listed in Table~\ref{tab:ActFunctions} for which
  we do \emph{not} know whether any of the realization sets $\cRN_\varrho^{\Omega}(S)$
  is dense in $L^p(\Omega)$ or in $C(\Omega)$ are:
  the inverse square root linear unit, the inverse square root unit,
  the softplus, and the $\arctan$ function.
  Of course, we expect that also for these activation functions, the resulting sets
  of realizations are never dense in $L^p(\Omega)$ or in $C(\Omega)$.

  Finally, we would like to mention that if $\Omega \subset \R^d$ has non-empty
  interior and if the input dimension satisfies $d \geq 2$, then it follows from the results in
  \cite{MaiorovRidgeFunctionsBestApproximation} that
  if $S = (d, N_1, 1)$ is a \emph{two-layer architecture},
  then $\cRN_\varrho^{\Omega}(S)$ is \emph{not} dense in $C(\Omega)$ or $L^p(\Omega)$.
\end{remark}

\section{(Non-)Closedness of the set of realizations}
\label{sec:Closed}

Let $\emptyset \neq \Omega \subset \R^d$ be compact with non-empty interior.
In the present section, we analyze whether the neural network realization set
$\cRN^\Omega_\varrho (S)$ with $S=(d,N_1,\dots,N_{L-1},1)$ is closed in $C(\Omega)$,
or in $L^p (\mu)$, for $p \in (0, \infty)$ and any measure $\mu$
satisfying a mild ``non-atomicness'' condition.
For the $L^p$-spaces, the answer is simple:
Under very mild assumptions on the activation function $\varrho$,
we will see that $\cRN^\Omega_\varrho (S)$ is never closed in $L^p (\mu)$.
In particular, this holds for \emph{all} of the activation functions
listed in Table~\ref{tab:ActFunctions}.
Closedness in $C(\Omega)$, however, is more subtle:
For this setting, we will identify several different classes of activation functions
for which the set $\cRN^\Omega_\varrho (S)$ is \emph{not} closed in $C(\Omega)$.
As we will see, these classes of activation functions cover all those functions listed in
Table~\ref{tab:ActFunctions}, \emph{except for the ReLU and the parametric ReLU}.
For these two activation functions, we were unable to determine whether the set
$\cRN^\Omega_\varrho (S)$ is closed in $C(\Omega)$ in general, but we conjecture this to be true.
Only for the case $L = 2$, we could show that these sets are indeed closed.


Closedness of $\cRN^\Omega_\varrho (S)$
is a highly desirable property as we will demonstrate in Section~\ref{sec:ConsOfNonClosed}.
Indeed, we establish that if $X = L^p(\mu)$ or $X = C(\Omega)$,
then, for all functions $f \in X$ \emph{that do not possess a best approximation
within $\mathcal{R} = \cRN^\Omega_\varrho (S)$},
the weights of approximating networks necessarily explode.
In other words, if $(\mathrm{R}_{\varrho}^{\Omega}(\Phi_n))_{n \in \N} \subset \mathcal{R}$
is such that $\|f - \mathrm{R}_{\varrho}^{\Omega}(\Phi_n)\|_{X}$
converges to $\inf_{g\in \mathcal{R}}\|f-g \|_{X}$ for $n \to \infty$,
then $\|\Phi_n\|_{\mathrm{total}} \to \infty$.
Such functions without a best approximation in $\mathcal{R}$
necessarily exist if $\mathcal{R}$ is not closed. Moreover, even in practical applications, where empirical error terms instead of $L^p(\mu)$ norms are minimized, the absence of closedness implies exploding weights as we show in Proposition \ref{prop:ExplodingWeights}.

Finally, we note that for simplicity, all ``non-closedness'' results in this section are
formulated for compact rectangles of the form $\Omega = [-B,B]^d$ only;
but our arguments easily generalize to any compact set $\Omega\subset \R^d$ with non-empty interior.

\subsection{Non-closedness in \texorpdfstring{$L^p(\mu)$}{Lᵖ(μ)}}
\label{subsec:NonClosedLp}

We start by examining the closedness with respect to $L^p$-norms for $p \in (0, \infty)$.
In fact, for all $B > 0$ and all widely used activation functions
(including all activation functions presented in Table~\ref{tab:ActFunctions}),
the set $\cRN_{\varrho}^{[-B,B]^d}(S)$ is \emph{not} closed in $L^p(\mu)$,
for any $p \in (0, \infty)$ and any ``sufficiently non-atomic'' measure $\mu$ on $[-B,B]^d$.
To be more precise, the following is true:

\begin{theorem}\label{thm:nonclosed}
Let $S = (d, N_1, \dots, N_{L-1}, 1)$ be a neural network architecture with $L \in \N_{\geq 2}$.
Let $\varrho: \R \to \R$ be a function satisfying the following conditions:
\begin{enumerate}
  \item[(i)] $\varrho$ is continuous and increasing;

  \item[(ii)] There is some $x_0 \in \R$ such that $\varrho$ is differentiable at $x_0$
              with $\varrho'(x_0) \neq 0$;

  \item[(iii)] There is some $r > 0$ such that
              $\varrho |_{(-\infty, -r) \cup (r, \infty)}$ is differentiable;

  \item[(iv)] \emph{At least one} of the following two conditions is satisfied:
              \begin{enumerate}
                \item[(a)] There are $\lambda, \lambda' \geq 0$
                           with $\lambda \neq \lambda'$ such that
                           $\varrho'(x) \to \lambda$ as $x \to \infty$,
                           and $\varrho'(x) \to \lambda'$ as $x \to -\infty$,
                           and we have $N_{L-1} \geq 2$.

                \item[(b)] $\varrho$ is bounded.
              \end{enumerate}
\end{enumerate}

Finally, let $B > 0$ and let $\mu$ be a finite Borel measure on $[-B, B]^d$
for which the support $\supp \mu$ is uncountable.
Then the set $\cRN_{\varrho}^{[-B,B]^d}(S)$ is \emph{not} closed in $L^p(\mu)$
for any $p \in (0, \infty)$.
More precisely, there is a function $f \in L^\infty (\mu)$ which satisfies
$f \in \overline{\cRN_{\varrho}^{[-B,B]^d}(S)} \setminus \cRN_{\varrho}^{[-B,B]^d}(S)$
for all $p \in (0,\infty)$, where the closure is taken in $L^p(\mu)$.
\end{theorem}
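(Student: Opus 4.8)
The plan is to exhibit a single function $f$ that is a limit in every $L^p(\mu)$ of realizations with architecture $S$, but which is itself not such a realization. The construction should be as simple as possible: I would take $f$ to be (a scaled and shifted copy of) a function with a single ``jump-like'' feature that can be approximated arbitrarily well by the network but only in the limit where some weight tends to $\infty$. Concretely, consider case (iv)(b) first, where $\varrho$ is bounded and increasing: then $\varrho(t x + c)$ converges pointwise, as $t \to \infty$, to a step function $\alpha \cdot \Indicator_{\{x > -c/t\}} + \ldots$ — more precisely, using $\ell := \lim_{x\to-\infty}\varrho(x)$ and $u := \lim_{x\to+\infty}\varrho(x)$ (which exist since $\varrho$ is bounded and increasing, and $\ell \neq u$ since $\varrho$ is increasing and, by (ii), non-constant), the functions $x \mapsto \varrho(t\,\langle w,x\rangle + b)$ converge in $L^p(\mu)$ to a multiple of an indicator of a half-space. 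One then feeds this converging family through the remaining (fixed) layers: since the first hidden layer has width $N_1 \geq 1$, we can route one neuron to produce the ``sharpening'' $\varrho(t x_1 + b)$ while the other neurons stay constant, and the later layers $(A_\ell, b_\ell)_{\ell \geq 2}$ are applied to this. Choosing these later weights so that the composition is non-constant, we get a family $g_t := \Realization_\varrho^\Omega(\Phi_t) \in \cRN_\varrho^{[-B,B]^d}(S)$ converging in $L^p(\mu)$ (for all $p$ simultaneously, by dominated convergence since everything is uniformly bounded on the compact set) to a limit $f$ which has a genuine discontinuity along a hyperplane.

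The second task is to show this limit $f$ is not itself in $\cRN_\varrho^{[-B,B]^d}(S)$. Here I would use continuity: by condition (i), $\varrho$ is continuous, so every element of $\cRN_\varrho^{[-B,B]^d}(S)$ is continuous on $\Omega$, hence (being a genuine function, not just an $L^p$ class) has no essential discontinuity. Since $\supp\mu$ is uncountable, a function that is a.e.-$\mu$ equal to a discontinuous step-type function cannot be a.e.-$\mu$ equal to a continuous one — this is where uncountability of the support is essential, as it guarantees $\mu$ charges both sides of the separating hyperplane (after possibly translating/rotating the hyperplane so that it genuinely splits $\supp\mu$ into two pieces of positive measure, which is possible because an uncountable compact set cannot be covered by a single hyperplane and in fact has uncountably many points on ``both sides'' of suitable hyperplanes). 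Thus $f \notin \cRN_\varrho^{[-B,B]^d}(S)$, and since the same $f$ works for all $p \in (0,\infty)$ and $f \in L^\infty(\mu)$ by boundedness, the ``more precise'' statement follows as well.

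For case (iv)(a), where $\varrho$ is not assumed bounded but $\varrho'$ has distinct limits $\lambda \neq \lambda'$ at $\pm\infty$ and $N_{L-1} \geq 2$, the idea is similar but the ``sharp feature'' must be built from a \emph{difference} of two neurons so that the unbounded affine growth cancels. Writing $\sigma(x) := \varrho(x) - (\lambda x^+ - \lambda'(-x)^-)$ or, more simply, considering $x \mapsto \varrho(t x + b) - \varrho(t x + b')$ for $b \neq b'$: as $t \to \infty$ the affine parts $\lambda t x$ (resp. $\lambda' t x$) cancel in the difference on each of the two half-lines, leaving a bounded limit that again is a step function (its height governed by $\lambda - \lambda'$ and the offsets $b, b'$). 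The condition $N_{L-1} \geq 2$ is exactly what lets us form this difference in the last hidden layer before the (single) output neuron. I would then argue exactly as before: the family stays uniformly bounded on the compact $\Omega$ (the unbounded parts having cancelled), converges in all $L^p(\mu)$ to a discontinuous $f$, which by continuity of $\varrho$ and uncountability of $\supp\mu$ is not a realization.

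The main obstacle I anticipate is the bookkeeping in case (iv)(a): one must verify carefully that after cancellation the resulting family is genuinely uniformly bounded (not just bounded for each fixed $t$) and that its pointwise limit is honestly discontinuous rather than, say, degenerating to a constant for the chosen weights — this requires condition (ii) ($\varrho'(x_0)\neq 0$ somewhere, ruling out the constant case) to be combined with (iv)(a) to guarantee the step has nonzero height. A secondary but real subtlety is making precise the claim that an uncountable compact support forces $\mu$ to assign positive mass to both sides of some hyperplane: I would handle this by noting that the set of directions/offsets for which the hyperplane is $\mu$-null on one side is ``small'', or more elementarily, by picking a point of $\supp\mu$ that is a condensation point and translating a coordinate hyperplane there, which splits a neighborhood — of positive $\mu$-measure on the relevant side — appropriately. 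Everything else (the pointwise convergence of $\varrho(tx+b)$, dominated convergence giving $L^p$ convergence for all $p$ at once, continuity of realizations from continuity of $\varrho$) is routine.
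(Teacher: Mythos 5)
Your construction of the approximating sequence matches the paper's: in the bounded case a single neuron $\varrho(t\langle w,x\rangle+b)$ sharpened by letting $t\to\infty$, in case (iv)(a) a difference $\varrho(tJ(x))-\varrho(tJ(x)-1)$ of two neurons in the last hidden layer so that the linear growth cancels and the mean value theorem together with $\varrho'(x)\to\lambda,\lambda'$ yields a bounded step-type limit; dominated convergence then gives convergence in every $L^p(\mu)$ simultaneously, and non-membership is deduced from the continuity of all realizations. This is exactly the paper's route.

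There is, however, one genuine gap, precisely at the point you flag as a ``secondary subtlety'': the claim that the discontinuous limit $f$ has no continuous representative modulo $\mu$-null sets. Neither of your two justifications works. First, ``an uncountable compact set cannot be covered by a single hyperplane'' is false (take $\supp\mu$ to be a Cantor set contained in a line in $\R^2$). Second, and more importantly, splitting $\supp\mu$ into two pieces of \emph{positive measure} by a hyperplane is not sufficient: if the two pieces are at positive distance from the separating hyperplane, the step function $c\,\Indicator_{H_+}+c'\,\Indicator_{H_-}$ agrees $\mu$-a.e.\ with a continuous function obtained by interpolating between $c$ and $c'$ across the gap. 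What is actually needed is a point $x^\ast\in\supp\mu$ and a direction $v$ such that $\supp\mu$ accumulates at $x^\ast$ from within \emph{both} open half-spaces $H_\pm(x^\ast,v)$; only then does continuity of a representative $g$ force $g(x^\ast)=c$ and $g(x^\ast)=c'$ simultaneously. Your fallback of taking a condensation point and a coordinate hyperplane through it also fails (again the Cantor-set-on-a-line example: all points of the support may lie on one side of, or on, every coordinate hyperplane through the chosen point). The paper isolates this as a separate lemma and proves the existence of such a two-sided accumulation point for an arbitrary uncountable set via a classical result of Ward on non-enumerable point sets; this is the one nontrivial ingredient your proposal is missing, and it is exactly where the uncountability of $\supp\mu$ enters. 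The remaining loose end in your write-up --- ensuring in case (iv)(b) that the layers applied \emph{after} the sharpened neuron map the two limit values $c\neq c'$ to distinct outputs --- is handled in the paper by reversing the order (an approximate-identity network in the first $L-1$ layers followed by the sharpening in the last layer), which you could adopt as well.
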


\begin{remark*}
  If $\supp \mu$ is countable, then $\mu = \sum_{x \in \supp \mu} \mu(\{ x \}) \, \delta_x$
  is a countable sum of Dirac measures, meaning that $\mu$ is \emph{purely atomic}.
  In particular, if $\mu$ is \emph{non-atomic}
  (meaning that $\mu(\{ x \}) = 0$ for all $x \in [-B,B]^d$), then $\supp \mu$ is uncountable
  and the theorem is applicable.
\end{remark*}

\begin{proof}
  For the proof of the theorem, we refer to Appendix~\ref{app:nonclosed}.
  The main proof idea consists in the approximation of a (discontinuous) step function
  which cannot be represented by a neural network with continuous activation function.
\end{proof}

\begin{corollary}\label{cor:StuffNeverClosedInLp}
  The assumptions concerning the activation function $\varrho$ in Theorem~\ref{thm:nonclosed}
  are satisfied for all of the activation functions listed in Table~\ref{tab:ActFunctions}.
  In any case where $\varrho$ is bounded, one can take $N_{L-1} = 1$;
  otherwise, one can take $N_{L-1} = 2$.
\end{corollary}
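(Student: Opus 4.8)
The plan is to verify, one activation function at a time, that each entry of Table~\ref{tab:ActFunctions} satisfies the hypotheses (i)--(iv) of Theorem~\ref{thm:nonclosed}, and to record in each case whether one needs the ``bounded'' branch (iv)(b) (in which case $N_{L-1}=1$ suffices) or the ``asymptotically affine'' branch (iv)(a) (in which case one takes $N_{L-1}=2$). The required properties are essentially immediate from the explicit formulas listed in the table, so the proof is a routine but somewhat lengthy check; I would organise it as a short case analysis rather than a single argument.

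First I would dispatch the bounded activation functions: softsign $x/(1+|x|)$, inverse square root unit $x/\sqrt{1+ax^2}$, sigmoid $1/(1+e^{-x})$, $\tanh$, and $\arctan$. Each is continuous and strictly increasing (hence satisfies (i)); each is differentiable everywhere with a nonvanishing derivative at (say) $x_0=0$ (so (ii) and (iii) hold, the latter trivially since the function is globally differentiable); and each is bounded, so (iv)(b) holds and one may take $N_{L-1}=1$. This is the content of the second sentence of the corollary for the bounded cases.

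Next I would handle the unbounded activation functions, where one must use branch (iv)(a) and therefore $N_{L-1}=2$: ReLU $\max\{0,x\}$, parametric ReLU $\max\{ax,x\}$ with $a\ge 0$, $a\neq 1$, the exponential linear unit, the inverse square root linear unit $x\chi_{x\ge0}+\frac{x}{\sqrt{1+ax^2}}\chi_{x<0}$, and softplus $\ln(1+e^x)$. All are continuous and increasing, giving (i). For (iii) one notes each is differentiable away from $0$ (indeed $C^\infty(\R\setminus\{0\})$, as already remarked in the text), so any $r>0$ works; for (ii) one picks $x_0=1$ for the ReLU-type functions ($\varrho'(1)=1\neq0$) and $x_0=0$ for softplus and ELU ($\varrho'(0)=\tfrac12\neq0$, resp.\ $1\neq0$). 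Finally (iv)(a): for ReLU, $\varrho'(x)\to1$ as $x\to\infty$ and $\varrho'(x)\to0$ as $x\to-\infty$, so $\lambda=1\neq0=\lambda'$; for parametric ReLU, $\lambda=1$ and $\lambda'=a\neq1$; for softplus, $\varrho'(x)=\sigma(x)\to1$ and $\to0$; for ELU, $\varrho'(x)\to1$ and $\varrho'(x)=e^x\to0$; for the inverse square root linear unit, $\varrho'(x)\to1$ as $x\to\infty$ and $\varrho'(x)=(1+ax^2)^{-3/2}\to0$ as $x\to-\infty$. In every case $\lambda\neq\lambda'$, so (iv)(a) applies with $N_{L-1}=2$.

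There is no real obstacle here: the only mild subtlety is that the (parametric) ReLU fails to be differentiable at the origin, but hypothesis (iii) only demands differentiability outside a bounded interval and hypothesis (ii) only demands differentiability at a single point, so this causes no difficulty; and in the special corner case $a=0$ the parametric ReLU is just the ReLU, already covered. With the hypotheses verified, Theorem~\ref{thm:nonclosed} applies directly and yields the stated non-closedness in $L^p(\mu)$ for every $p\in(0,\infty)$ and every finite Borel measure $\mu$ on $[-B,B]^d$ with uncountable support, with the asserted bound on $N_{L-1}$ in each case.
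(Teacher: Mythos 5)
Your proposal is correct and follows essentially the same case-by-case verification as the paper's own proof: the bounded activations (softsign, inverse square root unit, sigmoid, $\tanh$, $\arctan$) go through condition (iv)(b) with $N_{L-1}=1$, and the unbounded ones (ReLU, parametric ReLU, ELU, inverse square root linear unit, softplus) through (iv)(a) with $N_{L-1}=2$, using the same limits of $\varrho'$ at $\pm\infty$. The only inconsequential slip is for the parametric ReLU with $a>1$, where the limits are $\lambda=\max\{1,a\}$ at $+\infty$ and $\lambda'=\min\{1,a\}$ at $-\infty$ rather than $\lambda=1$, $\lambda'=a$; since $a\neq 1$ one still has $\lambda\neq\lambda'$, so the conclusion is unaffected.
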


\begin{proof}
   For a proof of this statement, we refer to Appendix~\ref{app:StuffNeverClosedInLp}.
\end{proof}

\subsection{Non-closedness in \texorpdfstring{$C([-B,B]^d)$}{C([-B,B]ᵈ)}
            for many widely used activation functions}\label{subsec:NonClosedC}

We have seen in Theorem~\ref{thm:nonclosed} that under
reasonably mild assumptions on the activation function $\varrho$---which are satisfied
for all commonly used activation functions---the set $\cRN_\varrho^{[-B,B]^d}(S)$
is not closed in any $L^p$-space where $p \in (0, \infty)$.
However, the argument of the proof of Theorem~\ref{thm:nonclosed} breaks down
if one considers closedness with respect to the $\| \cdot \|_{\sup}$-norm.
Therefore, we will analyze this setting more closely in this section.
More precisely, in Theorem~\ref{thm:GeneralNonClosednessC},
we present several criteria regarding the activation function $\varrho$
which imply that the set 
 $\cRN_\varrho^{[-B,B]^d}(S)$ is \emph{not} closed in $C ([-B,B]^d)$.
We remark that in all these results, $\varrho$ will be assumed to be at least $C^1$.
Developing similar criteria for non-differentiable functions is an interesting topic
for future research.

Before we formulate Theorem~\ref{thm:GeneralNonClosednessC}, we need the following notion:
We say that a function $f: \R \to \R$ is \textbf{approximately homogeneous of order $(r,q) \in \N_0^2$}
if there exists $s > 0$ such that $|f(x) - x^r| \leq s$ for all $x \geq 0$ and
$|f(x) - x^q| \leq s$ for all $x \leq 0$. Now the following theorem holds:

\begin{theorem}\label{thm:GeneralNonClosednessC}
Let $S = (d, N_1, \dots, N_{L-1}, 1)$ be a neural network architecture
with $L\in \N_{\geq 2}$,  let $B > 0$, 
and let $\varrho : \R \to \R$.
Assume that \emph{at least one} of the following three conditions is satisfied:
\begin{itemize}
    \item[(i)] $N_{L-1}\geq 2$ and $\varrho \in C^1(\R) \setminus C^\infty(\R).$

    \item[(ii)] $N_{L-1}\geq 2$ and $\varrho$ is bounded, analytic, and not constant.

    \item[(iii)] $\varrho$ is approximately homogeneous of order $(r,q)$
                 for certain $r,q \in \N_0$ with $r \neq q$,
                 and $\varrho \in C^{\max\{r,q\}}(\R)$.
\end{itemize}
Then the set $\cRN_\varrho^{[-B,B]^d}(S)$ is \emph{not} closed in the space $C([-B,B]^d)$.
\end{theorem}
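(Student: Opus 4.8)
The plan is to prove non-closedness in each of the three cases by the same scheme: for the given architecture $S$, exhibit a sequence $\big(\Realization_\varrho^{[-B,B]^d}(\Phi_n)\big)_{n\in\N}$ of realizations of networks with architecture $S$ that converges uniformly on $[-B,B]^d$ to a \emph{continuous} function $f$, and then show that $f\notin\cRN_\varrho^{[-B,B]^d}(S)$; this makes the uniform closure $\overline{\cRN_\varrho^{[-B,B]^d}(S)}$ in $C([-B,B]^d)$ strictly larger than the set itself. Two observations are used throughout. First, if $\varrho\in C^k(\R)$, then every element of $\cRN_\varrho^{[-B,B]^d}(S)$ is the restriction of a $C^k$-function on $\R^d$, since a realization is a finite composition of affine maps and the componentwise application of $\varrho$; likewise, if $\varrho$ is real-analytic then every realization is real-analytic on $\R^d$, and if $\varrho$ is bounded then --- because $L\geq 2$, so the last hidden layer takes values in a fixed bounded box --- every realization is bounded on all of $\R^d$. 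Second, in all three cases $\varrho\in C^1(\R)$ with $\varrho'(x_0)\neq 0$ for some $x_0$ (in case (iii) since $\max\{r,q\}\geq r\geq 1$ and $\varrho$ is non-constant; in case (ii) since $\varrho$ is analytic and non-constant). The standard ``approximate identity'' fact that $u\mapsto\big(\varrho(x_0+tu)-\varrho(x_0)\big)\big/\big(t\,\varrho'(x_0)\big)$ tends to $u$ uniformly on compacta as $t\to 0$ then lets us route the coordinate $x^{(1)}$ through the first $L-2$ hidden layers (each realizing an approximate identity on that coordinate, surplus neurons frozen to constants) to the last hidden layer, so that the one-dimensional gadget constructed below may sit in the last hidden layer and the two-layer construction promotes to any $L\geq 2$. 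It therefore suffices, in each case, to produce a suitable $f$ depending only on $x^{(1)}$.

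Cases (i) and (iii) are handled by the same idea: place into the closure a limit $f$ of \emph{strictly lower (finite)} smoothness than $\varrho$, and invoke the first observation. In case (iii), assume $r>q$ (the case $q>r$ is symmetric, exchanging the roles of the two half-lines) and set $F_\lambda(x)\coloneqq\lambda^{-r}\,\varrho(\lambda\,x^{(1)})$ for $\lambda>0$. From $|\varrho(y)-y^r|\leq s$ for $y\geq 0$ one gets $|F_\lambda(x)-(x^{(1)})^r|\leq\lambda^{-r}s$ when $x^{(1)}\geq 0$, and from $|\varrho(y)-y^q|\leq s$ for $y\leq 0$ one gets $|F_\lambda(x)|\leq\lambda^{-r}s+\lambda^{q-r}B^q$ when $x^{(1)}\leq 0$; hence $F_\lambda\to f$ uniformly on $[-B,B]^d$ as $\lambda\to\infty$, where $f(x)=\big(\max\{0,x^{(1)}\}\big)^r$. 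Each $F_\lambda$ is, after routing through later layers, a realization with architecture $S$, so $f\in\overline{\cRN_\varrho^{[-B,B]^d}(S)}$; but $t\mapsto(\max\{0,t\})^r$ lies in $C^{r-1}(\R)\setminus C^r(\R)$, whereas every realization restricts a $C^{\max\{r,q\}}$-function and $\max\{r,q\}=r$, so $f\notin\cRN_\varrho^{[-B,B]^d}(S)$. In case (i), let $m\geq 1$ be maximal with $\varrho\in C^m(\R)$ (such $m$ exists since $\varrho\in C^1(\R)\setminus C^\infty(\R)$), so that $\varrho'\in C^{m-1}(\R)\setminus C^m(\R)$, and choose $c\in\R$ with $t\mapsto\varrho'(t+c)$ not of class $C^m$ on $(-B,B)$. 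Using the two neurons available in the last hidden layer ($N_{L-1}\geq 2$), form the divided difference $g_h(x)\coloneqq h^{-1}\big(\varrho(x^{(1)}+c+h)-\varrho(x^{(1)}+c)\big)$, a genuine realization with architecture $S$ (surplus neurons frozen to zero); since $\varrho\in C^1$, $g_h\to g$ uniformly on $[-B,B]^d$ as $h\to 0$, where $g(x)=\varrho'(x^{(1)}+c)$. Thus $g\in\overline{\cRN_\varrho^{[-B,B]^d}(S)}$, while $g$ restricts $\varrho'(\,\cdot+c)$ in its first variable, which is not $C^m$ on $(-B,B)$; since every realization restricts a $C^m$-function, $g\notin\cRN_\varrho^{[-B,B]^d}(S)$, and non-closedness follows.

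In case (ii), $\varrho$ is $C^\infty$ --- indeed analytic --- so the smoothness obstruction above is unavailable, and instead we combine boundedness with analyticity. Since $\varrho$ is analytic and non-constant, $\varrho'(x_0)\neq 0$ for some $x_0$, so by the approximate-identity construction the linear function $f(x)=x^{(1)}$ lies in $\overline{\cRN_\varrho^{[-B,B]^d}(S)}$. Suppose, for contradiction, that $f\in\cRN_\varrho^{[-B,B]^d}(S)$. Then $f$ agrees on $[-B,B]^d$ with a function that is real-analytic on $\R^d$ and bounded on $\R^d$; since $[-B,B]^d$ has non-empty interior and $\R^d$ is connected, the identity theorem for real-analytic functions forces that function to equal $x\mapsto x^{(1)}$ everywhere, contradicting boundedness. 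Hence $f\notin\cRN_\varrho^{[-B,B]^d}(S)$, and the set is not closed.

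The main difficulty is, in every case, the non-membership step. For cases (i) and (iii) it reduces to the elementary fact that realizations inherit the $C^k$-smoothness of $\varrho$, so the only real care lies in pinning down the precise finite smoothness of the limit: that $(\max\{0,t\})^r\in C^{r-1}\setminus C^r$, and that $\varrho'$ genuinely loses exactly one order of differentiability relative to $\varrho$ --- including the borderline case in which a derivative of $\varrho$ exists everywhere but is discontinuous. For case (ii) the non-membership instead rests on the rigidity of real-analytic functions, so the delicate point there is the analytic continuation of a realization off the box $[-B,B]^d$ to a function that is genuinely bounded on all of $\R^d$. The remaining ingredients --- uniform convergence of the one-dimensional gadgets, and their routing through the $L-2$ intermediate layers via the approximate-identity lemma --- are routine given the auxiliary results already available in the paper.
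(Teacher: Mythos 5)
Your proposal is correct and follows essentially the same route as the paper in all three cases: for (i) a difference quotient converging uniformly to (a translate/dilate of) $\varrho'$, which has strictly lower finite smoothness than any realization; for (ii) an unbounded analytic limit that cannot equal a realization because realizations of a bounded activation are bounded and analytic on all of $\R^d$ (identity theorem); for (iii) the rescaled limit $x \mapsto \max\{0,x_1\}^{\max\{r,q\}} \notin C^{\max\{r,q\}}$. The only notable variation is in case (ii), where you target $x \mapsto x_1$ directly via the approximate-identity construction rather than the paper's limit $\varrho(x_1) + \varrho'(x^\ast)\,x_1$; this is a mild simplification (it does not even use $N_{L-1} \geq 2$) but rests on exactly the same non-membership argument.
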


\begin{proof}
  For the proof of the statement, we refer to Appendix~\ref{app:GeneralNonClosednessC}.
  In particular, the proof of the statement under Condition~(i) can be found
  in Appendix~\ref{app:SmoothnessNoClosedness}.
  Its main idea consists of the uniform approximation of $\varrho'$
  (which cannot be represented by neural networks with activation function $\varrho$,
  due to its lack of sufficient regularity) by neural networks.
  For the proof of the statement under Condition~(ii), we refer to Appendix~\ref{app:closedAnalytic}.
  The main proof idea consists in the uniform approximation of an unbounded
  analytic function which cannot be represented by a neural network
  with activation function $\varrho$, since $\varrho$ itself is bounded.
  Finally, the proof of the statement under Condition~(iii) can be found
  in Appendix~\ref{app:closedHomogeneous}.
  Its main idea consists in the approximation of the function
  $x\mapsto (x)^{\max\{r,q\}}_+ \not \in C^{\max\{r,q\}}.$
\end{proof}

\begin{corollary}\label{cor:GeneralNonClosednessC}
  Theorem~\ref{thm:GeneralNonClosednessC} applies to all activation functions
  listed in Table~\ref{tab:ActFunctions} \emph{except} for the ReLU and the parametric ReLU.
  To be more precise,
  \begin{itemize}
    \item[(1)] Condition~(i) is fulfilled by the function $x\mapsto \max\{0, x \}^k$ for $k \geq 2$,
               and by the exponential linear unit, the softsign function,
               and the inverse square root linear unit.

    \item[(2)] Condition (ii) is fulfilled by the inverse square root unit, the sigmoid function,
               the $\tanh$ function, and the $\arctan$ function.

    \item[(3)] Condition (iii) (with $r = 1$ and $q = 0$) is fulfilled by the softplus function.
  \end{itemize}
\end{corollary}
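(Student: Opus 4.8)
The plan is to verify Corollary~\ref{cor:GeneralNonClosednessC} by checking, for each activation function in Table~\ref{tab:ActFunctions} other than the (parametric) ReLU, that it falls under one of the three conditions of Theorem~\ref{thm:GeneralNonClosednessC}, and conversely noting why the argument does not apply to the (parametric) ReLU. Since the theorem has already been proven, the work here is purely verification: matching each listed function to the hypothesis it satisfies and checking the relevant smoothness/boundedness/homogeneity properties, all of which can be read off from the ``Smoothness/Boundedness'' column of Table~\ref{tab:ActFunctions} together with elementary calculus.

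First I would handle Condition~(i), which requires $\varrho \in C^1(\R)\setminus C^\infty(\R)$ (and $N_{L-1}\geq 2$, which we are free to assume since the corollary concerns architectures in general). For $x\mapsto\max\{0,x\}^k$ with $k\geq 2$: this is $C^{k-1}(\R)$ but its $k$-th derivative has a jump at $0$, so it is $C^1$ but not $C^\infty$. For the exponential linear unit $x\cdot\chi_{x\geq0}+(\exp(x)-1)\cdot\chi_{x<0}$: at $0$ both pieces and their first derivatives agree (value $0$, derivative $1$), but the second derivatives are $0$ from the right and $1$ from the left, so it is $C^1\setminus C^2\subset C^1\setminus C^\infty$. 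For the softsign $x/(1+|x|)$: a direct computation of one-sided derivatives shows it is $C^1$ (in fact the table lists $C^1$) but the second derivative is discontinuous at $0$. For the inverse square root linear unit $x\cdot\chi_{x\geq0}+\frac{x}{\sqrt{1+ax^2}}\cdot\chi_{x<0}$: the table records it as $C^2\setminus C^\infty$, hence certainly $C^1\setminus C^\infty$. In each case one only needs to confirm the precise order at which smoothness fails, which is a one-line one-sided-derivative check.

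Next, Condition~(ii) requires $\varrho$ bounded, analytic, and non-constant (again with $N_{L-1}\geq2$). The table directly lists the inverse square root unit $x/\sqrt{1+ax^2}$, the sigmoid $1/(1+e^{-x})$, the $\tanh$, and the $\arctan$ as analytic and bounded; none is constant. So these four are immediate. Finally, Condition~(iii) requires $\varrho$ to be approximately homogeneous of order $(r,q)$ with $r\neq q$ and $\varrho\in C^{\max\{r,q\}}(\R)$. For the softplus $\varrho(x)=\ln(1+e^x)$: it is analytic, hence in $C^1(\R)$; and $|\ln(1+e^x)-x| = \ln(1+e^{-x})\leq \ln 2$ for $x\geq0$ while $|\ln(1+e^x)-0|=\ln(1+e^x)\leq \ln 2$ for $x\leq 0$, so softplus is approximately homogeneous of order $(1,0)$ with $s=\ln 2$. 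Thus Condition~(iii) applies with $r=1$, $q=0$, $\max\{r,q\}=1$.

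The one thing that genuinely requires comment rather than mechanical checking is \emph{why the (parametric) ReLU is excluded}. The ReLU and parametric ReLU are in $C(\R)$ but not $C^1(\R)$, so Condition~(i) fails; they are unbounded, so Condition~(ii) fails; and while $x\mapsto\max\{0,x\}$ is approximately homogeneous of order $(1,0)$ (indeed $|\varrho(x)-x|=0$ for $x\geq0$ and $|\varrho(x)-0|=0$ for $x\leq0$, so even exactly homogeneous), it is only in $C^0(\R)$, not $C^{\max\{1,0\}}(\R)=C^1(\R)$ — similarly the parametric ReLU with parameter $a$ is approximately homogeneous of order $(1,1)$, which is disqualified by the requirement $r\neq q$ — so Condition~(iii) fails as well. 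Hence Theorem~\ref{thm:GeneralNonClosednessC} says nothing about these two functions, consistent with the discussion in Section~\ref{sec:Closed} that their closedness remains open in general and is only established for $L=2$ in Theorem~\ref{thm:closedReLU}. I do not anticipate a serious obstacle anywhere; the only mild care needed is in the softsign and ELU smoothness computations, where one must correctly identify that $C^1$ holds but $C^2$ fails, and this is a routine matching of one-sided second derivatives at the origin.
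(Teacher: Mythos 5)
Your verification matches the paper's proof essentially step for step: one-sided derivative checks at the origin for the Condition~(i) functions (with the ISRLU failing at order $3$, as the paper computes explicitly rather than reading off the table), boundedness plus analyticity for Condition~(ii), and the estimate $|\ln(1+e^x)-x|=\ln(1+e^{-x})\le\ln 2$ for the softplus. Two cosmetic slips worth noting: for $x\le 0$ the softplus comparison should be with $x^0=1$ rather than with $0$ (the paper bounds $|\ln(1+e^x)-x^0|\le 1+\ln 2$; approximate $(1,0)$-homogeneity holds either way since both quantities are bounded), and your aside that the parametric ReLU is approximately homogeneous of order $(1,1)$ is false for $a\ne 1$, since $|\max\{ax,x\}-x|=(1-a)|x|$ is unbounded on $(-\infty,0]$ --- though the conclusion that Condition~(iii) fails for it is still correct and is in any case not part of the corollary being proved.
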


\begin{proof}
  For the proof of this statement, we refer to Appendix~\ref{app:ProofCorollaryNonClosed}.
  In particular, for the proof of (1), we refer to Appendix~\ref{app:LimitedSmoothnessNonClosedness},
  the proof of (2) is clear and for the proof of (3), we refer to Appendix~\ref{app:Softplus}.
\end{proof}

\subsection{The phenomenon of exploding weights}
\label{sec:ConsOfNonClosed}

We have just seen that the realization set $\cRN_\varrho^{[-B,B]^d}(S)$ is not closed
in $L^p(\mu)$ for any $p \in (0,\infty)$ and every practically relevant activation function.
Furthermore, for a variety of activation functions, we have seen that $\cRN_\varrho^{[-B,B]^d}(S)$
is not closed in $C([-B,B]^d)$.
The situation is substantially different if the weights are taken from a compact subset:

\begin{proposition}\label{prop:bdWeights}
  Let $S = (d, N_1, \dots, N_L)$ be a neural network architecture,
  let $\Omega \subset \R^d$ be compact, let furthermore $p \in (0,\infty)$,
  and let $\varrho: \R \to \R$ be continuous.
  For $C > 0$, let
  \[
    \Theta_C := \big\{ \Phi\in \cN(S):\|\Phi\|_{\mathrm{total}}\leq C \big\}.
  \]
  Then the set $\mathrm{R}_{\varrho}^{\Omega}(\Theta_C)$ is compact in $C(\Omega)$
  as well as in $L^p(\mu)$, for any finite Borel measure $\mu$ on $\Omega$
  and any $p \in (0, \infty)$.
\end{proposition}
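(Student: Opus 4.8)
The plan is to deduce compactness of $\Realization_\varrho^\Omega(\Theta_C)$ from the fact that a continuous image of a compact set is compact, so the argument splits into three ingredients: (a) $\Theta_C$ is compact; (b) the realization map $\Realization_\varrho^\Omega$ is continuous from $\cN(S)$ into $C(\Omega)$ equipped with the sup-norm; and (c) $C(\Omega)$ embeds continuously into $L^p(\mu)$. Ingredient (a) is immediate: $\Theta_C$ is a closed and bounded subset of the finite-dimensional normed space $\big(\cN(S), \|\cdot\|_{\mathrm{total}}\big)$, hence compact by Heine--Borel.

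For ingredient (b), which is the heart of the matter, I would introduce for each layer index $\ell \in \{0,1,\dots,L\}$ the map $T_\ell : \cN(S) \times \Omega \to \R^{N_\ell}$ sending $(\Phi, x)$ to the vector $x_\ell$ produced by the forward recursion in the definition of $\Realization_\varrho^\Omega$. Since $T_0(\Phi,x) = x$ and, for $\ell \ge 1$, $T_\ell$ is obtained from $T_{\ell-1}$ by applying an affine map whose coefficients are entries of $\Phi$ and then (for $\ell < L$) the componentwise continuous function $\varrho$, an induction on $\ell$ shows that each $T_\ell$ is jointly continuous on $\cN(S) \times \Omega$. To upgrade this to continuity of $\Phi \mapsto \Realization_\varrho^\Omega(\Phi) = T_L(\Phi,\cdot) \in C(\Omega)$, I would fix $\Phi_0$ and $\eps > 0$ and observe that $K := \{\Phi \in \cN(S) : \|\Phi - \Phi_0\|_{\mathrm{total}} \le 1\} \times \Omega$ is compact, so $T_L$ is uniformly continuous on $K$; hence there is $\delta \in (0,1]$ with $|T_L(\Phi,x) - T_L(\Phi_0,x)| < \eps$ for all $x \in \Omega$ whenever $\|\Phi - \Phi_0\|_{\mathrm{total}} < \delta$, which is exactly $\|\Realization_\varrho^\Omega(\Phi) - \Realization_\varrho^\Omega(\Phi_0)\|_{\sup} < \eps$. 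Then $\Realization_\varrho^\Omega(\Theta_C)$, being the continuous image of the compact set $\Theta_C$, is compact in $C(\Omega)$.

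For ingredient (c), given a finite Borel measure $\mu$ on the compact set $\Omega$ and $p \in (0,\infty)$, the identity map $\iota : C(\Omega) \to L^p(\mu)$ is continuous: for $p \ge 1$ one has $\|f\|_{L^p(\mu)} \le \mu(\Omega)^{1/p}\,\|f\|_{\sup}$, and for $0 < p < 1$ one uses that the topology of $L^p(\mu)$ is induced by the metric $(f,g) \mapsto \int_\Omega |f-g|^p\, d\mu \le \mu(\Omega)\,\|f-g\|_{\sup}^p$. In either case $\iota$ maps the compact set $\Realization_\varrho^\Omega(\Theta_C)$ onto a compact subset of $L^p(\mu)$, which is precisely $\Realization_\varrho^\Omega(\Theta_C)$ regarded inside $L^p(\mu)$.

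I do not expect a serious obstacle here; the only point that needs genuine care is ingredient (b), and within it the step that turns the joint continuity (equivalently, local uniform continuity) of $T_L$ into continuity of the induced map into $\big(C(\Omega), \|\cdot\|_{\sup}\big)$ — i.e. passing to the compact neighborhood $K$ of $\{\Phi_0\} \times \Omega$ on which $T_L$ is uniformly continuous, which is exactly what converts pointwise-in-$x$ estimates into a sup-norm estimate. Everything else is standard.
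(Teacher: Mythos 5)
Your proof is correct and follows essentially the same route as the paper: compactness of $\Theta_C$ via Heine--Borel, continuity of $\Realization_\varrho^\Omega$ into $(C(\Omega),\|\cdot\|_{\sup})$, and the continuous embedding $C(\Omega)\hookrightarrow L^p(\mu)$ for finite $\mu$ on compact $\Omega$ (including the $0<p<1$ case via the metric $\int|f-g|^p\,d\mu$). The only difference is that you prove the continuity of the realization map inline (joint continuity of the layer maps plus uniform continuity on a compact neighborhood of $\{\Phi_0\}\times\Omega$), whereas the paper outsources this step to Proposition~\ref{prop:RealizationContinuity}, which it proves by a locally-uniform-convergence composition argument; both are valid.
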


\begin{proof}
  The proof of this statement is based on the continuity of the realization map
  and can be found in Appendix~\ref{app:bdWeights}.
\end{proof}

Proposition~\ref{prop:bdWeights} helps to explain the phenomenon of exploding network weights
that is sometimes observed during the training of neural networks.
Indeed, let us assume that $\mathcal{R} := \cRN_\varrho^{[-B,B]^d}(S)$ is \emph{not} closed in
$\mathcal{Y}$, where $\mathcal{Y} \coloneqq L^p (\mu)$ for a Borel measure $\mu$ on $[-B,B]^d$,
or $\mathcal{Y} \coloneqq C([-B,B]^d)$; as seen in Sections~\ref{subsec:NonClosedLp}
and \ref{subsec:NonClosedC}, this is true under mild assumptions on $\varrho$.
Then, it follows that there exists a function $f \in \mathcal{Y}$ which \emph{does not have
a best approximation in $\mathcal{R}$}, meaning that there does not exist any $g \in \mathcal{R}$
satisfying
\[
  \| f - g \|_{\mathcal{Y}}
  = \inf_{h \in \mathcal{R}}
      \| f - h \|_{\mathcal{Y}}
  \eqqcolon M \, ;
\]
in fact, one can take any $f \in \overline{\mathcal{R}} \setminus \mathcal{R}$.
Next, recall from Proposition~\ref{prop:bdWeights} that the subset of $\mathcal{R}$
that contains only realizations of networks with uniformly bounded weights is compact.

Hence, we conclude the following:
For every sequence
\(
  (f_n)_{n \in \N}
 = \big( \Realization_\varrho^{[-B,B]^d} (\Phi_n) \big)_{n \in \N}
 \subset \mathcal{R}
\)
satisfying ${\| f - f_n \|_{\mathcal{Y}} \to M}$, we must have $\|\Phi_n\|_{\mathrm{total}} \to \infty$,
since otherwise, by compactness, $(f_n)_{n \in \N}$ would have a subsequence that converges
to some $h \in \mathrm{R}_{\varrho}^{\Omega}(\Theta_C) \subset \mathcal{R}$.
In other words, \emph{the weights of the networks $\Phi_n$ necessarily explode}.

The argument above only deals with the approximation problem in $C([-B,B]^d)$ or in $L^p(\mu)$
for $p \in (0, \infty)$.
In practice, one is often not concerned with these norms, but instead wants to minimize
an \emph{empirical loss function} over $\mathcal{R}$.
For the empirical square loss, this loss function takes the form
\[
  E_N(f) \coloneqq \frac{1}{N} \sum_{i = 1}^N |f(x_i) - y_i|^2,
\]
for $\big( (x_i, y_i) \big)_{i=1}^N \subset \Omega \times \R$ drawn i.i.d.~according to
a probability distribution $\prob$ on $\Omega \times \R$.
By the strong law of large numbers, for each fixed measurable function $f$,
the empirical loss function converges almost surely to the \emph{expected loss}
\begin{equation}\label{eq:expectedRisk}
  \mathcal{E}_\prob (f)
  \coloneqq \int_{\Omega\times \R}
              \left|f(x) - y\right|^2
            d \, \prob (x,y).
\end{equation}
This expected loss is related to an $L^2$ minimization problem.
Indeed, \cite[Proposition~1]{Cucker02onthe} shows that there is a measurable function
$f_\prob : \Omega \to \R$---called the \emph{regression function}---such that the expected risk
from Equation~\eqref{eq:expectedRisk} satisfies
\begin{equation}\label{eq:expectedRiskDecomposition}
  \mathcal{E}_\prob (f)
  = \mathcal{E}_\prob (f_\prob)
    + \int_{\Omega}
        \left| f(x) - f_\prob(x) \right|^2
      d \prob_\Omega(x)
  \quad \text{for each measurable} \quad f : \Omega \to \R .
\end{equation}
Here, $\prob_\Omega$ is the marginal probability distribution of $\prob$ on $\Omega$,
and $\mathcal{E}_\prob (f_\prob)$ is called the \emph{Bayes risk};
it is the minimal expected loss achievable by any (measurable) function.

In this context of a statistical learning problem,
we have the following result regarding exploding weights:

\begin{proposition}\label{prop:ExplodingWeights}
  Let $d \in \N$ and $B, K > 0$.
  Let $\Omega \coloneqq [-B,B]^d$, and let $\prob$ be a Borel probability measure
  on $\Omega \times [-K,K]$.
  Further, let $S = (d, N_1, \dots, N_{L-1}, 1)$ be a neural network architecture
  and $\varrho : \R \to \R$ be locally Lipschitz continuous.
  Assume that the regression function $f_\prob$ is such that there does not exist
  a best approximation of $f_\prob$ in $\cRN_\varrho^{\Omega}(S)$ with respect to
  $\| \cdot \|_{L^2(\prob_\Omega)}$.
  Let $\big( (x_i, y_i) \big)_{i \in \N} \overset{\mathrm{i.i.d.}}{\sim} \prob$;
  all probabilities below will be with respect to this family of random variables.

  If $(\Phi_N)_{N \in \N} \subset \cN(S)$ is a random sequence of neural networks
  (depending on $\big( (x_i, y_i) \big)_{i \in \N}$) that satisfies
  \begin{equation}\label{eq:ConvergenceOfMinimizers}
    \mathbb{P}
      \left(
        E_N \left(\Realization_\varrho^{\Omega}(\Phi_N)\right)
        - \inf_{f \in \cRN_\varrho^{\Omega}(S)}
            E_N(f)
        > \eps
      \right) \to 0,
    \quad \text{as } N \to \infty
    \quad \text{for all } \eps > 0 ,
  \end{equation}
  then $\| \Phi_N \|_{\mathrm{total}} \to \infty$ in probability as $N \to \infty$.
\end{proposition}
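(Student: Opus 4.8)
The plan is to argue by contradiction, reducing the empirical statement to the deterministic best-approximation obstruction via a uniform law of large numbers combined with the compactness statement of Proposition~\ref{prop:bdWeights}. Suppose $\|\Phi_N\|_{\mathrm{total}}$ does \emph{not} tend to $\infty$ in probability. Then there exist $C > 0$, $\delta > 0$, and a subsequence (again indexed by $N$ for brevity) such that $\mathbb{P}(\|\Phi_N\|_{\mathrm{total}} \leq C) \geq \delta$ for all $N$. On the event $\{\|\Phi_N\|_{\mathrm{total}} \leq C\}$ we have $\Realization_\varrho^\Omega(\Phi_N) \in \mathrm{R}_\varrho^\Omega(\Theta_C)$, which by Proposition~\ref{prop:bdWeights} is a compact subset of $C(\Omega)$. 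Since $\varrho$ is locally Lipschitz and the weights are bounded by $C$, all these realizations are uniformly bounded (say by some $R = R(C,S,\varrho)$) on the compact set $\Omega$; hence the relevant functions take values in the compact box, and the loss integrand $|f(x) - y|^2$ stays uniformly bounded on $\Omega \times [-K,K]$ over all $f \in \mathrm{R}_\varrho^\Omega(\Theta_C)$.

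The key technical ingredient is a \emph{uniform} strong law of large numbers over the compact class $\mathcal{G} := \mathrm{R}_\varrho^\Omega(\Theta_C)$: I would show that
\[
  \sup_{f \in \mathcal{G}}
    \bigl| E_N(f) - \mathcal{E}_\prob(f) \bigr|
  \longrightarrow 0
  \qquad \text{almost surely as } N \to \infty.
\]
This follows from a standard argument: $\mathcal{G}$ is compact in $C(\Omega)$ (hence totally bounded), the map $f \mapsto \mathcal{E}_\prob(f)$ and each $f \mapsto E_N(f)$ are Lipschitz on $\mathcal{G}$ with a common Lipschitz constant (uniform in $N$, controlled by $2(R+K)$ and the bound on $\Omega$), so covering $\mathcal{G}$ by finitely many $\eta$-balls and applying the ordinary SLLN at the centers gives the uniform convergence. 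Next, using the decomposition in Equation~\eqref{eq:expectedRiskDecomposition}, $\mathcal{E}_\prob(f) = \mathcal{E}_\prob(f_\prob) + \|f - f_\prob\|_{L^2(\prob_\Omega)}^2$, so minimizing $\mathcal{E}_\prob$ over $\mathcal{G}$ is the same as minimizing $\|f - f_\prob\|_{L^2(\prob_\Omega)}$ over $\mathcal{G}$; and since $f_\prob$ has \emph{no} best approximation in $\cRN_\varrho^\Omega(S) \supset \mathcal{G}$, the infimum $\inf_{f \in \mathcal{G}} \mathcal{E}_\prob(f)$ is \emph{strictly larger} than $\tau := \inf_{f \in \cRN_\varrho^\Omega(S)} \mathcal{E}_\prob(f)$ — call the gap $2\eps_0 > 0$. (Here $\mathcal{G}$ is compact, so the infimum over $\mathcal{G}$ is attained; strictness is exactly the statement that no best approximation exists among weight-bounded networks, which are dense-in-realization among those producible with arbitrarily large weights only insofar as they approach $\tau$; attainment on $\mathcal{G}$ plus $\tau$ not attained gives strict inequality.)

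Now combine: on the event $\{\|\Phi_N\|_{\mathrm{total}} \leq C\}$, intersected with the high-probability event $\{\sup_{f \in \mathcal{G}} |E_N(f) - \mathcal{E}_\prob(f)| \leq \eps_0/2\}$ (which has probability $\to 1$), we get
\[
  E_N\bigl(\Realization_\varrho^\Omega(\Phi_N)\bigr)
  \geq \mathcal{E}_\prob\bigl(\Realization_\varrho^\Omega(\Phi_N)\bigr) - \tfrac{\eps_0}{2}
  \geq \tau + 2\eps_0 - \tfrac{\eps_0}{2}
  = \tau + \tfrac{3\eps_0}{2}.
\]
On the other hand $\inf_{f \in \cRN_\varrho^\Omega(S)} E_N(f) \leq E_N(g)$ for any fixed $g$ with $\mathcal{E}_\prob(g) < \tau + \eps_0/4$, and by the ordinary SLLN $E_N(g) \to \mathcal{E}_\prob(g) < \tau + \eps_0/4$ almost surely; so with probability $\to 1$, $\inf_{f \in \cRN_\varrho^\Omega(S)} E_N(f) \leq \tau + \eps_0/2$. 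Hence on an event of probability at least $\delta - o(1)$ we have $E_N(\Realization_\varrho^\Omega(\Phi_N)) - \inf_{f} E_N(f) \geq \eps_0$, contradicting the hypothesis~\eqref{eq:ConvergenceOfMinimizers} with $\eps = \eps_0$. Therefore $\|\Phi_N\|_{\mathrm{total}} \to \infty$ in probability.

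\textbf{Main obstacle.} The delicate point is the strict gap $\inf_{f \in \mathcal{G}} \mathcal{E}_\prob(f) > \tau$: one must carefully use compactness of $\mathcal{G}$ to attain the infimum over $\mathcal{G}$, and then invoke that $f_\prob$ has no best approximation in the full set $\cRN_\varrho^\Omega(S)$ to conclude the attained value exceeds $\tau$. A secondary subtlety is making the uniform SLLN rigorous — in particular, verifying the common Lipschitz bound on $\mathcal{G}$ (which rests on the uniform boundedness of realizations of weight-bounded networks, i.e.\ an a priori estimate $\|\Realization_\varrho^\Omega(\Phi)\|_{\sup} \leq R(C)$ for $\|\Phi\|_{\mathrm{total}} \leq C$, using local Lipschitz continuity of $\varrho$ and compactness of $\Omega$) and handling the $y$-variable via the boundedness $|y| \leq K$.
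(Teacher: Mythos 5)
Your proposal is correct and follows essentially the same route as the paper's proof: compactness of the weight-bounded realization set (Proposition~\ref{prop:bdWeights}), a uniform law of large numbers over that compact class (which the paper simply cites as \cite[Theorem~B]{Cucker02onthe} rather than re-deriving via a covering argument), the strict gap between the attained infimum over the compact subclass and the unattained global infimum, and comparison with a fixed near-optimal network handled by the ordinary law of large numbers. The only differences are presentational (contrapositive/subsequence phrasing versus a direct event inclusion, and applying the uniform law only over $\Theta_C$ instead of over a larger bounded class containing the comparison network), so no substantive gap remains.
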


\begin{remark*}
  A compact way of stating Proposition~\ref{prop:ExplodingWeights} is that,
  if $f_\prob$ has no best approximation in $\cRN_\varrho^{\Omega}(S)$
  with respect to $\| \cdot \|_{L^2(\prob_\Omega)}$,
  then the weights of the minimizers (or approximate minimizers) of the empirical square loss
  explode for increasing numbers of samples.

  Since $\prob$ is unknown in applications, it is indeed possible that $f_\prob$
  has no best approximation in the set of neural networks.
  As just one example, this is true if $\prob_\Omega$ is any Borel probability measure
  on $\Omega$ and if $\prob$ is the distribution of $(X, g(X))$, where $X \sim \prob_\Omega$
  and $g \in L^2(\prob_\Omega)$ is bounded and satisfies
  $g \in \overline{\cRN_\varrho^{\Omega}(S)} \setminus \cRN_\varrho^{\Omega}(S)$,
  with the closure taken in $L^2(\prob_\Omega)$.
  The existence of such a function $g$ is guaranteed by Theorem~\ref{thm:nonclosed}
  if $\supp \prob_\Omega$ is uncountable.
\end{remark*}

\begin{proof}
  For the proof of Proposition~\ref{prop:ExplodingWeights},
  we refer to Appendix~\ref{app:ExplodingWeightsProof}.
  The proof is based on classical arguments of statistical learning theory
  as given in \cite{Cucker02onthe}.
\end{proof}

\subsection{Closedness of ReLU networks in \texorpdfstring{$C([-B,B]^d)$}{C([-B,B]ᵈ)}}
\label{sec:ClosReLUNet}

In this subsection, we analyze the closedness of sets of realizations of
neural networks with respect to the ReLU or the parametric ReLU activation
function in $C(\Omega)$, mostly for the case $\Omega = [-B, B]^d$.
We conjecture that the set of (realizations of) ReLU networks of a fixed complexity is closed
in $C(\Omega)$, but were not able to prove such a result in full generality.
In two special cases, namely when the networks have only two layers, or when
at least the \emph{scaling} weights are bounded, we can show that the associated set of
ReLU realizations is closed in $C(\Omega)$; see below.

We begin by analyzing the set of realizations with uniformly bounded scaling weights
and possibly unbounded biases, before proceeding with the analysis of two layer ReLU networks.

For $\Phi = \big( (A_1,b_1),\dots,(A_L,b_L) \big) \in \cN(S)$ satisfying
$\|\Phi\|_{\mathrm{scaling}} \leq C$ for some $C>0$, we say that the
network $\Phi$ has \textbf{$C$-bounded scaling weights}.
Note that this does \emph{not} require the biases $b_\ell$
of the network to satisfy $|b_\ell| \leq C$.

Our first goal in this subsection is to show that if $\varrho$ denotes the ReLU,
if $S = (d, N_1,\dots,N_L)$, if $C > 0$, and if $\Omega \subset \R^d$ is measurable and bounded,
then the set
\[
  \cRN_{\varrho}^{\Omega,C}(S)
  := \left\{
      \Realization_\varrho^{\Omega} (\Phi)
      \with
      \Phi \in \cN(S) \text{ with } \|\Phi\|_{\mathrm{scaling}} \leq C
     \right\}
\]
is closed in $C(\Omega;\R^{N_L})$ and in $L^p (\mu; \R^{N_L})$ for arbitrary $p \in [1, \infty]$.
Here, and in the remainder of the paper, we use the norm
$\|f\|_{L^p(\mu;\R^{N_L})} = \|\, |f| \,\|_{L^p(\mu)}$ for vector-valued $L^p$-spaces.
The norm on $C(\Omega; \R^{N_L})$ is defined similarly.
The difference between the following proposition and
Proposition~\ref{prop:bdWeights} is that in the following proposition,
the ``shift weights'' (the biases) of the networks can be potentially unbounded.
Therefore, the resulting set is merely closed, not compact.

\begin{proposition}\label{prop:BoundedScalingWeightsClosedness}
  Let $S = (d, N_1, \dots, N_L)$ be a neural network architecture,
  let $C > 0$, and let $\Omega \subset \R^d$ be Borel measurable and bounded.
  Finally, let $\varrho : \R \to \R, x \mapsto \max \{ 0, x \}$ denote the ReLU function.

  Then the set $\cRN_{\varrho}^{\Omega,C}(S)$ is closed in $L^p (\mu;\R^{N_L})$
  for every $p \in [1,\infty]$ and any finite Borel measure $\mu$ on $\Omega$.
  If $\Omega$ is compact, then $\cRN_{\varrho}^{\Omega,C}(S)$ is also closed in $C(\Omega;\R^{N_L})$.
\end{proposition}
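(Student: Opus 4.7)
The plan is to take an arbitrary sequence $(\Phi_n)_{n\in\N} \subset \cN(S)$ with $\|\Phi_n\|_{\mathrm{scaling}} \leq C$ and $\Realization_\varrho^\Omega(\Phi_n) \to f$, and construct a single network $\Phi \in \cN(S)$ with $\|\Phi\|_{\mathrm{scaling}} \leq C$ realizing~$f$. Since each scaling matrix $A_\ell^{(n)}$ lies in a compact subset of $\R^{N_\ell\times N_{\ell-1}}$, I first pass to a subsequence so that $A_\ell^{(n)}\to A_\ell$ with $\|A_\ell\|_{\max}\leq C$ for every $\ell$. The remaining difficulty is that the biases $b_\ell^{(n)}$ need not be bounded. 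The entire argument leans on the following elementary property of the ReLU: for any bounded set $D\subset\R^N$ and any fixed row vector $a$, the map $y\mapsto\varrho(\langle a,y\rangle+\beta)$ coincides \emph{exactly} with the affine map $y\mapsto \langle a,y\rangle+\beta$ on~$D$ once $\beta$ is large enough positive, and coincides \emph{exactly} with the zero map once $\beta$ is large enough negative. Hence a neuron whose bias diverges to $+\infty$ along the subsequence can be replaced by one with a large but finite bias, without changing the neuron's output on $\Omega$ for large~$n$ except for an $x$-independent additive constant that will be absorbed into the biases of the subsequent layer.

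The limit network $\Phi$ is built inductively on the layer index $\ell=1,\dots,L-1$, maintaining the invariant
\[
  x_{\ell-1}^{(n)}(x) \;=\; \tilde x_{\ell-1}(x) \;+\; \varepsilon_{\ell-1}^{(n)}(x) \;+\; c_{\ell-1}^{(n)},
\]
where $\tilde x_{\ell-1}$ is the layer-$(\ell-1)$ output of the already-built part of~$\Phi$, $\varepsilon_{\ell-1}^{(n)}\to 0$ uniformly on~$\Omega$, and $c_{\ell-1}^{(n)}\in\R^{N_{\ell-1}}$ is a constant (in~$x$) vector supported on those coordinates whose biases diverged in earlier stages. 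At layer~$\ell$ I form the \emph{effective bias} $\beta_\ell^{(n)} := b_\ell^{(n)} + A_\ell^{(n)} c_{\ell-1}^{(n)}$, which governs the pre-activation after absorbing the carry-over; passing to a further subsequence I then sort the coordinates of $\beta_\ell^{(n)}$ into a class $B$ that converges to some finite $\beta_\ell^{(k)}$, a class $P$ diverging to $+\infty$, and a class $N$ diverging to $-\infty$. I set $\tilde b_\ell^{(k)}:=\beta_\ell^{(k)}$ on $B$, and on $P$ (resp.\ $N$) I pick any finite value making $A_\ell^{(k,\cdot)}\tilde x_{\ell-1}(x)+\tilde b_\ell^{(k)}$ uniformly positive (resp.\ negative) on $\Omega$; this is possible because $\tilde x_{\ell-1}$ is a fixed bounded function on the bounded set~$\Omega$ and $A_\ell^{(k,\cdot)}$ is a fixed bounded row. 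A direct computation, using the identity/zero property of the ReLU on the $P$/$N$ coordinates, reproduces the invariant at layer~$\ell$ with $c_\ell^{(n,k)}=\beta_\ell^{(k,n)}-\tilde b_\ell^{(k)}$ for $k\in P$ and $c_\ell^{(n,k)}=0$ otherwise, closing the induction.

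At the last layer no ReLU is applied, so the invariant becomes
$\Realization_\varrho^\Omega(\Phi_n)(x) = A_L\tilde x_{L-1}(x) + \beta_L^{(n)} + \delta_n(x)$
with $\delta_n\to 0$ uniformly on~$\Omega$. Because $\Realization_\varrho^\Omega(\Phi_n)\to f$ in $C(\Omega;\R^{N_L})$ (or, after extracting a $\mu$-a.e.\ convergent subsequence, in $L^p(\mu;\R^{N_L})$, where $\mu$ is finite), the sequence $\beta_L^{(n)}$ must itself be bounded: otherwise some coordinate would force $|\Realization_\varrho^\Omega(\Phi_n)(x)|\to\infty$ uniformly in~$x$, contradicting convergence to $f$. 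A further subsequence then yields $\beta_L^{(n)}\to \tilde b_L$ together with the pointwise (resp.\ $\mu$-a.e.) identity $f(x) = A_L\tilde x_{L-1}(x) + \tilde b_L$. Taking $\tilde b_L$ as the layer-$L$ bias of $\Phi$ produces the desired network: $\Realization_\varrho^\Omega(\Phi) = f$ and $\|\Phi\|_{\mathrm{scaling}} = \max_\ell \|A_\ell\|_{\max}\leq C$.

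The main obstacle is the intertwined layer-by-layer bookkeeping: at every step one must absorb all diverging ``carry-over'' constants $c_{\ell-1}^{(n)}$ into a single effective-bias vector, reclassify that vector into the $B$/$P$/$N$ trichotomy (which may require yet another subsequence, since the $\beta_\ell^{(n)}$ mix contributions from $b_\ell^{(n)}$ with $A_\ell^{(n)}c_{\ell-1}^{(n)}$ whose individually diverging entries can cancel), and verify that the new constants $c_\ell^{(n)}$ are again independent of~$x$. The whole scheme functions only because the two branches of the ReLU are \emph{exactly} affine, so the reparametrisation introduces no error beyond an $x$-independent shift; this exactness is precisely what fails for strictly nonlinear activations, in agreement with the non-closedness results of Section~\ref{subsec:NonClosedC} for smoother functions.
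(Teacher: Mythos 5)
Your proposal is correct and rests on exactly the same key mechanism as the paper's proof (Lemma~\ref{lem:WeightShiftLemma}): pass to a subsequence along which each bias coordinate converges in $[-\infty,\infty]$, use that the ReLU is exactly affine on each branch to replace $+\infty$-diverging biases by finite ones at the cost of an $x$-independent constant pushed into the next layer, and kill the $-\infty$-diverging neurons. The only difference is organizational --- the paper first rewrites the networks as equivalent networks with uniformly bounded weights and then invokes the compactness result of Proposition~\ref{prop:bdWeights} together with continuity of the realization map, whereas you construct the limit network directly in one inductive sweep --- so this is essentially the paper's argument.
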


\begin{remark*}
  In fact, the proof shows that each subset of
  $\cRN_{\varrho}^{\Omega,C}(S)$ which is bounded in $L^1 (\mu; \R^{N_L})$ (when $\mu(\Omega) > 0$)
  is precompact in $L^p(\mu; \R^{N_L})$ and in $C(\Omega; \R^{N_L})$.
\end{remark*}

\begin{proof}
  For the proof of the statement, we refer to Appendix~\ref{app:BoundedScalingWeightsClosedness}.
  The main idea is to show that for every sequence $(\Phi_n)_{n \in \N} \subset \mathcal{NN}(S)$
  of neural networks with uniformly bounded \emph{scaling} weights
  and with $\| \Realization_\varrho^\Omega (\Phi_n) \|_{L^1(\mu)} \leq M$,
  there exist a subsequence $(\Phi_{n_k})_{k\in\N}$ of $(\Phi_n)_{n\in\N}$ and neural networks
  $(\widetilde{\Phi}_{n_k})_{k\in\N}$ with uniformly bounded scaling weights \emph{and} biases such
  that
  \(
    \Realization^\Omega_\varrho \big( \widetilde{\Phi}_{n_k} \big)
    = \Realization^\Omega_\varrho \big( {\Phi}_{n_k} \big)
  \).
  The rest then follows from Proposition~\ref{prop:bdWeights}.
\end{proof}

As our second result in this section, we show that the set of realizations
of \emph{two-layer} neural networks with arbitrary scaling weights and biases
is closed in $C([-B,B]^d),$ if the activation is the parametric ReLU.
It is a fascinating question for further research whether this also holds for deeper networks.

\begin{theorem}\label{thm:closedReLU}
  Let $d, N_0 \in \N,$ let $B > 0$, and let $a \geq 0$.
  Let $\varrho_a:\R \to \R, x \mapsto \max \{x, ax \}$ be the parametric ReLU.
  Then $\cRN_{\varrho_a}^{[-B,B]^d}((d,N_0,1))$ is closed in $C([-B,B]^d)$.
\end{theorem}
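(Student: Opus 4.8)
The plan is to dispatch two trivial cases, then induct on the number of ``genuine'' hidden neurons of a normalized approximating sequence, the only delicate point being the control of weights that blow up. Since $\varrho_a = a\,\varrho_{1/a}$ for $a>1$ (so the two realization sets agree, the factor $a$ being absorbed into the output layer) and $\varrho_1=\mathrm{id}$ (so the realization set is the finite-dimensional, hence closed, space of affine maps), I may assume $a\in[0,1)$; then $\varrho_a$ is $1$-Lipschitz, positively homogeneous, and $\varrho_a(t)=\tfrac{1+a}{2}t+\tfrac{1-a}{2}|t|$. Given $f_n=\Realization_{\varrho_a}^\Omega(\Phi_n)\to f$ in $C(\Omega)$ with $\Omega:=[-B,B]^d$, write $f_n(x)=\gamma_n+\sum_{i=1}^{N_0}c_i^{(n)}\varrho_a(\langle w_i^{(n)},x\rangle+\beta_i^{(n)})$ and normalize: by positive homogeneity each neuron with $w_i^{(n)}\neq0$ can be rescaled so that $\|w_i^{(n)}\|=1$, and neurons with $w_i^{(n)}=0$ contribute a constant which I absorb into $\gamma_n$. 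Passing to a subsequence I may also assume $w_i^{(n)}\to w_i\in S^{d-1}$ for the remaining neurons and that, for each such $i$, either $\beta_i^{(n)}\to\beta_i\in\R$ or $|\beta_i^{(n)}|\to\infty$; in the second case $\langle w_i^{(n)},\cdot\rangle+\beta_i^{(n)}$ has constant sign on the bounded set $\Omega$ for large $n$, so $g_i^{(n)}:=\varrho_a(\langle w_i^{(n)},\cdot\rangle+\beta_i^{(n)})$ is affine on $\Omega$ and its term may be folded into the affine part. After finitely many such folds I arrive at $f_n=L_n+\sum_{i=1}^m c_i^{(n)}g_i^{(n)}$ with $L_n$ affine, $m\le N_0$, $g_i^{(n)}\to g_i:=\varrho_a(\langle w_i,\cdot\rangle+\beta_i)$ uniformly on $\Omega$, and the bookkeeping invariant ``$m+\mathbf{1}_{\{L_n\text{ nonconstant}\}}\le N_0$''. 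This invariant is precisely what guarantees that an expression of the form ``affine $+$ at most $m$ ridge functions $\varrho_a(\langle w_i,\cdot\rangle+\beta_i)$'' belongs to $\cRN_{\varrho_a}^\Omega((d,N_0,1))$, since a single $\varrho_a$-neuron with sufficiently large bias realizes an arbitrary affine map on $\Omega$.

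I now induct on $m$. If $m=0$ then $f=\lim L_n$ is affine and we are done. If $m\ge1$, let $t_n:=\max(\|L_n\|,\max_i|c_i^{(n)}|)$. If $\liminf_n t_n<\infty$, then along a subsequence $L_n\to L$ and $c_i^{(n)}\to c_i$ (finite dimensions), so $f=L+\sum_{i=1}^m c_i g_i$ lies in $\cRN_{\varrho_a}^\Omega((d,N_0,1))$ by the invariant. Otherwise $t_n\to\infty$; normalizing $\hat L_n:=L_n/t_n$, $\hat c_i^{(n)}:=c_i^{(n)}/t_n$ and passing to a subsequence yields $(\hat L,(\hat c_i))\neq0$ with the exact relation $\hat L+\sum_{i=1}^m\hat c_i g_i=0$ on $\Omega$. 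Substituting $g_i=\tfrac{1+a}{2}(\langle w_i,\cdot\rangle+\beta_i)+\tfrac{1-a}{2}|\langle w_i,\cdot\rangle+\beta_i|$ and invoking the linear independence in $C(\Omega)$ of $\{1,x_1,\dots,x_d\}$ together with the functions $|\langle w,\cdot\rangle+\beta|$ associated with pairwise distinct hyperplanes meeting $\mathrm{int}\,\Omega$ --- which I verify by applying the distributional Laplacian, noting that $\Delta\,|\langle w,\cdot\rangle+\beta|$ equals twice the surface measure of the hyperplane $\{\langle w,\cdot\rangle=-\beta\}$ when $\|w\|=1$, and that surface measures of distinct hyperplanes are linearly independent --- the relation forces the $\hat c_i$ attached to any one limiting hyperplane to sum to zero; as some $\hat c_i\neq0$ (else $\hat L=0$ as well), there exist two distinct indices $i_0,i_1$ with $\langle w_{i_1},\cdot\rangle+\beta_{i_1}=\pm(\langle w_{i_0},\cdot\rangle+\beta_{i_0})$.

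This ``collision'' is used to lower $m$. Writing $g_{i_1}^{(n)}=\varrho_a(\pm(\langle w_{i_0}^{(n)},\cdot\rangle+\beta_{i_0}^{(n)})+\delta_n)$ with $\delta_n$ affine and $\|\delta_n\|_{\sup}\to0$, and using $1$-Lipschitzness of $\varrho_a$ together with the identity $\varrho_a(-s)=\varrho_a(s)-(1+a)s$, one gets $c_{i_1}^{(n)}g_{i_1}^{(n)}=c_{i_1}^{(n)}g_{i_0}^{(n)}+(\text{an affine function})+E_n$ with $\|E_n\|_{\sup}\le|c_{i_1}^{(n)}|\,\|\delta_n\|_{\sup}$; merging $i_0$ and $i_1$ thus rewrites $f_n-E_n$ as an affine term plus $m-1$ normalized ridge neurons, with the invariant preserved, so that $f=\lim(f_n-E_n)$ falls under the inductive hypothesis once one knows $\|E_n\|_{\sup}\to0$. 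The main obstacle is exactly this last estimate: it expresses that no finite ``jump'' can build up across the shrinking gap between the two near-parallel hyperplanes, and it is obtained by using the uniform convergence $f_n\to f$ itself --- comparing the values of $f_n$ at pairs of points of $\Omega$ straddling the common limiting hyperplane along the direction $w_{i_0}^{(n)}$ and passing to the limit forces $|c_{i_1}^{(n)}|\,\|\delta_n\|_{\sup}\to0$, the several-variable analogue of the one-dimensional bound $|c(|t-p|-|t-q|)|\le|c||q-p|$ with $|c(q-p)|\to0$. Carrying this out when the near-parallel limiting hyperplanes actually cross inside $\Omega$, and handling the boundary variant in which a limiting hyperplane only touches $\partial\Omega$ (so that $g_i$ is affine on $\Omega$ while $g_i^{(n)}$ is not), are the two points that require genuine care; both succumb to the same straddling-point argument, and the induction then closes.
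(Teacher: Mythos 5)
Your reductions, the normalization of the inner weights to $S^{d-1}$, the subsequence extraction, and the folding of neurons with $|\beta_i^{(n)}|\to\infty$ into the affine part all match the paper's Steps 1--4, and your detection of a ``collision'' via the blow-up relation $\hat L+\sum_i\hat c_i g_i=0$ together with the distributional-Laplacian proof of linear independence is a clean alternative to the paper's Lemmas on singularity hyperplanes. The proof fails, however, at the merging step, and the failure is not a technicality. You claim that uniform convergence of $f_n$ forces $|c_{i_1}^{(n)}|\,\|\delta_n\|_{\sup}\to 0$ for \emph{a} colliding pair $(i_0,i_1)$ detected from the limiting hyperplanes alone. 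This is false. Take $d=1$, $a=0$, $\Omega=[-1,1]$, $\eps_n=n^{-2}$, $\eps_n'=n^{-2}+n^{-4}$, and
\[
  f_n(x) \;=\; n^3\Bigl[-2\varrho(x-\eps_n)+\varrho(x-2\eps_n)+2\varrho(x-\eps_n')-\varrho(x-2\eps_n')\Bigr],
\]
the difference of two hat functions. A direct computation gives $\|f_n\|_{\sup}=2/n\to 0$, all four (unit-inner-weight) neurons have singularity points converging to $0\in\Omega^\circ$, so every pair is a ``collision'' in your sense; yet for the pair at $(\eps_n,2\eps_n)$ one has $|c_{i_1}^{(n)}|\,\|\delta_n\|_{\sup}\sim n^3\cdot n^{-2}=n\to\infty$, and the actual merge error $E_n=n^3[\varrho(x-2\eps_n)-\varrho(x-\eps_n)]$ stays at sup-distance $\sim n$ from the affine functions. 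So merging that pair destroys the convergence $f_n-E_n\to f$, and no straddling argument can rescue it, since the statement it is supposed to prove is false for that pair.

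The underlying problem is that which pairs may be merged depends on the finite-$n$ geometry (how close the hyperplanes are \emph{relative to the sizes of the outer coefficients}), whereas your collision criterion sees only the limits $(w_i,\beta_i)$, which in the example coincide for all four neurons. Your argument supplies no mechanism for selecting a good pair, and a correct version would have to treat the entire group of neurons sharing a limiting hyperplane at once rather than pair by pair. (A second, smaller issue of the same flavor: in the straddling comparison the remaining neurons and $L_n$ contribute terms of size (unbounded coefficient)$\times$(vanishing distance), which is indeterminate, so even for a good pair the estimate needs second differences or localization where the other neurons are exactly affine.) This is essentially why the paper's proof never merges individual neurons: it shows in its Steps 5--6 that the \emph{sum over each group} $g_i^{(k)}+q_i^{(k)}$, restricted to small balls where all other groups are exactly affine, converges pointwise off the limiting hyperplane to a single function of the form $c_i\,\varrho_a(\langle\alpha^{(i)},\cdot\rangle+\beta^{(i)})+\text{affine}$, using Lemma~\ref{lem:PiecewiseLinearReLURepresentation} and the finite-dimensionality of the space of affine maps, without ever bounding individual coefficients. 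To repair your induction you would need an analogous group-level limit statement, at which point you have essentially reconstructed the paper's argument.
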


\begin{proof}
  For the proof of the statement, we refer to Appendix~\ref{app:closedReLU};
  here we only sketch the main idea:
  First, note that each $f \in \cRN_{\varrho_a}^{[-B,B]^d}((d,N_0,1))$ is of the form
  $f(x) = c + \sum_{i=1}^{N_0} \varrho_a (\langle \alpha_i , x \rangle + \beta_i)$.
  The proof is based on a careful---and quite technical---analysis of the
  \emph{singularity hyperplanes} of the functions $\varrho_a (\langle \alpha_i, x \rangle + \beta_i)$,
  that is, the hyperplanes $\langle \alpha_i, x \rangle + \beta_i = 0$ on which these functions
  are not differentiable.
  More precisely, given a \emph{uniformly convergent} sequence
  $(f_n)_{n \in \N} \subset \cRN_{\varrho_a}^{[-B,B]^d}((d,N_0,1))$,
  we analyze how the singularity hyperplanes of the functions $f_n$ behave as $n \to \infty$,
  in order to show that the limit is again of the same form as the $f_n$.
  For more details, we refer to the actual proof.
\end{proof}

\section{Failure of inverse instability of the realization map}
\label{sec:InverseStability}

In this section, we study the properties of the realization map $\Realization^{\Omega}_{\varrho}$.
First of all, we observe that the realization map is continuous.

\begin{proposition}\label{prop:RealizationContinuity}
  Let $\Omega \subset \R^d$ be compact and let $S = (d, N_1, \dots, N_L)$
  be a neural network architecture.
  If the activation function $\varrho : \R \to \R$ is continuous, then the realization map from
  Equation~\eqref{eq:RealizationMapping} is continuous.
  If $\varrho$ is locally Lipschitz continuous, then so is $\Realization^{\Omega}_{\varrho}$.

  Finally, if $\varrho$ is globally Lipschitz continuous, then there is a constant
  $C = C(\varrho, S) > 0$ such that
  \[
    \mathrm{Lip}\big(\Realization^{\Omega}_{\varrho} (\Phi)\big)
    \leq C \cdot \| \Phi \|_{\mathrm{scaling}}^L
    \qquad \text{ for all } \Phi \in \cN(S) \, .
  \]
\end{proposition}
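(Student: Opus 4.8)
The plan is to prove each of the three assertions—continuity, local Lipschitz continuity, and global Lipschitz continuity of the realization map, plus the Lipschitz bound on the realized function—by a straightforward induction on the layer index $\ell$, tracking how the intermediate quantities $x_\ell$ depend on both the input $x$ and the weights $\Phi$. The key observation is that a neural network realization is built by alternately applying affine maps (whose coefficients are the weights) and the fixed nonlinearity $\varrho$; since affine maps are jointly smooth in the weights and the point, and $\varrho$ has the regularity we assume, compositions inherit the corresponding regularity. The only subtlety is that $\varrho$ is only \emph{locally} Lipschitz in the middle two cases, so one must first confine attention to a bounded region of both parameter space and of $\Omega$.

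First I would fix a bounded subset $\mathcal{B} \subset \cN(S)$—say $\|\Phi\|_{\mathrm{total}} \le C_0$—and the compact set $\Omega$, and show that there is a radius $R = R(C_0, \Omega, S)$ such that for every $\Phi \in \mathcal{B}$, every $x \in \Omega$, and every $\ell \in \{0,\dots,L-1\}$, one has $\|x_\ell\|_\infty \le R$. This follows by induction: $\|x_0\|_\infty = \|x\|_\infty$ is bounded since $\Omega$ is bounded, and $\|x_\ell\|_\infty \le \sup_{|t|\le R_{\ell-1}'}|\varrho(t)|$ once $\|A_\ell x_{\ell-1} + b_\ell\|_\infty$ is controlled in terms of $\|x_{\ell-1}\|_\infty$, $C_0$, and the widths $N_j$. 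On this bounded region, $\varrho$ restricted to $[-R',R']$ is Lipschitz with some constant $L_\varrho$ (in the locally Lipschitz case) or simply uniformly continuous (in the merely continuous case). Then the main induction shows: for $\Phi, \Psi \in \mathcal{B}$ and $x \in \Omega$, writing $x_\ell, \tilde x_\ell$ for the intermediate vectors of $\Phi, \Psi$ at input $x$,
\[
  \|x_\ell - \tilde x_\ell\|_\infty
  \le \alpha_\ell \cdot \|\Phi - \Psi\|_{\mathrm{total}},
\]
where $\alpha_\ell$ depends only on $C_0, R, L_\varrho, S$. The inductive step is the triangle-inequality split
\[
  \|x_\ell - \tilde x_\ell\|_\infty
  \le \big\| \varrho(A_\ell x_{\ell-1}+b_\ell) - \varrho(A_\ell \tilde x_{\ell-1}+b_\ell) \big\|_\infty
      + \big\| \varrho(A_\ell \tilde x_{\ell-1}+b_\ell) - \varrho(\tilde A_\ell \tilde x_{\ell-1}+\tilde b_\ell) \big\|_\infty,
\]
the first term bounded by $L_\varrho \cdot \|A_\ell\|_{\max}\cdot N_{\ell-1}\cdot\|x_{\ell-1}-\tilde x_{\ell-1}\|_\infty$ and the second by $L_\varrho\cdot(\|A_\ell - \tilde A_\ell\|_{\max} N_{\ell-1} R + \|b_\ell - \tilde b_\ell\|_{\max})$. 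Taking the supremum over $x \in \Omega$ yields local Lipschitz continuity of $\Phi \mapsto \Realization_\varrho^\Omega(\Phi)$ in the $\sup$-norm. For merely continuous $\varrho$, one replaces the Lipschitz estimate on $\varrho$ by its modulus of continuity on $[-R',R']$ and obtains continuity rather than a Lipschitz bound; this handles the first assertion.

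For the final Lipschitz bound on the realized function, I would run a second, simpler induction—this time in the spatial variable $x$, with $\Phi$ fixed and $\varrho$ globally $L_\varrho$-Lipschitz. Writing $x_\ell, y_\ell$ for the intermediate vectors at inputs $x, y$, one gets
\[
  \|x_\ell - y_\ell\|_\infty \le L_\varrho \cdot \|A_\ell\|_{\max} \cdot N_{\ell-1} \cdot \|x_{\ell-1} - y_{\ell-1}\|_\infty
\]
for $1 \le \ell \le L-1$, and the last (affine) layer contributes one more factor $\|A_L\|_{\max} N_{L-1}$ without the $\varrho$ factor. Iterating from $\ell = 0$, where $\|x_0 - y_0\|_\infty = \|x-y\|_\infty$, gives
\[
  \|\Realization_\varrho^\Omega(\Phi)(x) - \Realization_\varrho^\Omega(\Phi)(y)\|_\infty
  \le \Big( L_\varrho^{L-1}\, \textstyle\prod_{\ell=1}^L N_{\ell-1} \Big)\cdot \|\Phi\|_{\mathrm{scaling}}^L \cdot \|x-y\|_\infty,
\]
so one may take $C(\varrho,S) = L_\varrho^{L-1}\prod_{\ell=1}^L N_{\ell-1}$ (with the convention $L_\varrho^0 = 1$ when $L=1$, though here $L \ge 2$). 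I expect the main obstacle to be purely bookkeeping: keeping the constants $\alpha_\ell$ and the region radius $R$ uniform over the bounded parameter set, and correctly threading the matrix-norm/vector-norm conversions (the factors $N_{\ell-1}$ coming from $\|A v\|_\infty \le N_{\ell-1}\|A\|_{\max}\|v\|_\infty$) through the induction; there is no conceptual difficulty beyond the interplay of local Lipschitzness of $\varrho$ with the need to first localize. I would relegate these routine estimates to the appendix.
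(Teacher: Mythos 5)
Your proposal is correct and follows essentially the same route as the paper's proof: first bound the intermediate activations uniformly over $\Omega$ and over a bounded set of parameters, then run a layer-wise induction on the differences to get (local) Lipschitz continuity in the weights, and a second spatial induction for the bound $\mathrm{Lip}(\Realization^\Omega_\varrho(\Phi)) \le C\,\|\Phi\|_{\mathrm{scaling}}^L$. The only cosmetic difference is that for merely continuous $\varrho$ the paper argues via preservation of locally uniform convergence under composition, whereas you substitute a modulus of continuity into the same induction — both are valid and equivalent in substance.
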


\begin{proof}
  For the proof of this statement, we refer to Appendix~\ref{app:RealCont}.
\end{proof}

In general, the realization map is not injective; that is, there can be networks
$\Phi \neq \Psi$ but such that
$\Realization^{\Omega}_{\varrho} (\Phi) = \Realization^{\Omega}_{\varrho} (\Psi)$;
in fact, if for instance
\[
  \Phi = \big( (A_1,b_1), \dots, (A_{L-1}, b_{L-1}), (0,0) \big)
  \quad \text{and} \quad
  \Psi = \big( (B_1, c_1), \dots, (B_{L-1}, c_{L-1}), (0,0) \big) \, ,
\]
then the realizations of $\Phi,\Psi$ are identical.

In this section, our main goal is to determine whether, up to the failure of
injectivity, the realization map is a homeomorphism onto its range;
mathematically, this means that we want to determine whether the realization map
is a \emph{quotient map}.
We will see that this is \emph{not} the case.

To this end, we will prove for fixed $\Phi$ that even if $\Realization^{\Omega}_{\varrho} (\Psi)$
is very close to $\Realization^{\Omega}_{\varrho}(\Phi)$,
it is \emph{not} true in general that
\(
  \Realization^{\Omega}_{\varrho} (\Psi)
  = \Realization^{\Omega}_{\varrho} (\widetilde{\Psi})
\)
for network weights $\widetilde{\Psi}$ close to $\Phi$.
Precisely, this follows from the following theorem for $\Phi = 0$ and $\Psi = \Phi_n$.

\begin{theorem}\label{thm:InverseStability}
  Let $\varrho : \R \to \R$ be Lipschitz continuous,
  but not affine-linear.
  Let $S = (N_0,\dots,N_{L-1},1)$ be a network architecture with
  $L \geq 2$, with $N_0 = d$, and $N_1 \geq 3$.
  Let $\Omega \subset \R^d$ be bounded with nonempty interior.

  Then there is a sequence $(\Phi_n)_{n \in \N}$ of networks with architecture
  $S$ and the following properties:
  \begin{enumerate}
    \item We have $\Realization^{\Omega}_{\varrho} (\Phi_n) \to 0$ uniformly on $\Omega$.

    \item We have $\mathrm{Lip}(\Realization^{\Omega}_{\varrho}(\Phi_n)) \to \infty$
          as $n \to \infty$.
  \end{enumerate}
  Finally, if $(\Phi_n)_{n \in \N}$ is a sequence of networks with architecture
  $S$ and the preceding two properties, then the following holds:
  For each sequence of networks $(\Psi_n)_{n \in \N}$ with architecture $S$ and
  $\Realization^{\Omega}_{\varrho} (\Psi_n) = \Realization^{\Omega}_{\varrho} (\Phi_n)$,
  we have $\|\Psi_n\|_{\mathrm{scaling}} \to \infty$.
\end{theorem}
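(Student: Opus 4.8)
The plan is to split the proof into two parts: constructing the sequence $(\Phi_n)_{n\in\N}$ with properties (1) and (2), and then deriving the inverse-instability conclusion from these two properties alone.

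\medskip

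\emph{Part I: Constructing the sequence.} Since $\varrho$ is Lipschitz but not affine-linear, there exist points $u < v$ at which the difference quotients of $\varrho$ disagree; in particular $\varrho$ is genuinely nonlinear on some bounded window. The idea is to build, already in the first hidden layer (which has width $N_1 \geq 3$), a ``bump-like'' function whose amplitude tends to $0$ but whose Lipschitz constant blows up. Concretely, I would pick a direction $a \in \R^d$ with $|a|=1$ and look at functions of the form
\[
  g_n(x) = \lambda_n \Big( \varrho\big(c_n \langle a, x\rangle + \alpha_n\big) - \varrho\big(c_n\langle a,x\rangle + \beta_n\big) \Big) + (\text{correction term}),
\]
where $c_n \to \infty$ rescales the input so that $\varrho$ is probed on an ever-smaller window around a point of nonlinearity, and $\lambda_n \to 0$ is chosen so that $\|g_n\|_{\sup} \to 0$ on the bounded set $\Omega$ but $\lambda_n c_n \to \infty$, forcing $\mathrm{Lip}(g_n) \to \infty$. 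The third neuron gives enough freedom to subtract off the asymptotically-linear part of $\varrho$ so that the net result really does converge uniformly to $0$ rather than to a nonzero affine function. One must then pad this construction through the remaining $L-2$ layers, composing with identity-like maps (possible since $\varrho$ is nonconstant, so an affine image of $\varrho$ reproduces an arbitrary affine function on a bounded set up to controlled error, or more simply one routes the signal through neurons operating in a region where $\varrho$ is affine if such a region exists, and otherwise absorbs the distortion into later weights) to obtain a full network $\Phi_n$ with architecture $S$ and $\Realization_\varrho^\Omega(\Phi_n) = g_n \to 0$ uniformly while $\mathrm{Lip}(\Realization_\varrho^\Omega(\Phi_n)) \to \infty$. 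The main obstacle in this part is making the ``correction term'' and the propagation through deep layers rigorous for a \emph{general} Lipschitz $\varrho$ that need not have any affine piece at all; handling the case where $\varrho$ is nowhere affine (e.g.\ strictly convex) requires care, but the freedom in $N_1 \geq 3$ and the ability to place later-layer neurons in shrinking windows should suffice.

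\medskip

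\emph{Part II: From properties (1)--(2) to inverse instability.} This is the conceptually cleaner half and I expect it to be short. Suppose $(\Psi_n)_{n\in\N}$ is a sequence of networks with architecture $S$ and $\Realization_\varrho^\Omega(\Psi_n) = \Realization_\varrho^\Omega(\Phi_n)$, and suppose for contradiction that $\|\Psi_n\|_{\mathrm{scaling}}$ does \emph{not} tend to infinity. Then along a subsequence $\|\Psi_{n_k}\|_{\mathrm{scaling}} \leq C$ for some fixed $C$. By the last part of Proposition~\ref{prop:RealizationContinuity}, there is a constant $C' = C'(\varrho,S)$ with
\[
  \mathrm{Lip}\big(\Realization_\varrho^\Omega(\Psi_{n_k})\big) \leq C' \cdot \|\Psi_{n_k}\|_{\mathrm{scaling}}^{L} \leq C' C^{L}
\]
for all $k$. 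But $\Realization_\varrho^\Omega(\Psi_{n_k}) = \Realization_\varrho^\Omega(\Phi_{n_k})$, so $\mathrm{Lip}(\Realization_\varrho^\Omega(\Phi_{n_k})) \leq C'C^L$, contradicting property~(2), which forces $\mathrm{Lip}(\Realization_\varrho^\Omega(\Phi_n)) \to \infty$. Hence $\|\Psi_n\|_{\mathrm{scaling}} \to \infty$, completing the proof.

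\medskip

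So the only genuinely technical work is Part~I, and within it the hardest point is engineering a uniformly-vanishing, Lipschitz-exploding realization for activation functions with no affine portion; everything downstream is a one-line application of the Lipschitz bound from Proposition~\ref{prop:RealizationContinuity}.
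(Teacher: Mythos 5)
Your Part II is exactly the paper's argument (its Step 4): boundedness of $\|\Psi_{n_k}\|_{\mathrm{scaling}}$ along a subsequence, combined with the bound $\mathrm{Lip}(\Realization^\Omega_\varrho(\Psi)) \leq C \cdot \|\Psi\|_{\mathrm{scaling}}^L$ from Proposition~\ref{prop:RealizationContinuity}, contradicts property~(2). Your Part I is also in the same spirit as the paper's construction: a finite difference of shifted copies of $\varrho$, precomposed with a steep linear map in the first (width $\geq 3$) layer, scaled down in amplitude, and padded through the remaining layers by the approximate-identity networks of Proposition~\ref{prop:Identity}. But Part I has a genuine gap, and you have misidentified where the difficulty lies.

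The gap is the inference ``$\lambda_n \to 0$ and $\lambda_n c_n \to \infty$, forcing $\mathrm{Lip}(g_n) \to \infty$.'' The product $\lambda_n c_n$ only controls the chain-rule \emph{upper} bound on $\mathrm{Lip}(g_n)$; the theorem needs a \emph{lower} bound that blows up, which requires exhibiting two nearby points where $g_n$ genuinely differs. If the inner difference $y \mapsto \varrho(y+\alpha_n)-\varrho(y+\beta_n)$ happened to be constant, $g_n$ would be constant no matter how large $\lambda_n c_n$ is. The essential lemma is therefore: \emph{since $\varrho$ is Lipschitz but not affine-linear, some finite difference of $\varrho$ is non-constant} (and, being a difference of shifts of a Lipschitz function, automatically bounded). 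The paper proves precisely this for the second difference $f_a(x)=\varrho(x+a)-2\varrho(x)+\varrho(x-a)$ for some $a>0$, via a tempered-distribution argument: $f_a\equiv\mathrm{const}$ for all $a$ forces $\supp\widehat{\varrho}\subset\{0\}$, hence $\varrho$ is a polynomial, hence affine-linear. Given $f_a(b)\neq f_a(c)$, the points $x_0+n^{-2}(b,0,\dots,0)$ and $x_0+n^{-2}(c,0,\dots,0)$ are $n^{-2}|b-c|$ apart while the realization values differ by $\asymp n^{-1}$ there, yielding $\mathrm{Lip}\gtrsim n$. Your proposal contains no substitute for this non-constancy lemma. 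Conversely, the ``correction term'' you worry about is unnecessary: the difference of shifts of an $M$-Lipschitz function is bounded by $M$ times the shift, so after multiplying by $n^{-1}$ the uniform convergence to $0$ is automatic, with no subtraction of an ``asymptotically linear part'' (and your remark that $c_n\to\infty$ probes $\varrho$ on an ever \emph{smaller} window is backwards --- the window grows). A smaller point you leave open: after composing with the approximate identity $\psi$ to fill the deep layers, one must check that the oscillation of size $\asymp n^{-1}$ survives; the paper does this using that $\psi(0)=0$ and $\psi$ is differentiable at $0$ with $\psi'(0)=1$.
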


\begin{proof}
  For the proof of the statement, we refer to Appendix~\ref{app:InverseStability}.
  The proof is based on the fact that the Lipschitz constant of the realization of a network
  essentially yields a lower bound on the $\|\cdot\|_{\mathrm{scaling}}$ norm
  of every neural network with this realization.
  We construct neural networks $\Phi_n$ the realizations of which have small amplitude
  but high Lipschitz constants.
  The associated realizations uniformly converge to $0$,
  but every associated neural network must have exploding weights.
\end{proof}

We finally rephrase the preceding result in more topological terms:

\begin{corollary} \label{cor:Quotient}
  Under the assumptions of Theorem \ref{thm:InverseStability}, the
  realization map $\Realization^{\Omega}_{\varrho}$ from
  Equation \eqref{eq:RealizationMapping} is \emph{not} a quotient map
  when considered as a map onto its range.
\end{corollary}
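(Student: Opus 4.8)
The plan is to argue by contradiction: assume that $\Realization^{\Omega}_{\varrho}$, viewed as a surjection onto $\mathcal{R} := \cRN^{\Omega}_\varrho(S)$ equipped with the subspace topology inherited from $C(\Omega)$ (say with the sup-norm), is a quotient map. I would first recall the relevant property of quotient maps: if $q : X \to Y$ is a quotient map and $g : Y \to Z$ is a function into a topological space, then $g$ is continuous if and only if $g \circ q$ is continuous; more to the point here, a quotient map $q$ has the property that the induced bijection $\widetilde{q} : X/{\sim} \to Y$ (where $x \sim x'$ iff $q(x) = q(x')$) is a homeomorphism. In particular, sequential convergence in $Y$ must be reflected, after passing to the quotient, by the topology of $X$. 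Concretely, for any sequence $(f_n)$ in $\mathcal{R}$ converging to some $f \in \mathcal{R}$, and any choice of preimages, one cannot always lift convergence; but the precise fact I want is about \emph{closed sets}: $q$ is a quotient map means $U \subset Y$ is open iff $q^{-1}(U)$ is open, equivalently $C \subset Y$ is closed iff $q^{-1}(C)$ is closed.

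The heart of the argument is to exhibit a set that witnesses the failure of this equivalence, using the sequence $(\Phi_n)$ produced by Theorem~\ref{thm:InverseStability}. Let $f_n := \Realization^{\Omega}_{\varrho}(\Phi_n)$, so $f_n \to 0$ uniformly, hence $f_n \to 0$ in $\mathcal{R}$, and $0 \in \mathcal{R}$ since the zero network realizes $0$. Consider the set $A := \{ f_n : n \in \N \} \subset \mathcal{R}$. Since $f_n \to 0$ in $C(\Omega)$ and $0 \notin A$ in general (we may assume, after discarding finitely many terms and relabelling, that $f_n \neq 0$ for all $n$, which is fine because $\mathrm{Lip}(f_n) \to \infty$ forces $f_n \not\equiv 0$ eventually), the set $A$ is \emph{not} closed in $\mathcal{R}$: its closure contains $0$. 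Therefore, if $\Realization^{\Omega}_{\varrho}$ were a quotient map, the preimage $(\Realization^{\Omega}_{\varrho})^{-1}(A) \subset \cN(S)$ would \emph{not} be closed in the finite-dimensional parameter space $\cN(S)$. I will show this preimage \emph{is} closed, giving the contradiction. The preimage is $\{ \Psi \in \cN(S) : \Realization^{\Omega}_{\varrho}(\Psi) = f_n \text{ for some } n \}$. Take any sequence $(\Psi^{(k)})_{k}$ in this set converging to some $\Psi^\ast \in \cN(S)$; by continuity of the realization map (Proposition~\ref{prop:RealizationContinuity}), $\Realization^{\Omega}_{\varrho}(\Psi^{(k)}) \to \Realization^{\Omega}_{\varrho}(\Psi^\ast)$ uniformly. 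Each $\Realization^{\Omega}_{\varrho}(\Psi^{(k)})$ equals some $f_{n_k}$; since the convergent parameter sequence $(\Psi^{(k)})$ is bounded in $\cN(S)$, the realizations $f_{n_k}$ have uniformly bounded scaling norms of their representing networks, which by the final assertion of Theorem~\ref{thm:InverseStability} (the conclusion $\|\Psi_n\|_{\mathrm{scaling}} \to \infty$ for \emph{every} sequence of networks realizing the $\Phi_n$) forces the index set $\{ n_k : k \in \N \}$ to be \emph{finite}. Hence, after passing to a subsequence, $n_k$ is constant equal to some $n_0$, so $\Realization^{\Omega}_{\varrho}(\Psi^\ast) = f_{n_0} \in A$, i.e.\ $\Psi^\ast \in (\Realization^{\Omega}_{\varrho})^{-1}(A)$. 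This shows the preimage is closed, contradicting the assumption that $\Realization^{\Omega}_{\varrho}$ is a quotient map.

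The step I expect to require the most care is the reduction ``the index set $\{n_k\}$ is finite'': one must argue that if infinitely many distinct $f_{n_k}$ each had a representing network of scaling norm bounded by $R$ (the bound coming from $\sup_k \|\Psi^{(k)}\|_{\mathrm{total}} < \infty$), this would contradict Theorem~\ref{thm:InverseStability}. The subtlety is that Theorem~\ref{thm:InverseStability} is phrased for a \emph{sequence} $(\Psi_n)$ with $\Realization^{\Omega}_{\varrho}(\Psi_n) = \Realization^{\Omega}_{\varrho}(\Phi_n)$ indexed by the \emph{same} $n$; so I would extract from the $\Psi^{(k)}$ a genuine sequence $(\Psi_n)_{n}$ (defined for the infinitely many $n$ occurring as some $n_k$, and extended arbitrarily — e.g.\ by any fixed network realizing $f_n$ — on the remaining indices) realizing the $\Phi_n$ with $\liminf_n \|\Psi_n\|_{\mathrm{scaling}} \le R < \infty$, contradicting $\|\Psi_n\|_{\mathrm{scaling}} \to \infty$. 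A minor bookkeeping point is ensuring $f_n \neq 0$ eventually and that $A$ genuinely fails to be closed; both follow from $\mathrm{Lip}(f_n) \to \infty$ together with $f_n \to 0$. Everything else is a direct application of the definition of quotient map and the continuity statement of Proposition~\ref{prop:RealizationContinuity}.
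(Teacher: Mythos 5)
Your proposal is correct and follows essentially the same route as the paper's proof: the same witness set $\{f_n : n \ge n_0\}$ (non-closed in the range because $f_n \to 0 \notin \{f_n\}$), and the same dichotomy showing its preimage is closed (boundedness of a convergent parameter sequence rules out infinitely many distinct indices, then continuity of the realization map handles the finite case). The only cosmetic difference is that you invoke the final assertion of Theorem~\ref{thm:InverseStability} — with the correct bookkeeping to build a full sequence $(\Psi_n)_n$ — where the paper applies the Lipschitz bound of Proposition~\ref{prop:RealizationContinuity} directly; these are the same argument.
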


\begin{proof}
  For the proof of the statement, we refer to Appendix \ref{app:Quotient}.
\end{proof}

\section*{Acknowledgements}
P.P.~and M.R.~were supported by the DFG Collaborative Research Center
TRR 109 ``Discretization in Geometry and Dynamics".
P.P.~was supported by a DFG Research Fellowship
"Shearlet-based energy functionals for anisotropic phase-field methods".
M.R.~is supported by the Berlin Mathematical School.
F.V.~acknowledges support from the European Commission through
DEDALE (contract no.~665044) within the H2020 Framework Program.

We would like to thank Dave L.~Renfro
for bringing the paper \cite{WardStructureOfNonEnumerableSetsOfPoints} to our attention.

\bibliographystyle{abbrv}
\bibliography{references}

\appendix
\section{Notation}
\label{sub:Notation}

The symbol $\N$ denotes the \textbf{natural numbers} $\N = \{1,2,3,\dots\}$,
whereas $\N_0 = \{ 0 \} \cup \N$ stands for the natural numbers including zero.
Moreover, we set $\N_{\geq d} \coloneqq \{ n \in \N \colon n \geq d \}$ for $d \in \N$.
The number of elements of a set $M$ will be denoted by $|M| \in \N_0 \cup \{\infty\}$.
Furthermore, we write $\underline{n} \coloneqq \{k \in \N \,:\, k \leq n\}$ for $n \in \N_0$.
In particular, $\underline{0} = \emptyset$.

For two sets $A,B$, a map $f : A \to B$, and $C \subset A$, we write $f|_{C}$
for the \textbf{restriction of} $f$ \textbf{to} $C$.
For a set $A$, we denote by $\chi_A = \Indicator_A$ the
\textbf{indicator function of} $A$, so that $\chi_A (x) = 1$ if $x \in A$
and $\chi_A (x) = 0$ otherwise. For any $\R$-vector space $\mathcal{Y}$
we write $A + B \coloneqq \{ a + b \,:\, a \in A, b \in B \}$ and
$\lambda A \coloneqq \{\lambda a \,:\, a \in A\}$,
for $\lambda \in \R$ and subsets $A,B \subset \mathcal{Y}$.

The \textbf{algebraic dual space} of a $\mathbb{K}$-vector space $\mathcal{Y}$
(with $\mathbb{K} = \R$ or $\mathbb{K} = \CC$),
that is the space of all linear functions $\varphi: \mathcal{Y} \to \mathbb{K}$,
will be denoted by $\mathcal{Y}^\ast$.
In contrast, if $\mathcal{Y}$ is a \emph{topological} vector space, we denote by
$\mathcal{Y}'$ the \textbf{topological dual space} of $\mathcal{Y}$,
which consists of all functions $\varphi \in \mathcal{Y}^\ast$ that are continuous.

Given functions $(f_i)_{i \in \underline{n}}$ with $f_i : X_i \to Y_i$, we consider three
different types of products between these maps:
The \textbf{cartesian product} of $f_1, \dots, f_n$ is
\[
  f_1 \times \cdots \times f_n :
  X_1 \times \cdots \times X_n \to Y_1 \times \dots \times Y_n, \quad
  (x_1, \dots, x_n) \mapsto \big( f_1 (x_1), \dots, f_n (x_n) \big) \, .
\]
The \textbf{tensor product} of $f_1, \dots, f_n$ is defined if $Y_1, \dots, Y_n \subset \CC$,
and is then given by
\[
  f_1 \otimes \cdots \otimes f_n :
  X_1 \times \cdots \times X_n \to \CC,\quad
  (x_1,\dots,x_n) \mapsto f_1(x_1) \cdots f_n (x_n) \, .
\]
Finally, the \textbf{direct sum}
of $f_1, \dots, f_n$ is defined if $X_1 = \dots = X_n$, and given by
\[
  f_1 \oplus \cdots \oplus f_n :
  X_1 \to Y_1 \times \cdots \times Y_n, \quad
  x \mapsto \big( f_1 (x), \dots, f_n (x) \big) \, .
\]

\medskip{}

The \textbf{closure} of a subset $A$ of a topological space
will be denoted by $\overline{A}$, while the \textbf{interior} of $A$ is
denoted by $A^\circ$.
For a metric space $(\mathcal{U}, d)$,
we write $B_\eps(x)\coloneqq \{y \in \mathcal{U}: d(x,y)< \eps \}$ for the
$\eps$-\textbf{ball around} $x$, where $x \in \mathcal{U}$ and $\eps > 0$.
Furthermore, for a Lipschitz continuous function
$f : \mathcal{U}_1 \to \mathcal{U}_2$ between two metric spaces
$\mathcal{U}_1$ and $\mathcal{U}_2$, we denote by $\mathrm{Lip}(f)$
the smallest possible \textbf{Lipschitz constant} for $f$.

\medskip{}

For $d \in \N$ and a function $f: A \to \R^{d}$ or a vector
$v \in \R^{d}$, we denote for $j\in \{1, \dots, d\}$ the
$j$\textbf{-th component} of $f$ or $v$ by $(f)_j$ or $v_j$, respectively.
As an example, the \textbf{Euclidean scalar product} on $\R^d$
is given by $\langle x,y \rangle = \sum_{i=1}^d x_i \, y_i$.
We denote the \textbf{Euclidean norm} by $|x| := \sqrt{\langle x,x \rangle}$ for $x \in \R^d$.
For a matrix $A \in \R^{n \times d}$, let $\| A \|_{\max}
  \coloneqq \max_{i = 1,\dots,n} \,\,
       \max_{j = 1,\dots,d}
         | A_{i,j} |$.
The \textbf{transpose} of a matrix $A \in \R^{n \times d}$ will be
denoted by $A^T \in \R^{d \times n}$.
For $A \in \R^{n \times d}$, $i \in \{1,\dots,n\}$ and $j \in \{1,\dots,d\}$,
we denote by $A_{i,-} \in \R^d$ the $i$-th row of $A$
and by $A_{-,j} \in \R^n$ the $j$-th column of $A$.
The Euclidean \textbf{unit sphere} in $\R^d$ will be denoted by $S^{d-1} \subset \R^d$.

For $n \in \N$ and $\emptyset \neq \Omega \subset \R^d$, we denote by $C(\Omega; \R^n)$ the
space of all \textbf{continuous functions defined on $\Omega$ with values in $\R^n$}.
If $\Omega$ is compact, then $(C(\Omega; \R^n),\|\cdot\|_{\sup})$ denotes the Banach space
of $\R^n$- valued continuous functions equipped with the supremum norm, where we use the
Euclidean norm on $\R^n$.
If $n = 1$, then we shorten the notation to $C(\Omega)$.

We note that on $C(\Omega)$, the supremum norm coincides with the $L^\infty(\Omega)$-norm,
if for all $x\in \Omega$ and for all $\eps > 0$ we have that
$\lambda(\Omega\cap B_{\eps}(x))>0,$ where $\lambda$ denotes the Lebesgue measure
on $\R^d$.
For any nonempty set $U \subset \R$, we
say that a function $f : U \to \R$ is \textbf{increasing}
if $f(x) \leq f(y)$ for every $x,y \in U$ with $x < y$.
If even $f(x) < f(y)$ for all such $x,y$,
we say that $f$ is \textbf{strictly increasing}.
The terms ``decreasing'' and ``strictly decreasing'' are defined analogously.

The \textbf{Schwartz space} will be denoted by $\mathcal{S}(\R^d)$ and the
space of \textbf{tempered distributions} by $\mathcal{S}'(\R^d)$.
The associated bilinear \textbf{dual pairing} will be denoted by
$\langle \cdot,\cdot \rangle_{\mathcal{S}',\mathcal{S}}$.
We refer to \mbox{\cite[Sections 8.1--8.3 and 9.2]{FollandRA}} for more details
on the spaces $\mathcal{S}(\R^d)$ and $\mathcal{S}'(\R^d)$.
Finally, the \textbf{Dirac delta distribution}
$\delta_x$ at $x \in \R^d$ is given by
$\delta_x : C(\R^d) \to \R, f \mapsto f(x)$.

\section{Auxiliary results: Operations with neural networks}
\label{sec:auxResult}

This part of the appendix is devoted to auxiliary results that are connected
with basic operations one can perform with neural networks and which we will frequently make use of
in the proofs below.

We start by showing that one can ``enlarge'' a given neural network
in such a way that the realizations of the original network and the enlarged network coincide.
To be more precise, the following holds:

\begin{lemma}\label{lem:enlarge}
Let $d, L\in \N$, $\Omega\subset \R^d$, and $\varrho:\R\to \R$.
Moreover, let $\Phi = \big( (A_1, b_1), \dots, (A_L, b_L) \big)$ be a neural network with
architecture $(d, N_1, \dots, N_L)$ and let
$\widetilde{N}_1, \dots, \widetilde{N}_{L-1} \in \N$ such that
$\widetilde{N}_\ell\geq N_\ell$ for all $\ell = 1, \dots, L-1$.
Then, there exists a neural network $\widetilde{\Phi}$ with architecture
$(d,\widetilde{N}_1,\dots,\widetilde{N}_{L-1},N_L)$ and such that
$\Realization^{\Omega}_{\varrho}(\Phi)=\Realization^{\Omega}_{\varrho}(\widetilde{\Phi})$.
%
%
\end{lemma}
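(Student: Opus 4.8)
The plan is to realize $\widetilde{\Phi}$ by padding each hidden layer of $\Phi$ with "dummy" neurons whose weights are all zero, so that they contribute nothing to the realization. Concretely, I would proceed layer by layer: for $\ell = 1, \dots, L-1$, replace the tuple $(A_\ell, b_\ell) \in \R^{N_\ell \times N_{\ell-1}} \times \R^{N_\ell}$ by an enlarged tuple $(\widetilde{A}_\ell, \widetilde{b}_\ell) \in \R^{\widetilde{N}_\ell \times \widetilde{N}_{\ell-1}} \times \R^{\widetilde{N}_\ell}$ obtained by embedding $A_\ell$ into the top-left block of $\widetilde{A}_\ell$ and $b_\ell$ into the first $N_\ell$ coordinates of $\widetilde{b}_\ell$, setting all remaining entries to zero; here we use the convention $\widetilde{N}_0 = N_0 = d$ and $\widetilde{N}_L = N_L$. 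For the final layer, $(A_L, b_L) \in \R^{N_L \times N_{L-1}} \times \R^{N_L}$ must become $(\widetilde{A}_L, b_L) \in \R^{N_L \times \widetilde{N}_{L-1}} \times \R^{N_L}$, so I only pad $A_L$ with zero columns on the right (the bias $b_L$ is unchanged since the output dimension is unchanged).

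The key step is then to verify by induction on $\ell$ that the intermediate activations of $\widetilde{\Phi}$ are exactly those of $\Phi$, padded with zeros. Precisely, writing $x_\ell$ for the activation of $\Phi$ at layer $\ell$ and $\widetilde{x}_\ell$ for that of $\widetilde{\Phi}$, I claim $\widetilde{x}_\ell = (x_\ell^T, 0, \dots, 0)^T \in \R^{\widetilde{N}_\ell}$ for $\ell = 0, \dots, L-1$. The base case $\ell = 0$ is immediate since $\widetilde{x}_0 = x = x_0$ and $\widetilde{N}_0 = N_0$. For the inductive step, the block structure of $\widetilde{A}_\ell$ gives $\widetilde{A}_\ell \widetilde{x}_{\ell-1} + \widetilde{b}_\ell = ((A_\ell x_{\ell-1} + b_\ell)^T, 0, \dots, 0)^T$, because the zero columns of $\widetilde{A}_\ell$ annihilate the padding in $\widetilde{x}_{\ell-1}$ and the zero rows together with zero bias entries produce zeros in the extra coordinates. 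Applying $\varrho$ componentwise preserves this — one does need to note that $\varrho(0)$ need not equal $0$, so strictly speaking the padding coordinates become the constant $\varrho(0)$ rather than $0$; this is harmless since those coordinates are multiplied by zero columns at the next layer, but it means the induction hypothesis should read "the first $N_\ell$ coordinates of $\widetilde{x}_\ell$ equal $x_\ell$" rather than asserting the tail is zero. With this corrected hypothesis, the final layer computes $\widetilde{A}_L \widetilde{x}_{L-1} + b_L = A_L x_{L-1} + b_L = x_L$, since the zero columns of $\widetilde{A}_L$ kill the tail of $\widetilde{x}_{L-1}$, giving $\Realization^\Omega_\varrho(\widetilde{\Phi})(x) = x_L = \Realization^\Omega_\varrho(\Phi)(x)$ for all $x \in \Omega$.

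This argument is essentially bookkeeping, so I do not expect a genuine obstacle; the only subtlety worth stating carefully is the one just mentioned, namely that $\varrho(0) \neq 0$ in general, which forces the induction hypothesis to track only the first $N_\ell$ coordinates of the enlarged activations and to rely on the zero columns at the subsequent layer to discard the (possibly nonzero) padding. It is also convenient to first reduce to the case $\widetilde{N}_\ell = N_\ell + 1$ for a single index $\ell$ and then iterate, or alternatively to just do the general padding in one step as above; either way the proof is a direct computation.
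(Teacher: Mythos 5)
Your proposal is correct and is essentially identical to the paper's proof, which pads each hidden layer with zero blocks in exactly the way you describe. Your extra care about $\varrho(0)$ possibly being nonzero (so the padded coordinates carry the constant $\varrho(0)$, which is then annihilated by the zero columns of the next layer) is a valid refinement of a point the paper dismisses as clear.
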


\begin{proof}
Setting $N_0 \coloneqq \widetilde{N}_0 \coloneqq d$, and $\widetilde{N}_L \coloneqq N_L$,
we define
\(
  \widetilde{\Phi}
  \coloneqq \left(
              (\widetilde{A}_1,\widetilde{b}_1),
              \dots,
              (\widetilde{A}_L,\widetilde{b}_L)
            \right)
\)
by
\begin{align*}
  \widetilde{A}_\ell
  \coloneqq \left(
       \begin{array}{l l}
         A_\ell
         & 0_{N_\ell \times (\widetilde{N}_{\ell-1}-N_{\ell-1})} \\
         0_{(\widetilde{N}_\ell-N_\ell)\times N_{\ell-1} }
         & 0_{
              (\widetilde{N}_\ell-N_\ell)
              \times (\widetilde{N}_{\ell-1}- N_{\ell-1})
             }
       \end{array}
     \right)
  \in \R^{\widetilde{N}_\ell\times \widetilde{N}_{\ell-1}}
  \qquad \text{and} \qquad
  \widetilde{b}_\ell
  \coloneqq \begin{pmatrix}
              b_\ell \\ 0_{\widetilde{N}_\ell-N_\ell}
            \end{pmatrix}
     \in \R^{\widetilde{N}_\ell},
\end{align*}
for $\ell = 1, \dots, L$.
Here, $0_{m_1 \times m_2}$ and $0_k$ denote the zero-matrix in $\R^{m_1 \times m_2}$
and the zero vector in $\R^k$, respectively.
Clearly, $\Realization_\varrho^\Omega(\widetilde{\Phi}) = \Realization_\varrho^\Omega(\Phi)$.
This yields the claim.
\end{proof}

Another operation that we can perform with networks is \emph{concatenation},
as given in the following definition.

\begin{definition}\label{def:Concatenation}
Let $L_1, L_2 \in \N$ and let $\vphantom{\sum_j}
\Phi^1 = \big( (A_1^1,b_1^1), \dots, (A_{L_1}^1,b_{L_1}^1) \big),
\Phi^2 = \big((A_1^2,b_1^2), \dots, (A_{L_2}^2,b_{L_2}^2) \big)$
be two neural networks such that the input layer of $\Phi^1$ has the same
dimension as the output layer of $\Phi^2$.
Then, $\Phi^1 \conc \Phi^2$ denotes the following $L_1+L_2-1$ layer network:
\[
  \Phi^1 \conc \Phi^2
  \coloneqq \big(
       (A_1^2,b_1^2),
       \dots,
       (A_{L_2-1}^2,b_{L_2-1}^2),
       (A_{1}^1 A_{L_2}^2, A_{1}^1 b^2_{L_2} + b_1^1),
       ({A}_{2}^1, b_{2}^1),
       \dots,
       (A_{L_1}^1, b_{{L_1}}^1)
     \big).
\]
Then, we call $\Phi^1 \conc \Phi^2$ the
\textbf{concatenation of} $\Phi^1$ \textbf{and} $\Phi^2$.
\end{definition}

One directly verifies that for every $\varrho: \R \to \R$ the definition of
concatenation is reasonable, that is, if $d_i$ is the dimension of the input
layer of $\Phi^i$, $i = 1,2$, and if $\Omega \subset \R^{d_2}$, then
$\Realization_\varrho^\Omega(\Phi^1 \conc \Phi^2)
= \Realization_\varrho^{\R^{d_1}}(\Phi^1) \circ \Realization_\varrho^\Omega(\Phi^2)$.
If $\Phi^2$ has architecture $(d, N_1, \dots, N_{L_2})$ and $\Phi^1$
has architecture $(N_{L_2}, \widetilde{N}_{1},\dots,\widetilde{N}_{L_1-1}, \widetilde{N}_{L_1})$,
then $\Phi^1 \conc \Phi^2$ has architecture
$(d, N_1, \dots, N_{L_2 - 1},  \widetilde{N}_{1}, \dots, \widetilde{N}_{L_1})$.
Therefore, $N(\Phi^1 \conc \Phi^2) = N(\Phi^1) + N(\Phi^2) - 2 N_{L_2}$.

We close this section by showing that under mild assumptions on $\varrho$---which are always
satisfied in practice---and on the network architecture,
one can construct a neural network which locally approximates the identity mapping
$\mathrm{id}_{\R^d}$ to arbitrary accuracy.
Similarly, one can obtain a neural network the realization of which
approximates the projection onto the $i$-th coordinate.
The main ingredient of the proof is the approximation
\(
  x \approx \frac{\varrho(x_0 + x) - \varrho(x_0)}{\varrho'(x_0)} ,
\)
which holds for $|x|$ small enough and where $x_0$ is chosen such that $\varrho'(x_0) \neq 0$.

\begin{proposition}\label{prop:Identity}
Let $\varrho : \R \to \R$ be continuous, and assume that there
exists $x_0 \in \R$ such that $\varrho$ is differentiable at $x_0$ with $\varrho' (x_0) \neq 0$.
Then, for every $\eps > 0, d \in \N, B > 0$ and every $L\in \N$ there exists a neural network
$\Phi_\eps^B\in \mathcal{NN}((d,d, \dots, d))$ with $L$ layers 
such that
\begin{itemize}
  \item $\left|\Realization_{\varrho}^{[-B,B]^d}(\Phi_{\eps}^B)(x)-x\right|
        \leq \eps$ for all $x\in [-B,B]^d$;

  \item $\Realization_{\varrho}^{[-B,B]^d}({\Phi}_{\eps}^B)(0)=0$;




  \item $\Realization_\varrho^{[-B,B]^d} (\Phi_\eps^B)$ is totally differentiable at $x = 0$
        with Jacobian matrix
        \(
          D \big( \Realization_\varrho^{[-B,B]^d} (\Phi_\eps^B) \big) (0)
          = \mathrm{id}_{\R^d};
        \)

  \item for $j \in \{1, \dots, d\}$,
        $\left(\Realization_{\varrho}^{[-B,B]^d}({\Phi}_{\eps}^B)\right)_j$
        is constant in all but the $j$-th coordinate.
\end{itemize}

Furthermore, for every $d, L \in \N$, $\eps > 0$, $B > 0$ and every
$i\in \{1,\dots,d\}$, one can construct a neural network
$\widetilde{\Phi}_{\eps,i}^B \in \mathcal{NN}((d,1, \dots, 1))$ with $L$ layers
such that
\begin{itemize}
  \item $\left|
           \Realization_{\varrho}^{[-B,B]^d}(\widetilde{\Phi}_{\eps,i}^B)(x)
           -x_i
         \right|\leq \eps$ for all $x\in [-B,B]^d$;

  \item $\Realization_{\varrho}^{[-B,B]^d}(\widetilde{\Phi}_{\eps,i}^B)(0)=0$;


  \item $\Realization_\varrho^{[-B,B]^d} (\Phi_{\eps,i}^B)$ is partially differentiable at $x = 0$,
        with
        \(
          \frac{\partial}{\partial x_i}\Big|_{x=0}
          \Realization_{\varrho}^{[-B,B]^d}(\widetilde{\Phi}_{\eps,i}^B)(x) = 1
        \);
        and


  \item $\Realization_{\varrho}^{[-B,B]^d}(\widetilde{\Phi}_{\eps,i}^B)$
        is constant in all but the $i$-th coordinate.
\end{itemize}

Finally, if $\varrho$ is increasing, then
$\big(\Realization_{\varrho}^{[-B,B]^d}({\Phi}_{\eps}^B)\big)_j$ and
$\Realization_{\varrho}^{[-B,B]^d}(\widetilde{\Phi}_{\eps,i}^B)$ are
monotonically increasing in every coordinate and for all $j\in \{1, \dots, d\}$.
\end{proposition}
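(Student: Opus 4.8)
The plan is to build the networks $\Phi_\eps^B$ and $\widetilde\Phi_{\eps,i}^B$ layer by layer, using the scalar approximation $x \approx \frac{\varrho(x_0 + \delta x) - \varrho(x_0)}{\delta\,\varrho'(x_0)}$, which is exact up to an error that is $o(\delta)$ uniformly for $|x| \le B$ (a consequence of differentiability of $\varrho$ at $x_0$). First I would treat the single-layer building block: for a small parameter $\delta > 0$, define the scalar network $g_\delta : [-B,B] \to \R$, $g_\delta(t) = \frac{1}{\delta\,\varrho'(x_0)}\bigl(\varrho(x_0 + \delta t) - \varrho(x_0)\bigr)$, which is a one-layer network (in the sense of our formalism, a composition $A_2\varrho(A_1 t + b_1) + b_2$) with $g_\delta(0) = 0$, $g_\delta'(0) = 1$, and $\sup_{|t|\le B}|g_\delta(t) - t| \to 0$ as $\delta \to 0$; moreover $g_\delta$ is increasing whenever $\varrho$ is. Applying this coordinatewise gives a two-layer network of architecture $(d,d,d)$ approximating $\mathrm{id}_{\R^d}$, with each output coordinate depending only on the corresponding input coordinate, vanishing at $0$, with Jacobian $\mathrm{id}$ at $0$, and monotone in each coordinate when $\varrho$ is increasing.

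Next I would handle arbitrary depth $L$. The naive idea is to concatenate $L-1$ copies of the two-layer block, but one must be careful: concatenation as in Definition~\ref{def:Concatenation} merges adjacent affine layers, so stacking $k$ two-layer blocks yields a $(k+1)$-layer network of the right architecture $(d,d,\dots,d)$. The subtlety is error propagation and, more importantly, the requirement that later blocks still receive inputs in a set where the approximation is valid. Since each block maps $[-B,B]^d$ into $[-B',B']^d$ for some $B'$ slightly larger than $B$ (the error can be made $\le \eps_0$), I would either (a) choose each block to be built for domain $[-B-1, B+1]^d$ rather than $[-B,B]^d$, ensuring all intermediate values stay in the valid range, or (b) rescale: precompose and postcompose with affine contractions/dilations so each block operates on a fixed domain. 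Option (a) is cleanest. Then by choosing the $\delta$ in each of the $L-1$ blocks small enough, the composed error is $\le \eps$; the property "$=0$ at $0$" is preserved exactly (each block fixes $0$); the coordinate-separation property is preserved because a composition of coordinatewise maps is coordinatewise; monotonicity is preserved because a composition of increasing maps is increasing; and the Jacobian at $0$ is the product of the blockwise Jacobians, hence $\mathrm{id}$. For the "totally differentiable at $0$" claim, note each block is totally differentiable at $0$ (it is $\varrho$ evaluated at an affine map, and $\varrho$ is differentiable at $x_0$, the relevant base point), so the composition is totally differentiable at $0$ by the chain rule for total derivatives.

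For the second family $\widetilde\Phi_{\eps,i}^B \in \mathcal{NN}((d,1,\dots,1))$: the first affine map is $x \mapsto x_i$ (i.e.\ $A_1 = e_i^T$, a $1\times d$ row), reducing to the scalar problem, after which I apply the scalar approximate-identity network $g_\delta$ composed $L-1$ times in dimension one, again enlarging the domain to absorb propagated errors. This gives a width-one, $L$-layer network; the value at $0$ is $0$, the partial derivative in $x_i$ at $0$ is $1$ (chain rule), the realization depends on $x$ only through $x_i$ hence is constant in the other coordinates, and it is increasing in $x_i$ (equivalently, in every coordinate, trivially) when $\varrho$ is. I expect the main obstacle to be the bookkeeping in the depth-$L$ construction: making the domain-enlargement uniform so that all $L-1$ intermediate layers receive arguments in a region where the first-order Taylor estimate for $\varrho$ at $x_0$ holds with the desired accuracy, and then choosing the $L-1$ scale parameters $\delta_1,\dots,\delta_{L-1}$ in the correct order (innermost last, or via a crude induction on the number of layers) so the cumulative approximation error telescopes below $\eps$. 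Everything else is routine once the scalar block and its four properties (value at $0$, derivative at $0$, uniform closeness to identity, monotonicity) are established.
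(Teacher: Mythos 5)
Your proposal is correct and follows essentially the same route as the paper: the scalar block $g_\delta$ is exactly the paper's $\varrho_C$ with $\delta = 1/C$, the domain enlargement to absorb propagated errors matches the paper's use of $[-B-L\eps,\,B+L\eps]$ with per-block error $\eps/(dL)$, and the depth-$L$ construction, Jacobian/chain-rule argument, monotonicity, and the width-one projection variant all coincide with the paper's proof. The only (trivial) omission is the degenerate case $L=1$, where one simply takes the affine network $((\mathrm{id}_{\R^d},0))$ or $((e_i^T,0))$ with no activation applied.
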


\begin{proof}
We first consider the special case $L = 1$.
Here, we can take $\Phi_\eps^B \coloneqq ( (\mathrm{id}_{\R^d}, 0) )$
and ${\Phi_{\eps,i}^B \coloneqq ( (e_i ,0) )}$, with  $e_i \in \R^{1 \times d}$ denoting the
$i$-th standard basis vector in ${\R^d \cong \R^{1 \times d}}$.
In this case, $\Realization_\varrho^{[-B,B]^d} (\Phi_\eps^B) = \identity_{\R^d}$
and $\Realization_\varrho^{[-B,B]^d}(\Phi_{\eps,i}^B) (x) = x_i$ for all $x \in [-B,B]^d$,
which implies that all claimed properties are satisfied.
Thus, we can assume in the following that $L \geq 2$.

Without loss of generality, we only consider the case $\eps \leq 1$.
Define $\eps' \coloneqq \eps / (dL)$.
Let $x_0 \in \R$ be such that $\varrho$ is differentiable at $x_0$ with $\varrho' (x_0) \neq 0$.
We set $r_0 \coloneqq \varrho(x_0)$ and $s_0\coloneqq \varrho'(x_0)$.
Next, for $C > 0$, we define
\[
  \varrho_C : [-B-L\eps, B+L\eps] \to \R, \quad
  x \mapsto \frac{C}{s_0} \cdot \varrho\left(\frac{x}{C}
            + x_0\right)- \frac{C r_0}{s_0}.
\]
We claim that there is some $C_0 > 0$ such that $|\varrho_C(x) - x| \leq \eps'$
for all $x \in [-B-L\eps,B+L\eps]$ and all $C \geq C_0$. To see this, first note
by definition of the derivative that there is some $\delta > 0$ with
\[
  |\varrho(t + x_0) - r_0 - s_0 t|
  \leq \frac{|s_0| \cdot \eps'}{1+B+L} \cdot |t|
  \qquad \text{ for all } t \in \R \text{ with } | t | \leq \delta.
\]
Here we implicitly used that $s_0 = \varrho'(x_0) \neq 0$ to ensure that
the right-hand side is a \emph{positive} multiple of $|t|$.
Now, set $C_0 \coloneqq (B+L)/\delta$, and let $C \geq C_0$ be arbitrary.
Note because of $\eps' \leq \eps \leq 1$ that every $x \in [-B-L\eps,B+L\eps]$
satisfies $| x | \leq B+L$.
Hence, if we set $t \coloneqq x/C$, then $| t | \leq \delta$. Therefore,
\[
  | \varrho_C (x) - x |
  = \left| \frac{C}{s_0} \right|
    \cdot \big| \varrho(t + x_0) - r_0 - s_0 t \big|
  \leq \left| \frac{C}{s_0} \right|
       \cdot \frac{| s_0 | \cdot \eps'}{1+B+L}
       \cdot \left| \frac{x}{C} \right|
  \leq \eps' .
\]
Note that $\varrho_C$ is differentiable at $0$
with derivative $\varrho_C ' (0) = \frac{C}{s_0} \varrho' (x_0) \frac{1}{C} = 1$,
thanks to the chain rule.

\medskip{}

Using these preliminary observations, we now construct the neural networks $\Phi_\eps^B$
and $\Phi_{\eps,i}^B$.
Define ${\Phi_0^C \coloneqq \big( (A_1, b_1),(A_2,b_2) \big)}$, where
\[
  A_1 \coloneqq \frac{1}{C} \cdot \mathrm{id}_{\R^d} \in \R^{d \times d},
  \quad
  b_1 \coloneqq x_0 \cdot (1, \dots, 1)^T \in \R^d,
  \quad
  A_2 \coloneqq \frac{C}{s_0} \cdot \mathrm{id}_{\R^d} \in \R^{d \times d},
  \quad
  b_2 \coloneqq - \frac{C r_0}{s_0} \cdot (1,\dots,1)^T \in \R^d.
\]

%
Note $\Phi_0^C \in \cN((d,d,d))$.
To shorten the notation, let $\Omega\coloneqq [-B, B]^d$ and $J = [-B, B]$.
It is not hard to see that
${\Realization_{\varrho}^{\Omega}(\Phi_0^C) = \varrho_C|_J \times \cdots \times \varrho_C|_J}$,
where the cartesian product has $d$ factors.
We define ${\Phi_C \coloneqq \Phi_0^C \conc \Phi_0^C \conc \cdots \conc \Phi_0^C}$,
where we take $L-2$ concatenations (meaning $L-1$ factors, so that $\Phi_C = \Phi_0^C$ if $L = 2$).
We obtain $\Phi_C \in \cN ((d,\dots,d))$ (with $L$ layers) and 
\begin{equation}
  \Realization_{\varrho}^{\Omega}(\Phi_C)(x)
  = \big(
      \varrho_C \circ \varrho_C \circ \dots \circ \varrho_C(x_i)
    \big)_{i = 1, \dots, d}
  \quad \text{for all} \quad x \in \Omega ,
  \label{eq:IdentityTensorProductStructure}
\end{equation}
where $\varrho_C$ is applied $L-1$ times.

Since $|\varrho_C (x) -x | \leq \eps' \leq \eps$
for all $x \in [-B-L\eps,B+L\eps]$, it is not hard to see by induction that
\[
  |(\varrho_C \circ \cdots \circ \varrho_C) (x) - x|
  \leq t \cdot \eps' \leq t \cdot \eps
  \qquad \text{ for all } \, x \in [-B, B] ,
\]
where $\varrho_C$ is applied $t \leq L$ times.
Therefore, since $\eps' = \eps / (dL)$, we conclude for $C \geq C_0$ that
\[
  \left|\Realization_{\varrho}^{\Omega}(\Phi_{C})(x) - x \right| \leq \eps
  \quad \text{for all} \quad x \in \Omega .
\]
As we saw above, $\varrho_C$ is differentiable at $0$
with $\varrho_{C}(0) = 0$ and $\varrho_{C}'(0)=1$.
By induction, we thus get
$\frac{d}{dx}\big|_{x=0} (\varrho_C \circ \cdots \circ \varrho_C)(x) = 1$,
where the composition has at most $L$ factors.
Thanks to Equation~\eqref{eq:IdentityTensorProductStructure}, this shows that
$\Realization_\varrho^\Omega (\Phi_C)$ is totally differentiable at $0$, with
$D (\Realization_\varrho^\Omega (\Phi_C)) (0) = \mathrm{id}_{\R^d}$, as claimed.

Also by Equation \eqref{eq:IdentityTensorProductStructure}, we see
that for every $j \in \{1, \dots, d\}$,
$\big( \Realization_{\varrho}^{\Omega}(\Phi_{C})(x) \big)_j$ is constant in all
but the $j$-th coordinate.
Additionally, if $\varrho$ is increasing, then $s_0 > 0$, so that $\varrho_C$ is also increasing,
and hence $\big( \Realization_{\varrho}^{\Omega}(\Phi_{C}) \big)_j$ is increasing
in the $j$-th coordinate, since compositions of increasing functions are increasing.
Hence, $\Phi_\eps^B\coloneqq \Phi_{C}$ satisfies the desired properties.

\medskip{}

We proceed with the second part of the proposition.
We first prove the statement for $i = 1$.
Let $\widetilde{\Phi}_1^C \coloneqq \big( (A_1', b_1'),(A_2',b_2') \big)$, where
\begin{align*}
    A_1' \coloneqq \left(
              \begin{array}{cccc}
                \frac{1}{C} & 0 & \cdots & 0
              \end{array}
            \right) \in \R^{1 \times d},
    \quad
    b_1' \coloneqq x_0 \in \R^1 ,
    \quad
    A_2' \coloneqq \frac{C}{s_0} \in \R^{1 \times 1},
    \quad
    b_2' \coloneqq - \frac{C r_0}{s_0} \in \R^1.
\end{align*}
We have $\widetilde{\Phi}_1^C \in \cN((d, 1, 1))$.
Next, define $\widetilde{\Phi}_2^C \coloneqq \big( (A_1'', b_1''),(A_2'',b_2'') \big)$, where
\begin{align*}
  A_1'' \coloneqq \frac{1}{C} \in \R^{1 \times 1},
  \quad
  b_1'' \coloneqq x_0 \in \R^1,
  \quad
  A_2'' \coloneqq \frac{C}{s_0} \in \R^{1 \times 1},
  \quad
  b_2'' \coloneqq - \frac{C r_0}{s_0} \in \R^1 .
\end{align*}
We have $\widetilde{\Phi}_2^C \in \cN ((1, 1, 1))$. 
Setting $\widetilde{\Phi}_C \coloneqq \widetilde{\Phi}_2^C \conc \dots \conc
\widetilde{\Phi}_2^C \conc \widetilde{\Phi}_1^C$,
where we take $L-2$ concatenations (meaning $L-1$ factors), yields a neural network
$\widetilde{\Phi}_C \in \cN((d,1,\dots,1))$ (with $L$ layers) such that
%
\[
  \Realization_{\varrho}^{\Omega}(\widetilde{\Phi}_C)(x)
  \coloneqq \big( \varrho_C \circ \varrho_C \circ \dots \circ \varrho_C\big) (x_1)
  \quad \text{for all} \quad x \in \Omega ,
\]
where $\varrho_C$ is applied $L-1$ times.
%
Exactly as in the proof of the first part, this implies for $C \geq C_0$ that
\[
  \left|\Realization_{\varrho}^{\Omega}(\widetilde{\Phi}_{C})(x) - x_1 \right|
  \leq \eps
  \quad \text{for all} \quad x \in \Omega .
\]
Setting $\widetilde{\Phi}_{\eps,1}^B \coloneqq \widetilde{\Phi}_{C}$ and repeating
the previous arguments yields the claim for $i = 1$. Permuting the columns
of $A_1'$ yields the result for arbitrary $i \in \{1, \dots, d\}$.

Now, let $\varrho$ be increasing.
Then, $s_0 > 0$, and thus $\varrho_C$ is increasing for every $C > 0$.
Since $\Realization_{\varrho}^\Omega(\widetilde{\Phi}_{C})$ is the composition
of componentwise monotonically increasing functions, the claim regarding the monotonicity follows.
\end{proof}

\section{Proofs and results connected to Section~\ref{sec:Shape}}
\label{app:Convex}

\subsection{Proof of Theorem~\ref{thm:NoConvexityEver}}
\label{app:NoConvex}

We first establish the star-shapedness of the set of all realizations of
neural networks, which is a direct consequence of the fact that
the set is invariant under scalar multiplication.
The following proposition provides the details.

\begin{proposition}\label{prop:starshaped}
  Let $S = (d, N_1, \dots, N_L)$ be a neural network architecture, let $\Omega \subset \R^d$,
  and let $\varrho: \R \to \R$.
  Then, the set $\cRN_\varrho^\Omega(S)$
  is closed under scalar multiplication and is star-shaped with respect to the origin.
\end{proposition}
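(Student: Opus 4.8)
The plan is to prove the two claimed properties in turn: first that $\cRN_\varrho^\Omega(S)$ is closed under scalar multiplication, and then deduce star-shapedness with respect to $0$ as an immediate consequence. The key observation is that scaling the realization of a network amounts to scaling only the last affine map of the network, which does not touch any activation function and hence does not affect the rest of the computation.

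First I would fix $f \in \cRN_\varrho^\Omega(S)$ and a scalar $c \in \R$. By definition there is a network $\Phi = \big( (A_1,b_1), \dots, (A_L,b_L) \big) \in \cN(S)$ with $\Realization_\varrho^\Omega(\Phi) = f$. Define the new network $\Phi_c \coloneqq \big( (A_1,b_1), \dots, (A_{L-1},b_{L-1}), (c\,A_L, c\,b_L) \big)$. Since only the last layer has been modified and the dimensions are unchanged, $\Phi_c \in \cN(S)$. Running the realization scheme, the intermediate quantities $x_0, x_1, \dots, x_{L-1}$ for $\Phi_c$ coincide exactly with those for $\Phi$ (they depend only on $(A_1,b_1),\dots,(A_{L-1},b_{L-1})$), and the final step gives $(x_L)_{\Phi_c} = (c A_L) x_{L-1} + (c b_L) = c\,\big( A_L x_{L-1} + b_L \big) = c \cdot f(x)$ for every $x \in \Omega$. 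Hence $\Realization_\varrho^\Omega(\Phi_c) = c\,f \in \cRN_\varrho^\Omega(S)$, which proves closedness under scalar multiplication.

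For star-shapedness with respect to the origin, I first note that $0 \in \cRN_\varrho^\Omega(S)$: take any $\Phi \in \cN(S)$ and apply the previous step with $c = 0$, or directly observe that the network with all weights zero realizes the zero function. Now let $g \in \cRN_\varrho^\Omega(S)$ and $\lambda \in [0,1]$; then $\lambda \cdot 0 + (1-\lambda) g = (1-\lambda) g$, and since $1 - \lambda \in \R$, closedness under scalar multiplication gives $(1-\lambda) g \in \cRN_\varrho^\Omega(S)$. Thus the whole segment $\{\lambda \cdot 0 + (1-\lambda) g : \lambda \in [0,1]\}$ lies in $\cRN_\varrho^\Omega(S)$, so $0$ is a center and the set is star-shaped.

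There is no real obstacle here — the only mild point to be careful about is the bookkeeping that modifying the last layer keeps the architecture $S$ intact (the matrix $c A_L$ still lies in $\R^{N_L \times N_{L-1}}$ and $c b_L \in \R^{N_L}$), and that the continuity/Lipschitz hypotheses on $\varrho$ play no role whatsoever in this particular statement. So the "hardest" part is simply writing down the modified network and verifying the realization identity, which is a one-line computation.
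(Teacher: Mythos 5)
Your proof is correct and follows essentially the same route as the paper's: scale the final affine map $(A_L,b_L)$ by the scalar to get closedness under scalar multiplication, then obtain $0$ as a center since the segment from $0$ to $g$ consists of scalar multiples of $g$. No issues.
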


\begin{proof}
Let $f \in \cRN_\varrho^\Omega(S)$ and choose
$\Phi \coloneqq \big( (A_1,b_1), \dots, (A_L,b_L)\big) \in \cN(S)$ satisfying
${f = \Realization_\varrho^\Omega(\Phi)}$.
For ${\lambda \in \R}$, define
\(
  \widetilde{\Phi}
  \coloneqq \big(
       (A_1,b_1), \dots, (A_{L-1}, b_{L-1}), (\lambda A_{L}, \lambda b_L)
     \big)
\)
and observe that $\widetilde{\Phi} \in \cN(S)$ and furthermore
${\lambda f = \Realization^\Omega_\varrho(\widetilde{\Phi}) \in\cRN^\Omega_\varrho(S)}$.
This establishes the closedness of $\cRN^\Omega_\varrho(S)$ under scalar multiplication.


We can choose $\lambda = 0$ in the
argument above and obtain ${0 \in \cRN^\Omega_\varrho(S)}$.
For every $f \in \cRN^\Omega_\varrho(S)$ the line $\{\lambda f \colon \lambda \in [0,1]\}$
between $0$ and $f$ is contained in $\cRN^\Omega_\varrho(S)$,
since $\cRN^\Omega_\varrho(S)$ is closed under scalar multiplication.
We conclude that $\cRN^\Omega_\varrho(S)$ is star-shaped with respect to the origin.
\end{proof}

Our next goal is to show that $\cRN^\Omega_{\varrho}(S)$ cannot contain infinitely many
linearly independent centers.

As a preparation, we prove two related results which show that the class $\cRN_\varrho^\Omega(S)$
is ``small''.
The main assumption for guaranteeing this is that the activation function should be
locally Lipschitz continuous.

\begin{lemma}\label{lem:LipschitzImagesOfNetworkClass}
  Let $S = (d, N_1, \dots, N_L)$ be a neural network architecture, set $N_0 \coloneqq d$,
  and let $M \in \N$.
  Let $\varrho : \R \to \R$ be locally Lipschitz continuous.
  Let $\Omega \subset \R^d$ be compact, and let $\Lambda : C(\Omega; \R^{N_L}) \to \R^M$
  be locally Lipschitz continuous, with respect to the uniform norm on $C(\Omega; \R^{N_L})$.

  If $M > \sum_{\ell=1}^L (N_{\ell - 1} + 1) N_\ell$,
  then $\Lambda (\cRN_\varrho^\Omega (S)) \subset \R^M$ is a set of Lebesgue measure zero.
\end{lemma}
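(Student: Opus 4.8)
The key observation is that the map from network weights to the realization is locally Lipschitz (this follows from Proposition~\ref{prop:RealizationContinuity}), so the composition $\Lambda \circ \Realization_\varrho^\Omega$ sends the finite-dimensional parameter space $\cN(S) \cong \R^{P}$, where $P = \sum_{\ell=1}^L (N_{\ell-1}+1)N_\ell$, into $\R^M$ in a locally Lipschitz manner. Since $\cRN_\varrho^\Omega(S) = \Realization_\varrho^\Omega(\cN(S))$, we have $\Lambda(\cRN_\varrho^\Omega(S)) = (\Lambda \circ \Realization_\varrho^\Omega)(\R^P)$, and we are asking about the image of $\R^P$ under a locally Lipschitz map into $\R^M$ with $M > P$. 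The plan is to invoke the standard fact that a locally Lipschitz (indeed, merely locally H\"older of sufficiently high exponent, but Lipschitz suffices here) map from $\R^P$ to $\R^M$ with $M > P$ has image of $M$-dimensional Lebesgue measure zero.

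First I would recall why $\Realization_\varrho^\Omega$ is locally Lipschitz: this is exactly the second assertion of Proposition~\ref{prop:RealizationContinuity}, using that $\varrho$ is locally Lipschitz continuous. Hence $F \coloneqq \Lambda \circ \Realization_\varrho^\Omega : \R^P \to \R^M$ is locally Lipschitz as a composition of locally Lipschitz maps (the target $C(\Omega;\R^{N_L})$ is equipped with the uniform norm, matching the hypothesis on $\Lambda$). Next, write $\R^P = \bigcup_{n \in \N} K_n$ as a countable union of compact sets (e.g.\ closed balls of radius $n$); on each $K_n$ the map $F$ is globally Lipschitz with some constant $C_n$. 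It then suffices to show each $F(K_n)$ is a Lebesgue null set in $\R^M$, since a countable union of null sets is null.

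For the core step, fix a compact set $K \subset \R^P$ and a Lipschitz constant $C$ for $F|_K$. Cover $K$ by $N \lesssim \delta^{-P}$ cubes of side length $\delta$; the image of each such cube under $F$ has diameter at most $C\sqrt{P}\,\delta$, hence lies in a ball of $\R^M$ of radius $C\sqrt{P}\,\delta$, which has Lebesgue measure at most $c_M (C\sqrt{P}\,\delta)^M$. Summing, $\lambda^M(F(K)) \leq N \cdot c_M (C\sqrt{P})^M \delta^M \lesssim \delta^{M-P} \to 0$ as $\delta \to 0$, using $M > P$. Therefore $\lambda^M(F(K)) = 0$, completing the argument. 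I expect the main (and only nontrivial) obstacle to be making the measure-zero covering estimate fully rigorous — in particular keeping track that the number of $\delta$-cubes needed to cover a fixed compact set grows only like $\delta^{-P}$ while each image piece shrinks like $\delta^M$ — but this is a routine geometric measure theory computation, and everything else is bookkeeping about composing Lipschitz maps and reducing to the compact case.
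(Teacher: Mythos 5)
Your proposal is correct and follows essentially the same route as the paper: both reduce the claim to the fact that $\Lambda \circ \Realization_\varrho^\Omega$ is a locally Lipschitz map from the $P$-dimensional parameter space $\cN(S)$ (with $P = \sum_{\ell=1}^L (N_{\ell-1}+1)N_\ell < M$) into $\R^M$, using Proposition~\ref{prop:RealizationContinuity} for the local Lipschitz continuity of the realization map. The only difference is cosmetic: you justify the final ``image is null'' step directly via an exhaustion by compact sets and a $\delta$-cube covering estimate, whereas the paper extends the map to a locally Lipschitz self-map of $\R^M$ restricted to the null set $\R^P \times \{0\}^{M-P}$ and cites the standard fact that such maps preserve Lebesgue null sets; both justifications are valid.
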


\begin{proof}
  Since $\varrho$ is locally Lipschitz continuous,
  Proposition~\ref{prop:RealizationContinuity} (which will be proved completely independently)
  shows that the realization map
  \[
    \Realization_\varrho^{\Omega}:
    \left(
      \cN(S), \|\cdot\|_{\cN(S)}
    \right)
    \to \big( C(\Omega; \R^{N_L}), \|\cdot\|_{\sup} \big)
  \]
  is locally Lipschitz continuous.
  Here, the normed vector space $\cN (S)$ is per definition isomorphic to
  \(
    \vphantom{\sum_j}
    \prod_{\ell = 1}^L
      \big(
        \R^{N_{\ell - 1} \times N_\ell} \times \R^{N_\ell}
      \big)
  \)
  and thus has dimension $D := \sum_{\ell=1}^L (N_{\ell - 1} + 1) N_\ell$,
  so that there is an isomorphism $J : \R^D \to \cN(S)$.

  As a composition of locally Lipschitz continuous functions, the map
  \[
    \Gamma :
    \R^M \to \R^M,
    (x_1, \dots, x_M) \mapsto \Lambda \Big(
                                        \Realization_\varrho^\Omega \big( J (x_1,\dots,x_D) \big)
                                      \Big)
  \]
  is locally Lipschitz continuous, and satisfies
  \(
    \Lambda \big( \cRN_\varrho^\Omega(S) \big)
    = \mathrm{ran} (\Gamma)
    = \Gamma(\R^D \times \{0\}^{M-D})
  \).
  But it is well known (see for instance \cite[Theorem~5.9]{AmannEscher}),
  that a locally Lipschitz continuous function between Euclidean spaces of the same dimension
  maps sets of Lebesgue measure zero to sets of Lebesgue measure zero.
  Hence, $\Lambda(\cRN_\varrho^\Omega(S)) \subset \R^M$ is a set of Lebesgue measure zero.
\end{proof}

As a corollary, we can now show that the class of neural network realizations
cannot contain a subspace of large dimension.

\begin{corollary}\label{cor:NeuralNetworkSetOnlyHasSmallSubspaces}
  Let $S = (d, N_1, \dots, N_L)$ be a neural network architecture, set $N_0 \coloneqq d$,
  and let $\varrho : \R \to \R$ be locally Lipschitz continuous.

  Let $\emptyset \neq \Omega \subset \R^d$ be arbitrary.
  If $V \subset C(\Omega; \R^{N_L})$ is a vector space
  with $V \subset \cRN_\varrho^\Omega (S)$,
  then ${\dim V \leq \sum_{\ell=1}^L (N_{\ell - 1} + 1) N_\ell}$.
\end{corollary}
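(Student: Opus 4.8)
The plan is to argue by contradiction: assume $\dim V \geq D+1$, where $D := \sum_{\ell=1}^L (N_{\ell-1}+1)N_\ell$ is the number of parameters of the architecture $S$, and reduce everything to the compact setting of Lemma~\ref{lem:LipschitzImagesOfNetworkClass} by evaluating at finitely many points of $\Omega$.

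First I would pass from the arbitrary domain $\Omega$ to a \emph{finite} (hence compact) subset. Pick a subspace $W \subseteq V$ with $\dim W = D+1$. Since $W$ is finite-dimensional and consists of continuous $\R^{N_L}$-valued functions that jointly separate the elements of $W$ (a function vanishing at every point of $\Omega$ is zero), there is a finite set $K = \{x_1,\dots,x_k\} \subset \Omega$ such that the restriction map $W \to C(K;\R^{N_L})$, $f \mapsto f|_K$, is injective. This $K$ can be produced greedily: as long as some nonzero $f \in W$ vanishes on the current finite set, adjoin a point where $f$ is nonzero; this strictly decreases the dimension of the subspace of $W$ vanishing on the set, so the process stops after at most $D+1$ steps. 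Writing $W|_K := \{f|_K : f \in W\}$, injectivity gives $\dim W|_K = D+1$, and since $\Realization_\varrho^\Omega(\Phi)|_K = \Realization_\varrho^K(\Phi)$ for every network $\Phi \in \cN(S)$, we get $W|_K \subseteq \cRN_\varrho^K(S)$.

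Next I would build a locally Lipschitz functional with full-dimensional image on $W|_K$. The evaluation map $E : C(K;\R^{N_L}) \to \R^{N_L k}$, $g \mapsto \big(g(x_1),\dots,g(x_k)\big)$, is linear and bounded with respect to the supremum norm, hence globally---in particular locally---Lipschitz. Its image $E(W|_K)$ is a $(D+1)$-dimensional subspace of $\R^{N_L k}$, so there is a linear map $P : \R^{N_L k} \to \R^{D+1}$ restricting to an isomorphism of $E(W|_K)$ onto $\R^{D+1}$ (choose a complement of $E(W|_K)$ and send it to $0$). Then $\Lambda := P \circ E : C(K;\R^{N_L}) \to \R^{D+1}$ is locally Lipschitz and satisfies $\Lambda(W|_K) = \R^{D+1}$, a set of positive Lebesgue measure. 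Applying Lemma~\ref{lem:LipschitzImagesOfNetworkClass} with $\Omega$ replaced by the compact set $K$, with this $\Lambda$, and with $M = D+1 > D$, we conclude that $\Lambda(\cRN_\varrho^K(S))$ has Lebesgue measure zero in $\R^{D+1}$. But $W|_K \subseteq \cRN_\varrho^K(S)$ forces $\R^{D+1} = \Lambda(W|_K) \subseteq \Lambda(\cRN_\varrho^K(S))$, a contradiction. Hence $\dim V \leq D$.

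The only points needing genuine care are the reduction to a finite separating set $K$ (ensuring the restriction $W \to C(K;\R^{N_L})$ is injective, so that dimensions are preserved) and the observation that Lemma~\ref{lem:LipschitzImagesOfNetworkClass} is applicable to a finite compact domain $K$; beyond that the argument is elementary linear algebra together with the already-established lemma.
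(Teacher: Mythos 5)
Your proof is correct and follows essentially the same route as the paper's: reduce to a finite (hence compact) set of evaluation points on which the $(D+1)$-dimensional subspace injects, build a locally Lipschitz map $\Lambda$ onto $\R^{D+1}$, and contradict Lemma~\ref{lem:LipschitzImagesOfNetworkClass}. The only difference is bookkeeping — you obtain the finite set by a greedy separation argument and then compose the full evaluation map with a linear projection, whereas the paper selects $D+1$ spanning point-evaluation functionals via a dual-space argument; both are valid.
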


\begin{proof}
Let $D \coloneqq \sum_{\ell=1}^L (N_{\ell - 1} + 1) N_\ell$.
Assume towards a contradiction that the claim of the corollary does not hold;
then there exists a subspace $V \subset C(\Omega; \R^{N_L})$ of dimension $\dim V = D + 1$
with $V \subset \cRN_{\varrho}^\Omega (S)$.
For $x \in \Omega$ and $\ell \in \underline{N_L}$,
let $\delta_x^{(\ell)} : C(\Omega; \R^{N_L}) \to \R, f \mapsto \big( f(x) \big)_\ell$.
Define
\(
  W
  \coloneqq \mathrm{span}
            \big\{
              \delta_x^{(\ell)} |_V \colon x \in \Omega, \ell \in \underline{N_L} \,
            \big\}
\),
and note that $W$ is a subspace of the finite-dimensional algebraic dual space $V^\ast$ of $V$.
In particular, $\dim W \leq \dim V^\ast = \dim V = D+1$, so that there are
$(x_1, \ell_1), \dots, (x_{D+1}, \ell_{D+1}) \in \Omega \times \underline{N_L}$ such that
$W = \mathrm{span} \big\{ \delta_{x_k}^{(\ell_k)} \colon k \in \underline{D+1} \big\}$.

\smallskip{}

We claim that the linear map
\[
  \Lambda_0 :
  V \to \R^{D+1},
  f \mapsto \big( [f(x_k)]_{\ell_k} \big)_{k \in \underline{D+1}}
\]
is surjective.
Since $\dim V = D+1 = \dim \R^{D+1}$, it suffices to show that $\Lambda_0$ is injective.
But if $\Lambda_0 f = 0$ for some $f \in V \subset C(\Omega; \R^{N_L})$,
and if $x \in \Omega$ and $\ell \in \underline{N_L}$ are arbitrary,
then $\delta_x^{(\ell)} = \sum_{k = 1}^{D+1} a_k \, \delta_{x_k}^{(\ell_k)}$ for certain
$a_1, \dots, a_{D+1} \in \R$.
Hence, $[f(x)]_\ell = \sum_{k=1}^{D+1} a_k [f(x_k)]_{\ell_k} = 0$.
Since $x \in \Omega$ and $\ell \in \underline{N_L}$ were arbitrary, this means $f \equiv 0$.
Therefore, $\Lambda_0$ is injective and thus surjective.

Now, let us define $\Omega' \coloneqq \{x_1, \dots, x_{D+1} \}$,
and note that $\Omega' \subset \R^d$ is compact.
Set $M \coloneqq D+1$, and define
\[
  \Lambda :
  C(\Omega' , \R^{N_L}) \to \R^{M},
  f \mapsto \big( [f(x_k)]_{\ell_k} \big)_{k \in \underline{D+1}} .
\]
It is straightforward to verify that $\Lambda$ is Lipschitz continuous.
Therefore, Lemma~\ref{lem:LipschitzImagesOfNetworkClass} shows that the set
$\Lambda(\cRN_\varrho^{\Omega'} (S)) \subset \R^M$ is a null-set.
However,
\[
  \Lambda \big( \cRN_\varrho^{\Omega'} (S) \big)
    = \Lambda \big(
                \{
                  f|_{\Omega'}
                  \colon
                  f \in \cRN_\varrho^\Omega (S)
                \}
              \big) 
    \supset \Lambda \big( \{ f|_{\Omega'} \colon f \in V \} \big)
    =       \Lambda_0 (V)
    =       \R^{D+1} = \R^M .
\]
This yields the desired contradiction.
\end{proof}

%
%

Now, the announced estimate for the number of linearly independent centers of the set
of all network realizations of a fixed size is a direct consequence.

\begin{proposition}\label{prop:Centres}
Let $S = (d, N_1, \dots, N_L)$ be a neural network architecture, let $\Omega \subset \R^d$,
and let $\varrho: \R \to \R$ be locally Lipschitz continuous.
Then, $\cRN^\Omega_\varrho(S)$ contains at most $\sum_{\ell = 1}^L (N_{\ell-1} + 1) N_{\ell}$
linearly independent centers, where $N_0 = d$.
That is, the number of linearly independent centers is bounded by the total number of parameters
of the underlying neural networks.
\end{proposition}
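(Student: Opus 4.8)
The plan is to deduce Proposition~\ref{prop:Centres} as an essentially immediate corollary of the structural results already established, namely Proposition~\ref{prop:starshaped} together with Corollary~\ref{cor:NeuralNetworkSetOnlyHasSmallSubspaces}. The key observation is that the set of centers of a star-shaped set that is \emph{closed under scalar multiplication} is itself a linear subspace. Concretely, let $\Rcal \coloneqq \cRN_\varrho^\Omega(S)$ and let $Z$ denote the set of all centers of $\Rcal$, i.e.\ all $f \in \Rcal$ such that $\{\lambda f + (1-\lambda) g : \lambda \in [0,1]\} \subset \Rcal$ for every $g \in \Rcal$. By Proposition~\ref{prop:starshaped} we already know $0 \in Z$ (so $Z \neq \emptyset$) and that $\Rcal$ is closed under scalar multiplication. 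The goal is to show $Z$ is a linear subspace of $C(\Omega;\R^{N_L})$ contained in $\Rcal$, and then invoke Corollary~\ref{cor:NeuralNetworkSetOnlyHasSmallSubspaces} to bound $\dim Z \leq \sum_{\ell=1}^L (N_{\ell-1}+1)N_\ell$. Since any collection of linearly independent centers lies inside $Z$, this bounds the number of linearly independent centers by the same quantity.

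First I would verify that $Z$ is closed under scalar multiplication: if $f \in Z$ and $c \in \R$, I want $cf \in Z$. For $c = 0$ this is clear. For $c \neq 0$ and arbitrary $g \in \Rcal$, note that $c^{-1}g \in \Rcal$ by scalar-multiplication-closedness, and since $f$ is a center, $\lambda f + (1-\lambda)(c^{-1}g) \in \Rcal$ for all $\lambda \in [0,1]$; multiplying by $c$ (again using closedness under scalar multiplication) gives $\lambda (cf) + (1-\lambda) g \in \Rcal$, so $cf \in Z$. Next I would verify $Z$ is closed under addition. This is the step that requires the most care: given $f_1, f_2 \in Z$ I must show $f_1 + f_2 \in Z$, i.e.\ that $f_1 + f_2 \in \Rcal$ and that it is a center. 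The standard argument is that, for star-shaped sets, the set of centers is convex, so $\tfrac12 f_1 + \tfrac12 f_2$ is a center; then scaling by $2$ (using the previous paragraph) shows $f_1 + f_2 \in Z$. To see that the midpoint is a center: for any $g \in \Rcal$ one writes the segment from $\tfrac12(f_1+f_2)$ to $g$ and, using that $f_1$ is a center to first connect $f_1$ to $g$, and that $f_2$ is a center to connect $f_2$ to points on that segment, a short interpolation shows every point of the segment lies in $\Rcal$. I would write this out carefully with explicit convex combinations.

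Once $Z$ is shown to be a linear subspace with $Z \subset \Rcal$, the proof concludes by directly applying Corollary~\ref{cor:NeuralNetworkSetOnlyHasSmallSubspaces}, which gives $\dim Z \leq \sum_{\ell=1}^L (N_{\ell-1}+1)N_\ell$ with $N_0 = d$. Any set of linearly independent centers is a linearly independent subset of $Z$, hence has cardinality at most $\dim Z$, which is exactly the claimed bound, equal to the total number of parameters $\sum_{\ell=1}^L (N_{\ell-1}+1)N_\ell$ of networks with architecture $S$.

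I expect the main obstacle to be the additive-closure step, i.e.\ proving cleanly that the midpoint of two centers is again a center. It is a classical fact that the set of centers (the \emph{kernel}) of a star-shaped set is convex, but since the paper works with a bare-hands definition, I would reproduce the interpolation argument explicitly rather than cite it: the technical point is to chase the convex combinations in the right order so that at each stage one is taking a convex combination of a center with an element of $\Rcal$, which is the only operation guaranteed to stay inside $\Rcal$. Everything else—scalar-multiplication closure of $Z$, and the final dimension count—is routine given the earlier results.
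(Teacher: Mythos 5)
Your proposal is correct, and it reaches the conclusion by the same final step as the paper (producing a linear subspace of $\cRN_\varrho^\Omega(S)$ and invoking Corollary~\ref{cor:NeuralNetworkSetOnlyHasSmallSubspaces}), but the intermediate lemma is genuinely different. You prove that the full set $Z$ of centers is itself a linear subspace, which forces you to establish the classical fact that the kernel of a star-shaped set is convex; your sketch of that interpolation (connect $f_2$ to $g$ first, then $f_1$ to the resulting point, with the combination coefficients $s = \tfrac{t/2}{1-t/2}$) is the right one and goes through. The paper instead takes $M = D+1$ linearly independent centers and shows by induction that their \emph{span} is contained in $\cRN_\varrho^\Omega(S)$, using the observation that if $V \subset A$ is a subspace, $A$ is closed under scalar multiplication, and $x_0 \in A$ is a center, then $V + \spn\{x_0\} \subset A$. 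That version never needs to know that the intermediate span elements are centers, so it sidesteps kernel convexity entirely: each induction step takes exactly one convex combination of a genuine center with an element of $A$, which is the only operation the hypotheses license. Your route proves the slightly stronger statement that $Z$ is a subspace (hence in particular convex), at the cost of the extra convexity argument you correctly flag as the delicate step; the paper's route is marginally more economical but yields only the containment $\spn\{x_1,\dots,x_M\} \subset \cRN_\varrho^\Omega(S)$, which is all the dimension count requires. Both are valid, and both ultimately rest on Proposition~\ref{prop:starshaped} and Corollary~\ref{cor:NeuralNetworkSetOnlyHasSmallSubspaces}.
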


\begin{proof}
%

Let us set $D \coloneqq \sum_{\ell = 1}^L (N_{\ell - 1} + 1) N_\ell$,
and assume towards a contradiction that $\cRN^\Omega_\varrho(S)$ contains
$M \coloneqq D+1$ linearly independent centers
$\Realization_\varrho^\Omega(\Phi_1), \dots, \Realization_\varrho^\Omega(\Phi_M)$.
Since $\cRN_\varrho^\Omega(S)$ is closed under multiplication with scalars, this implies
\[
  V
  \coloneqq \spn \left\{
            \Realization_\varrho^\Omega(\Phi_1), \dots, \Realization_\varrho^\Omega(\Phi_M)
          \right\}
  \subset \cRN_\varrho^\Omega (S) .
\]
Indeed, this follows by induction on $M$, using the following observation:
If $V$ is a vector space contained in a set $A$, if $A$ is closed under multiplication with scalars,
and if $x_0 \in A$ is a center for $A$, then $V + \spn \{x_0\} \subset A$.
To see this, let $\mu \in \R$ and $v \in V$.
There is some $\eps \in \{1,-1\}$ such that $\eps \mu = |\mu|$.
Now set $x \coloneqq \eps v \in V \subset A$ and $\lambda \coloneqq | \mu | / (1+| \mu |) \in [0,1]$.
Then,
\[
  v + \mu \, x_0
  = \eps \cdot (\eps v + | \mu | x_0)
  = \eps
    \cdot (1 + |\mu|)
    \cdot \left( \frac{1}{1+| \mu |} x + \frac{| \mu |}{1+| \mu |} x_0 \right)
  = \eps \cdot (1 + |\mu|) \cdot \big( \lambda x_0 + (1 - \lambda) x \big)
  \in A .
\]
Since the family $\big( \Realization_\varrho^\Omega (\Phi_k) \big)_{k \in \underline{M}}$
is linearly independent, we see $\dim V = M > D = \sum_{\ell = 1}^L (N_{\ell - 1} + 1) N_\ell$.
In view of Corollary~\ref{cor:NeuralNetworkSetOnlyHasSmallSubspaces},
this yields the desired contradiction.
\end{proof}

Next, we analyze the convexity of $\cRN^\Omega_\varrho(S)$.
As a direct consequence of Proposition~\ref{prop:Centres}, we see that $\cRN^\Omega_\varrho(S)$
is never convex if $\cRN^\Omega_\varrho(S)$ contains more than a certain number
of linearly independent functions.

\begin{corollary}\label{cor:NoConvexity}
  Let $S = (d, N_1, \dots, N_L)$ be a neural network architecture and let $N_0 \coloneqq d$.
  Let $\Omega \subset \R^d$, and let $\varrho: \R \to \R$ be locally Lipschitz continuous.

  If $\cRN^\Omega_\varrho(S)$ contains more than $\sum_{\ell = 1}^L (N_{\ell-1}+1)N_{\ell}$
  linearly independent functions, then $\cRN^\Omega_\varrho(S)$ is not convex.
\end{corollary}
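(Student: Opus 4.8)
The plan is to argue by contradiction, reducing everything immediately to Proposition~\ref{prop:Centres}. Suppose that $\cRN^\Omega_\varrho(S)$ contains more than $D \coloneqq \sum_{\ell = 1}^L (N_{\ell-1}+1)N_\ell$ linearly independent functions, and assume towards a contradiction that $\cRN^\Omega_\varrho(S)$ is convex. The first observation I would record is the essentially tautological fact that in a convex set \emph{every} point is a center, in the sense of the footnote following Theorem~\ref{thm:NoConvexityEver}: if $A$ is convex and $f \in A$, then for every $g \in A$ the segment $\{\lambda f + (1-\lambda)g : \lambda \in [0,1]\}$ is contained in $A$ by convexity, so $f$ is a center of $A$.

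Applying this to $A = \cRN^\Omega_\varrho(S)$, the set of centers of $\cRN^\Omega_\varrho(S)$ is all of $\cRN^\Omega_\varrho(S)$. Hence the maximal number of linearly independent centers of $\cRN^\Omega_\varrho(S)$ equals the maximal number of linearly independent elements of $\cRN^\Omega_\varrho(S)$, which by hypothesis is strictly larger than $D$. This contradicts Proposition~\ref{prop:Centres}, which states that $\cRN^\Omega_\varrho(S)$ contains at most $D$ linearly independent centers. Therefore $\cRN^\Omega_\varrho(S)$ cannot be convex.

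An equivalent route, which I would at most mention in passing, combines Proposition~\ref{prop:starshaped} with Corollary~\ref{cor:NeuralNetworkSetOnlyHasSmallSubspaces}: since $\cRN^\Omega_\varrho(S)$ is closed under scalar multiplication and contains $0$, a convex such set is automatically closed under addition (for $x,y$ in the set, $\tfrac{1}{2}(x+y)$ lies in it by convexity, hence so does $x+y = 2\cdot\tfrac{1}{2}(x+y)$), and is thus a linear subspace; Corollary~\ref{cor:NeuralNetworkSetOnlyHasSmallSubspaces} then forces its dimension to be at most $D$, again contradicting the hypothesis. Since both arguments merely assemble results already in hand, there is no genuine obstacle; the only point worth spelling out carefully is the trivial observation that convexity turns every point into a center.
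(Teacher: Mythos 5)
Your main argument is exactly the paper's proof: every element of a convex set is a center, so a convex $\cRN^\Omega_\varrho(S)$ with more than $\sum_{\ell=1}^L (N_{\ell-1}+1)N_\ell$ linearly independent functions would contradict Proposition~\ref{prop:Centres}. This is correct, and your alternative route via Corollary~\ref{cor:NeuralNetworkSetOnlyHasSmallSubspaces} is also sound, though unnecessary.
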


\begin{proof}
  Every element of a convex set is a center.
  Thus the result follows directly from Proposition~\ref{prop:Centres}.
\end{proof}

Corollary~\ref{cor:NoConvexity} claims that if a set of realizations of neural
networks with fixed size contains more than a fixed number of linearly
independent functions, then it cannot be convex.
Since $\cRN^{\R^d}_\varrho(S)$ is translation invariant, it is very likely
that $\cRN^{\R^d}_\varrho(S)$ (and hence also $\cRN^{\Omega}_\varrho(S)$)
contains \emph{infinitely} many linearly independent functions.
In fact, our next result shows under minor regularity assumptions on $\varrho$
that if the set $\cRN^\Omega_\varrho(S)$
\emph{does not} contain infinitely many linearly independent functions,
then $\varrho$ is necessarily a polynomial.

\begin{proposition}\label{prop:NetworksUsuallyProduceManyLinearlyIndependentFunctions}
  Let $S = (d, N_1, \dots, N_L)$ be a neural network architecture with  $L \in \N_{\geq 2}$.
  Moreover, let $\varrho:\R\to \R$ be continuous.
  Assume that there exists $x_0 \in \R$ such that $\varrho$ is
  differentiable at $x_0$ with
  $\varrho'(x_0) \neq 0$.

  Further assume that $\Omega \subset \R^d$ has nonempty interior, and that
  $\cRN_\varrho^{\Omega}(S)$
  does \emph{not} contain infinitely many linearly independent functions.
  Then, $\varrho$ is a polynomial.
\end{proposition}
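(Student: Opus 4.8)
The plan is to prove the contrapositive: assuming $\varrho$ is \emph{not} a polynomial, I will exhibit infinitely many linearly independent functions in $\cRN_\varrho^{\Omega}(S)$. The first step is a reduction to a compact domain. Since $\Omega$ has nonempty interior, fix a nondegenerate closed box $Q \subseteq \Omega$; then $Q$ is compact and infinite, and $\Realization_\varrho^{\Omega}(\Phi)|_Q = \Realization_\varrho^{Q}(\Phi)$ for every $\Phi \in \cN(S)$. Hence, if $g_1, g_2, \dots \in \cRN_\varrho^{Q}(S)$ are linearly independent with $g_i = \Realization_\varrho^{Q}(\Phi_i)$, then $\Realization_\varrho^{\Omega}(\Phi_1), \Realization_\varrho^{\Omega}(\Phi_2), \dots$ are linearly independent elements of $\cRN_\varrho^{\Omega}(S)$, because any finite linear relation among them restricts to a relation on $Q$. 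So it suffices to find infinitely many linearly independent functions in $\cRN_\varrho^{Q}(S)$.

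Suppose, for contradiction, that $\cRN_\varrho^{Q}(S)$ contains no infinite linearly independent set. Then a maximal linearly independent subset is finite and spans, so $V \coloneqq \spn\big(\cRN_\varrho^{Q}(S)\big) \subseteq C(Q)$ is finite-dimensional, hence closed, and therefore $\overline{\cRN_\varrho^{Q}(S)} \subseteq V$. The core step is to show that every ridge function $g_{w,b} : Q \to \R,\ x \mapsto \varrho(\langle w, x\rangle + b)$ belongs to $V$, for all $w \in \R^d$, $b \in \R$. If $L = 2$, this is immediate: $g_{w,b}$ is realized by the two-layer single-neuron network $\big((w^T,b),(1,0)\big) \in \cN((d,1,1))$, which I enlarge to architecture $S$ via Lemma~\ref{lem:enlarge}, so $g_{w,b} \in \cRN_\varrho^{Q}(S) \subseteq V$. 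If $L \geq 3$, I realize $g_{w,b}$ by that same two-layer block $\Psi_0$ and append $L-2$ further layers implementing an approximate identity: choosing $B>0$ with $g_{w,b}(Q) \subseteq [-B,B]$ and applying Proposition~\ref{prop:Identity} in dimension one (this is where the hypothesis $\varrho'(x_0)\neq 0$ is used) yields one-dimensional networks $\Theta_\eps$ with $L-1$ layers satisfying $\|\Realization_\varrho^{[-B,B]}(\Theta_\eps) - \mathrm{id}\|_{\sup} \leq \eps$; the concatenation $\Theta_\eps \conc \Psi_0$ (Definition~\ref{def:Concatenation}) has $L$ layers, all hidden widths equal to one, and realization $\Realization_\varrho^{\R}(\Theta_\eps) \circ g_{w,b}$, which converges uniformly to $g_{w,b}$ as $\eps \to 0$. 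Enlarging to architecture $S$ via Lemma~\ref{lem:enlarge}, this presents $g_{w,b}$ as a uniform limit of elements of $\cRN_\varrho^{Q}(S)$, so $g_{w,b} \in \overline{\cRN_\varrho^{Q}(S)} \subseteq V$.

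Consequently $\spn\{g_{w,b} : w \in \R^d,\ b \in \R\} \subseteq V$. By the universal approximation theorem of \cite{PinkusUniversalApproximation}, which applies precisely because $\varrho$ is continuous and not a polynomial, this span is dense in $C(Q)$; since $V$ is closed, $V = C(Q)$. But $Q$ is infinite, so $C(Q)$ is infinite-dimensional, contradicting $\dim V < \infty$. This contradiction shows that $\cRN_\varrho^{Q}(S)$ does contain infinitely many linearly independent functions, and the reduction above transfers this conclusion to $\cRN_\varrho^{\Omega}(S)$, which completes the proof.

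The main obstacle is the case $L \geq 3$: there $\cRN_\varrho^{\Omega}(S)$ does not literally contain the shallow ridge functions $g_{w,b}$, and composing with an approximate identity only places $g_{w,b}$ in the \emph{closure} of $\cRN_\varrho^{Q}(S)$; the argument survives only because the working hypothesis forces $V$ to be finite-dimensional, hence closed, so membership in the closure is enough. A secondary point needing care is the passage from the general $\Omega$ with nonempty interior to the compact box $Q$, required both for Proposition~\ref{prop:Identity} and for the compact-domain form of \cite{PinkusUniversalApproximation}.
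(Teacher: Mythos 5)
Your proof is correct in substance but takes a genuinely different route from the paper's. You argue the contrapositive: if $\varrho$ is not a polynomial, then the assumption that $\cRN_\varrho^{Q}(S)$ has no infinite linearly independent subset makes $V \coloneqq \spn \cRN_\varrho^{Q}(S)$ finite-dimensional and hence closed in $C(Q)$; the approximate-identity construction (Proposition~\ref{prop:Identity} composed with a single neuron via Definition~\ref{def:Concatenation} and Lemma~\ref{lem:enlarge}) places every ridge function $x \mapsto \varrho(\langle w, x\rangle + b)$ into $\overline{\cRN_\varrho^{Q}(S)} \subset V$, and the density theorem of \cite{PinkusUniversalApproximation} then forces $V = C(Q)$, contradicting $\dim V < \infty$. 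The paper instead works forward from the finite-dimensionality: it passes to the translation- and dilation-invariant family $\cRN_{S',\varrho}^*$ of one-dimensional traces, invokes the classification of finite-dimensional translation-invariant subspaces of $C(\R)$ from \cite{MR0169048} to trap this family in a span of functions $x^{k_j} e^{\lambda_j x}$, uses dilation invariance together with Lemma~\ref{lem:PolynomialExponentialLinearIndependence} to eliminate the genuine exponentials, and finally approximates $\varrho$ itself by elements of the resulting polynomial space. Your route is shorter and is essentially the same mechanism the paper uses for Lemma~\ref{lem:NoApproximativeConvexity} (closed subspace, ridge functions, Pinkus); what it costs is reliance on the nontrivial direction of Pinkus's theorem (``not a polynomial implies dense''), whereas the paper's argument produces the conclusion ``$\varrho$ is a polynomial'' directly from structure theory and does not need universal approximation at this point.

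One small omission: the proposition allows a vector-valued output layer $N_L \geq 1$, while your scalar ridge functions and the enlargement via Lemma~\ref{lem:enlarge} (which preserves the output dimension) only land in $\cRN_\varrho^{Q}(S)$ when $N_L = 1$. This is easily repaired --- either pad the last matrix with zero rows so that the constructed scalar functions appear as the first output component (linear independence of the first components implies linear independence of the vector-valued realizations), or, as the paper does in its Step~1, use that the component projection $\cRN_\varrho^\Omega(S) \to \cRN_\varrho^\Omega(S')$ is linear and surjective --- but as written your argument silently assumes $N_L = 1$.
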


\begin{proof}
\textbf{Step 1:} Set $S' \coloneqq (d, N_1, \dots, N_{L-1}, 1)$.
We first show that $\cRN_\varrho^\Omega (S')$
does not contain infinitely many linearly independent functions.
To see this, first note that the map
\[
  \Theta :
  \cRN_\varrho^\Omega (S) \to \cRN_\varrho^\Omega (S'),
  f \mapsto f_1 ,
\]
which maps an $\R^{N_L}$-valued function to its first component,
is linear, well-defined, and \emph{surjective}.

Hence, if there were infinitely many linearly independent functions $(f_n)_{n \in \N}$
in $\cRN_{\varrho}^{\Omega} (S')$, then we could find
$(g_n)_{n \in \N}$ in $\cRN_\varrho^\Omega (S)$ such that $f_n = \Theta \, g_n$.
But then the $(g_n)_{n \in \N}$ are necessarily linearly independent,
contradicting the hypothesis of the theorem.

\medskip{}

\textbf{Step 2:}  We show that $\CalG := \cRN_\varrho^{\R^d} (S')$ does not
contain infinitely many linearly independent functions.

To see this, first note that since $\CalF := \cRN_\varrho^\Omega (S')$ does not
contain infinitely many linearly independent functions (Step 1),
elementary linear algebra shows that there is a finite-dimensional subspace $V \subset C(\Omega; \R)$
satisfying $\CalF \subset V$.
Let $D := \dim V$, and assume towards a contradiction that there are $D+1$ linearly independent
functions $f_1, \dots, f_{D+1} \in \CalG$,
and set $W := \mathrm{span} \{f_1, \dots, f_{D+1} \} \subset C(\R^d; \R)$.
The space ${\Gamma := \mathrm{span} \{\delta_x |_W \colon x \in \R^d \} \subset W^\ast}$
spanned by the point evaluation functionals $\delta_x : C(\R^d; \R) \to \R, f \mapsto f(x)$
is finite-dimensional with $\dim \Gamma \leq \dim W^\ast = \dim W = D+1$.
Hence, there are $x_1, \dots, x_{D+1} \in \R^d$ such that
$\Gamma = \mathrm{span} \{\delta_{x_1}|_W , \dots, \delta_{x_{D+1}} |_W \}$.

We claim that the map
\[
  \Theta : W \to \R^{D+1}, f \mapsto \big( f(x_\ell) \big)_{\ell \in \underline{D+1}}
\]
is surjective.
Since $\dim W = D+1$, it suffices to show that $\Theta$ is injective.
If this was not true, there would be some $f \in W \subset C(\R^d; \R)$, $f \not\equiv 0$
such that $\Theta f = 0$.
But since $f \not\equiv 0$, there is some $x_0 \in \R^d$ satisfying $f(x_0) \neq 0$.
Because of $\delta_{x_0}|_W \in \Gamma$,
we have $\delta_{x_0}|_W = \sum_{\ell = 1}^{D+1} a_\ell \, \delta_{x_\ell}|_W$
for certain $a_1, \dots, a_{D+1} \in \R$.
Hence,
\(
  0
  \neq f(x_0)
  = \delta_{x_0}|_W (f)
  = \sum_{\ell=1}^{D+1} a_\ell \, \delta_{x_\ell}|_W (f)
  = 0
  ,
\)
since $f(x_\ell) = \big( \Theta(f) \big)_\ell = 0$ for all $\ell \in \underline{D+1}$.
This contradiction shows that $\Theta$ is injective, and hence surjective.

Now, since $\Omega$ has nonempty interior, there is some $b \in \Omega$ and some $r > 0$ such that
$y_\ell := b + r \, x_\ell \in \Omega$ for all $\ell \in \underline{D+1}$.
Define
\[
  g_\ell : \R^d \to \R, y \mapsto f_\ell \left( \frac{y}{r} - \frac{b}{r} \right)
  \quad \text{for} \quad \ell \in \underline{D + 1} .
\]
It is not hard to see $g_\ell \in \CalG$, and hence $g_\ell |_\Omega \in \CalF \subset V$
for all $\ell \in \underline{D+1}$.
Now, define the linear operator
$\Lambda : V \to \R^{D+1}, f \mapsto \big( f(y_\ell) \big)_{\ell \in \underline{D+1}}$,
and note that
\(
  \Lambda(g_\ell)
  = \big( g_\ell (y_k) \big)_{k \in \underline{D+1}}
  = \big( f_\ell (x_k) \big)_{k \in \underline{D+1}}
  = \Theta (f_\ell)
  ,
\)
because of ${y_\ell}/{r} - {b}/{r} = x_\ell$.
Since the functions $f_1, \dots, f_{D+1}$ span the space $W$,
this implies $\Lambda(V) \supset \Theta(W) = \R^{D+1}$,
in contradiction to $\Lambda$ being linear and $\dim V = D < D+1$.
This contradiction shows that $\CalG$ does not contain
infinitely many linearly independent functions.

\medskip{}

\textbf{Step 3:} From the previous step, we know that
$\CalG = \cRN_\varrho^{\R^d}(S')$ does not
contain infinitely many linearly independent functions.
In this step, we show that this implies that the activation function $\varrho$ is a polynomial.

To this end, define
\[
  \cRN_{S', \varrho}^*
  \coloneqq \left\lbrace
       f:\R \to \R \,
       \middle|
       \begin{varwidth}{\linewidth}
       \text{ there is some}
       $g \in \cRN^{\R^d}_\varrho(S')$ \\ \vspace{.2cm}
       \text{with}
       $f(x) = g(x, 0, \dots, 0)$
       \text{ for all } $x \in \R$
       \end{varwidth}
     \right\rbrace .
\]
Clearly, $\cRN_{S',\varrho}^*$ is dilation- and translation invariant; that is, if
$f \in \cRN_{S', \varrho}^*$, then also ${f(a \, \cdot) \in \cRN_{S', \varrho}^*}$
and ${f(\cdot-x) \in \cRN_{S', \varrho}^*}$ for arbitrary $a > 0$ and $x \in \R$.
Furthermore, by Step~2, we see that $\cRN_{S', \varrho}^*$
does not contain infinitely many linearly independent functions.
Therefore, $V \coloneqq \spn \cRN_{S', \varrho}^*$ is a finite-dimensional
translation- and dilation invariant subspace of $C(\R)$.
Thanks to the translation invariance, it follows from \cite{MR0169048} that
there exists some $r \in \N$, and certain $\lambda_j \in \CC$,
$k_j \in \N_0$ for $j = 1, \dots, r$ such that
\begin{equation}
  \cRN_{S', \varrho}^*
  \subset V
  \subset \spn_{\mathbb{C}} \left\{
                 x \mapsto x^{k_j} e^{\lambda_j x}: j = 1, \dots, r
               \right\} ,
  \label{eq:ConvexityPolynomialExponentialInclusion}
\end{equation}
where $\spn_{\mathbb{C}}$ denotes the linear span, with $\CC$ as the underlying field.
Clearly, we can assume ${(k_j, \lambda_j) \neq (k_\ell, \lambda_\ell)}$ for $j \neq \ell$.

\medskip{}

\textbf{Step 4:} Let $N := \max_{j \in \FirstN{r}} \, k_j$.
We claim that $V$ is contained in the space $\CC_{\deg \leq N} [X]$ of (complex) polynomials
of degree at most $N$.

Indeed, suppose towards a contradiction that there is some ${f \in V \setminus \CC_{\deg \leq N} [X]}$.
Thanks to \eqref{eq:ConvexityPolynomialExponentialInclusion}, we can write
$f = \sum_{j = 1}^r a_j \, x^{k_j} \, e^{\lambda_j x}$ with $a_1, \dots, a_r \in \CC$.
Because of $f \notin \CC_{\deg \leq N} [X]$, there is some $\ell \in \FirstN{r}$ such that
$a_\ell \neq 0$ and $\lambda_\ell \neq 0$.
Now, choose $\beta > 0$ such that $|\beta \lambda_\ell| > |\lambda_j|$ for all $j \in \FirstN{r}$,
and note that $f(\beta \, \cdot) \in V$,
so that Equation~\eqref{eq:ConvexityPolynomialExponentialInclusion}
yields coefficients $b_1, \dots, b_r \in \CC$ such that
$f(\beta \, x) = \sum_{j=1}^r b_j \, x^{k_j} \, e^{\lambda_j x}$.
By subtracting the two different representations for $f(\beta \, x)$, we thus see
\[
  0 \equiv f(\beta x) - f(\beta x)
    =      \sum_{j=1}^r a_j \, \beta^{k_j} \, x^{k_j} \, e^{\beta \lambda_j x}
           - \sum_{j=1}^r b_j \, x^{k_j} \, e^{\lambda_j x} ,
\]
and hence
\begin{equation}
  x^{k_\ell} e^{\beta \lambda_\ell x}
  = \frac{1}{a_\ell \, \beta^{k_\ell}} \cdot
    \Big(
      \sum_{j=1}^r
        b_j \, x^{k_j} \, e^{\lambda_j x}
      - \sum_{j \in \FirstN{r} \setminus \{ \ell \}}
          a_j \, \beta^{k_j} \, x^{k_j} \, e^{\beta \lambda_j x}
    \Big) .
  \label{eq:ConvexityPolynomialExponentialContradiction}
\end{equation}
Note, however, that $|\beta \lambda_\ell| > |\lambda_j|$ and hence
$(k_\ell, \beta \lambda_\ell) \neq (k_j, \lambda_j)$ for all $j \in \FirstN{r}$,
and furthermore that $(k_\ell, \beta \lambda_\ell) \neq (k_j, \beta \lambda_j)$
for $j \in \FirstN{r} \setminus \{ \ell \}$.
Thus, Lemma~\ref{lem:PolynomialExponentialLinearIndependence} below shows
that Equation~\eqref{eq:ConvexityPolynomialExponentialContradiction} cannot be true.
This is the desired contradiction.

\medskip{}

\textbf{Step 5:} In this step, we complete the proof, by first showing for arbitrary $B > 0$
that $\varrho|_{[-B,B]}$ is a polynomial of degree at most $N$.

Let $\eps, B > 0$ be arbitrary.
Since $\varrho$ is continuous, it is uniformly
continuous on $[-B-1,B+1]$, that is, there is some $\delta \in (0,1)$ such that
$| \varrho(x) - \varrho(y) | \leq \eps$ for all $x,y \in [-B-1,B+1]$
with $| x -y | \leq \delta$.
Since $\varrho'(x_0) \neq 0$ and $L \geq 2$,
Proposition~\ref{prop:Identity} and Lemma~\ref{lem:enlarge} imply existence of a neural network
$\widetilde{\Phi}_{\eps,B} \in \cN((d, N_1, \dots, N_{L-1}))$ such that
\[
  \left|
    \big[ \Realization^{[-B,B]^d}_{\varrho}(\widetilde{\Phi}_{\eps,B})(x) \big]_1 - x_1
  \right|
  \leq \delta,
  \text{ for all } x \in [-B,B]^d .
\]
In particular, this implies because of $\delta \leq 1$ that
$\big[ \Realization^{[-B,B]^d}_\varrho(\widetilde{\Phi}_{\eps,B})(x) \big]_1 \in [-B-1,B+1]$
for all $x \in [-B,B]^d$.
We conclude that
\begin{align}\label{eq:ThePlusOneNetwork}
  \left|
    \big[
      \varrho \big( \Realization^{[-B,B]^d}_{\varrho}(\widetilde{\Phi}_{\eps,B})(x) \big)
    \big]_1
    - \varrho(x_1)
  \right|
  \leq \eps,
  \quad \text{for all} \quad x \in [-B,B]^d ,
\end{align}
with $\varrho$ acting componentwise.
By \eqref{eq:ThePlusOneNetwork}, it follows that there is a neural network
${\Phi_{\eps,B} \in \cN(S')}$ satisfying
\begin{align} \label{eq:approxNetwork}
  \big|
    \Realization^{[-B,B]^d}_{\varrho}(\Phi_{\eps,B})(x_1, 0, \dots, 0)
    - \varrho(x_1)
  \big|
 \leq \eps
 \quad \text{for all } x_1 \in [-B,B] .
\end{align}
From \eqref{eq:approxNetwork} and Step~4, we thus see
\[
  \varrho|_{[-B,B]}
  \in \overline{\left\{ f|_{[-B,B]}: f\in \cRN_{S', \varrho}^*\right\}}
  \subset \{ p|_{[-B,B]} \colon p \in \CC_{\deg \leq N}[X] \} ,
\]
where the closure is taken with respect to the sup norm, and where we
implicitly used that the space on the right-hand side is a closed
subspace of $C([-B,B])$, since it is a finite dimensional subspace.

Since $\varrho|_{[-B,B]}$ is a polynomial of degree at most $N$;
we see that the $N+1$-th derivative of $\varrho$ satisfies
$\varrho^{(N+1)} \equiv 0$ on $(-B, B)$, for arbitrary $B > 0$.
Thus, $\varrho^{(N+1)} \equiv 0$, meaning that $\varrho$ is a polynomial.
\end{proof}

In the above proof, we used the following elementary lemma,
whose proof we provide for completeness.

\begin{lemma}\label{lem:PolynomialExponentialLinearIndependence}
  For $k \in \N_0$ and $\lambda \in \CC$, define
  $f_{k,\lambda} : \R \to \CC, x \mapsto x^{k} \, e^{\lambda x}$.

  Let $N \in \N$, and let $(k_1, \lambda_1), \dots, (k_N, \lambda_N) \in \N_0 \times \CC$
  satisfy $(k_\ell, \lambda_\ell) \neq (k_j, \lambda_j)$ for $\ell \neq j$.
  Then, the family $(f_{k_\ell, \lambda_\ell})_{\ell=1,\dots,N}$ is linearly independent
  over $\CC$.
\end{lemma}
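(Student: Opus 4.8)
The plan is to prove linear independence of the functions $f_{k,\lambda}(x) = x^k e^{\lambda x}$ by a standard two-stage argument: first group the terms by their exponential rate $\lambda$, then handle the polynomial prefactors within each group. Concretely, suppose $\sum_{\ell=1}^N a_\ell \, f_{k_\ell,\lambda_\ell} \equiv 0$ with $a_\ell \in \CC$. Let $\mu_1, \dots, \mu_m$ be the \emph{distinct} values occurring among $\lambda_1, \dots, \lambda_N$, and for each $i$ collect the indices with $\lambda_\ell = \mu_i$ into a polynomial $p_i(x) = \sum_{\ell : \lambda_\ell = \mu_i} a_\ell \, x^{k_\ell}$. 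Then the relation reads $\sum_{i=1}^m p_i(x) \, e^{\mu_i x} \equiv 0$ on $\R$. Since the pairs $(k_\ell, \lambda_\ell)$ are distinct, within a fixed group the exponents $k_\ell$ are pairwise distinct, so $p_i \equiv 0$ forces $a_\ell = 0$ for all $\ell$ with $\lambda_\ell = \mu_i$; hence it suffices to show each $p_i$ is the zero polynomial.

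First I would dispose of the purely-exponential case: if $\lambda_1, \dots, \lambda_m$ are pairwise distinct and $\sum_i p_i(x) e^{\lambda_i x} \equiv 0$ with polynomials $p_i$, then all $p_i \equiv 0$. A clean way is induction on $m$: divide through by $e^{\lambda_m x}$ to get $\sum_{i=1}^{m-1} p_i(x) e^{(\lambda_i - \lambda_m) x} + p_m(x) \equiv 0$, and differentiate $1 + \deg p_m$ times; each differentiation kills the degree of $p_m$ by one while preserving the shape $q_i(x) e^{(\lambda_i - \lambda_m)x}$ of the other terms (because $\lambda_i - \lambda_m \neq 0$ and differentiating $q(x) e^{cx}$ with $c\neq 0$ yields $(q' + cq)(x)e^{cx}$, a polynomial-times-exponential of the same nonzero rate whose leading coefficient is $c$ times that of $q$). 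After enough differentiations the $p_m$-term vanishes and we obtain $\sum_{i=1}^{m-1} \tilde q_i(x) e^{(\lambda_i-\lambda_m)x} \equiv 0$ with $\tilde q_i$ having the same degree as $p_i$ (leading coefficient multiplied by a nonzero power of $\lambda_i - \lambda_m$); the inductive hypothesis gives $\tilde q_i \equiv 0$, hence $p_i \equiv 0$ for $i < m$, and then $p_m \equiv 0$ as well. Alternatively one may just invoke the classical fact that $x \mapsto x^k e^{\lambda x}$ are solutions of constant-coefficient linear ODEs and linear independence follows from distinctness of the characteristic data — but I would prefer to keep the proof self-contained with the differentiation argument.

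The main (mild) obstacle is purely bookkeeping: making sure that within one $\lambda$-group the distinctness hypothesis $(k_\ell,\lambda_\ell)\neq(k_j,\lambda_j)$ really does give distinct $k_\ell$ (immediate), and that the differentiation step tracks leading coefficients correctly so that no cancellation is silently assumed. Once the purely-exponential lemma is in hand, the reduction in the first paragraph is immediate, and the proof is complete. I do not anticipate any genuine difficulty; this is a textbook fact and the only care needed is in the induction's base case $m=1$ (a single polynomial-times-exponential vanishing identically forces the polynomial to vanish, since $e^{\lambda x} \neq 0$) and in confirming that repeated differentiation strictly decreases $\deg p_m$.
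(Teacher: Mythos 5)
Your proposal is correct. It is a close cousin of the paper's argument but organized differently: the paper proceeds by contradiction, singles out the group of indices sharing the exponential rate $\lambda_1$, and applies the annihilating operator $\prod_{\lambda \in \Lambda\setminus\{\lambda_1\}}\bigl(\tfrac{d}{dx}-\lambda\,\identity\bigr)^{M+1}$ \emph{once} to kill every other group, then reads off a surviving nonzero leading monomial $a_j c_{k_j} x^{k_j}$ to reach a contradiction; you instead prove the purely-exponential statement (distinct rates $\mu_i$, polynomial coefficients $p_i$) by induction on the number of distinct rates, peeling off one exponential at a time via the divide-by-$e^{\lambda_m x}$-and-differentiate trick, and then reduce the lemma to it by grouping. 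The two mechanisms are conjugate — $(\tfrac{d}{dx}-\lambda)$ is exactly $e^{\lambda x}\,\tfrac{d}{dx}\,e^{-\lambda x}$ — so nothing essentially new is needed in either direction; your version is the more standard textbook induction and avoids the paper's bookkeeping about which index in the $\lambda_1$-group has maximal $k_j$, at the cost of an induction and of tracking that each differentiation multiplies leading coefficients by the nonzero constants $\lambda_i-\lambda_m$. Two trivial points to tidy up in a write-up: handle $p_m\equiv 0$ (where ``differentiate $1+\deg p_m$ times'' is vacuous) by passing directly to the inductive hypothesis, and note that dividing by $e^{\lambda_m x}$ is legitimate for complex $\lambda_m$ since $e^{\lambda_m x}\neq 0$ for all real $x$.
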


\begin{proof}
  Let us assume towards a contradiction that
  \begin{equation}
    0 \equiv \sum_{\ell=1}^N a_\ell \, f_{k_\ell, \lambda_\ell} (x)
      =      \sum_{\ell=1}^N a_\ell \, x^{k_\ell} \, e^{\lambda_\ell x}
    \label{eq:PolynomialExponentialLinearIndependence}
  \end{equation}
  for some coefficient vector $(a_1,\dots,a_N) \in \CC^N \setminus \{ 0 \}$.
  By dropping those terms for which $a_\ell = 0$, we can assume that $a_\ell \neq 0$ for all
  $\ell \in \FirstN{N}$.

  Let $\Lambda := \{ \lambda_i \colon i \in \FirstN{N} \}$.
  In the case where $|\Lambda| = 1$, it follows that $k_j \neq k_\ell$ for $j \neq \ell$.
  Furthermore, multiplying Equation~\eqref{eq:PolynomialExponentialLinearIndependence} by
  $e^{-\lambda_1 x}$, we see that
  \(
    0 \equiv \sum_{\ell=1}^N a_\ell \, x^{k_\ell} ,
  \)
  which is impossible since the monomials $(x^k)_{k \in \N_0}$ are linearly independent.
  Thus, we only need to consider the case that $|\Lambda| > 1$.

  Define ${M \coloneqq \max \{ k_\ell \colon \ell \in \FirstN{N} \}}$ and
  \[
    I := \big\{ \ell \in \FirstN{N} \colon \lambda_\ell = \lambda_1 \big\},
    \qquad \text{and choose $j \in I$ satisfying} \qquad
    k_j = \max_{\ell \in I} \, k_\ell
    .
  \]
  Note that this implies $k_\ell < k_j$ for all $\ell \in I \setminus \{ j \}$,
  since $(k_\ell, \lambda_\ell) \neq (k_j, \lambda_j)$ and hence $k_\ell \neq k_j$
  for $\ell \in I \setminus \{ j \}$.

  Consider the differential operator
  \[
    T
    := \prod_{\lambda \in \Lambda \setminus \{ \lambda_1 \}}
         \Big(
           \frac{d}{d x} - \lambda \, \identity
         \Big)^{M+1}
  \]
  Note that
  \(
    \big( \frac{d}{dx} - \lambda \, \identity \big) (x^k \, e^{\mu x})
    = (\mu - \lambda) x^k \, e^{\mu x} + k \, x^{k-1} \, e^{\mu x} .
  \)
  Using this identity, it is easy to see that if $\lambda \in \Lambda \setminus \{ \lambda_1 \}$
  and $k \in \N_0$ satisfies $k \leq M$, then $T (x^k \, e^{\lambda x}) \equiv 0$.
  Furthermore, for each $k \in \N_0$ with $k \leq M$,
  there exist a constant $c_k \in \CC \setminus \{ 0 \}$ and a polynomial $p_k \in \CC[X]$ with
  $\deg p_k < k$ satisfying $T(x^k \, e^{\lambda_1 x}) = e^{\lambda_1 x} \cdot (c_k x^k + p_k (x))$.
  Overall, Equation~\eqref{eq:PolynomialExponentialLinearIndependence} implies that
  \[
    0 \equiv e^{-\lambda_1 x}
             \cdot T \Big( \sum_{\ell=1}^N a_\ell \, x^{k_\ell} \, e^{\lambda_\ell x} \Big)
      = a_{j} c_{k_j} x^{k_j}
        + a_j p_{k_j} (x)
        + \sum_{\ell \in I \setminus \{ j \}}
            \big[ a_\ell \cdot (c_{k_\ell} x^{k_\ell} + p_{k_\ell} (x)) \big]
      =: a_{j} c_{k_j} x^{k_j} + q(x) ,
  \]
  where $a_j c_{k_j} \neq 0$ and where $\deg q < k_j$,
  since $k_j > k_\ell$ for all $\ell \in I \setminus \{ j \}$.
  This is the desired contradiction.
\end{proof}

As our final ingredient for the proof of Theorem~\ref{thm:NoConvexityEver},
we show that every non-constant locally Lipschitz function $\varrho$ satisfies
the technical assumptions of Proposition~\ref{prop:NetworksUsuallyProduceManyLinearlyIndependentFunctions}.

\begin{lemma}\label{lem:LocallyLipschitzHasNonzeroDerivativeSomewhere}
  Let $\varrho : \R \to \R$ be locally Lipschitz continuous and not constant.
  Then, there exists some $x_0 \in \R$ such that $\varrho$ is differentiable at $x_0$
  with $\varrho'(x_0) \neq 0$.
\end{lemma}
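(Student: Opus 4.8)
The plan is to argue by contraposition, using the classical fact that a locally Lipschitz function on $\R$ is differentiable almost everywhere together with the fundamental theorem of calculus for absolutely continuous functions. Concretely, I want to show that if $\varrho'(x) = 0$ at every point $x$ where $\varrho$ is differentiable, then $\varrho$ must be constant.

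First I would exploit the hypothesis that $\varrho$ is not constant to fix two points $a < b$ with $\varrho(a) \neq \varrho(b)$. On the compact interval $[a,b]$, local Lipschitz continuity upgrades to a uniform Lipschitz bound, so $\varrho|_{[a,b]}$ is Lipschitz continuous, and hence absolutely continuous on $[a,b]$.

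Next I would invoke two standard results from real analysis: (i) an absolutely continuous (in particular, a Lipschitz) function on $[a,b]$ is differentiable at Lebesgue-almost every point of $[a,b]$; and (ii) for such a function the fundamental theorem of calculus holds in the form $\varrho(b) - \varrho(a) = \int_a^b \varrho'(t)\,dt$, where $\varrho'$ is defined almost everywhere. Now, if $\varrho'(x) = 0$ held for almost every $x \in [a,b]$, then this integral would vanish and we would obtain $\varrho(a) = \varrho(b)$, contradicting the choice of $a$ and $b$. Consequently, the set of $x \in (a,b)$ at which $\varrho$ is differentiable with $\varrho'(x) \neq 0$ has strictly positive Lebesgue measure; in particular it is nonempty, and any $x_0$ in this set satisfies the claim.

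I do not expect any genuine obstacle here; the statement is essentially a packaging of well-known facts. The only point requiring mild care is to apply the fundamental theorem of calculus in the form valid for Lipschitz (equivalently, absolutely continuous) functions rather than merely for $C^1$ functions, and to keep in mind that such a function need not be differentiable everywhere—only almost everywhere—which is nonetheless enough to run the argument.
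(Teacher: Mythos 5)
Your argument is correct and matches the paper's proof essentially verbatim: both restrict $\varrho$ to a compact interval where it is non-constant, use that Lipschitz implies absolutely continuous, invoke the Lebesgue fundamental theorem of calculus, and conclude that $\varrho'$ cannot vanish almost everywhere there. No issues.
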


\begin{proof}
  Since $\varrho$ is not constant, there is some $B > 0$ such that $\varrho|_{[-B,B]}$
  is not constant.
  By assumption, $\varrho$ is Lipschitz continuous on $[-B,B]$.
  Thus, $\varrho|_{[-B,B]}$ is absolutely continuous; see for instance \cite[Definition~7.17]{Rudin}.
  Thanks to the fundamental theorem of calculus for the Lebesgue integral
  (see \cite[Theorem~7.20]{Rudin}), this implies that $\varrho|_{[-B,B]}$ is differentiable
  almost everywhere on $(-B,B)$ and satisfies $\varrho (y) - \varrho(x) = \int_x^y \varrho'(t) \, dt$
  for $-B \leq x < y \leq B$, where $\varrho'(t) := 0$ if $\varrho$ is not differentiable at $t$.

  Since $\varrho|_{[-B,B]}$ is not constant, the preceding formula shows that there has to be some
  $x_0 \in (-B, B)$ such that $\varrho' (x_0) \neq 0$; in particular, this means that $\varrho$
  is differentiable at $x_0$.
\end{proof}

Now, a combination of Corollary~\ref{cor:NoConvexity},
Proposition~\ref{prop:NetworksUsuallyProduceManyLinearlyIndependentFunctions},
and Lemma~\ref{lem:LocallyLipschitzHasNonzeroDerivativeSomewhere}
proves Theorem~\ref{thm:NoConvexityEver}.
For the application of Lemma~\ref{lem:LocallyLipschitzHasNonzeroDerivativeSomewhere},
note that if $\varrho$ is constant, then $\varrho$ is a polynomial, so that the conclusion
of Theorem~\ref{thm:NoConvexityEver} also holds in this case.

\subsection{Proof of Theorem~\ref{thm:epsconv}}
\label{app:epsconv}

We first show in the following lemma that if $\overline{\cRN_\varrho^{\Omega}(S)}$ is convex,
then $\cRN_\varrho^{\Omega}(S)$ is dense in $C(\Omega)$.
The proof of Theorem~\ref{thm:epsconv} is given thereafter.


\begin{lemma}\label{lem:NoApproximativeConvexity}
Let $S = (d, N_1, \dots, N_{L-1}, 1)$ be a neural network architecture with $L \geq 2$.
Let $\Omega \subset \R^d$ be compact and let $\varrho: \R \to \R$ be continuous but not a polynomial.
Finally, assume that there is some $x_0 \in \R$ such that $\varrho$ is differentiable at $x_0$
with $\varrho'(x_0) \neq 0$.

If $\overline{\cRN_\varrho^{\Omega}(S)}$ is convex, then $\cRN_\varrho^{\Omega}(S)$
is dense in $C(\Omega)$.
\end{lemma}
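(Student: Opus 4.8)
The plan is to show that the convexity hypothesis forces $\overline{\cRN_\varrho^\Omega(S)}$ to contain the realization set of \emph{two-layer} networks with activation $\varrho$, and then invoke the universal approximation theorem of \cite{PinkusUniversalApproximation}, which (since $\varrho$ is continuous but not a polynomial) tells us that this two-layer class is already dense in $C(\Omega)$. The central observation is that $\cRN_\varrho^\Omega(S)$ is always closed under scalar multiplication and star-shaped with respect to the origin (Proposition~\ref{prop:starshaped}), hence so is its closure $\mathcal{R} := \overline{\cRN_\varrho^\Omega(S)}$; a set that is star-shaped about $0$, closed under scalar multiplication, and convex is automatically a \emph{linear subspace} of $C(\Omega)$. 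So the first step is to record that $\mathcal{R}$ is a closed linear subspace.

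Next I would argue that $\mathcal{R}$ contains every realization of a two-layer network $x \mapsto \sum_{i=1}^{N_1} c_i\,\varrho(\langle a_i, x\rangle + \beta_i)$. The idea is that a single ``neuron'' $x \mapsto \varrho(\langle a, x\rangle + \beta)$ can be \emph{approximated} in $C(\Omega)$ by realizations in $\cRN_\varrho^\Omega(S)$: using the hypothesis $\varrho'(x_0)\neq 0$ together with Proposition~\ref{prop:Identity} and Lemma~\ref{lem:enlarge}, one builds a network with architecture $S$ whose first $L-2$ hidden layers approximately implement the identity (so that the available width $N_1,\dots,N_{L-1}$ is essentially wasted on an approximate identity) while the last two layers realize the desired single-neuron function composed with this near-identity; passing to the limit $\eps \to 0$ and using that $\mathcal{R}$ is closed, each single neuron $x \mapsto \varrho(\langle a,x\rangle+\beta)$ lies in $\mathcal{R}$. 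Because $\mathcal{R}$ is a linear subspace, it then contains all finite linear combinations of such neurons, i.e.\ all two-layer realizations. (A technical point: one must be a little careful that the approximate-identity construction of Proposition~\ref{prop:Identity} really does let the composed network attain architecture $S$ with a single active output neuron in the last layer; this is where Lemma~\ref{lem:enlarge} is used to pad the remaining neurons.)

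Finally, with the inclusion ``two-layer realizations of $\varrho$'' $\subset \mathcal{R}$ in hand, the main theorem of \cite{PinkusUniversalApproximation} gives that the span of $\{x\mapsto \varrho(\langle a,x\rangle+\beta)\}$ is dense in $C(\Omega)$ precisely because $\varrho$ is continuous and not a polynomial; hence $\mathcal{R} = C(\Omega)$, i.e.\ $\cRN_\varrho^\Omega(S)$ is dense in $C(\Omega)$, which is the claim. I expect the main obstacle to be the second step: carefully verifying that a single $\varrho$-neuron can be approximated uniformly on $\Omega$ by realizations with the \emph{fixed} architecture $S$, controlling the interplay between the approximate-identity sub-network (which introduces an error that must be absorbed into the continuity modulus of $\varrho$ on a slightly enlarged compact set) and the final linear layer, and making sure the scalar-multiplication/linear-subspace structure is genuinely inherited by the closure. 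The rest is either already proved in the excerpt (Propositions~\ref{prop:starshaped},~\ref{prop:Identity}, Lemma~\ref{lem:enlarge}) or is a direct citation.
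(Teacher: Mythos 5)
Your overall strategy coincides with the paper's: from convexity plus closedness under scalar multiplication (Proposition~\ref{prop:starshaped}) conclude that $\overline{\cRN_\varrho^\Omega(S)}$ is a closed linear subspace of $C(\Omega)$, show it contains every single neuron $x \mapsto \varrho(\langle a, x\rangle+\beta)$ and hence every two-layer realization, and finish with \cite{PinkusUniversalApproximation}. That part is fine.

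However, the one step you flag as delicate is, as you describe it, actually broken. You propose to let the \emph{first} $L-2$ hidden layers approximately implement $\mathrm{id}_{\R^d}$ and to place the neuron at the \emph{end}. The resulting network has architecture $(d, d, \dots, d, 1, 1)$, and Lemma~\ref{lem:enlarge} only lets you pad widths \emph{upwards}; so your construction requires $N_\ell \geq d$ for $\ell \leq L-2$, which is not part of the hypotheses (the lemma allows, e.g., $N_1 = 1 < d$). The paper avoids this by reversing the composition: it realizes the neuron in the \emph{first} two layers via $\Psi = \big((a^T,\beta),(1,1)\big) \in \cN((d,1,1))$, and then appends an approximate identity $\Phi_\eps \in \cN((1,\dots,1))$ acting on the \emph{scalar} output range $[-B,B] \supset g(\Omega)$, so the composite $\Phi_\eps \conc \Psi$ has architecture $(d,1,\dots,1)$ and embeds into any admissible $S$. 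This ordering also simplifies the error analysis (the approximate identity is applied after $\varrho$, so no modulus-of-continuity argument for $\varrho(\langle a,\cdot\rangle+\beta)$ on an enlarged set is needed). Your argument becomes correct once you make this swap; as written, it does not cover all architectures the lemma claims.
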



\begin{proof}
Since $\overline{\cRN_{\varrho}^{\Omega}(S)}$ is convex and closed under scalar multiplication,
$\overline{\cRN_{\varrho}^{\Omega}(S)}$ forms a closed linear subspace of $C(\Omega)$.
Below, we will show that
\(
  \big(
    \Omega \to \R,
    x \mapsto \varrho(\langle a, x \rangle + b)
  \big)
  \in \overline{\cRN_{\varrho}^{\Omega}(S)}
\)
for arbitrary $a \in \R^d$ and $b \in \R$.
Once we prove this, it follows that
\(
  \big( \Omega \to \R, x \mapsto \sum_{i=1}^N c_i \, \varrho(b_i + \langle a_i, x \rangle) \big)
  \in \overline{\cRN_{\varrho}^{\Omega}(S)}
\)
for arbitrary $N \in \N$, $a_i \in \R^d$, and $b_i, c_i \in \R$.
As shown in \cite{PinkusUniversalApproximation}, this then entails
that $\overline{\cRN_{\varrho}^{\Omega}(S)}$ (and hence also $\cRN_{\varrho}^{\Omega}(S)$)
is dense in $C(\Omega)$, since $\varrho$ is not a polynomial.

Thus, let $a \in \R^d$, $b \in \R$, and $\eps > 0$ be arbitrary, and define
$g : \Omega \to \R, x \mapsto \varrho(b + \langle a, x \rangle)$
and $\Psi \coloneqq \big( (a^T, b), (1, 1) \big) \in \cN( (d, 1, 1) )$,
noting that $\Realization_\varrho^{\Omega} (\Psi) = g$.
Since $g$ is continuous on the compact set $\Omega$, we have $|g(x)| \leq B$ for all $x \in \Omega$
and some $B > 0$.
By Proposition~\ref{prop:Identity} and since $\varrho'(x_0) \neq 0$ and $L \geq 2$,
there exists a neural network $\Phi_\eps \in \cN( (1,\dots,1) )$ (with $L-1$ layers)
such that $|\Realization_\varrho^{[-B,B]} (\Phi_\eps) - x| \leq \eps$
for all $x \in [-B,B]\vphantom{\sum_j}$.
This easily shows $\| \Realization_\varrho^{\Omega} (\Phi_\eps \conc \Psi) - g \|_{\sup} \leq \eps$,
while
\(
  \Realization_\varrho^\Omega(\Phi_\eps \conc \Psi)
  \in \cRN_{\varrho}^{\Omega} ( (d,1,\dots,1) )
  \subset \cRN_{\varrho}^{\Omega}(S)
\)
by Lemma~\ref{lem:enlarge}.
Therefore, $g \in \overline{\cRN_{\varrho}^{\Omega}(S)}$, which completes the proof.
\end{proof}

Now we are ready to prove Theorem~\ref{thm:epsconv}.
By assumption, $\cRN_{\varrho}^{\Omega}(S)$ is \emph{not} dense in $C(\Omega)$.
We start by proving that there exists at least one $\eps > 0$ such that the set
$\overline{\cRN_{\varrho}^{\Omega}(S)}$ is not $\eps$-convex.
Suppose towards a contradiction that this is not true, so that
$\overline{\cRN_{\varrho}^{\Omega}(S)}$ is $\eps$-convex for \emph{all} $\eps > 0$.
This implies
\begin{equation}\label{eq:DerAnfangVomEnde}
  \text{co}\left(\, \overline{\cRN_{\varrho}^{\Omega}(S)} \,\right)
  \subset \bigcap_{\eps > 0}
          \left(\,
            \overline{\cRN_{\varrho}^{\Omega}(S)} + B_\eps(0)
          \right)
  = \overline{\cRN_{\varrho}^{\Omega}(S)},
\end{equation}
where the last identity holds true, since if
$\widetilde{f} \not \in \overline{\cRN_{\varrho}^{\Omega}(S)}$,
there exists $\eps'>0$ such that $\|\widetilde{f}-f\|_{\sup}>\eps'$ for all
$f \in \overline{\cRN_{\varrho}^{\Omega}(S)}$.
Equation~\eqref{eq:DerAnfangVomEnde} shows that $\overline{\cRN_{\varrho}^{\Omega}(S)}$ is convex,
which by Lemma~\ref{lem:NoApproximativeConvexity} implies that
$\cRN_{\varrho}^{\Omega}(S) \subset C(\Omega)$ is dense, in contradiction to the assumptions
of Theorem~\ref{thm:epsconv}.
This is the desired contradiction, showing that $\overline{\cRN_{\varrho}^{\Omega}(S)}$
is $\eps$-convex for \emph{some} $\eps > 0$.

Thus, there exists $g \in \text{co} \big( \overline{\cRN_{\varrho}^{\Omega}(S)} \big)$ such that
\(
  \|g - f\|_{\sup} \geq \eps_0
\)
for all $f \in \overline{\cRN_{\varrho}^{\Omega}(S)}$.
Now, let $\eps > 0$ be arbitrary.
Then, $\frac{\eps}{\eps_0} g \in \text{co}(\overline{\cRN_{\varrho}^{\Omega}(S)})$,
since $\cRN_{\varrho}^{\Omega}(S)$ is closed under scalar multiplication.
Moreover,
\[
  \left\|\frac{\eps}{\eps_0} g - f\right\|_{\sup}\geq \eps
  \quad \text{for all} \quad f \in \overline{\cRN_{\varrho}^{\Omega}(S)},
\]
again due to the closedness under scalar multiplication of $\cRN_{\varrho}^{\Omega}(S)$.
This shows that $\overline{\cRN_{\varrho}^{\Omega}(S)}$ is not $\eps$-convex for any $\eps > 0$.
\hfill$\square$

\subsection{Non-dense network sets}
\label{app:nonDenseNetworkSets}

In this section, 
we review criteria on $\varrho$ which ensure that
$\overline{\cRN_\varrho^{\Omega}(S)} \neq C(\Omega)$.
Precisely, we will show that this is true if $\varrho : \R \to \R$ is
\textbf{computable by elementary operations}, which means that there is some $N \in \N$
and an algorithm that takes $x \in \R$ as input and returns $\varrho(x)$ after no more than $N$ of
the following operations:
\begin{itemize}
  \item applying the exponential function $\exp : \R \to \R$;

  \item applying one of the arithmetic operations $+, -, \times$, and $/$ on real numbers;

  \item jumps conditioned on comparisons of real numbers using the following
        operators: $<, >, \leq, \geq , = , \neq$.
\end{itemize}
Then, a combination of \cite[Theorem~14.1]{AnthonyBartlett} with
\cite[Theorem~8.14]{AnthonyBartlett} shows that if $\varrho$ is
computable by elementary operations, then the \emph{pseudo-dimension} of each
of the function classes $\cRN_\varrho^{\R^d}(S)$ is finite.
Here, the pseudo-dimension $\mathrm{Pdim}(\mathcal{F})$ of a function class
$\mathcal{F} \subset \R^X$ is defined as follows (see \cite[Section~11.2]{AnthonyBartlett}):
\[
  \mathrm{Pdim}(\mathcal{F})
  \coloneqq \sup
            \big\{
              |K|
              \colon
              K \subset X \text{ finite and pseudo-shattered by } \mathcal{F}
            \big\}
  \in \N \cup \{\infty\} \, .
\]
Here, a finite set $K = \{ x_1,\dots,x_m \} \subset X$ (with pairwise distinct $x_i$)
is \emph{pseudo-shattered} by $\mathcal{F}$
if there are $r_1,\dots,r_m \in \R$ such that for each $b \in \{0, 1\}^m$ there is a
function $f_b \in \mathcal{F}$ with $\Indicator_{[0,\infty)} \big( f_b (x_i) - r_i \big) = b_i$
for all $i \in \{1,\dots,m\}$.

Using this result, we can now show that the realization sets of networks with
activation functions that are computable by elementary operations are never dense
in $L^p (\Omega)$ or $C(\Omega)$.

\begin{proposition}\label{prop:MostActivationFunctionsDoNotYieldDenseRealisations}
  Let $\varrho : \R \to \R$ be continuous and computable by elementary operations.
  Moreover, let $S=(d, N_1, \dots, N_{L-1}, 1)$ be a neural network architecture.
  Let $\Omega \subset \R^d$ be any measurable set with nonempty interior,
  and let $\mathcal{Y}$ denote either $L^p(\Omega)$ (for some $p \in [1,\infty)$), or $C(\Omega)$.
  In case of $\mathcal{Y} = C(\Omega)$, assume additionally that $\Omega$ is compact.

  Then. we have $\overline{\mathcal{Y} \cap \cRN_\varrho^{\Omega}(S)} \subsetneq \mathcal{Y}$.
\end{proposition}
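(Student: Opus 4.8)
The plan is to argue by contraposition, using the finiteness of the pseudo-dimension recalled just above the statement. Since $\varrho$ is continuous, every realization is a genuine continuous function and $\cRN_\varrho^{\Omega}(S) = \{ f|_\Omega : f \in \cRN_\varrho^{\R^d}(S) \}$; consequently any finite subset of $\Omega$ that is pseudo-shattered by $\cRN_\varrho^{\Omega}(S)$ (or by any subfamily thereof) is also pseudo-shattered by $\cRN_\varrho^{\R^d}(S)$, so that $D := \mathrm{Pdim}\big(\cRN_\varrho^{\Omega}(S)\big) \le \mathrm{Pdim}\big(\cRN_\varrho^{\R^d}(S)\big) < \infty$ by the cited combination of \cite[Theorems~8.14 and 14.1]{AnthonyBartlett}. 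It therefore suffices to prove the following general claim: \emph{if a family $\mathcal{F}$ of continuous functions on $\Omega$ is dense in $\mathcal{Y}$, then $\mathrm{Pdim}(\mathcal{F}) = \infty$}. Applying this to $\mathcal{F} = \mathcal{Y} \cap \cRN_\varrho^{\Omega}(S)$, whose pseudo-dimension is at most $D < \infty$, then yields the theorem.

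To establish the claim I would fix an arbitrary $m \in \N$ and exhibit $m$ pairwise distinct points of $\Omega$ that are pseudo-shattered by $\mathcal{F}$ with all thresholds set to $0$; letting $m \to \infty$ then forces $\mathrm{Pdim}(\mathcal{F}) = \infty$. In the case $\mathcal{Y} = C(\Omega)$ this is routine: $\Omega$ has nonempty interior, hence contains $m$ pairwise distinct points $x_1, \dots, x_m$; for each sign pattern $b \in \{0,1\}^m$ one chooses (via the Tietze extension theorem, or an explicit bump construction) a function $h_b \in C(\Omega)$ with $h_b(x_i) = 2 b_i - 1$, and then, by density, some $f_b \in \mathcal{F}$ with $\|f_b - h_b\|_{\sup} < 1$, so that $\Indicator_{[0,\infty)}\big(f_b(x_i)\big) = b_i$ for all $i$.

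The case $\mathcal{Y} = L^p(\Omega)$ is the genuinely delicate one, since $L^p$-closeness of $f_b$ to a target does not by itself control $f_b$ at any prescribed point --- indeed realizations of networks may have arbitrarily large Lipschitz constants. The plan here is to pick pairwise disjoint closed balls $\overline{B_1}, \dots, \overline{B_m} \subset \Omega$ of common Lebesgue volume $v > 0$ (possible as $\Omega$ has nonempty interior), and to \emph{choose the points only after the approximations have been produced}. Concretely, I would fix $\eta > 0$ with $2^m (2\eta)^p < v$, set $g_b := \sum_{i=1}^m (2 b_i - 1)\, \Indicator_{B_i} \in L^p(\Omega)$, and use density to pick $f_b \in \mathcal{F}$ with $\|f_b - g_b\|_{L^p(\Omega)} < \eta$. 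By Markov's inequality, for each $i$ and each $b$ the bad set $E_i^b := \{ x \in B_i : |f_b(x) - (2 b_i - 1)| > \tfrac12 \}$ has Lebesgue measure $< (2\eta)^p$; since there are only $2^m$ such sets inside $B_i$, their union has measure $< v = \lambda(B_i)$, so one may pick $y_i \in B_i$ lying outside all of them. Then $y_1,\dots,y_m$ are pairwise distinct and satisfy $f_b(y_i) \ge \tfrac12 > 0$ when $b_i = 1$ and $f_b(y_i) \le -\tfrac12 < 0$ when $b_i = 0$, i.e. $\Indicator_{[0,\infty)}\big(f_b(y_i)\big) = b_i$ for all $i$ and all $b$; this is exactly pseudo-shattering of $\{y_1,\dots,y_m\}$ with thresholds $0$.

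The only obstacle worth flagging is the one just noted: converting $L^p$-approximation into pointwise sign information about the approximants. The union bound over all $2^m$ sign patterns resolves it, and it is harmless precisely because $m$ is fixed before $\eta$ is chosen, so no uniform modulus of continuity for the approximants is ever required.
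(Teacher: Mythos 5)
Your proposal is correct and follows essentially the same route as the paper: reduce to the finite pseudo-dimension of $\cRN_\varrho^{\R^d}(S)$, then show that density of a function class in $\mathcal{Y}$ forces infinite pseudo-dimension, handling the $L^p$ case by a measure/union-bound argument that selects the shattered points \emph{after} the approximants are chosen. The only differences (disjoint balls instead of slabs of a cube, target values $\pm 1$ with threshold $0$ instead of values in $\{0,1\}$ with threshold $\tfrac12$) are cosmetic.
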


\begin{proof}
The considerations from before the statement of the proposition
show that
\[
  \mathrm{Pdim} \big( \mathcal{Y} \cap \cRN_\varrho^{\Omega}(S) \big)
  \leq \mathrm{Pdim} \big( \cRN_\varrho^{\R^d}(S) \big)
  <    \infty.
\]
Therefore, all we need to show is that if $\mathcal{F} \subset C(\Omega)$ is a function class
for which $\mathcal{F} \cap \mathcal{Y}$ is dense in $\mathcal{Y}$,
then $\mathrm{Pdim}(\mathcal{F}) = \infty$.

For $\mathcal{Y} = C(\Omega)$, this is easy:
Let $m \in \N$ be arbitrary, choose distinct points $x_1, \dots, x_m \in \Omega$,
and note that for each $b \in \{0,1\}^m$, there is $g_b \in C(\Omega)$ satisfying $g_b (x_j) = b_j$
for all $j \in \underline{m}$.
By density, for each $b \in \{0,1\}^m$, there is $f_b \in \mathcal{F}$ such that
$\|f_b - g_b\|_{\sup} < \frac{1}{2}$.
In particular, $f_b (x_j) > \frac{1}{2}$ if $b_j = 1$ and $f_b (x_j) < \frac{1}{2}$ if $b_j = 0$.
Thus, if we set $r_1 \coloneqq \dots \coloneqq r_m \coloneqq \frac{1}{2}$, then
$\Indicator_{[0,\infty)} (f_b(x_j) - r_j) = b_j$ for all $j \in \underline{m}$.
Hence, $S = \{x_1,\dots,x_m\}$ is pseudo-shattered by $\mathcal{F}$,
so that $\mathrm{Pdim}(\mathcal{F}) \geq m$.
Since $m \in \N$ was arbitrary, $\mathrm{Pdim}(\mathcal{F}) = \infty$.

For $\mathcal{Y} = L^p (\Omega)$, one can modify this argument as follows:
Since $\Omega$ has nonempty interior, there are ${x_0 \in \Omega}$ and $r > 0$
such that $x_0 + r [0,1]^d \subset \Omega$.
Let $m \in \N$ be arbitrary, and for $j \in \underline{m}$ define
${M_j := x_0 + r \big[ (\frac{j-1}{m}, \frac{j}{m}) \times [0,1]^{d-1} \big]}$.
Furthermore, for $b \in \{0,1\}^m$,
let $g_b := \sum_{j \in \underline{m} \text{ with } b_j = 1} \Indicator_{M_j}$,
and note $g_b \in L^p (\Omega)$.

Since $L^p(\Omega) \cap \mathcal{F} \subset L^p(\Omega)$ is dense,
there is for each $b \in \{0,1\}^m$ some $f_b \in \mathcal{F} \cap L^p(\Omega)$
such that $\|f_b - g_b \|_{L^p}^p \leq r^d / (2^{1+p} \cdot m \cdot 2^m)$.
If we set $\Omega_b := \{x \in \Omega \colon |f_b (x) - g_b (x) | \geq 1/2 \}$, then
$\Indicator_{\Omega_b} \leq 2^p \cdot |f_b - g_b|^p$, and hence
\[
  \lambda(\Omega_b)
  \leq 2^p \, \| f_b - g_b \|_{L^p}^p
  \leq \frac{r^d}{2 \cdot m \cdot 2^m} ,
\]
and thus $\lambda (\bigcup_{b \in \{0,1\}^m} \Omega_b) \leq \frac{r^d}{2m}$,
where $\lambda$ is the Lebesgue measure.
Hence, $\lambda (M_j \setminus \bigcup_{b \in \{0,1\}^m} \Omega_b) \geq \frac{r^d}{2m} > 0$,
so that we can choose for each $j \in \underline{m}$ some
$x_j \in M_j \setminus \bigcup_{b \in \{0,1\}^m} \Omega _b$.
We then have
\[
  |f_b (x_j) - \delta_{b_j, 1}| = |f_b (x_j) - g_b (x_j)| < 1/2 ,
\]
and hence $f_b (x_j) > 1/2$ if $b_j = 1$ and $f_b (x_j) < 1/2$ otherwise.
Thus, if we set $r_1 \coloneqq \dots \coloneqq r_m \coloneqq \frac{1}{2}$,
then we have as above that $\Indicator_{[0,\infty)} \big( f_b (x_j) - r_j \big) = b_j$
for all $j \in \underline{m}$ and $b \in \{ 0, 1 \}^m$.
The remainder of the proof is as for $\mathcal{Y} = C(\Omega)$.
\end{proof}

Note that the following activation functions are computable by elementary operations:
any piecewise polynomial function (in particular, the ReLU and the parametric ReLU),
the exponential linear unit, the softsign
(since the absolute value can be computed using a case distinction), the sigmoid, and the $\tanh$.
Thus, the preceding proposition applies to each of these activation functions.

\section{Proofs of the results in Section~\ref{sec:Closed}}

\subsection{Proof of Theorem~\ref{thm:nonclosed}}
\label{app:nonclosed}

The proof of Theorem~\ref{thm:nonclosed} is crucially based on the following lemma:

\begin{lemma}\label{lem:DiscontinuousFunctionWithoutContinuousRepresentative}
  Let $\mu$ be a finite Borel measure on $[-B, B]^d$ with uncountable support $\supp \mu$.
  For $x^\ast ,v \in \R^d$ with $v \neq 0$, define
  \[
    H_\pm (x^\ast, v) := x^\ast + H_{\pm} (v)
    \quad \text{where} \quad
    H_+ (v) := \{ x \in \R^d \colon \langle x, v \rangle > 0\}
    \quad \text{and} \quad
    H_- (v) := \{ x \in \R^d \colon \langle x, v \rangle < 0\}.
  \]
  Then, there are $x^\ast \in [-B, B]^d$ and $v \in S^{d-1}$ such that if $f : [-B,B]^d \to \R$ satisfies
  \begin{equation}
    f(x) = c \quad \text{for } x \in H_+ (x^\ast, v)
    \qquad \text{and} \qquad
    f(x) = c' \quad \text{for } x \in H_- (x^\ast, v)
    \qquad \text{with } c \neq c' ,
    \label{eq:DiscontinuousFunctionCondition}
  \end{equation}
  then there is no continuous $g : [-B,B]^d \to \R$ satisfying
  $f = g$ $\mu$-almost everywhere.
\end{lemma}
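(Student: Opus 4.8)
The plan is to reduce the assertion to a statement about how the mass of $\mu$ sits near a single point, and then to close by a one-line continuity argument. Concretely, it suffices to produce $x^\ast \in [-B,B]^d$ and $v \in S^{d-1}$ with the property that, writing $H_\pm := H_\pm(x^\ast,v)$ for the two open half-spaces bounded by the hyperplane through $x^\ast$ with normal $v$, one has $\mu(B_\eps(x^\ast)\cap H_+)>0$ and $\mu(B_\eps(x^\ast)\cap H_-)>0$ for every $\eps>0$. Granting such $x^\ast, v$, suppose $f$ satisfies \eqref{eq:DiscontinuousFunctionCondition} and $g:[-B,B]^d\to\R$ is continuous with $f=g$ $\mu$-a.e. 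Then $g=c$ $\mu$-a.e.\ on $H_+$ and $g=c'$ $\mu$-a.e.\ on $H_-$, so for every $n\in\N$ the sets $B_{1/n}(x^\ast)\cap H_+\cap\{g=c\}$ and $B_{1/n}(x^\ast)\cap H_-\cap\{g=c'\}$ have positive $\mu$-measure and are in particular nonempty; choosing $y_n$ and $z_n$ in them we get $y_n\to x^\ast$ and $z_n\to x^\ast$, whence $c=\lim_n g(y_n)=g(x^\ast)=\lim_n g(z_n)=c'$, contradicting $c\neq c'$.

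To find $x^\ast$ and $v$ I would first pass to the support $F:=\supp\mu$, a closed, uncountable subset of $[-B,B]^d$. It is enough to exhibit $p\in F$ and $v\in S^{d-1}$ such that every ball around $p$ meets $F$ inside \emph{each} of the open half-spaces $H_\pm(p,v)$: if $y\in F\cap B_\eps(p)\cap H_+(p,v)$, then $B_\eps(p)\cap H_+(p,v)$ is an open neighbourhood of the support point $y$ and therefore has positive $\mu$-measure, and symmetrically on the other side, so $x^\ast:=p$ does the job. Next, by the Cantor--Bendixson theorem I would replace $F$ by its perfect kernel $P$, the set of condensation points of $F$: since $F$ is uncountable, $P$ is nonempty, closed and has no isolated points, $P\subset F$, and any pair $(p,v)$ that works for $P$ works a fortiori for $F$.

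Call $p\in P$ \emph{bilateral} if some $v\in S^{d-1}$ has the property just described relative to $P$. The crux is that the set of non-bilateral points of $P$ is at most countable, so that, $P$ being uncountable, a bilateral point exists and we are done. For $d=1$ this is classical: the set of $t\in P$ for which some interval $(t,t+\eps)$, or some interval $(t-\eps,t)$, is disjoint from $P$ is countable, because each such $t$ can be injectively assigned a rational lying in the corresponding gap. For $d\ge 2$ I would argue as follows. For $p\in P$ let $V_p\subset S^{d-1}$ be the set of accumulation points, as $\eps\downarrow 0$, of the directions $\{(y-p)/|y-p|:y\in P,\ 0<|y-p|<\eps\}$; since $P$ is perfect, $V_p\ne\emptyset$. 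If $V_p$ contains two distinct directions $w_1\ne w_2$, then $p$ is bilateral with normal $v:=(w_1-w_2)/|w_1-w_2|$, since $\langle w_1,v\rangle>0>\langle w_2,v\rangle$ and each $w_i$ is approached by points of $P$, which therefore lie, eventually, strictly on the corresponding side. Hence every non-bilateral $p$ has $V_p=\{v_0\}$ a single direction, and for such $p$ there is $\delta_p>0$ with $\langle (y-p)/|y-p|,\,v_0\rangle>1/2$ for all $y\in P\cap B_{\delta_p}(p)\setminus\{p\}$. If $p\ne p'$ are two such points with $|p-p'|<\min(\delta_p,\delta_{p'})$ and $v_0(p),v_0(p')$ both within $1/4$ of a common rational unit vector, then $p'\in P\cap B_{\delta_p}(p)\setminus\{p\}$ gives $\langle (p'-p)/|p'-p|,v_0(p)\rangle>1/2$, while $p\in P\cap B_{\delta_{p'}}(p')\setminus\{p'\}$ gives $\langle (p'-p)/|p'-p|,v_0(p')\rangle<-1/2$; subtracting yields $\langle (p'-p)/|p'-p|,\,v_0(p)-v_0(p')\rangle>1$, impossible since $|v_0(p)-v_0(p')|<1/2$. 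Encoding each non-bilateral $p$ by a rational approximation of $p$, a rational approximation of $v_0(p)$, and a rational lower bound for $\delta_p$ then defines an injection of the non-bilateral points into a countable set.

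The delicate point is precisely this $d\ge 2$ countability argument, and it is worth recording why the obvious shortcuts fail. Projecting $\mu$ onto a coordinate axis with uncountable marginal support and invoking the one-dimensional result does not work, because a good threshold for the projected measure need not be realised at a single point of $\R^d$: the horizontal fibres of $F$ lying just above and just below the threshold can drift apart. Nor can one simply say ``non-bilateral $\Rightarrow$ $F$ is locally a one-sided line segment'': a one-sided cusp such as $\{(t,\sqrt{t}):t\ge 0\}\subset\R^2$ is non-bilateral at its apex yet is contained in no line. The argument above avoids both pitfalls by reasoning directly with limit directions; apart from this ingredient (and the one-dimensional structure fact, which is of the kind treated in~\cite{WardStructureOfNonEnumerableSetsOfPoints}), the remainder of the proof is just the chain of reductions and the continuity argument of the first two paragraphs.
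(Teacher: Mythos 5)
Your proposal is correct, and its overall architecture matches the paper's: first produce a point $x^\ast\in\supp\mu$ and a direction $v$ such that $x^\ast$ is approached by the support from within both open half-spaces $H_\pm(x^\ast,v)$, then derive the contradiction by picking points $y_n\to x^\ast$ in $H_+$ with $g(y_n)=c$ and $z_n\to x^\ast$ in $H_-$ with $g(z_n)=c'$ and invoking continuity of $g$; this second half is essentially identical to the paper's Step~2. Where you genuinely diverge is in the first half. The paper obtains the two-sided point by citing a theorem of Ward (for an uncountable $E\subset\R^d$, all but countably many $x^\ast\in E$ admit a $v$ such that both cones $E\cap C(x^\ast,\pm v;\delta,\eta)$ are \emph{uncountable} for all $\delta,\eta>0$), together with the caveat that the cited proof is written for $\R^3$ and ``extends almost verbatim,'' plus a pointer to an external note. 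You instead prove, from scratch, the weaker statement that suffices: after a Cantor--Bendixson reduction to the perfect kernel $P$ of $\supp\mu$, you analyze the set $V_p$ of limit directions of $P$ at $p$, observe that $|V_p|\ge 2$ forces bilaterality via the separating normal $(w_1-w_2)/|w_1-w_2|$, and show by the rational-triple encoding that the points with $V_p$ a singleton form a countable set. Your argument checks out (the inner-product bookkeeping in the countability step is right, and the precision of the rational approximation of $p$ — say within a third of the rational lower bound for $\delta_p$ — is the only detail left implicit), and it buys self-containedness and dimension-independence at the cost of a longer write-up; the paper's route buys brevity and a quantitatively stronger conclusion (uncountably many support points in arbitrarily narrow cones) that the lemma does not actually need.
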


\begin{proof}
  \textbf{Step 1:} Let $K := \supp \mu \subset [-B,B]^d$.
  In this step, we show that there is some $x^\ast \in K$ and some $v \in S^{d-1}$
  such that $x^\ast \in \overline{K \cap H_+ (x^\ast, v)} \cap \overline{K \cap H_- (x^\ast, v)}$.
  This follows from a result in \cite{WardStructureOfNonEnumerableSetsOfPoints}, where the following
  is shown:
  For $x^\ast \in \R^d$ and $v \in S^{d-1}$, as well as $\delta, \eta > 0$, write
  \[
    C(v;\delta,\eta)
    \coloneqq \big\{
                r \, \xi
                \colon
                0 < r < \delta \text{ and } \xi \in S^{d-1} \text{ with } |\xi - v| < \eta
              \big\}
    \qquad \text{and} \qquad
    C(x^\ast, v; \delta, \eta) := x^\ast + C(v;\delta,\eta) .
  \]
  Then, for each uncountable set $E \subset \R^d$ and for all but countably many $x^\ast \in E$,
  there is some $v \in S^{d-1}$ such that $E \cap C(x^\ast, v; \delta, \eta)$
  and $E \cap C(x^\ast, -v; \delta, \eta)$ are both uncountable for all $\delta, \eta > 0$.

  Now, if $\eta < 1$, then any $r \xi \in C(v;\delta,\eta)$ with $|\xi - v| < \eta$
  satisfies
  \(
    \langle v, \xi \rangle
    = \langle v,v \rangle + \langle v, \xi - v \rangle
    \geq 1 - |\xi - v|
    > 0 ,
  \)
  so that $C(x^\ast, v; \delta, \eta) \subset B_\delta(x^\ast) \cap H_+ (x^\ast, v)$
  and $C(x^\ast, -v; \delta, \eta) \subset B_\delta(x^\ast) \cap H_- (x^\ast, v)$.
  From this it is easy to see that if $x^\ast, v$ are as provided by the result in
  \cite{WardStructureOfNonEnumerableSetsOfPoints} (for $E = K$), then indeed
  $x^\ast \in \overline{K \cap H_+ (x^\ast, v)} \cap \overline{K \cap H_- (x^\ast, v)}$.

  We remark that strictly speaking, the proof in \cite{WardStructureOfNonEnumerableSetsOfPoints}
  is only provided for $E \subset \R^3$, but the proof extends almost verbatim to $\R^d$.
  A direct proof of the existence of $x^\ast, v$ can be found in \cite{StackexchangeTwoSidedLimitPoint}.

  \medskip{}

  \textbf{Step 2:} We show that if $x^\ast, v$ are as in Step~1 and if $f : [-B,B]^d \to \R$
  satisfies \eqref{eq:DiscontinuousFunctionCondition}, then there is no continuous
  $g : [-B,B]^d \to \R$ satisfying $f = g$ $\mu$-almost everywhere.

  Assume towards a contradiction that such a continuous function $g$ exists.
  Recall (see for instance \mbox{\cite[Section~7.4]{CohnMeasureTheory}})
  that the support of $\mu$ is defined as
  \[
    \supp \mu
    = [-B,B]^d \setminus \bigcup \{ U \colon U \subset [-B,B]^d \text{ open and } \mu (U) = 0\}.
  \]
  In particular, if $U \subset [-B,B]^d$ is open with $U \cap \supp \mu \neq \emptyset$,
  then $\mu(U) > 0$.

  For each $n \in \N$, set $U_{n,+} := B_{1/n}(x^\ast) \cap [-B,B]^d \cap H_+ (x^\ast, v)$
  and $U_{n,-} := B_{1/n}(x^\ast) \cap [-B,B]^d \cap H_- (x^\ast, v)$,
  and note that $U_{n,\pm}$ are both open (as subsets of $[-B,B]^d$)
  with $K \cap U_{n, \pm} \neq \emptyset$, since
  $x^\ast \in \overline{K \cap H_+ (x^\ast, v)}$ and $x^\ast \in \overline{K \cap H_- (x^\ast, v)}$.
  Hence, $\mu(U_{n,\pm}) > 0$.
  Since $f = g$ $\mu$-almost everywhere, there exist $x_{n,\pm} \in U_{n,\pm}$
  with $f(x_{n,\pm}) = g(x_{n,\pm})$.
  This implies $g(x_{n,+}) = c$ and $g(x_{n,-}) = c'$.
  But since $x_{n,\pm} \in B_{1/n}(x^\ast)$, we have $x_{n,\pm} \to x^\ast$, so that the continuity
  of $g$ implies $g(x^\ast) = \lim_{n} g(x_{n,+}) = c$ and $g(x^\ast) = \lim_n g(x_{n,-}) = c'$,
  in contradiction to $c \neq c'$.
\end{proof}

We now prove Theorem~\ref{thm:nonclosed}.
Set $\Omega \coloneqq [-B,B]^d$ and define
$\widetilde{N}_i := 1$ for ${i = 1,\dots,L-2}$ and $\widetilde{N}_{L-1} := 2$ if $\varrho$ is unbounded,
while $\widetilde{N}_{L-1} := 1$ otherwise.
We show that for $\varrho$ as in the statement of the theorem there exists a sequence of functions in
\(
  \cRN_\varrho^{\Omega}( (d, \widetilde{N}_1, \dots, \widetilde{N}_{L-1}, 1) )
  \subset \cRN_\varrho^{\Omega}( S )
\)
such that the sequence converges (in $L^p(\mu)$) to a \emph{bounded, discontinuous limit}
$f \in L^\infty(\mu)$, meaning that $f$ does not have a continuous representative,
even after possibly changing it on a $\mu$-null-set.
Since 
\(
    \cRN_{\varrho}^\Omega(S) \subset C(\Omega) ,
\)
this will show that $f \in \overline{\cRN_\varrho^{\Omega}(S)} \setminus \cRN_\varrho^{\Omega}(S)$.

\medskip

For the construction of the sequence, let $x^\ast \in \supp \mu$ and $v \in S^{d-1}$ as
provided by Lemma~\ref{lem:DiscontinuousFunctionWithoutContinuousRepresentative}.
Extend $v$ to an orthonormal basis $(v, w_1, \dots, w_{d-1})$ of $\R^d$, and define
$A := O^T$ for $O := (v, w_1, \dots, w_{d-1}) \in \R^{d \times d}$.
Note that $x^\ast \in \Omega \subset \overline{B}_{dB} (0)$ and hence
$A (\Omega - x^\ast) \subset \overline{B}_{2dB}(0) \subset [-2dB, 2dB]^d =: \Omega'$.
Define $B' := 2 d B$.

Next, using Proposition~\ref{prop:Identity}, choose a neural network
$\Psi \in \cN((d, 1, \dots, 1))$ with $L-1$ layers such that
\begin{itemize}

  \item[(1)] $\Realization^{\Omega'}_{\varrho}(\Psi)(0) = 0$;

  \item[(2)] $\Realization_{\varrho}^{\Omega'}(\Psi)$ is differentiable at $0$ and
             $\frac{\partial\Realization_{\varrho}^{\Omega'}(\Psi)}{\partial x_1}(0)=1$;

  \item[(3)] $\Realization_{\varrho}^{\Omega'}(\Psi)$ is constant in all but the
             $x_1$-direction; and

  \item[(4)] $\Realization_\varrho^{\Omega'}(\Psi)$ is increasing with respect to each variable
             (with the remaining variables fixed).
\end{itemize}

Let $J_0 \coloneqq \Realization_{\varrho}^{\Omega'}(\Psi)$.
Since $J_0(0) = 0$ and $\frac{\partial J_0}{\partial x_1} (0) = 1$, we see
directly from the definition of the partial derivative that for each $\delta \in (0, B')$,
there are $x_\delta \in (-\delta, 0)$ and $y_\delta \in (0, \delta)$ such that
${J_0(x_\delta, 0, \dots, 0) < J_0(0) = 0}$ and $J_0(y_\delta, 0, \dots, 0) > J_0(0) = 0$.
Furthermore, Properties (3) and (4) from above show that $J_0(x)$ only depends on $x_1$
and that $t \mapsto J_0(t, 0, \dots, 0)$ is increasing.
In combination, these observations imply that
\begin{equation}\label{eq:IdentityBehaviorPreparation}
  J_0(x) < 0 \fa x \in \Omega' \text{ with } x_1 < 0 ,
  \quad \text{and} \quad
  J_0(x) > 0 \fa x \in \Omega' \text{ with } x_1 > 0 .
\end{equation}
Finally, with $\Psi = \big( (A_1, b_1),\dots,(A_{L-1}, b_{L-1}) \big)$, define
\(
  \Phi
  \coloneqq \big( (A_1 A, b_1 - A_1 A x^\ast), (A_2, b_2), \dots, (A_{L-1}, b_{L-1}) \big)
\),
and note that $\Phi \in \cN ( (d,1,\dots,1))$ with $L-1$ layers, and
$\Realization_\varrho^{\R^d} (\Phi)(x) = \Realization_\varrho^{\R^d} (\Psi)(A (x - x^\ast))$
for all $x \in \R^d$.
Combining this with the definition of $A$ and with Equation~\eqref{eq:IdentityBehaviorPreparation},
and noting that $A(x - x^\ast) \in \Omega'$ for $x \in \Omega$, we see that
$J := \Realization_\varrho^\Omega (\Phi)$ satisfies
\begin{equation}
  \begin{cases}
    J(x) < 0 , & \text{for } x \in \Omega \cap H_- (x^\ast, v), \\
    J(x) > 0 , & \text{for } x \in \Omega \cap H_+ (x^\ast, v), \\
    J(x) = 0 , & \text{for } x \in \Omega \cap H_0 (x^\ast, v),
  \end{cases}
  \label{eq:IdentityBehavior}
\end{equation}
where $H_0(x^\ast, v) := \R^d \setminus (H_- (x^\ast, v) \cup H_+ (x^\ast, v))$.

We now distinguish the cases given in Assumption (iv)(a) and (b) of Theorem~\ref{thm:nonclosed}.

\medskip{}

\textbf{Case 1:}
$\varrho$ is unbounded, so that necessarily Assumption (iv)(a) of Theorem~\ref{thm:nonclosed} holds,
and $\widetilde{N}_{L-1} = 2$.
For $n \in \N$ let $\Phi_n = \big( (A_1^n,b_1^n),(A_2^n,b_2^n) \big) \in \cN((1,2,1))$ be given by
\[
  A_1^n = \begin{pmatrix} n \\ n \end{pmatrix} \in \R^{2 \times 1},
  \quad
  b_1^n = \begin{pmatrix} 0 \\ -1 \end{pmatrix} \in \R^2,
  \quad
  A_2^n = \begin{pmatrix} 1 & -1 \end{pmatrix} \in \R^{1 \times 2},
  \quad
  b_2^n = 0 \in \R^1.
\]
Then, $\Phi_n \conc \Phi \in \cN((d,\widetilde{N}_1,\dots,\widetilde{N}_{L-1},1))$.
Now, let us define
\[
  h_n \coloneqq \Realization^{\Omega}_{\varrho}(\Phi_n \conc \Phi) ,
  \quad \text{and note} \quad
  h_n (x) = \varrho(nJ(x)) -  \varrho(nJ(x)-1)
  \quad \text{ for } x \in \Omega .
\]
Then, since $h_n$ is continuous and hence bounded on the compact set $\Omega$,
we see that $h_n \in L^p(\mu)$ for every $n \in \N$ and all $p \in (0, \infty]$.

We now show that $(h_n)_{n \in \N}$ converges to a discontinuous limit.
To see this, first consider $x \in \Omega \cap H_+ (x^\ast, v)$. 
Since $J(x) > 0$ by \eqref{eq:IdentityBehavior}, there exists some $N_x \in \N$
such that for all $n \geq N_x$, the estimate $nJ(x) - 1 > r$ holds,
where $r > 0$ is as in Assumption (iii) of Theorem~\ref{thm:nonclosed}.
Hence, by the mean value theorem, there exists some
$\xi_n^x\in [nJ(x)-1,nJ(x)]$ such that
\[
  \lim_{n \to \infty} h_n(x)
  = \lim_{n \to \infty} \varrho'(\xi_n^x)
  = \lambda ,
\]
since $\xi_n^x \to \infty$ as $n \to \infty, n \geq N_x$.
Analogously, it follows for $x \in \Omega \cap H_- (x^\ast, v)$
that $\lim_{n\to \infty} h_n(x) = \lambda'$.
Hence, setting $\gamma := \varrho(0) - \varrho(-1)$, we see for each $x \in \Omega$ that
\[
  \lim_{n \to \infty} h_n (x)
  = \left(
      \lambda \cdot \Indicator_{H_+ (x^\ast, v)}
      + \gamma \cdot \Indicator_{H_0 (x^\ast, v)}
      + \lambda' \cdot \Indicator_{H_- (x^\ast, v)}
    \right)(x)
  =:h(x) .
\]

We now claim that there is some $M > 0$ such that
$|\varrho(x) - \varrho(x - 1)| \leq M$ for all $x \in \R$.
To see this, note because of $\varrho'(x) \to \lambda$ as $x \to \infty$
and because of $\varrho'(x) \to \lambda'$ as $x \to -\infty$ that there are
$M_0 > 0$ and $R > r$ with $|\varrho'(x)| \leq M_0$ for all $x \in \R$ with $|x| \geq R$.
Hence, $\varrho$ is $M_0$-Lipschitz on $(-\infty,-R]$ and on $[R, \infty)$, so that
$| \varrho(x) - \varrho(x-1) | \leq M_0$ for all $x \in \R$ with $|x| \geq R+1$.
But by continuity and compactness, we also have
$| \varrho(x) - \varrho(x-1) | \leq M_1$ for all $|x| \leq R+1$ and some
constant $M_1 > 0$. Thus, we can simply choose $M \coloneqq \max \{M_0, M_1\}$.

By what was shown in the preceding paragraph,
we get $|h_n| \leq M$ and hence also $|h| \leq M$ for all $n \in \N$.
Hence, by the dominated convergence theorem, we see for any $p \in (0,\infty)$ that
\(
  \lim_{n\to \infty}
    \left\|
      h_n - h
    \right\|_{L^p(\mu)}
  = 0 .
\)
But since $\lambda \neq \lambda'$, Lemma~\ref{lem:DiscontinuousFunctionWithoutContinuousRepresentative}
shows that $h$ doesn't have a continuous representative, even after changing it on a $\mu$-null-set.
This yields the required non-continuity of a limit point as discussed at the
beginning of the proof.


\medskip{}

\textbf{Case 2:}
$\varrho$ is bounded, so that $\widetilde{N}_{L-1} = 1$.
Since $\varrho$ is monotonically increasing, there exist $c,c'\in \R$ such that
\begin{align*}
    \lim_{x\to \infty} \varrho(x) = c
    \quad \text{ and } \quad
    \lim_{x\to -\infty} \varrho(x) = c' .
\end{align*}
By the monotonicity and since $\varrho$ is not constant (because of $\varrho' (x_0) \neq 0$),
we have $c > c'\vphantom{\sum_j}$.

For each $n\in \N$, we now consider the neural network
\(
  \widetilde{\Phi}_n
  = \big( (\widetilde{A}_1^n,\widetilde{b}_1^n),(\widetilde{A}_2^n,\widetilde{b}_2^n) \big)
  \in \cN ((1, 1, 1))
\)
given by
\[
  \widetilde{A}_1^n = n \in \R^{1 \times 1},
  \quad
  \widetilde{b}_1^n = 0 \in \R^1,
  \quad
  \widetilde{A}_2^n = 1 \in \R^{1 \times 1},
  \quad
  \widetilde{b}_2^n = 0 \in \R^1 .
\]
Then, $\widetilde{\Phi}_n\conc \Phi\in \cN( (d, \widetilde{N}_1, \dots, \widetilde{N}_{L-1}, 1))$.
Now, let us define
\[
  \widetilde{h}_n
  \coloneqq \Realization^{\Omega}_{\varrho}(\widetilde{\Phi}_n\conc \Phi)
  \quad \text{and note} \quad
  \widetilde{h_n} (x) = \varrho(nJ(x))
  \quad \text{for all } x \in \Omega .
\]

Since each of the $\widetilde{h}_n$ is continuous and $\Omega$ is compact,
we have $\widetilde{h}_n\in L^p(\mu)$ for all $p\in (0,\infty]$.
Equation~\eqref{eq:IdentityBehavior} implies that $J(x) > 0$
for all $x \in \Omega \cap H_+ (x^\ast, v)$.
This in turn yields that
\begin{align}\label{eq:limit1}
     \lim_{n \to \infty} \widetilde{h}_n(x) = c
     \quad \fa x \in \Omega \cap H_+ (x^\ast, v) . 
\end{align}
Similarly, the fact that $J(x) < 0$ for all $x \in \Omega \cap H_- (x^\ast, v)$ yields
\begin{align}\label{eq:limit2}
     \lim_{n \to \infty} \widetilde{h}_n(x) = c'
     \quad \fa x \in \Omega \cap H_- (x^\ast, v). 
\end{align}
Combining \eqref{eq:limit1} with \eqref{eq:limit2} yields
for all $x \in \Omega$ that
\[
  \lim_{n\to \infty}\widetilde{h}_n (x)
  = \left(
       c \cdot \Indicator_{H_+ (x_\ast, v)}
      + \varrho(0) \cdot \Indicator_{H_0 (x^\ast, v)}
      +c' \cdot \Indicator_{H_- (x^\ast, v)}
    \right)(x)
  =:\widetilde{h}(x) .
\]
%
By the boundedness of $\varrho$, we get $|\widetilde{h}_n (x)| \leq C$
for all $n \in \N$ and $x \in \Omega$ and a suitable $C > 0$,
so that also $\widetilde{h}$ is bounded.
Together with the dominated convergence theorem, this implies for any $p \in (0,\infty)$ that
\(
  \lim_{n \to \infty}
    \big\|
      \widetilde{h}_n - \widetilde{h}
    \big\|_{L^p(\mu)}
  =0.
\)
Since $c \neq c'$, Lemma~\ref{lem:DiscontinuousFunctionWithoutContinuousRepresentative}
shows that $\widetilde{h}$ does not have a continuous representative
(with respect to equality $\mu$-almost everywhere).
This yields the required non-continuity of a limit point as discussed at the
beginning of the proof.
\hfill$\square$

\subsection{Proof of Corollary~\ref{cor:StuffNeverClosedInLp}}
\label{app:StuffNeverClosedInLp}

It is not hard to verify that all functions listed in Table~\ref{tab:ActFunctions} are continuous
and increasing.
Furthermore, each activation function $\varrho$ listed in Table~\ref{tab:ActFunctions} is not constant and satisfies
$\varrho|_{\R\setminus \{0\}} \in C^\infty (\R \setminus \{0\})$.
This shows that $\varrho|_{(-\infty,-r) \cup (r,\infty)}$ is differentiable for any $r > 0$,
and that there is some $x_0 = x_0 (\varrho) \in \R$  such that $\varrho'(x_0) \neq 0$.

Next, the softsign, the inverse square root unit, the sigmoid, the $\tanh$, and the $\arctan$
function are all bounded, and thus satisfy condition (iv)(b) of Theorem~\ref{thm:nonclosed}.
Thus, all that remains is to verify condition (iv)(a) of Theorem~\ref{thm:nonclosed}
for the remaining activation functions:

\begin{enumerate}
  \item For the ReLU $\varrho(x) = \max\{0, x\}$, condition (iv)(a) is satisfied
        with $\lambda = 1$ and $\lambda' = 0 \neq \lambda$.

  \item For the parametric ReLU $\varrho(x) = \max \{a x, x\}$ (with $a \geq 0$, $a \neq 1$),
        Condition~(iv)(a) is satisfied with $\lambda = \max \{ 1, a \}$
        and $\lambda' = \min \{ 1, a \}$, where $\lambda \neq \lambda'$ since $a \neq 1$.

  \item For the exponential linear unit
        $\varrho(x) = x \Indicator_{[0,\infty)}(x) + (e^x - 1) \Indicator_{(-\infty,0)}(x)$,
        Condition~(iv)(a) is satisfied for $\lambda = 1$
        and $\lambda' = \lim_{x \to -\infty} e^x = 0 \neq \lambda$.

  \item For the inverse square root linear unit
        $\varrho(x)
         = x \Indicator_{[0,\infty)}(x)
           + \frac{x}{\sqrt{1 + a x^2}} \Indicator_{(-\infty,0)}(x)$,
        the quotient rule shows that for $x<0$ we have
        \begin{equation}
          \varrho'(x)
          = \frac{\sqrt{1 + a x^2} - x \cdot \frac{1}{2} (1 + a x^2)^{-1/2} 2 a x}{1 + a x^2}
          = \frac{(1 + a x^2) - a x^2}{(1 + a x^2)^{3/2}}
          = (1 + a x^2)^{-3/2} .
          \label{eq:ISQRLUDerivative}
        \end{equation}
        Therefore, Condition~(iv)(a) is satisfied for $\lambda = 1$ and
        $\lambda' = \lim_{x \to -\infty} \varrho'(x) = 0 \neq \lambda$.

  \item For the softplus function $\varrho(x) = \ln(1 + e^x)$, Condition~(iv)(a) is satisfied for
        \begin{flalign*}
          &&
          \lambda = \lim_{x \to \infty} \frac{e^x}{1 + e^x} = 1
          \quad \text{and} \quad
          \lambda' = \lim_{x \to -\infty} \frac{e^x}{1 + e^x} = 0 \neq \lambda .
          && \square
        \end{flalign*}
\end{enumerate}

\subsection{Proof of Theorem~\ref{thm:GeneralNonClosednessC}}
\label{app:GeneralNonClosednessC}

\subsubsection{Proof of Theorem~\ref{thm:GeneralNonClosednessC} under Condition (i)}
\label{app:SmoothnessNoClosedness}

Let $\Omega \coloneqq [-B,B]^d$.
Let $m \in \N$ be maximal with $\varrho \in C^m (\R)$;
this is possible since $\varrho \in C^1 (\R) \setminus C^\infty (\R)$.
Note that $\varrho \in C^{m}(\R) \setminus C^{m+1}(\R)$.
This easily implies $\cRN_{\varrho}^{\Omega}(S) \subset C^{m}({\Omega})$.

We now show for the architecture
$S' := (d, \widetilde{N}_1, \dots, \widetilde{N}_{L-2}, 2, 1)$,
where $\widetilde{N}_i := 1$ for all $i = 1, \dots, L-2$,
that the set $\cRN_\varrho^{\Omega}(S')$ is not closed in $C(\Omega)$.
If we had $\varrho \in C^{m+1}([-C,C])$ for all $C > 0$, this would imply $\varrho \in C^{m+1}(\R)$;
thus, there is $C > 0$ such that $\varrho \notin C^{m+1} ([-C,C])$.
Now, choose $\lambda > C / B$, so that $\lambda [-B,B] \supset [-C,C]$.
This entails that $\varrho(\lambda \cdot)\in C^{m}([-B,B]) \setminus C^{m+1}([-B,B])$.
Next, since the continuous derivative
$\frac{d}{dx} \varrho (\lambda x) = \lambda \varrho' (\lambda x)$ is bounded
on the compact set $[-B,B]$, we see that $\varrho(\lambda \cdot)$ is Lipschitz continuous
on $[-B,B]$, and we set $M_1 \coloneqq \mathrm{Lip}(\varrho(\lambda \cdot))$.
Next, by the uniform continuity of $\lambda \cdot \varrho'(\lambda \cdot)$
on $[-(B+1),B+1]$, if we set
\[
  \eps_n
  \coloneqq \sup_{\substack{x,y \in [-(B+1),B+1]\\ \text{with } |x-y| \leq 1/n}}
       |\lambda \cdot \varrho'(\lambda x) - \lambda \cdot \varrho'(\lambda y)| ,
\]
then $\eps_n \to 0$ as $n \to \infty$.

For $n\in \N$, let
$\Phi_n^1 = \big((A_1^n,b_1^n),(A_2^n,b_2^n)\big)\in \cN((1,2,1))$ be given by
\[
  A_1^n = \begin{pmatrix} \lambda \\ \lambda \end{pmatrix} \in \R^{2 \times 1},
  \quad
  b_1^n = \begin{pmatrix} \lambda / n \\ 0 \end{pmatrix} \in \R^2,
  \quad
  A_2^n = \begin{pmatrix} n & -n \end{pmatrix} \in \R^{1 \times 2},
  \quad
  b_2^n = 0 \in \R^1 .
\]
Note that there is some $x^\ast \in \R$ such that $\varrho' (x^\ast) \neq 0$,
since otherwise $\varrho' \equiv 0$ and hence $\varrho \in C^\infty (\R)$.
Thus, for each $n \in \N$, Proposition~\ref{prop:Identity} yields a neural network
$\Phi^2_n \in \cN ((d,1,\dots,1))$ with $L-1$ layers such that
\begin{align}\label{eq:APropertyOfPhi2n}
  \left| \Realization_{\varrho}^{\Omega}(\Phi^2_n)(x) - x_1 \right|
  \leq \frac{1}{2 n^2}
  \text{ for all } x \in \Omega .
\end{align}
We set $\Phi_n \coloneqq \Phi^1_n \conc \Phi^2_n\in \cN(S')$
and $f_n \coloneqq \Realization_\varrho^\Omega (\Phi_n)$.
For $x \in \Omega$, we then have
\begin{align*}
  | f_n (x) - \lambda\varrho'(\lambda x_1) |
  = \left|
         n\cdot \left(
            \varrho\left(
                       \lambda \Realization_{\varrho}^{\Omega}(\Phi^2_n)(x)
                     + \lambda \cdot n^{-1} 
                   \right)
            - \varrho(\lambda \Realization_{\varrho}^{\Omega}(\Phi^2_n)(x))
          \right)
          - \lambda\varrho'(\lambda x_1)
       \right| .
\end{align*}
Now, by the Lipschitz continuity of $\varrho(\lambda \cdot)$ and
Equation~\eqref{eq:APropertyOfPhi2n}, we conclude that
\[
  \left|
    n \cdot \left(
        \varrho\left(
                  \lambda \Realization_{\varrho}^{\Omega}(\Phi^2_n)(x)
                  + \lambda \cdot n^{-1} 
                \right)
        - \varrho(\lambda \Realization_{\varrho}^{\Omega}(\Phi^2_n)(x))
     \right)
     - n \cdot \left(
           \varrho\left(
                    \lambda \left(x_1 + n^{-1}\right)
                  \right)
           - \varrho(\lambda x_1)
         \right)
  \right|
  \leq \frac{M_1 \, \lambda}{n} .
\]
This implies for every $x \in \Omega$ that
\begin{align*}
  | f_n (x) - \lambda \varrho'(\lambda x_1) |
  & \leq \left|
            n \left(
                \varrho\left(\lambda \left(x_1 + n^{-1} \right) \right)
                - \varrho(\lambda x_1)
              \right)
            - \lambda\varrho'(\lambda x_1)
         \right| + \frac{M_1 \, \lambda}{n} \\
  ({\scriptstyle{\text{by the mean value theorem, }
                 \xi_{n}^x \in (x_1, x_1 + n^{-1})}})
  & = \left|
         \lambda \cdot \varrho'(\lambda \cdot \xi_{n}^x)
         - \lambda\varrho'(\lambda x_1)
      \right| + \frac{M_1 \, \lambda}{n} \\
  & \leq \eps_n + \frac{M_1 \, \lambda}{n} .
\end{align*}
Here, the last step used that $| \xi_{n}^x - x_1 | \leq n^{-1} \leq 1$,
so that $x_1, \xi_{n}^x \in [-(B+1),B+1]$.
\smallskip{}

Overall, we established the existence of a sequence $(f_n)_{n \in \N}$ in
$\cRN_{\varrho}^{\Omega}(S')$ which converges uniformly to the function
$\Omega \to \R, ~x\mapsto \varrho_\lambda(x) \coloneqq \lambda \, \varrho'(\lambda x_1)$.
By our choice of $\lambda$, we have $\varrho_\lambda \not \in C^{m}({\Omega})$.
Because of $\cRN_{\varrho}^{\Omega}(S') \subset C^{m}({\Omega})$,
we thus see that $\varrho_\lambda \not \in \cRN_{\varrho}^{\Omega}(S')$,
so that $\cRN_{\varrho}^{\Omega}(S')$ is not closed in $C(\Omega)$.

Finally, note by Lemma~\ref{lem:enlarge} that
\[
  f_n
  \in \cRN_{\varrho}^{\Omega}(S')
  \subset \cRN_{\varrho}^{\Omega}(S)
  \quad \text{for all} \quad n \in \N .
\]
Since $f_n \to \varrho_\lambda$ uniformly, where $\varrho_\lambda \notin C^m (\Omega)$,
and hence $\varrho_\lambda \notin \cRN_{\varrho}^{\Omega}(S)$, we thus see that
$\cRN_{\varrho}^{\Omega}(S)$ is not closed in $C(\Omega)$.
\hfill$\square$

\subsubsection{Proof of Theorem~\ref{thm:GeneralNonClosednessC} under Condition (ii)}
\label{app:closedAnalytic}

Let $\Omega \coloneqq [-B,B]^d$.
We first show that if we set $S' := (d, \widetilde{N}_1, \dots, \widetilde{N}_{L-2},2,1)$,
where $\widetilde{N}_i \coloneqq 1$ for all ${i = 1, \dots, L-2}$,
then there exists a limit point of $\cRN_\varrho^{\Omega}(S')$
which is the restriction $f|_\Omega$ of an \emph{unbounded} analytic function $f : \R \to \R$.

Since $\varrho$ is not constant, there is some $x^\ast \in \R$ such that $\varrho'(x^\ast) \neq 0$.
For $n \in \N$, let us define
${\Phi_n^1 \coloneqq \big( (A_1^n,b_1^n),(A_2^n,b_2^n) \big) \in \cN((1,2,1))}$ by
\[
  A_1^n \coloneqq \begin{pmatrix} 1 \\ 1/n \end{pmatrix} \in \R^{2 \times 1},
  \quad
  b_1^n \coloneqq \begin{pmatrix} 0 \\ x^\ast  \end{pmatrix} \in \R^2,
  \quad
  A_2^n \coloneqq \begin{pmatrix} 1 & n \end{pmatrix} \in \R^{1 \times 2},
  \quad
  b_2^n \coloneqq - \varrho(x^\ast)n \in \R^1 .
\]
With this choice, we have
\[
  \Realization_\varrho^{\R}(\Phi_n^1)(x)
  = \varrho(x) + n \cdot \big( \varrho(x/n + x^\ast) - \varrho(x^\ast) \big)
  \quad \text{for all } x \in \R .
\]
For any $x \in \R$, the mean-value theorem yields $\widetilde{x}$
between $x^\ast$ and $x^\ast + \frac{x}{n}$ satisfying
$\varrho(x^\ast + \frac{x}{n}) - \varrho(x^\ast) = \frac{x}{n} \cdot \varrho' (\widetilde{x})$.
Therefore, if $B > 0$ and $x \in [-B,B]$, then
\begin{align*}
  \left|
    \Realization_\varrho^{\R}(\Phi_n^1)(x) - \big( \varrho(x) + \varrho'(x^\ast) x \big)
  \right|
  \leq B |\varrho'(\widetilde{x}) - \varrho'(x^\ast)|
  \quad \text{for some} \quad \widetilde{x} \in [x^\ast - B/n, x^\ast + B/n] .
\end{align*}
Since $\varrho'$ is continuous, we conclude that
\begin{align}\label{eq:JustAnotherEstimateWithoutAName}
  \sup_{x \in [-B,B]}
    \left|
      \Realization_\varrho^{\R}(\Phi_n^1)(x)
      - \big( \varrho(x) + \varrho'(x^\ast) x \big)
    \right|
  \xrightarrow[n\to\infty]{} 0 .
\end{align}
Moreover, note that
\(
  \frac{d}{dx}\Realization_\varrho^{\R}(\Phi_n^1)(x)
  = \varrho'(x) + \varrho'(x^\ast + n^{-1} \cdot x)
\)
is bounded on $[-(B+1),B+1]$, uniformly with respect to $n \in \N$.
Hence, $\Realization_\varrho^{\R}(\Phi_n^1)$ is Lipschitz continuous on
$[-(B+1), B+1]$, with Lipschitz constant $C' > 0$ independent of $n \in \N$.

Next, for each $n \in \N$, Proposition~\ref{prop:Identity} yields
a neural network $\Phi^2_n \in \cN((d,1,\dots,1))$ with $L-1$ layers such that
\begin{align}\label{eq:AnotherPropertyOfPhi2n}
  \left| \Realization_{\varrho}^{\Omega}(\Phi^2_n)(x) - x_1 \right|
  \leq \frac{1}{n},
  \text{ for all } x \in \Omega .
\end{align}
We set $\Phi_n \coloneqq \Phi_n^1 \conc \Phi_n^2 \in \cN(S')$ and note for all $x \in \Omega$ that
\begin{align*}
  \left|
    \Realization_\varrho^{\Omega}(\Phi_n)(x)
    - \big( \varrho(x_1) + \varrho'(x^\ast) x_1 \big)
  \right|
  = \left|
      \Realization_\varrho^{\R}(\Phi_n^1)
        \big( \Realization_\varrho^{\Omega}(\Phi_n^2)(x) \big)
      - \big( \varrho(x_1) + \varrho'(x^\ast) x_1 \big)
    \right|.
\end{align*}
By the Lipschitz continuity of $\Realization_\varrho^{\R}(\Phi_n^1)$
on $[-(B+1), B+1]$, and using \eqref{eq:AnotherPropertyOfPhi2n}, we conclude that
\[
  \left|
    \Realization_\varrho^{\Omega}(\Phi_n)(x)
    - \big( \varrho(x_1) + \varrho'(x^\ast) x_1 \big)
  \right|
  \leq \left|
         \Realization_\varrho^{\R}(\Phi_n^1) (x_1)
         - \big( \varrho(x_1) + \varrho'(x^\ast) x_1 \big)
       \right|
       + \frac{C'}{n} ,
\]
so that an application of \eqref{eq:JustAnotherEstimateWithoutAName} yields
\[
  \sup_{x \in \Omega}
    \left|
      \Realization_\varrho^{\Omega}(\Phi_n)(x)
      - \big( \varrho(x_1) + \varrho'(x^\ast) x_1 \big)
    \right|
  \xrightarrow[n\to\infty]{} 0 .
\]
Now, to show that $\cRN_\varrho^{\Omega}(S) \subset C(\Omega)$ is not closed,
since
\({
  \Realization_\varrho^\Omega (\Phi_n) \in \cRN_\varrho^{\Omega}(S') \subset \cRN_\varrho^{\Omega}(S),
}\)
it is sufficient to show that with
\[
  F: \R^d \to \R, \quad  x \mapsto \varrho(x_1) + \varrho'(x^\ast) x_1,
\]
$F|_{\Omega}$ is not an element of $\cRN_\varrho^{\Omega}(S)$.
This is accomplished, once we show that there do not exist any
$\widehat{N}_{1}, \dots, \widehat{N}_{L-1} \in \N$ such that $F|_{\Omega}$ is an element of
$\cRN_\varrho^{\Omega}( (d, \widehat{N}_1, \dots, \widehat{N}_{L-1}, 1) )$.

Towards a contradiction, we assume that there exist $\widehat{N}_1, \dots, \widehat{N}_{L-1} \in \N$
such that $F|_{\Omega} = \Realization_\varrho^{\Omega}(\Phi^3)$ for a network
$\Phi^3 \in \cN( (d, \widehat{N}_1, \dots, \widehat{N}_{L-1}, 1) )$.
Since $F$ and $ \Realization_\varrho^{\R^{d}}(\Phi^3)$ are both analytic functions
that coincide on $\Omega = [-B,B]^d$, they must be equal on all of $\R^d$.
However, $F$ is unbounded (since $\varrho$ is bounded, and since $\varrho'(x^\ast) \neq 0$),
while $\Realization_\varrho^{\R^{d}}(\Phi^3)$ is bounded as a consequence of $\varrho$ being bounded.
This produces the desired contradiction.
\hfill$\square$

\subsubsection{Proof of Theorem~\ref{thm:GeneralNonClosednessC} under Condition (iii)}
\label{app:closedHomogeneous}

Let $\varrho \in C^{\max\{r,q\}}(\R)$ be approximately homogeneous of order $(r, q)$ with $r \neq q$.
For simplicity, let us assume that $r > q$;
we will briefly comment on the case $q > r$ at the end of the proof.

Note that $r \geq 1$, since $r,q \in \N_0$ with $r > q$.
Let $(x)_+ \coloneqq \max\{x,0\}$ for $x \in \R$.
We start by showing that
\begin{align}\label{eq:converge}
  k^{-r} \varrho(k \cdot)
  \xrightarrow[k \to \infty]{\text{uniformly on } [-B,B]} (\cdot)_+^r .
\end{align}
To see this, let $s > 0$ such that $|\varrho(x) - x^r| \leq s$ for all $x> 0$ and
$|\varrho(x) - x^q| \leq s$ for all $x < 0$.
For any $k \in \N$ and $x \in [-B,0]$, we have
\[
  | k^{-r} \, \varrho(k x) - (x)_+^r |
  = |k^{-r} \, \varrho(k x)|
  \leq k^{-r} \cdot \big( |\varrho(k x) - (k x)^q| + |(k x)^q| \big)
  \leq k^{-r} \cdot (s + k^q B^q)
  \leq c_0 \cdot k^{-1}
\]
for a constant $c_0 = c_0(B, s, r, q) > 0$.
Moreover, for $x \in [0,B]$, we have
\[
  | k^{-r} \, \varrho(k x) - (x)_+^r |
  = k^{-r} |\varrho(k x) - (k x)^r|
  \leq s \cdot k^{-r} .
\]
Overall, we conclude that
\[
  \sup_{x\in [-B,B]}
    |k^{-r} \varrho(k x) - (x)_+^r|
  \leq \max\{c_0, s\} \cdot k^{-1} ,
\]
which implies \eqref{eq:converge}.

We observe that $\big( x \mapsto (x)_+^r \big) \not \in C^{r}([-B,B])$.
Additionally, since $\varrho \in C^{\max\{r,q\}}(\R) = C^r (\R)$, we have
$\cRN_\varrho^{[-B,B]^d}(S) \subset C^r([-B,B]^d)$.
Hence, the proof is complete if we can construct a sequence $(\Phi_n)_{n \in \N}$ of neural networks
in $\cN((d,1,\dots,1))$ (with $L$ layers) such that the $\varrho$-realizations
$\Realization_\varrho^\Omega (\Phi_n)$ converge uniformly
to the function $[-B,B]^d \to \R, x \mapsto (x_1)_+^r$.
By the preceding considerations, this is clearly possible, as can be seen
by the same arguments used in the proofs of the previous results.
For invoking these arguments, note that $\max\{r,q\} \geq 1$, so that $\varrho \in C^1 (\R)$.
Also, since $\varrho$ is approximately homogeneous of order $(r,q)$ with $r \neq q$,
$\varrho$ cannot be constant, and hence $\varrho'(x_0) \neq 0$ for some $x_0 \in \R$.

\medskip{}

For completeness, let us briefly consider the case where $q > r$
that was omitted at the beginning of the proof.
In this case, $(-k)^{-q} \, \varrho(-k \cdot) \to (\cdot)_+^q$
with uniform convergence on $[-B,B]$.
Indeed, for $x \in [0,B]$, we have
$|(-k)^{-q} \, \varrho(-k x) - (x)_+^q|
 = k^{-q} |\varrho(-k x) - (-k x)^q|
 \leq k^{-q} \cdot s \leq s \cdot k^{-1}$.
Similarly, for $x \in [-B, 0]$, we get
$| (-k)^{-q} \, \varrho(-k x) - (x)_+^q |
 \leq k^{-q} \big( |\varrho(-k x) - (-k x)^r| + |(-k x)^r| \big)
 \leq k^{-q} (s + B^r k^r)
 \leq c_1 \cdot k^{-1}$
for some constant $c_1 = c_1(B,s,r,q) > 0$.
Here, we used that $q - r \geq 1$, since $r,q \in \N_0$ with $q > r$.
Now, the proof proceeds as before, noting that $\big( x \mapsto (x)_+^q \big) \notin C^q ([-B,B])$,
while $\varrho \in C^{\max\{r,q\}} (\R) \subset C^q (\R)$.
\hfill$\square$

\subsection{Proof of Corollary~\ref{cor:GeneralNonClosednessC}}
\label{app:ProofCorollaryNonClosed}

\subsubsection{Proof of Corollary~\ref{cor:GeneralNonClosednessC}.(1)}
\label{app:LimitedSmoothnessNonClosedness}

\textbf{Powers of ReLUs:}
For $k \in \N$, let $\mathrm{ReLU}_k : \R \to \R, x \mapsto \max \{0, x\}^k$,
and note that this is a continuous function.
On $\R \setminus \{0\}$, $\mathrm{ReLU}_k$ is differentiable with
$\mathrm{ReLU}_k ' = k \cdot \mathrm{ReLU}_{k - 1}$.
Furthermore, if $k \geq 2$, then
$| h^{-1} (\mathrm{ReLU}_k (h) - \mathrm{ReLU}_k (0))| \leq |h|^{k-1} \to 0$ as $h \to 0$.
Thus, if $k \geq 2$, then $\mathrm{ReLU}_k$ is continuously differentiable with derivative
$\mathrm{ReLU}_k ' = k \cdot \mathrm{ReLU}_{k-1}$.
Finally, $\mathrm{ReLU}_1$ is \emph{not} differentiable at $x = 0$.
Overall, this shows $\mathrm{ReLU}_k \in C^1 (\R) \setminus C^\infty (\R)$ for all $k \geq 2$.

\medskip{}

\noindent
\textbf{The exponential linear unit:}
We have $\frac{d^k}{d x^k} (e^x - 1) = e^x$ for all $k \in \N$.
Therefore, the exponential linear unit
$\varrho : \R \to \R, x \mapsto x \Indicator_{[0,\infty)}(x) + (e^x - 1) \Indicator_{(-\infty,0)}(x)$
satisfies for $k \in \N_0$ that
\[
  \lim_{x \downarrow 0} \varrho^{(k)} (x) = \delta_{k,1}
  \qquad \text{and} \qquad
  \lim_{x \uparrow 0} \varrho^{(k)} (x)
  = \begin{cases}
      \lim_{x \uparrow 0} (e^x - 1) = 0, & \text{if } k = 0 , \\
      \lim_{x \uparrow 0} e^x = 1,       & \text{if } k \neq 0 .
    \end{cases}
\]
By standard results in real analysis
(see for instance \cite[Problem~2 in Chapter~VIII.6]{DieudonneFoundations}),
this implies that $\varrho \in C^1 (\R) \setminus C^2 (\R)$.

\medskip{}

\noindent
\textbf{The softsign function:}
On $(-1,\infty)$, we have $\frac{d}{dx} \frac{x}{1 + x} = (1 + x)^{-2}$ and
$\frac{d^2}{d x^2} \frac{x}{1 + x} = -2 (1 + x)^{-3}$.
Furthermore, if $x < 0$, then
$\mathrm{softsign}(x) = \frac{x}{1 + |x|} = - \frac{-x}{1 + (-x)} = - \mathrm{softsign}(-x)$.
Therefore, the softsign function is $C^\infty$ on $\R \setminus \{0\}$,
and satisfies
\[
  \lim_{x \downarrow 0} \mathrm{softsign}'(x)
  = \lim_{x \downarrow 0} (1 + x)^{-2}
  = 1
  = \lim_{x \uparrow 0} (1 - x)^{-2}
  = \lim_{x \uparrow 0} \mathrm{softsign} ' (x) .
\]
By standard results in real analysis
(see for instance \cite[Problem~2 in Chapter~VIII.6]{DieudonneFoundations}),
this implies that $\mathrm{softsign} \in C^1 (\R)$.
However, since
\[
  \lim_{x \downarrow 0} \mathrm{softsign}''(x)
  = \lim_{x \downarrow 0} -2 (1 + x)^{-3}
  = -2
  \quad \text{and} \quad
  \lim_{x \uparrow 0} \mathrm{softsign}''(x)
  = \lim_{x \uparrow 0} 2 (1 - x)^{-3}
  = 2 ,
\]
we have $\mathrm{softsign} \notin C^2 (\R)$.

\medskip{}

\noindent
\textbf{The inverse square root linear unit:}
Let
\(
  \varrho :
  \R \to \R,
  x \mapsto x \Indicator_{[0,\infty)}(x) + \frac{x}{(1 + a x^2)^{1/2}} \Indicator_{(-\infty,0)}(x)
\)
denote the inverse square root linear unit with parameter $a > 0$,
and note $\varrho|_{\R\setminus \{0\}} \in C^\infty (\R \setminus \{0\})$.
As we saw in Equation~\eqref{eq:ISQRLUDerivative}, we have
$\frac{d}{dx} \frac{x}{(1 + a x^2)^{1/2}} = (1 + a x^2)^{-3/2}$,
and thus $\frac{d^2}{d x^2} \frac{x}{(1 + a x^2)^{1/2}} = - 3 a x \cdot (1 + a x^2)^{-5/2}$,
and finally
\(
  \frac{d^3}{d x^3} \frac{x}{(1 + a x^2)^{1/2}}
  = -3 a (1 + a x^2)^{-5/2} + 15 a^2 x^2 (1 + a x^2)^{-7/2}
  .
\)
These calculations imply
\[
  \lim_{x \uparrow 0} \varrho'(x)
  = \lim_{x \uparrow 0} (1 + a x^2)^{-3/2}
  = 1
  = \lim_{x \downarrow 0} \varrho'(x)
  \quad \text{and} \quad
  \lim_{x \uparrow 0} \varrho''(x)
  = \lim_{x \uparrow 0} -3 a x \cdot (1 + a x^2)^{-5/2}
  = 0
  = \lim_{x \downarrow 0} \varrho''(x) ,
\]
but also
\[
  \lim_{x \uparrow 0} \varrho'''(x)
  = \lim_{x \uparrow 0}
    \big[ -3 a (1 + a x^2)^{-5/2} + 15 a^2 x^2 (1 + a x^2)^{-7/2} \big]
  = - 3 a
  \neq 0
  = \lim_{x \downarrow 0} \varrho ''' (x) .
\]
By standard results in real analysis
(see for instance \cite[Problem~2 in Chapter~VIII.6]{DieudonneFoundations}),
this implies that $\varrho \in C^2 (\R) \setminus C^3 (\R)$.
\hfill$\square$

\subsubsection{Proof of Corollary~\ref{cor:GeneralNonClosednessC}.(3)}
\label{app:Softplus}

\textbf{The softplus function:}
Clearly, $\mathrm{softplus} \in C^\infty(\R) \subset C^{\max\{1,0\}}(\R)$.
Furthermore, the softplus function is approximately homogeneous of order $(1,0)$.
Indeed, for $x \geq 0$, we have
\[
  | \ln(1 + e^x) - x |
  = \Big|
      \ln \Big( \frac{1 + e^x}{e^x} \Big)
    \Big|
  = \ln(1 + e^{-x})
  \leq \ln (2) ,
\]
and for $x \leq 0$, we have $|\ln(1 + e^x) - x^0| \leq 1 + \ln(2)$.
\hfill$\square$

\subsection{Proof of Proposition~\ref{prop:bdWeights}}
\label{app:bdWeights}

The set $\Theta_C$
is closed and bounded in the normed space
\(
  \big(
    \cN(S),
    \| \cdot \|_{\cN(S)}
  \big) .
\)
Thus, the Heine-Borel Theorem implies the compactness of $\Theta_C$.
By Proposition~\ref{prop:RealizationContinuity} (which will be proved independently), the map
\[
  \Realization_{\varrho}^{\Omega}
  : \big(
      \cN(S),
      \|\cdot\|_{\cN(S)}
    \big)
    \to \big(C(\Omega),\|\cdot\|_{\sup}\big)
  \]
is continuous.
As a consequence, the set $\Realization_{\varrho}^\Omega(\Theta_C)$
is compact in $C(\Omega)$.
%
%
Because of the compactness of $\Omega$, $C(\Omega)$ is continuously embedded into $L^p(\mu)$
for every $p \in (0,\infty)$ and any finite Borel measure $\mu$ on $\Omega$.
This implies that the set $\Realization_\varrho^\Omega(\Theta_C)$
is compact in $L^p(\mu)$ as well.
\hfill$\square$

\subsection{Proof of Proposition~\ref{prop:ExplodingWeights}}
\label{app:ExplodingWeightsProof}

With $(\Phi_N)_{N \in \N}$ as in the statement of Proposition~\ref{prop:ExplodingWeights},
we want to show that $\| \Phi_N \|_{\mathrm{total}} \to \infty$ in probability.
By definition, this means that for each fixed $C > 0$, and letting $\Omega_N$ denote the event where
$\| \Phi_N \| \geq C$, we want to show that $\mathbb{P}(\Omega_N) \to 1$ as $N \to \infty$.
For brevity, let us write $\mathcal{R}^Z \coloneqq \mathrm{R}_{\varrho}^{\Omega}(\Theta_{Z})$
for $\Theta_{Z}$, $Z > 0$ as in Proposition \ref{prop:bdWeights}.

\medskip{}

By compactness of $\mathcal{R}^C$, we can choose $g \in \mathcal{R}^C$ satisfying
\(
  \| f_\prob - g \|_{L^2(\prob_\Omega)}^2
  = \inf_{h \in \mathcal{R}^C} \| f_\prob - h \|_{L^2(\prob_\Omega)}^2 .
\)
Define $M := \inf_{h \in \cRN_\varrho^\Omega (S)} \| f_\prob - h \|_{L^2(\prob_\Omega)}^2$.
Since by assumption the infimum defining $M$ is not attained, we have
$\| f_\prob - g \|_{L^2(\prob_\Omega)}^2 > M$, so that there are $h \in \cRN_\varrho^\Omega (S)$
and $\delta > 0$ with
$\| f_\prob - g \|_{L^2(\prob_\Omega)}^2 \geq 2 \delta + \| f_\prob - h \|_{L^2(\prob_\Omega)}^2$.
Let $C' > 0$ with $h \in \mathcal{R}^{C'}$.

For $N \in \N$ and $\eps > 0$, let us denote by $\Omega_{N,\eps}^{(1)}$ the event where
\(
  \sup_{f \in \mathcal{R}^{C'}}
    |\mathcal{E}_\prob (f) - E_N (f)|
  > \eps .
\)
Since $\mathcal{R}^{C'}$ is compact, \cite[Theorem~B]{Cucker02onthe} shows for arbitrary
$\eps > 0$ that $\mathbb{P}(\Omega_{N,\eps}^{(1)}) \xrightarrow[N \to \infty]{} 0$ for each fixed
$\eps > 0$.
Similarly, denoting by $\Omega_{N,\eps}^{(2)}$ the event where
$E_N (\Realization_\varrho^\Omega (\Phi_N)) - \inf_{f \in \cRN_\varrho^\Omega(S)} E_N (f) > \eps$,
we have by assumption \eqref{eq:ConvergenceOfMinimizers} that
$\mathbb{P}(\Omega_{N,\eps}^{(2)}) \xrightarrow[N \to \infty]{} 0$,
for each fixed $\eps > 0$.

\medskip{}

We now claim that $\Omega_N^c \subset \Omega_{N, \delta/3}^{(1)} \cup \Omega_{N, \delta/3}^{(2)}$.
Once we prove this, we get
\[
  0 \leq \mathbb{P} (\Omega_N^c)
    \leq \mathbb{P} \big( \Omega_{N,\delta/3}^{(1)} \big)
         + \mathbb{P} \big( \Omega_{N,\delta/3}^{(2)} \big)
    \xrightarrow[N \to \infty]{} 0,
\]
and hence $\mathbb{P}(\Omega_N) \to 1$, as desired.

To prove $\Omega_N^c \subset \Omega_{N, \delta/3}^{(1)} \cup \Omega_{N, \delta/3}^{(2)}$,
assume towards a contradiction that there exists a training sample
\({
  \omega
  \coloneqq \big( (x_i ,y_i) \big)_{i \in \N}
  \in \Omega_N^c \setminus \big( \Omega_{N, \delta/3}^{(1)} \cup \Omega_{N, \delta/3}^{(2)} \big)
  .
}\)
Thus, $\| \Phi_N \|_{\mathrm{total}} < C$, meaning
${f_N := \Realization_\varrho^\Omega (\Phi_N) \in \mathcal{R}^C \subset \mathcal{R}^{C'}}$.
Using the decomposition of the expected loss from Equation~\eqref{eq:expectedRiskDecomposition},
we thus see
\begin{align*}
  & \| g - f_\prob \|_{L^2(\prob_\Omega)}^2 + \mathcal{E}_\prob (f_\prob) \\
  ({\scriptstyle{\text{by choice of $g$ and since } f_N \in \mathcal{R}^C}})
  & \leq \| f_N - f_\prob \|_{L^2(\prob_\Omega)}^2 + \mathcal{E}_\prob (f_\prob) 
    =    \mathcal{E}_\prob \big( f_N \big) \\
  ({\scriptstyle{\text{since } \omega \notin \Omega_{N,\delta/3}^{(1)}
                 \text{ and } f_N \in \mathcal{R}^{C'}}})
  & \leq E_N (f_N) + \frac{\delta}{3} \\
  ({\scriptstyle{\text{since } \omega \notin \Omega_{N,\delta/3}^{(2)}}})
  & \leq \frac{2}{3} \delta
         + \inf_{f \in \cRN_\varrho^\Omega (S)}
             E_N (f)
    \leq \frac{2}{3} \delta + E_N (h) \\
  ({\scriptstyle{\text{since } h \in \mathcal{R}^{C'}
                 \text{ and } \omega \notin \Omega_{N,\delta/3}^{(1)}}})
  & \leq \mathcal{E}_{\prob} (h) + \delta
    =    \mathcal{E}_{\prob} (f_\prob) + \| h - f_\prob \|_{L^2(\prob_\Omega)}^2 + \delta .
\end{align*}
By rearranging and recalling the choice of $h$ and $\delta$, we finally see
\[
  \| f_\prob - h \|_{L^2(\sigma_\Omega)}^2 + 2 \delta
  \leq \| f_\sigma - g \|_{L^2(\sigma_\Omega)}
  \leq \| f_\prob - h \|_{L^2(\sigma_\Omega)}^2 + \delta
  ,
\]
which is the desired contradiction.
\hfill$\square$

\subsection{Proof of Proposition~\ref{prop:BoundedScalingWeightsClosedness}}
\label{app:BoundedScalingWeightsClosedness}

The main ingredient of the proof will be to show that one can replace
a given sequence of networks with $C$-bounded scaling weights
by another sequence with $C$-bounded scaling weights that also has
bounded biases. Then one can apply Proposition~\ref{prop:bdWeights}.

\begin{lemma}\label{lem:WeightShiftLemma}
  Let $S = (d, N_1, \dots, N_L)$ be a neural network architecture,
  let $C > 0$ and let $\Omega \subset \R^d$ be measurable and bounded.
  Let $\mu$ be a finite Borel measure on $\Omega$ with $\mu(\Omega) > 0$.
  Finally, let ${\varrho : \R \to \R, ~x \mapsto \max\{0,x\}}$ denote the ReLU activation function.

  Let $(\Phi_n)_{n \in \N}$ be a sequence of networks in
  $\cN(S)$ with $C$-bounded scaling weights and such that there exists some $M > 0$ with
  ${\| \Realization_\varrho^{\Omega} (\Phi_n) \|_{L^1 (\mu)} \leq M}$
  for all $n \in \N$.

  Then, there is an infinite set $I \subset \N$ and a family of networks
  $(\Psi_n)_{n \in I} \subset \cN(S)$ with $C$-bounded scaling weights which satisfies
  $\Realization_\varrho^\Omega (\Phi_n) = \Realization_\varrho^\Omega (\Psi_n)$
  for $n \in I$ and such that $\|\Psi_n\|_{\mathrm{total}} \leq C'$
  for all $n \in I$ and a suitable constant $C' > 0$.
\end{lemma}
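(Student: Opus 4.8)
The plan is to prove Lemma~\ref{lem:WeightShiftLemma} by induction on the number of layers $L$, using a \emph{strengthened} inductive statement that tolerates a varying (but bounded) domain and a varying (but uniformly non-degenerate) measure. Concretely, I would prove: for all $R,\eps,C,M>0$ and every architecture $S=(d,N_1,\dots,N_L)$ there is a constant $C^\ast=C^\ast(R,\eps,C,M,S)$ such that, whenever $\Omega_n\subset\overline{B_R(0)}$ are bounded Borel sets, $\mu_n$ are finite Borel measures on $\Omega_n$ with $\mu_n(\Omega_n)\ge\eps$, and $(\Phi_n)_{n\in\N}\subset\cN(S)$ has $C$-bounded scaling weights with $\|\Realization_\varrho^{\Omega_n}(\Phi_n)\|_{L^1(\mu_n)}\le M$, then there are an infinite set $I\subset\N$ and networks $(\Psi_n)_{n\in I}\subset\cN(S)$ with $C$-bounded scaling weights satisfying $\Realization_\varrho^{\Omega_n}(\Psi_n)=\Realization_\varrho^{\Omega_n}(\Phi_n)$ on $\Omega_n$ and $\|\Psi_n\|_{\mathrm{total}}\le C^\ast$. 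The lemma is then the special case $\Omega_n\equiv\Omega$, $\mu_n\equiv\mu$, $R=\sup_{x\in\Omega}|x|$, $\eps=\mu(\Omega)$. The base case $L=1$ is immediate and needs no subsequence: here $\Realization_\varrho^{\Omega_n}(\Phi_n)(x)=A_1^{(n)}x+b_1^{(n)}$, and $\|A_1^{(n)}\|_{\max}\le C$ forces $|A_1^{(n)}x|\le\sqrt{d}\,CR$ on $\Omega_n$, so integrating $|b_1^{(n)}|\le|\Realization_\varrho^{\Omega_n}(\Phi_n)(x)|+|A_1^{(n)}x|$ against $\mu_n$ and dividing by $\mu_n(\Omega_n)\ge\eps$ gives $|b_1^{(n)}|\le M/\eps+\sqrt{d}\,CR$.

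For the inductive step $L\ge2$, assuming the statement for $L-1$, I would first pass to an infinite subset along which, for each of the finitely many first-layer neurons $i\in\{1,\dots,N_1\}$, the bias $(b_1^{(n)})_i$ either stays bounded or converges to $+\infty$ or to $-\infty$. Since $\|A_1^{(n)}\|_{\max}\le C$ and $\Omega_n\subset\overline{B_R(0)}$, a neuron with $(b_1^{(n)})_i\to-\infty$ has pre-activation $(A_1^{(n)})_{i,-}x+(b_1^{(n)})_i<0$ for \emph{all} $x\in\Omega_n$ once $n$ is large, hence outputs $0$ there and may be deleted (its row and bias set to $0$); a neuron with $(b_1^{(n)})_i\to+\infty$ has pre-activation $>0$ on all of $\Omega_n$, hence acts as the affine map $x\mapsto(A_1^{(n)})_{i,-}x+(b_1^{(n)})_i$, and I would replace its bias by the fixed moderate constant $c^\ast:=\sqrt{d}\,CR+1$ (which keeps the pre-activation positive on $\overline{B_R(0)}$) and compensate by adding a suitable constant vector — built from the leftover offsets $(b_1^{(n)})_i-c^\ast$ and the corresponding columns of $A_2^{(n)}$ — to the layer-$2$ bias. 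This modification leaves $\Realization_\varrho^{\Omega_n}(\Phi_n)$ unchanged on $\Omega_n$, keeps every scaling matrix $C$-bounded, makes the first-layer weights and biases bounded by a constant depending only on $R$, $C$ and the finitely many bounded biases, and — crucially — makes the first-layer output $v_n\colon\Omega_n\to\R^{N_1}$ uniformly bounded: $v_n(\Omega_n)\subset\overline{B_V(0)}$ for some $V=V(R,C,S)$.

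Next I would apply the inductive hypothesis to the tail network $\Theta_n:=\big((A_2^{(n)},\widehat{b}_2^{(n)}),(A_3^{(n)},b_3^{(n)}),\dots,(A_L^{(n)},b_L^{(n)})\big)$, which has architecture $(N_1,\dots,N_L)$, $L-1$ layers, and $C$-bounded scaling weights, on the domain $\overline{B_V(0)}$ equipped with the pushforward measure $\nu_n:=(v_n)_\ast\mu_n$. This $\nu_n$ is a finite Borel measure supported in $\overline{B_V(0)}$ with total mass $\mu_n(\Omega_n)\ge\eps$ independent of $n$, and unrolling the modified network gives $\Realization_\varrho^{\Omega_n}(\Phi_n)(x)=\Realization_\varrho^{\overline{B_V(0)}}(\Theta_n)(v_n(x))$, whence by change of variables $\|\Realization_\varrho^{\overline{B_V(0)}}(\Theta_n)\|_{L^1(\nu_n)}=\|\Realization_\varrho^{\Omega_n}(\Phi_n)\|_{L^1(\mu_n)}\le M$. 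The inductive hypothesis thus yields an infinite $I$ and networks $(\Xi_n)_{n\in I}$ of architecture $(N_1,\dots,N_L)$ with $C$-bounded scaling, uniformly bounded $\|\Xi_n\|_{\mathrm{total}}$, and $\Realization_\varrho^{\overline{B_V(0)}}(\Xi_n)=\Realization_\varrho^{\overline{B_V(0)}}(\Theta_n)$ on $\overline{B_V(0)}$. Prepending the modified first layer $(\widehat{A}_1^{(n)},\widehat{b}_1^{(n)})$ to $\Xi_n$ produces the desired $\Psi_n\in\cN(S)$: its realization on $\Omega_n$ equals $\Realization_\varrho^{\overline{B_V(0)}}(\Xi_n)\circ v_n=\Realization_\varrho^{\overline{B_V(0)}}(\Theta_n)\circ v_n=\Realization_\varrho^{\Omega_n}(\Phi_n)$, its scaling weights remain $C$-bounded, and all its biases are bounded, uniformly in $n\in I$.

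The main obstacle is precisely the domain bookkeeping just described: the tail network naturally lives on the $n$-dependent image $v_n(\Omega_n)$, so the inductive statement has to tolerate a varying domain and a varying measure while still producing a \emph{single} constant $C^\ast$; getting these quantifiers right — and verifying that $\eps$ and $M$ are genuinely preserved going down the recursion, while only the radius degrades from $R$ to $V(R,C,S)$ — is exactly what forces the strengthened formulation above. A second, more technical point requiring care is that the dead/saturated classification of the first-layer neurons must hold \emph{exactly} on $\Omega_n$, not merely $\mu_n$-almost everywhere, so that the modified network realizes the \emph{same} function on $\Omega_n$; this is where the boundedness of $\Omega_n$ together with the $C$-bound on the scaling weights is used.
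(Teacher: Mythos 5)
Your argument is sound and rests on exactly the same core mechanism as the paper's proof: pass to a subsequence along which each bias of the layer being processed converges in $[-\infty,\infty]$; delete the neurons whose bias tends to $-\infty$ (they output zero on the whole bounded domain because the scaling weights are $C$-bounded); replace the bias of a neuron tending to $+\infty$ by a moderate constant and push the leftover offset into the next layer's bias via the corresponding column of the next weight matrix; and control the output bias through the $L^1(\mu)$ bound together with the uniform bound on the penultimate activations. The organizational difference is genuine, though: the paper runs the induction over the layer index $m$ \emph{inside} the fixed sequence of networks, never changing the domain or the measure, and instead carries along a bound $R'$ on the intermediate activations; you run an outer induction on the depth $L$, peeling off the first layer and recursing on the tail network over the pushforward measure $(v_n)_*\mu_n$ on a ball $\overline{B_V(0)}$. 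Your version is more modular, at the cost of having to generalize the statement to varying domains and measures --- precisely the bookkeeping the paper's in-place formulation avoids. One quantifier in your strengthened statement is overstated: you claim $C^\ast$ depends only on $(R,\eps,C,M,S)$, but the radius $V$ you feed into the inductive hypothesis depends on the supremum of those first-layer biases that converge to \emph{finite} limits along your subsequence, and that supremum is not controlled by $(R,\eps,C,M,S)$ (a neuron whose output is annihilated by the second layer can carry an arbitrarily large, yet bounded, bias). This is harmless --- the non-uniform version, in which $C^\ast$ may depend on the given sequence, is all the lemma asserts and is all your induction actually uses --- but the statement should be phrased that way.
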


\begin{proof}
  Set $N_0 := d$.
  Since $\Omega$ is bounded, there is some $R > 0$ with
  $\|x\|_{\ell^\infty} \leq R$ for all $x \in \Omega$.
  In the following, we will use without further comment the estimate
  $\|A x\|_{\ell^\infty} \leq k \cdot \|A\|_{\max} \cdot \|x\|_{\ell^\infty}$
  which is valid for $A \in \R^{n \times k}$ and $x \in \R^k$.


  Below, we will show by induction on $m\in\left\{ 0,\dots,L-1\right\}$
  that for each $m\in\left\{ 0,\dots,L-1\right\} $, there is an infinite
  subset $I_{m} \subset \N$, and a family of networks
  $\big( \Psi^{(m)}_n \big)_{n \in I_m} \subset \cN(S)$ of the form
  \begin{equation}
     \Psi_n^{(m)}
    = \left(
        (B_1^{(n,m)}, c_1^{(n,m)}),
        \dots,
        (B_L^{(n,m)}, c_L^{(n,m)})
      \right)
    \label{eq:BoundedScalingClosednessNetworkNamingScheme}
  \end{equation}
  with the following properties:
  \begin{enumerate}
    \item[(A)] We have
               \(
                 \Realization_\varrho^\Omega \big( \Psi_n^{(m)} \big)
                 = \Realization_\varrho^{\Omega} (\Phi_n)
               \)
               for all $n \in I_m$;

    \item[(B)] each network $\Psi_n^{(m)}$, $n \in I_m$, has $C$-bounded scaling weights;

    \item[(C)] there is a constant $C_m > 0$ with
               $\big\| c_\ell^{(n,m)} \big\|_{\ell^\infty} \leq C_m$
               for all $n \in I_m$ and all $\ell \in \FirstN{m}$.
  \end{enumerate}
  Once this is shown, we set $I \coloneqq I_{L-1}$ and $\Psi_n \coloneqq \Psi_n^{(L-1)}$
  for $n \in I$.
  Clearly, $\Psi_n$ has $C$-bounded scaling weights and satisfies
  $\Realization_\varrho^\Omega (\Psi_n) = \Realization_\varrho^\Omega(\Phi_n)$,
  so that it remains to show $\| \Psi_n \|_{\mathrm{total}} \leq C'$, for which
  it suffices to show $\| c_L^{(n,L-1)} \|_{\ell^\infty} \leq C''$ for some $C'' > 0$ and all
  $n \in I$, since we have $\| c_\ell^{(n,L-1)} \|_{\ell^\infty} \leq C_{L-1}$ for all
  $\ell \in \FirstN{L-1}$.

  Now, note for $\ell \in \FirstN{L-1}$ and $x \in \R^{N_{\ell-1}}$ that
  $T_\ell^{(n,L-1)}(x) \coloneqq B_\ell^{(n,L-1)}x + c_\ell^{(n,L-1)}$ satisfies
  \[
    \big\| T_\ell^{(n,L-1)} (x) \big\|_{\ell^\infty}
    \leq N_{\ell-1} \cdot C \cdot \|x\|_{\ell^\infty} + C_{L-1} .
  \]
  Since $\Omega$ is bounded, and since $| \varrho(x) | \leq |x|$
  for all $x \in \R$, there is thus a constant $C_{L-1} ' > 0$ such that if we set
  \[
    \beta^{(n)} (x)
    \coloneqq \big(\varrho \circ T_{L-1}^{(n,L-1)}
        \circ \cdots \circ
        \varrho \circ T_1^{(n,L-1)} \, \big) (x)
    \quad \text{for} \quad x \in \Omega ,
  \]
  then $\|\beta^{(n)} (x)\|_{\ell^\infty} \leq C_{L-1} '$
  for all $x \in \Omega$ and all $n \in I$.

  For arbitrary $i \in \FirstN{N_L}$ and $x \in \Omega$, this implies
  \begin{align*}
      \big| [\Realization_\varrho^\Omega (\Phi_n) (x)]_i \big|
    = \big| [\Realization_\varrho^\Omega (\Psi_n^{(L-1)}) (x)]_i \big|
    & = \big|
          \big\langle \big(B_L^{(n,L-1)}\big)_i \,,\, \beta^{(n)}(x) \big\rangle
          + (c_L^{(n,L-1)})_{i}
        \big| \\
    & \geq \big| (c_L^{(n,L-1)})_{i} \big|
           - \big|
               \big\langle
                 \big(B_L^{(n,L-1)}\big)_i
                 \,,\,
                 \beta^{(n)}(x)
               \big\rangle
             \big| \\
    & \geq \big| (c_L^{(n,L-1)})_{i} \big|
           - N_{L-1} \cdot C \cdot \|\beta^{(n)}(x)\|_{\ell^\infty} \\
    & \geq \big| (c_L^{(n,L-1)})_{i} \big|
           - N_{L-1} \cdot C \cdot C_{L-1}' .
  \end{align*}
  Since by assumption $\|\Realization^\Omega_\varrho (\Phi_n) \|_{L^1(\mu)} \leq M$
  and $\mu(\Omega) > 0$, we see that $\big( c_L^{(n,L-1)} \big)_{n \in I}$ must be a bounded sequence.

  Thus, it remains to construct the networks $\Psi_n^{(m)}$ for $n \in I_m$
  (and the sets $I_m$) for $m \in \{0,\dots,L-1\}$ with the properties (A)--(C)
  from above.

  \medskip{}

  For the start of the induction ($m=0$), we can simply take
  $I_0 \coloneqq \N$, $\Psi_n^{(0)} \coloneqq \Phi_n$, and $C_0 > 0$ arbitrary,
  since condition (C) is void in this case.

  Now, assume that a family of networks $(\Psi_n^{(m)})_{n \in I_m}$
  as in Equation~\eqref{eq:BoundedScalingClosednessNetworkNamingScheme}
  with an infinite subset $I_{m} \subset \N$ and satisfying conditions (A)--(C)
  has been constructed for some $m \in \left\{ 0,\dots,L-2\right \}$.
  In particular, $L \geq 2$.

  For brevity, set
  $T_{\ell}^{(n)} : \R^{N_{\ell-1}} \to \R^{N_{\ell}}, ~x\mapsto B_{\ell}^{(n,m)}x + c^{(n,m)}_\ell$
  for $\ell \in \FirstN{L}$, and $\varrho_{L} \coloneqq \mathrm{id}_{\R^{N_{L}}}$,
  and let $\varrho_{\ell} \coloneqq \varrho \times \cdots \times \varrho$ denote the $N_\ell$-fold
  cartesian product of $\varrho$ for $\ell \in \FirstN{L-1}$.
  Furthermore, let us define
  $\beta_n \coloneqq \varrho_{m}\circ T_{m}^{(n)} \circ \cdots \circ
   \varrho_{1} \circ T_{1}^{(n)} : \R^{d} \to \R^{N_{m}}$.
  Note $\| \varrho_{\ell} (x) \|_{\ell^\infty} \leq \| x \|_{\ell^\infty}$
  for all $x\in\R^{N_{\ell}}$.
  Additionally, observe for $n \in I_{m}$, $\ell \in \FirstN{m}$ and
  $x \in \R^{N_{\ell-1}}$ that
  \[
    \left\| T_{\ell}^{(n)} (x) \right\|_{\ell^\infty}
    =    \left\| B_{\ell}^{(n,m)} x + c^{(n,m)}_{\ell} \right\|_{\ell^\infty}
    \leq N_{\ell-1} \cdot C \cdot \| x \|_{\ell^\infty} + C_{m} .
  \]
  Combining these observations, and recalling that $\Omega$ is bounded,
  we easily see that there is some $R' > 0$
  with $\| \beta_n (x) \|_{\ell^\infty} \leq R'$ for all $x \in \Omega$ and $n \in I_{m}$.

  Next, since $\left( c_{m+1}^{(n,m)} \right)_{n \in I_{m}}$
  is an infinite family in $\R^{N_{m+1}} \subset [-\infty, \infty]^{N_{m+1}}$,
  we can find (by compactness) an infinite subset $I_{m}^{(0)} \subset I_{m}$
  such that $c_{m+1}^{(n,m)}\to c_{m+1} \in [-\infty, \infty]^{N_{m+1}}$
  as $n \to \infty$ in the set $I_{m}^{(0)}$.

  \medskip{}

  Our goal is to construct vectors $d^{(n)},e^{(n)} \in \R^{N_{m+1}}$,
  matrices $C^{(n)} \in \R^{N_{m+1} \times N_{m}}$, and an
  infinite subset $I_{m+1} \subset I_{m}^{(0)}$ such that
  $\|C^{(n)}\|_{\max} \leq C$ for all $n \in I_{m+1}$, such that
  $\left(d^{(n)}\right)_{n\in I_{m+1}}$ is a bounded family,
  and such that we have
  \begin{equation}
      \varrho_{m+1} \big( T_{m+1}^{(n)}( x ) \big)
    = \varrho_{m+1} \big(C^{(n)} \, x + d^{(n)} \big) + e^{(n)}
    \quad \text{for all } x \in \R^{N_{m}} \text{ with } \| x \|_{\ell^\infty} \leq R' ,
    \label{eq:MainTarget}
  \end{equation}
  for all $n \in I_{m+1}$.

  Once $d^{(n)},e^{(n)},C^{(n)}$ are
  constructed, we can simply choose $\Psi_n^{(m+1)}$
  as in Equation~\eqref{eq:BoundedScalingClosednessNetworkNamingScheme},
  where we define $B_{\ell}^{(n,m+1)} \coloneqq B_{\ell}^{(n,m)}$
  and $c_{\ell}^{(n,m+1)} \coloneqq c_{\ell}^{(n,m)}$
  for $\ell \in \FirstN{L} \setminus \left \{ m+1,m+2 \right\}$, and finally
  \[
    B_{m+1}^{(n,m+1)} \coloneqq C^{(n)},
    \quad
    B_{m+2}^{(n,m+1)} \coloneqq B_{m+2}^{(n,m)},
    \quad
    c_{m+1}^{(n,m+1)} \coloneqq d^{(n)},
    \quad \text{and} \quad
    c_{m+2}^{(n,m+1)} \coloneqq c_{m+2}^{(n,m)} + B_{m+2}^{(n,m+1)} e^{(n)}
  \]
  for $n\in I_{m+1}$.
  Indeed, these choices clearly ensure $\big\|B_\ell^{(n,m+1)} \big\|_{\max} \leq C$
  for all $\ell \in \FirstN{L}$, as well as $\big\|c_\ell^{(n,m+1)}\big\|_{\ell^\infty} \leq C_{m+1}$
  for all $\ell \in \FirstN{m+1}$ and $n \in I_{m+1}$, for a suitable constant $C_{m+1} > 0$.

  Finally, since $\| \beta_{n}(x) \|_{\ell^\infty} \leq R'$
  for all $x \in \Omega$ and $n \in I_m$, Equation~\eqref{eq:MainTarget} implies
  \begin{align*}
    T_{m+2}^{(n)}
    \left(
      \varrho_{m+1} \left( T_{m+1}^{(n)} \left( \beta_n (x) \right) \right)
    \right)
    & = T_{m+2}^{(n)}
        \left(
          \varrho_{m+1} \left( C^{(n)} \beta_n (x) + d^{(n)} \right) + e^{(n)}
        \right)\\
    & = B_{m+2}^{(n,m)}
        \left(
          \varrho_{m+1}
          \left(
            B_{m+1}^{(n,m+1)} \beta_n (x) + c_{m+1}^{(n,m+1)}
          \right)
          + e^{(n)}
        \right)
        + c_{m+2}^{(n,m)}\\
    & = B_{m+2}^{(n,m+1)}
        \left(
          \varrho_{m+1}
          \left(
            B_{m+1}^{(n,m+1)} \beta_n (x) + c_{m+1}^{(n,m+1)}
          \right)
        \right)
        + c_{m+2}^{(n,m+1)}
  \end{align*}
  for all $x\in \Omega$ and $n\in I_{m+1}$. By recalling the definition
  of $\beta_n$, and by noting that $B_{\ell}^{(n,m+1)},c_{\ell}^{(n,m+1)}$
  are identical to $B_{\ell}^{(n,m)},c_{\ell}^{(n,m)}$
  for $\ell\in\FirstN{L}\setminus\left\{ m+1,m+2\right\} $, this
  easily yields
  \[
    \Realization_\varrho^\Omega \big( \Psi_n^{(m+1)} \big)
    = \Realization_\varrho^\Omega \big( \Psi_n^{(m)} \big)
    = \Realization_\varrho^\Omega (\Phi_n)
    \qquad \text{ for all } n \in I_{m+1} .
  \]

  \medskip{}

  Thus, it remains to construct $d^{(n)}, e^{(n)}, C^{(n)}$
  for $n \in I_{m+1}$ (and the set $I_{m+1}$ itself)
  as described around Equation~\eqref{eq:MainTarget}.
  To this end, for $n \in I_{m}^{(0)}$ and $k \in \FirstN{N_{m+1}}$, define
  \[
    d_{k}^{(n)}
    \coloneqq \begin{cases}
         R'\cdot N_m C,
         & \text{if } \left( c_{m+1} \right)_{k} = \infty,\\
         0,
         & \text{if } \left( c_{m+1} \right)_{k} = -\infty,\\
         \left( c_{m+1}^{(n,m)} \right)_{k},
         & \text{if } \left(c_{m+1}\right)_{k} \in \R,
       \end{cases}
    \qquad\text{and}\qquad
    e_{k}^{(n)}
    \coloneqq \begin{cases}
         \left( c_{m+1}^{(n,m)} \right)_{k} - R' \cdot N_m C,
         & \text{if } \left(c_{m+1}\right)_{k} = \infty,\\
         0,
         & \text{if } \left(c_{m+1}\right)_{k} = -\infty,\\
         0,
         & \text{if } \left(c_{m+1}\right)_{k} \in \R,
       \end{cases}
  \]
  as well as
  \[
  C_{k,-}^{(n)}
  \coloneqq \begin{cases}
       \left( B_{m+1}^{(n,m)} \right)_{k,-} \quad,
       & \text{if }\left( c_{m+1} \right)_{k} = \infty,\\
       0 \in \R^{N_{m}},
       & \text{if } \left( c_{m+1} \right)_{k} = -\infty,\\
       \left( B_{m+1}^{(n,m)} \right)_{k,-}\quad,
       & \text{if } \left( c_{m+1} \right)_{k} \in \R.
     \end{cases}
  \]
  To see that these choices indeed fulfil the conditions outlined around
  Equation~\eqref{eq:MainTarget} for a suitable choice of
  $I_{m+1}\subset I_{m}^{\left(0\right)}$,
  first note that $\left(d^{(n)}\right)_{n\in I_{m}^{\left(0\right)}}$
  is indeed a bounded family.
  Furthermore, $\big| C_{k,i}^{(n)} \big| \leq \big| (B_{m+1}^{(n,m)})_{k,i} \big|$ for all
  $k \in \FirstN{N_{m+1}}$ and $i \in \FirstN{N_m}$, which easily implies
  $\| C^{(n)} \|_{\max} \leq \| B_{m+1}^{(n,m)} \|_{\max} \leq C$ for all $n \in I_m^{(0)}$.
  Thus, it remains to verify equation~\eqref{eq:MainTarget} itself.
  But the estimate $\| B_{m+1}^{(n,m)} \|_{\max} \leq C$ also implies
  \begin{equation}
    \left| \left( B_{m+1}^{(n,m)} \, x \right)_k \right|
    \leq N_m \, C \cdot \| x \|_{\ell^\infty}
    \leq N_m \, C \cdot R'
    \quad \text{for all } \, k \in \underline{N_{m+1}} \text{ and all }
                      x \in \R^{N_{m}} \text{ with } \| x \|_{\ell^\infty} \leq R' .
    \label{eq:EasyEstimate}
  \end{equation}
  As a final preparation, note that $\varrho_{m+1} = \varrho \times \cdots \times \varrho$
  is a cartesian product of ReLU functions, since $m \leq L - 2$.
  Now, for $k \in \FirstN{N_{m+1}}$ there are three cases:

  \textbf{Case 1:} We have $(c_{m+1})_{k} = \infty$.
  Thus, there is some $n_{k} \in \N$ such that
  $\big( c_{m+1}^{(n,m)} \big)_{k} \geq R' \cdot N_m C$
  for all $n \in I_{m}^{(0)}$ with $n \geq n_{k}$.
  In view of Equation~\eqref{eq:EasyEstimate}, this implies
  \(
    \big( T_{m+1}^{(n)} (x) \big)_{k}
   = \big( B_{m+1}^{(n,m)} x + c_{m+1}^{(n,m)} \big)_{k} \geq 0
   ,
  \)
  and hence
  \[
    \Big[ \varrho_{m+1} \big(\, T_{m+1}^{(n)} (x) \, \big) \Big]_{k}
    = \Big( B_{m+1}^{(n,m)} \, x + c_{m+1}^{(n,m)} \Big)_{k}
    = \Big(
        \varrho_{m+1} \big( C^{(n)} \, x + d^{(n)} \big) + e^{(n)}
      \Big)_{k}\;,
  \]
  where the last step used our choice of $d^{(n)},e^{(n)},C^{(n)}$,
  and the fact that $\left(C^{(n)}x+d^{(n)}\right)_{k}\geq0$
  by Equation~\eqref{eq:EasyEstimate}.

  \medskip{}

  \textbf{Case 2:} We have $(c_{m+1})_{k} = -\infty$.
  This implies that there is some $n_{k} \in \N$ with
  $\big( c_{m+1}^{(n,m)} \big)_{k} \leq -R' \cdot N_m C$
  for all $n \in I_{m}^{(0)}$ with $n \geq n_{k}$.
  Because of Equation~\eqref{eq:EasyEstimate}, this yields
  \(
    \big( T_{m+1}^{(n)} (x) \big)_{k}
    = \big( B_{m+1}^{(n,m)} x + c_{m+1}^{(n,m)} \big)_{k}
    \leq 0
  \),
  and hence
  \[
    \Big[
      \varrho_{m+1} \big( T_{m+1}^{(n)} (x) \big)
    \Big]_{k}
    = 0
    = \Big[
        \varrho_{m+1} \big( C^{(n)} x + d^{(n)} \big) + e^{(n)}
      \Big]_{k}
    ,
  \]
  where the last step used our choice of $d^{(n)}, e^{(n)}, C^{(n)}$.

  \medskip{}

  \textbf{Case 3:} We have $(c_{m+1})_{k} \in \R$. In this
  case, set $n_{k} \coloneqq 1$, and note by our choice of $d^{(n)}, e^{(n)}, C^{(n)}$
  for $n \in I_{m}^{(0)}$ with $n \geq 1 = n_{k}$ that
  \[
    \Big[
      \varrho_{m+1} \big( C^{(n)}x + d^{(n)} \big) + e^{(n)}
    \Big]_{k}
    = \Big[
        \varrho_{m+1} \big( B_{m+1}^{(n,m)}x + c_{m+1}^{(n,m)} \big)
      \Big]_{k}
    = \Big[
        \varrho_{m+1} \big( T_{m+1}^{(n)} (x) \big)
      \Big]_{k} \;.
  \]

  Overall, we have thus shown that Equation~\eqref{eq:MainTarget} is
  satisfied for all $n \in I_{m+1}$, where
  \[
    I_{m+1}
    \coloneqq
    \big\{
      n \in I_{m}^{\left(0\right)}
      \,:\,
      n \geq \max \left\{ n_{k} \,:\, k \in \FirstN{N_{m+1}} \right\}
    \big\}
  \]
  is clearly an infinite set, since $I_{m}^{\left(0\right)}$ is.
\end{proof}

Using Lemma \ref{lem:WeightShiftLemma}, we can now easily show that the set
$\cRN_{\varrho}^{\Omega,C}(S)$ is closed in $L^p (\mu; \R^{N_L})$ and in $C(\Omega; \R^{N_L})$:
Let $\mathcal{Y}$ denote either $L^p (\mu;\R^{N_L})$ for some $p \in [1, \infty]$ and some finite
Borel measure $\mu$ on $\Omega$,
or $C(\Omega;\R^{N_L})$, where we assume in the latter case that $\Omega$ is compact
and set $\mu = \delta_{x_0}$ for a fixed $x_0 \in \Omega$.
Note that we can assume $\mu(\Omega) > 0$, since otherwise the claim is trivial.
Let $(f_n)_{n \in \N}$ be a sequence in $\cRN_{\varrho}^{\Omega,C}(S)\vphantom{\sum_{j_j}}$
which satisfies $f_n \to f$ for some $f \in \mathcal{Y}$, with convergence in $\mathcal{Y}$.
Thus, $f_n = \Realization_\varrho^\Omega (\Phi_n)$ for a
suitable sequence $(\Phi_n)_{n \in \N}$ in $\cN(S)$ with $C$-bounded scaling weights.

Since $(f_n)_{n \in \N} = \big( \Realization_\varrho^\Omega (\Phi_n) \big)_{n \in \N}$ is
convergent in $\mathcal{Y}$, it is also bounded in $\mathcal{Y}$.
But since $\Omega$ is bounded and $\mu$ is a finite measure, it is not hard to see
$\mathcal{Y} \hookrightarrow L^1 (\mu)$, so that we get
$\| \Realization_\varrho^\Omega (\Phi_n) \|_{L^1(\mu)} \leq M$ for all $n \in \N$ and
a suitable constant $M > 0$.

Therefore, Lemma~\ref{lem:WeightShiftLemma} yields an infinite set
$I \subset \N$ and networks $(\Psi_n)_{n \in I} \subset \cN(S)$ with
$C$-bounded scaling weights such that $f_n = \Realization_\varrho^\Omega (\Psi_n)$
and $\| \Psi_n \|_{\mathrm{total}} \leq C'$ for all $n \in I$ and a suitable $C' > 0$.

Hence, $(\Psi_n)_{n \in I}$ is a bounded, infinite family in the
finite dimensional vector space $\cN(S)$.
Thus, there is a further infinite set $I_1 \subset I$ such that
$\Psi_n \to \Psi \in \cN(S)$ as $n \to \infty$ in $I_1$.

But since $\Omega$ is bounded, say $\Omega \subset [-R,R]^d$, the realization map
\[
  \Realization^{[-R,R]^d}_\varrho
  : \cN(S) \to     C([-R,R]^d; \R^{N_L}),
    \Phi           ~    \mapsto \Realization^{[-R,R]^d}_\varrho (\Phi)
\]
is continuous (even locally Lipschitz continuous); see Proposition~\ref{prop:RealizationContinuity},
which will be proved independently.
Hence, $\Realization_\varrho^{[-R,R]^d} (\Psi_n) \to \Realization_\varrho^{[-R,R]^d}(\Psi)$
as $n \to \infty$ in $I_1$, with uniform convergence.
This easily implies
$f_n = \Realization_\varrho^{\Omega}(\Psi_n) \to \Realization_\varrho^{\Omega}(\Psi)$,
with convergence in $\mathcal{Y}$ as $n \to \infty$ in $I_1$.
Hence, $f = \Realization^\Omega_\varrho(\Psi) \in \cRN_{\varrho}^{C,\Omega}(S)$.
\hfill$\square$

\subsection{Proof of Theorem~\ref{thm:closedReLU}}
\label{app:closedReLU}

For the proof of Theorem~\ref{thm:closedReLU}, we will use a careful analysis
of the \textbf{singularity hyperplanes} of functions of the form
$x \mapsto \varrho_a (\langle \alpha, x \rangle + \beta)$, that is, the
hyperplane on which this function is not differentiable.
To simplify this analysis, we first introduce a convenient terminology
and collect quite a few auxiliary results.

\begin{definition}\label{def:SingularityHyperplanes}
  For $\alpha, \widetilde{\alpha} \in S^{d-1}$ and
  $\beta, \widetilde{\beta} \in \R$, we write
  $(\alpha,\beta) \sim (\widetilde{\alpha}, \widetilde{\beta})$
  if there is a $\eps \in \{\pm 1\}$ such that
  $(\alpha, \beta) = \eps \cdot (\widetilde{\alpha}, \widetilde{\beta})$.

  Furthermore, for $a \geq 0$ and with
  $\varrho_a : \R \to \R, x \mapsto \max \{x, ax \}$ denoting the
  parametric ReLU, we set
  \[
    S_{\alpha, \beta}
    \coloneqq \big\{ x \in \R^d \,:\, \langle \alpha, x \rangle + \beta = 0 \big\}
    \qquad \text{and} \qquad
    h_{\alpha,\beta}^{(a)} :
    \R^d \to \R,
    x \mapsto \varrho_a (\langle \alpha, x \rangle + \beta) .
  \]
  Moreover, we define
  \[
    W_{\alpha,\beta}^{+}
    \coloneqq \{ x \in \R^d \,:\, \langle \alpha, x \rangle + \beta > 0 \}
    \quad \text{and} \quad
    W_{\alpha,\beta}^{-}
    \coloneqq \{ x \in \R^d \,:\, \langle \alpha, x \rangle + \beta < 0 \} ,
  \]
  and finally
  \[
    U_{\alpha,\beta}^{(\eps)}
    \coloneqq \big\{
         x \in \R^d
         \,:\,
         |\langle \alpha, x \rangle + \beta| \geq \eps
       \big\} ,
    \quad
    U_{\alpha,\beta}^{(\eps,+)}
    \coloneqq U_{\alpha,\beta}^{(\eps)} \cap W_{\alpha,\beta}^{+}
    \quad \text{and} \quad
    U_{\alpha,\beta}^{(\eps,-)}
    \coloneqq U_{\alpha,\beta}^{(\eps)} \cap W_{\alpha,\beta}^{-}
    \quad \text{for } \eps > 0 .
  \]
\end{definition}

\begin{lemma}\label{lem:HyperplanesTechnicalLemma}
  Let $(\alpha, \beta) \in S^{d-1} \times \R$ and $x_0 \in S_{\alpha,\beta}$.
  Furthermore, let
  $(\alpha_1, \beta_1), \dots, (\alpha_N, \beta_N) \in S^{d-1} \times \R$ with
  $(\alpha_\ell, \beta_\ell) \not\sim (\alpha, \beta)$ for all
  $\ell \in \underline{N}$.
  Then, there exists $z \in \R^d$ satisfying
  \[
    \langle z, \alpha \rangle = 0
    \qquad \text{and} \qquad
    \langle z, \alpha_j \rangle \neq 0
    \quad \forall \, j \in \underline{N}
                  \text{ with } x_0 \in S_{\alpha_j, \beta_j} .
  \]
\end{lemma}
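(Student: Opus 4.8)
The plan is to isolate the indices that actually matter. Set $J := \{ j \in \underline{N} : x_0 \in S_{\alpha_j,\beta_j} \}$, so that $\langle \alpha_j, x_0 \rangle + \beta_j = 0$ for every $j \in J$. Only these indices appear in the desired conclusion, so it suffices to produce $z \in \R^d$ with $\langle z, \alpha \rangle = 0$ and $\langle z, \alpha_j \rangle \neq 0$ for all $j \in J$; in particular, if $J = \emptyset$ we may simply take $z = 0$, so we assume $J \neq \emptyset$.

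The key observation is that for every $j \in J$ the vector $\alpha_j$ is \emph{not} a scalar multiple of $\alpha$. Indeed, since $\alpha, \alpha_j \in S^{d-1}$, being parallel would force $\alpha_j = \eps \alpha$ for some $\eps \in \{\pm 1\}$; then evaluating at $x_0$, which lies on both $S_{\alpha,\beta}$ and $S_{\alpha_j,\beta_j}$, gives $\beta_j = -\langle \alpha_j, x_0 \rangle = -\eps \langle \alpha, x_0 \rangle = \eps \beta$, so $(\alpha_j,\beta_j) = \eps(\alpha,\beta)$ and hence $(\alpha_j,\beta_j) \sim (\alpha,\beta)$, contradicting the hypothesis. (This argument also forces $d \geq 2$ whenever $J \neq \emptyset$, since in dimension one any two directions in $S^0$ are parallel.)

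With this in hand the rest is elementary linear algebra on the hyperplane $\alpha^\perp$. Let $P$ denote the orthogonal projection of $\R^d$ onto $\alpha^\perp$. For $z \in \alpha^\perp$ we have $\langle z, \alpha_j \rangle = \langle z, P\alpha_j \rangle$, and by the previous paragraph $P\alpha_j \neq 0$, so that $V_j := \{ z \in \alpha^\perp : \langle z, P\alpha_j \rangle = 0 \}$ is a \emph{proper} linear subspace of $\alpha^\perp$ for each $j \in J$. Since a vector space over an infinite field is never a finite union of proper subspaces, there exists $z \in \alpha^\perp \setminus \bigcup_{j \in J} V_j$; this $z$ satisfies $\langle z, \alpha \rangle = 0$ and $\langle z, \alpha_j \rangle \neq 0$ for all $j \in J$, as required.

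The only delicate point is the middle step — translating ``$x_0$ lies on both singularity hyperplanes and $(\alpha_j,\beta_j) \not\sim (\alpha,\beta)$'' into ``$\alpha_j$ is not parallel to $\alpha$''; once that is secured, the finite-union-of-subspaces argument is routine. I would also record the degenerate case $d = 1$ explicitly, where the middle step forces $J = \emptyset$ and $z = 0$ works vacuously.
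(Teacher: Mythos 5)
Your proof is correct. It follows the same skeleton as the paper's argument --- restrict attention to the indices $j$ with $x_0 \in S_{\alpha_j,\beta_j}$, observe that the crux is that no such $\alpha_j$ can be parallel to $\alpha$ (which you derive exactly as the paper does, from $(\alpha_j,\beta_j)\not\sim(\alpha,\beta)$ together with $x_0$ lying on both hyperplanes), and then conclude via a covering argument on $\alpha^\perp$ --- but the final step is organized differently. The paper argues by contradiction: it assumes $\alpha^\perp$ is covered by the sets $\{z\in\alpha^\perp : \langle z,\alpha_j\rangle=0\}$, invokes the Baire category theorem to force one of them to equal $\alpha^\perp$, and only then derives the forbidden relation $(\alpha,\beta)\sim(\alpha_j,\beta_j)$. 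You instead establish non-parallelism first, so that each $V_j$ is a \emph{proper} subspace of $\alpha^\perp$, and then appeal to the elementary fact that a vector space over an infinite field is never a finite union of proper subspaces. Your route is slightly more economical (no Baire needed for a finite union of closed subspaces), and your explicit handling of $J=\emptyset$ and of $d=1$ is a harmless extra precaution; both arguments are equally valid.
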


\begin{proof}
  By discarding those $(\alpha_j, \beta_j)$ for which
  $x_0 \notin S_{\alpha_j, \beta_j}$, we can assume that
  $x_0 \in S_{\alpha_j, \beta_j}$ for all $j \in \underline{N}$.

  Assume towards a contradiction that the claim of the lemma is false; that is,
  \begin{equation}
    \alpha^{\perp}
    = \bigcup_{j=1}^N
        \left\{ z \in \alpha^{\perp} \,:\, \langle z, \alpha_j \rangle = 0 \right\} ,
    \label{eq:HyperplanesTechnicalBaireInclusion}
  \end{equation}
  where $\alpha^\perp := \{z \in \R^d \,:\, \langle z,\alpha \rangle = 0\}$.
  Since $\alpha^\perp$ is a closed subset of $\R^d$ and thus a complete metric
  space, and since the right-hand side of
  \eqref{eq:HyperplanesTechnicalBaireInclusion} is a countable (in fact, finite)
  union of closed sets, the Baire category theorem
  (see \cite[Theorem~5.9]{FollandRA}) shows that there are
  $j \in \underline{N}$ and $\eps > 0$ such that
  \[
    V \coloneqq \left\{z \in \alpha^\perp \,:\, \langle z, \alpha_j \rangle = 0 \right\}
   \supset B_{\eps} (x) \cap \alpha^\perp
   \qquad \text{for some} \quad x \in V .
  \]
  But since $V$ is a vector space, this easily implies $V = \alpha^\perp$,
  that is, $\langle z, \alpha_j \rangle = 0$ for all $z \in \alpha^\perp$.
  In other words, $\alpha^\perp \subset \alpha_j^\perp$, and then
  $\alpha^\perp = \alpha_j^\perp$ by a dimension argument, since
  $\alpha, \alpha_j \neq 0$.

  Hence,
  $\spn \alpha = (\alpha^\perp)^\perp = (\alpha_j^\perp)^\perp = \spn \alpha_j$.
  Because of $|\alpha| = |\alpha_j| = 1$,
  we thus see $\alpha = \eps \, \alpha_j$
  for some $\eps \in \{\pm 1\}$.
  Finally, since $x_0 \in S_{\alpha,\beta} \cap S_{\alpha_j, \beta_j}$, we see
  \(
    \beta
    = - \langle \alpha, x_0 \rangle
    = - \eps \langle \alpha_j, x_0 \rangle
    = \eps \beta_j ,
  \)
  and thus $(\alpha, \beta) = \eps (\alpha_j, \beta_j)$, in contradiction
  to $(\alpha, \beta) \not\sim (\alpha_j, \beta_j)$.
\end{proof}

\begin{lemma}\label{lem:SingularityHyperplanesIntersection}
  Let $(\alpha,\beta) \in S^{d-1} \times \R$ and
  $(\alpha_1, \beta_1), \dots, (\alpha_N, \beta_N) \in S^{d-1} \times \R$
  with $(\alpha_i, \beta_i) \not\sim (\alpha, \beta)$ for all
  $i \in \underline{N}$.
  Furthermore, let $U \subset \R^d$ be open with
  $S_{\alpha, \beta} \cap U \neq \emptyset$.

  Then, there is $\eps > 0$ satisfying
  \[
    U
    \cap S_{\alpha, \beta}
    \cap \bigcap_{j=1}^N U_{\alpha_j, \beta_j}^{(\eps)}
    \neq \emptyset .
  \]
\end{lemma}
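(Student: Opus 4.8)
The plan is to reduce the statement to a single perturbation argument inside the affine hyperplane $S_{\alpha,\beta}$, using Lemma~\ref{lem:HyperplanesTechnicalLemma} to choose a good direction of motion. First I would fix a point $x_0 \in U \cap S_{\alpha,\beta}$, which exists by hypothesis. Split the index set $\underline{N}$ into $J_0 := \{ j \in \underline{N} : x_0 \in S_{\alpha_j,\beta_j} \}$ and its complement. For $j \notin J_0$ we have $|\langle \alpha_j, x_0 \rangle + \beta_j| > 0$ already, so continuity will keep nearby points away from $S_{\alpha_j,\beta_j}$; the only real work is to move away from the hyperplanes indexed by $J_0$, while staying inside $U \cap S_{\alpha,\beta}$.

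Next I would apply Lemma~\ref{lem:HyperplanesTechnicalLemma} (with the given data $(\alpha,\beta)$, $x_0$, and $(\alpha_j,\beta_j)_{j\in\underline{N}}$) to obtain $z \in \R^d$ with $\langle z, \alpha \rangle = 0$ and $\langle z, \alpha_j \rangle \neq 0$ for all $j \in J_0$. The condition $\langle z, \alpha \rangle = 0$ ensures that the whole line $x_0 + tz$ stays in $S_{\alpha,\beta}$, since $\langle \alpha, x_0 + tz \rangle + \beta = (\langle \alpha, x_0\rangle + \beta) + t\langle \alpha, z\rangle = 0$ for every $t \in \R$. (If $J_0 = \emptyset$, then Lemma~\ref{lem:HyperplanesTechnicalLemma} only forces $\langle z, \alpha\rangle = 0$ and one may simply take $z = 0$, i.e. $x_1 = x_0$ below; the argument then degenerates harmlessly.) Since $U$ is open and $x_0 \in U$, there is $\delta > 0$ with $x_0 + tz \in U$ for all $|t| < \delta$.

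Now I would pick a suitable small $t \in (0,\delta)$ and set $x_1 := x_0 + t z \in U \cap S_{\alpha,\beta}$. For $j \in J_0$ we compute $\langle \alpha_j, x_1\rangle + \beta_j = (\langle\alpha_j,x_0\rangle + \beta_j) + t \langle \alpha_j, z\rangle = t\langle\alpha_j,z\rangle \neq 0$, since $\langle \alpha_j,z\rangle \neq 0$ and $t \neq 0$. For $j \notin J_0$, the map $t \mapsto \langle \alpha_j, x_0 + tz\rangle + \beta_j$ is affine and nonzero at $t = 0$, so after possibly shrinking $t$ (finitely many conditions, hence a positive threshold remains) we also have $\langle \alpha_j, x_1\rangle + \beta_j \neq 0$. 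Then setting $\eps := \min_{j \in \underline{N}} |\langle \alpha_j, x_1\rangle + \beta_j| > 0$, we get $x_1 \in U_{\alpha_j,\beta_j}^{(\eps)}$ for every $j$, and hence $x_1 \in U \cap S_{\alpha,\beta} \cap \bigcap_{j=1}^N U_{\alpha_j,\beta_j}^{(\eps)}$, proving this set is nonempty.

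I do not expect a serious obstacle here: the content is entirely front-loaded into Lemma~\ref{lem:HyperplanesTechnicalLemma}, and what remains is a routine "move slightly off finitely many hyperplanes" perturbation. The only points requiring a line of care are checking that $x_0 + tz$ genuinely stays in the affine set $S_{\alpha,\beta}$ (which is exactly what $\langle z,\alpha\rangle = 0$ delivers) and that the finitely many smallness constraints on $t$ can be met simultaneously (a finite minimum of positive quantities).
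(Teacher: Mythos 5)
Your argument is correct and matches the paper's proof essentially line for line: fix $x_0 \in U \cap S_{\alpha,\beta}$, invoke Lemma~\ref{lem:HyperplanesTechnicalLemma} to get a direction $z$ tangent to $S_{\alpha,\beta}$ but transverse to the hyperplanes through $x_0$, perturb by a small $t$, and take $\eps$ as a finite minimum of positive quantities. No gaps.
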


\begin{proof}
  By assumption, there exists $x_0 \in U \cap S_{\alpha, \beta}$.
  Next, Lemma~\ref{lem:HyperplanesTechnicalLemma} yields $z \in \R^d$
  such that $\langle z, \alpha \rangle = 0$ and
  $\langle z, \alpha_j \rangle \neq 0$ for all $j \in \underline{N}$ with
  $x_0 \in S_{\alpha_j, \beta_j}$.
  Note that this implies
  $\langle \alpha, x_0 + tz \rangle + \beta
   = \langle \alpha, x_0 \rangle + \beta = 0$
  and hence $x_0 + tz \in S_{\alpha, \beta}$ for all $t \in \R$.

  Next, let
  $J \coloneqq \{j \in \underline{N} \,:\, x_0 \notin S_{\alpha_j, \beta_j} \}$,
  so that $\langle \alpha_j, x_0 \rangle + \beta_j \neq 0$ for all $j \in J$.
  Thus, there are $\eps_1, \delta > 0$ with
  $|\langle \alpha_j, x_0 + t z\rangle + \beta_j| \geq \eps_1$
  (that is, $x_0 + t z \in U_{\alpha_j, \beta_j}^{(\eps_1)}$)
  for all $t \in \R$ with $|t| \leq \delta$ and all $j \in J$.
  Since $U$ is open with $x_0 \in U$, we can shrink $\delta$ so that
  $x_0 + t z \in U$ for all $|t| \leq \delta$. Let $t \coloneqq \delta$.

  We claim that there is some $\eps > 0$ such that
  $x \coloneqq x_0 + t z \in U \cap S_{\alpha, \beta}
                        \cap \bigcap_{j=1}^N U_{\alpha_j, \beta_j}^{(\eps)}$.
  To see this, note for $j \in \underline{N} \setminus J$ that
  $x_0 \in S_{\alpha_j, \beta_j}$, and hence
  \[
    |\langle x_0 + t z, \alpha_j \rangle + \beta_j|
    = |t| \cdot |\langle z, \alpha_j \rangle|
    \geq \delta \cdot \min_{\ell \in \underline{N} \setminus J}
                        |\langle z, \alpha_\ell \rangle |
    =: \eps_2 > 0 ,
  \]
  since $\langle z, \alpha_j \rangle \neq 0$ for all
  $j \in \underline{N} \setminus J$, by choice of $z$.
  By combining all our observations, we see that
  \({
    x_0 + t z
    \in U \cap S_{\alpha, \beta}
          \cap \bigcap_{j=1}^N U_{\alpha_j, \beta_j}^{(\eps)}
  }\)
  for $\eps \coloneqq \min \{ \eps_1, \eps_2 \} > 0$.
\end{proof}

\begin{lemma}\label{lem:ParametricReLUNotDifferentiable}
  If $0 \leq a < 1$ and $(\alpha, \beta) \in S^{d-1} \times \R$, then
  $h_{\alpha,\beta}^{(a)}$ is \emph{not} differentiable at any
  $x_0 \in S_{\alpha,\beta}$.
\end{lemma}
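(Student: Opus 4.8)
The plan is to reduce the statement to the elementary fact that the one-dimensional parametric ReLU $\varrho_a$ with $0 \le a < 1$ fails to be differentiable at the origin. The key observation is that restricting $h_{\alpha,\beta}^{(a)}$ to the line through $x_0$ in the direction of $\alpha$ recovers $\varrho_a$ (up to the obvious reparametrisation), so if $h_{\alpha,\beta}^{(a)}$ were totally differentiable at $x_0$, the chain rule would force $\varrho_a$ to be differentiable at $0$, a contradiction.

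Concretely, fix $(\alpha,\beta) \in S^{d-1} \times \R$ and $x_0 \in S_{\alpha,\beta}$, so that $\langle \alpha, x_0 \rangle + \beta = 0$. First I would introduce the auxiliary function $g : \R \to \R$, $g(t) \coloneqq h_{\alpha,\beta}^{(a)}(x_0 + t\alpha)$, and compute, using $|\alpha| = 1$,
\[
  g(t)
  = \varrho_a\big( \langle \alpha, x_0 + t\alpha \rangle + \beta \big)
  = \varrho_a\big( \langle \alpha, x_0 \rangle + \beta + t \big)
  = \varrho_a(t)
  \qquad \text{for all } t \in \R .
\]
Next I would record that since $0 \le a < 1$ we have $\varrho_a(t) = t$ for $t \ge 0$ and $\varrho_a(t) = a t$ for $t < 0$, so that $\lim_{t \downarrow 0} \varrho_a(t)/t = 1$ while $\lim_{t \uparrow 0} \varrho_a(t)/t = a$; as $a \neq 1$, the difference quotient of $\varrho_a$ at $0$ has no two‑sided limit, i.e.\ $g = \varrho_a$ is not differentiable at $t = 0$.

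To conclude, I would argue by contradiction: if $h_{\alpha,\beta}^{(a)}$ were (totally) differentiable at $x_0$, then — since $t \mapsto x_0 + t\alpha$ is affine and hence smooth — the chain rule would give that $g = h_{\alpha,\beta}^{(a)} \circ (t \mapsto x_0 + t\alpha)$ is differentiable at $t = 0$ with $g'(0) = \big\langle \nabla h_{\alpha,\beta}^{(a)}(x_0), \alpha \big\rangle$. This contradicts the previous paragraph, so $h_{\alpha,\beta}^{(a)}$ is not differentiable at $x_0$; since $x_0 \in S_{\alpha,\beta}$ was arbitrary, the claim follows.

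I do not anticipate a genuine obstacle here; the only points needing care are (i) spelling out that total differentiability implies differentiability of every one-dimensional restriction (so that the chain-rule step is legitimate), and (ii) using $|\alpha| = 1$ together with $x_0 \in S_{\alpha,\beta}$ to get exactly $g = \varrho_a$ rather than a scaled/shifted copy. Both are routine, so the lemma should follow in a few lines.
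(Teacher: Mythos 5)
Your proposal is correct and follows essentially the same argument as the paper: restrict $h_{\alpha,\beta}^{(a)}$ to the line $t \mapsto x_0 + t\alpha$, use $\|\alpha\|=1$ and $x_0 \in S_{\alpha,\beta}$ to see that this restriction is exactly $\varrho_a$, and derive a contradiction from the mismatch of the one-sided derivatives $1$ and $a \neq 1$ at $t=0$. No issues.
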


\begin{proof}
  Assume towards a contradiction that $h_{\alpha,\beta}^{(a)}$ is differentiable
  at some $x_0 \in S_{\alpha,\beta}$.
  Then, the function
  \(
    f : \R \to \R, t \mapsto h_{\alpha,\beta}^{(a)} (x_0 + t \alpha)
  \)
  is differentiable at $t = 0$.
  But since $x_0 \in S_{\alpha,\beta}$ and $\| \alpha \|_{\ell^2} = 1$, we have
  \[
    f(t)
    = \varrho_a \big( \langle \alpha, x_0 + t \alpha \rangle + \beta \big)
    = \varrho_a (t)
    = \begin{cases}
        t ,      & \text{if } t \geq 0 , \\
        a \, t , & \text{if } t < 0 ,
      \end{cases}
  \]
  for all $t \in \R$. This easily shows that $f$ is not differentiable at
  $t = 0$, since the right-sided derivative is $1$, while the left-sided
  derivative is $a \neq 1$.
  This is the desired contradiction.
\end{proof}

\begin{lemma}\label{lem:ParametricReLULinearIndependence}
  Let $0 \leq a < 1$, and let
  $(\alpha_1, \beta_1), \dots, (\alpha_N, \beta_N) \in S^{d-1} \times \R$
  with $(\alpha_i, \beta_i) \not\sim (\alpha_j, \beta_j)$ for $j \neq i$.
  Furthermore, let $U \subset \R^d$ be open with
  $U \cap S_{\alpha_i, \beta_i} \neq \emptyset$ for all $i \in \underline{N}$.
  Finally, set $h_i \coloneqq h_{\alpha_i, \beta_i}^{(a)}|_{U}$ for
  $i \in \underline{N}$ with $h_{\alpha_i, \beta_i}^{(a)}$ as in
  Definition~\ref{def:SingularityHyperplanes}, and let
  $h_{N+1} : U \to \R, x \mapsto 1$.

  \smallskip{}

  Then, the family $(h_i)_{i = 1,\dots,N+1}$ is linearly independent.
\end{lemma}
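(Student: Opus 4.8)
The plan is to argue by contradiction on a linear relation, exploiting the fact that the only obstruction to differentiability of $h_i$ (for $i \in \underline{N}$) is its singularity hyperplane $S_{\alpha_i,\beta_i}$, whereas $h_{N+1} \equiv 1$ is smooth everywhere. Concretely, suppose $\sum_{i=1}^{N+1} c_i\, h_i \equiv 0$ on $U$ for some scalars $c_1, \dots, c_{N+1} \in \R$; the goal is to deduce $c_1 = \dots = c_{N+1} = 0$. (If $N = 0$ the statement is trivial, since then $h_1 \equiv 1 \not\equiv 0$ and $U \neq \emptyset$; so assume $N \geq 1$.)

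First I would fix an arbitrary index $j \in \underline{N}$ and apply Lemma~\ref{lem:SingularityHyperplanesIntersection} with the pair $(\alpha_j, \beta_j)$, the remaining pairs $(\alpha_i, \beta_i)$ for $i \in \underline{N} \setminus \{j\}$ (each of which satisfies $(\alpha_i,\beta_i) \not\sim (\alpha_j,\beta_j)$ by hypothesis), and the open set $U$ (which meets $S_{\alpha_j,\beta_j}$ by assumption). This produces some $\eps > 0$ and a point $x_0 \in U \cap S_{\alpha_j,\beta_j} \cap \bigcap_{i \neq j} U_{\alpha_i,\beta_i}^{(\eps)}$. The point of choosing $x_0$ in this intersection is that for every $i \in \underline{N} \setminus \{j\}$ we have $|\langle \alpha_i, x_0 \rangle + \beta_i| \geq \eps > 0$, so the affine functional $x \mapsto \langle \alpha_i, x \rangle + \beta_i$ keeps a constant sign on a neighborhood of $x_0$; hence $h_i = h_{\alpha_i,\beta_i}^{(a)}$ agrees near $x_0$ with an affine function and is, in particular, differentiable at $x_0$. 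The constant $h_{N+1}$ is trivially differentiable at $x_0$ as well.

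Consequently $\sum_{i \in \underline{N+1} \setminus \{j\}} c_i\, h_i$ is differentiable at $x_0$, and rewriting the relation as $c_j\, h_j = -\sum_{i \neq j} c_i\, h_i$ on $U$ shows that $c_j\, h_j$ is differentiable at $x_0$. But $x_0 \in S_{\alpha_j,\beta_j}$, so Lemma~\ref{lem:ParametricReLUNotDifferentiable} (which uses $0 \leq a < 1$) tells us $h_j$ is \emph{not} differentiable at $x_0$; therefore $c_j = 0$. Since $j \in \underline{N}$ was arbitrary, $c_1 = \dots = c_N = 0$, and the original relation collapses to $c_{N+1} = c_{N+1}\, h_{N+1} \equiv 0$ on the nonempty set $U$, forcing $c_{N+1} = 0$. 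This establishes linear independence. The main point of the argument is really delegated to the preparatory lemmas — the Baire-category argument of Lemma~\ref{lem:HyperplanesTechnicalLemma}, its geometric consequence in Lemma~\ref{lem:SingularityHyperplanesIntersection}, and the non-differentiability in Lemma~\ref{lem:ParametricReLUNotDifferentiable}; given those, the only subtlety is ensuring the test point $x_0$ sits strictly off every singularity hyperplane other than $S_{\alpha_j,\beta_j}$, which is exactly what membership in the sets $U_{\alpha_i,\beta_i}^{(\eps)}$ encodes.
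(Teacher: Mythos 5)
Your proposal is correct and follows essentially the same route as the paper: use Lemma~\ref{lem:SingularityHyperplanesIntersection} to locate a point of $S_{\alpha_j,\beta_j}$ inside $U$ that stays a positive distance from every other singularity hyperplane, note that all terms except $h_j$ are differentiable (indeed affine) near that point, and invoke Lemma~\ref{lem:ParametricReLUNotDifferentiable} to conclude $c_j = 0$. The only cosmetic difference is that you run the argument directly for each $j$ rather than by contradiction on a nonzero coefficient, which changes nothing of substance.
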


\begin{proof}
  Assume towards a contradiction that $0 = \sum_{i=1}^{N+1} \gamma_i \, h_i$
  for certain $\gamma_1, \dots, \gamma_{N+1} \in \R$ with $\gamma_\ell \neq 0$
  for some $\ell \in \underline{N+1}$.
  Note that if we had $\gamma_i = 0$ for all $i \in \underline{N}$, we would
  get $0 = \gamma_{N+1} \, h_{N+1} \equiv \gamma_{N+1}$, and thus
  $\gamma_i = 0$ for all $i \in \underline{N+1}$, a contradiction.
  Hence, there is some $j \in \underline{N}$ with $\gamma_j \neq 0$.

  By Lemma~\ref{lem:SingularityHyperplanesIntersection} there is some $\eps > 0$
  such that there exists
  \(
    x_0 \in U \cap S_{\alpha_j, \beta_j}
             \cap \bigcap_{i \in \underline{N} \setminus \{j\}}
                    U_{\alpha_i, \beta_i}^{(\eps)}
    .
  \)
  Therefore, $x_0 \in U \cap S_{\alpha_j, \beta_j} \cap V$ for the open set
  \(
    V \coloneqq \bigcap_{i \in \underline{N} \setminus \{j\}}
          \big( \R^d \setminus S_{\alpha_i, \beta_i} \big)
    .
  \)

  Because of $x_0 \in U \cap S_{\alpha_j, \beta_j}$,
  Lemma~\ref{lem:ParametricReLUNotDifferentiable} shows that
  $h_{\alpha_j, \beta_j}^{(a)} |_U$ is \emph{not} differentiable at $x_0$.
  On the other hand, we have
  \[
    h_{\alpha_j, \beta_j}^{(a)} |_{U}
    = h_j
    = - \gamma_j^{-1}
      \cdot \Big(
              \gamma_{N+1} \, h_{N+1}
              + \sum_{i \in \underline{N} \setminus \{j\}}
                  \gamma_i \,\, h_{\alpha_i, \beta_i}^{(a)}|_{U}
            \Big) ,
  \]
  where the right-hand side is differentiable at $x_0$, since
  each summand is easily seen to be differentiable on the open set $V$,
  with $x_0 \in V \cap U$.
\end{proof}

\begin{lemma}\label{lem:UEpsilonProperties}
  Let $(\alpha, \beta) \in S^{d-1} \times \R$.
  If $\Omega \subset \R^d$ is compact with $\Omega \cap S_{\alpha,\beta} = \emptyset$,
  then there is some $\eps > 0$ such that $\Omega \subset U_{\alpha,\beta}^{(\eps)}$.

\end{lemma}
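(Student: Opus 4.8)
The plan is to use a standard compactness argument applied to the continuous function measuring signed distance to the hyperplane $S_{\alpha,\beta}$. Concretely, I would first define
\[
  g : \R^d \to \R, \quad x \mapsto |\langle \alpha, x \rangle + \beta| ,
\]
which is continuous as the composition of the affine map $x \mapsto \langle \alpha, x \rangle + \beta$ with the absolute value. Note that by definition $U_{\alpha,\beta}^{(\eps)} = \{ x \in \R^d : g(x) \geq \eps \}$ and $S_{\alpha,\beta} = \{ x : g(x) = 0 \}$.

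Next I would treat the trivial case $\Omega = \emptyset$ separately: here one may simply take $\eps := 1$, and the inclusion $\Omega \subset U_{\alpha,\beta}^{(1)}$ holds vacuously. So assume from now on that $\Omega \neq \emptyset$. Since $\Omega \cap S_{\alpha,\beta} = \emptyset$, we have $g(x) > 0$ for every $x \in \Omega$. Because $\Omega$ is compact and $g$ is continuous, $g$ attains its infimum over $\Omega$ at some point $x_0 \in \Omega$; set $\eps \coloneqq g(x_0)$. By the previous observation $\eps > 0$.

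Finally, by the minimality of $x_0$ we get $g(x) \geq \eps$ for all $x \in \Omega$, which is exactly the statement $\Omega \subset U_{\alpha,\beta}^{(\eps)}$. This completes the argument. There is no genuine obstacle here; the only things to be careful about are the empty-set edge case and the fact that $\|\alpha\|_{\ell^2} = 1 \neq 0$ guarantees the affine functional $x \mapsto \langle \alpha, x\rangle + \beta$ is non-constant (so that $S_{\alpha,\beta}$ is a genuine hyperplane), though this is not strictly needed for the proof itself.
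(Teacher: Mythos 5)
Your proof is correct and takes essentially the same route as the paper: both define the continuous function $x \mapsto |\langle \alpha, x\rangle + \beta|$, note it is strictly positive on $\Omega$ by the disjointness hypothesis, and use compactness to extract a positive minimum $\eps$. Your extra care about the empty-set case is harmless but not needed (the claim is vacuous there, as you note).
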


\begin{proof}
  The continuous function
  $\Omega \to (0,\infty), x \mapsto |\langle \alpha, x \rangle + \beta|$, which is
  well-defined by assumption, attains a minimum
  $\eps = \min_{x \in \Omega} |\langle \alpha, x \rangle + \beta| > 0$.
  %
\end{proof}

\begin{lemma}\label{lem:PiecewiseLinearReLURepresentation}
  Let $0 \leq a < 1$, let $(\alpha, \beta) \in S^{d-1} \times \R$, and let
  $U \subset \R^d$ be open with $U \cap S_{\alpha, \beta} \neq \emptyset$.
  Finally, let $f : U \to \R$ be continuous, and assume that $f$ is
  affine-linear on $U \cap W_{\alpha,\beta}^{+}$
  and on $U \cap W_{\alpha,\beta}^{-}$.

  Then, there are $c, \kappa \in \R$ and $\zeta \in \R^d$ such that
  \[
    f(x) = c \cdot \varrho_a (\langle \alpha, x \rangle + \beta)
           + \langle \zeta, x \rangle + \kappa
    \qquad \text{ for all }\, x \in U .
  \]
\end{lemma}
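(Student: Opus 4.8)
The plan is as follows. By hypothesis there exist $\zeta^{+},\zeta^{-}\in\R^{d}$ and $\kappa^{+},\kappa^{-}\in\R$ with $f(x)=\langle\zeta^{+},x\rangle+\kappa^{+}$ for all $x\in U\cap W_{\alpha,\beta}^{+}$ and $f(x)=\langle\zeta^{-},x\rangle+\kappa^{-}$ for all $x\in U\cap W_{\alpha,\beta}^{-}$ (reading ``affine-linear on $U\cap W_{\alpha,\beta}^{\pm}$'' as agreement with a \emph{single} affine map there, which is what the asserted conclusion forces). Since $U$ is open and meets the hyperplane $S_{\alpha,\beta}$, any ball $B_{r}(x_{0})\subset U$ around a point $x_{0}\in U\cap S_{\alpha,\beta}$ meets both open half-spaces $W_{\alpha,\beta}^{\pm}$; in particular $U\cap W_{\alpha,\beta}^{+}\neq\emptyset\neq U\cap W_{\alpha,\beta}^{-}$, and $U\cap S_{\alpha,\beta}$ is a nonempty relatively open subset of $S_{\alpha,\beta}$.

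Next I would use continuity of $f$ across the hyperplane. Fix $x_{0}\in U\cap S_{\alpha,\beta}$ and $r>0$ with $B_{r}(x_{0})\subset U$. For every $v\in\alpha^{\perp}$ with $|v|<r$ the point $x_{0}+v$ lies in $U\cap S_{\alpha,\beta}$ and can be approached from within both $U\cap W_{\alpha,\beta}^{+}$ and $U\cap W_{\alpha,\beta}^{-}$, so continuity gives $\langle\zeta^{+},x_{0}+v\rangle+\kappa^{+}=\langle\zeta^{-},x_{0}+v\rangle+\kappa^{-}$. Subtracting the case $v=0$ and using homogeneity in $v$, one gets $\langle\zeta^{+}-\zeta^{-},v\rangle=0$ for all $v\in\alpha^{\perp}$, hence $\zeta^{+}-\zeta^{-}=c\,\alpha$ for some $c\in\R$; substituting this back and recalling $\langle\alpha,x_{0}\rangle=-\beta$ yields $\kappa^{+}-\kappa^{-}=c\,\beta$.

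Then I would guess and verify the coefficients. Put $c':=c/(1-a)$ — this is the one place the assumption $a<1$ is used — and $\zeta:=\zeta^{+}-c'\alpha$, $\kappa:=\kappa^{+}-c'\beta$. On $U\cap W_{\alpha,\beta}^{+}$ we have $\varrho_{a}(\langle\alpha,x\rangle+\beta)=\langle\alpha,x\rangle+\beta$, so $c'\varrho_{a}(\langle\alpha,x\rangle+\beta)+\langle\zeta,x\rangle+\kappa=\langle\zeta^{+},x\rangle+\kappa^{+}=f(x)$. On $U\cap W_{\alpha,\beta}^{-}$ we have $\varrho_{a}(\langle\alpha,x\rangle+\beta)=a(\langle\alpha,x\rangle+\beta)$, and since $\zeta+ac'\alpha=\zeta^{+}-(1-a)c'\alpha=\zeta^{+}-c\alpha=\zeta^{-}$ together with the analogous identity $\kappa+ac'\beta=\kappa^{-}$, the same expression equals $\langle\zeta^{-},x\rangle+\kappa^{-}=f(x)$. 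Finally, both sides of the claimed formula are continuous on $U$ and coincide on $U\setminus S_{\alpha,\beta}=(U\cap W_{\alpha,\beta}^{+})\cup(U\cap W_{\alpha,\beta}^{-})$, which is dense in $U$ because a hyperplane has empty interior; hence equality holds on all of $U$.

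The argument is elementary, so there is no serious obstacle: the only points needing a little care are the interpretation of the hypothesis (a single affine function on each side, which the stated conclusion implicitly requires), the passage from ``$\zeta^{+}-\zeta^{-}$ is orthogonal to $\alpha^{\perp}$'' to ``$\zeta^{+}-\zeta^{-}\in\R\alpha$'', and the division by $1-a$, which is exactly why $a\ne 1$ is assumed.
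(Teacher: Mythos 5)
Your proof is correct and follows essentially the same route as the paper's: use continuity across $S_{\alpha,\beta}$ (approaching hyperplane points from both half-spaces inside $U$) to show the two affine pieces differ by a multiple of $\langle\alpha,\cdot\rangle+\beta$, divide by $1-a$ to pin down the coefficient of $\varrho_a$, verify on both half-spaces, and conclude by density of $U\setminus S_{\alpha,\beta}$. Your explicit coefficients agree with the paper's (your $c'$ is the paper's $c$ and your $\zeta=\zeta^{+}-c'\alpha$ equals $(\xi_2-a\xi_1)/(1-a)$), so there is nothing to add.
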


\begin{proof}
  By assumption, there are $\xi_1, \xi_2 \in \R^d$ and
  $\omega_1, \omega_2 \in \R$ satisfying
  \[
    f(x) = \langle \xi_1, x \rangle + \omega_1
    \quad \text{for } x \in U \cap W_{\alpha,\beta}^{+}
    \qquad \text{and} \qquad
    f(x) = \langle \xi_2, x \rangle + \omega_2
    \quad \text{for } x \in U \cap W_{\alpha,\beta}^{-} .
  \]

  \noindent
  \textbf{Step~1:} We claim that
  $U \cap S_{\alpha,\beta} \subset \overline{U \cap W_{\alpha,\beta}^{\pm}}$.
  Indeed, for arbitrary $x \in U \cap S_{\alpha,\beta}$, we have
  $x + t \alpha \in U$ for $t \in (-\eps,\eps)$ for a suitable $\eps > 0$, since $U$ is open.
  But since $x \in S_{\alpha,\beta}$ and $\| \alpha \|_{\ell^2} = 1$, we have
  $\langle x + t \alpha, \alpha \rangle + \beta = t$.
  Hence, $x + t \alpha \in U \cap W_{\alpha,\beta}^{+}$ for $t \in (0,\eps)$
  and $x + t \alpha \in U \cap W_{\alpha,\beta}^{-}$ for $t \in (-\eps,0)$.
  This easily implies the claim of this step.

  \medskip{}

  \noindent
  \textbf{Step~2:} We claim that $\xi_1 - \xi_2 \in \spn \alpha$.
  To see this, consider the modified function
  \[
    \widetilde{f} : U \to \R,
    x \mapsto f(x) - (\langle \xi_2, x \rangle + \omega_2) ,
  \]
  which is continuous and satisfies $\widetilde{f} \equiv 0$ on
  $U \cap W_{\alpha,\beta}^{-}$ and
  $\widetilde{f}(x) = \langle \theta, x \rangle + \omega$ on
  $U \cap W_{\alpha,\beta}^{+}$, where we defined $\theta \coloneqq \xi_1 - \xi_2$
  and $\omega \coloneqq \omega_1 - \omega_2$.

  Since we saw in Step~1 that
  $U \cap S_{\alpha,\beta} \subset \overline{U \cap W_{\alpha,\beta}^{\pm}}$,
  we thus get by continuity of $\widetilde{f}$ that
  \[
    0 = \widetilde{f} (x) = \langle \theta, x \rangle + \omega
    \qquad \forall \, x \in U \cap S_{\alpha,\beta} .
  \]
  But by assumption on $U$, there is some $x_0 \in U \cap S_{\alpha,\beta}$.
  For arbitrary $v \in \alpha^{\perp}$, we then have
  $x_0 + t v \in U \cap S_{\alpha,\beta}$ for all $t \in (-\eps,\eps)$ and a
  suitable $\eps = \eps(v) > 0$, since $U$ is open.
  Hence, $0 = \langle \theta, x_0 + t v \rangle + \omega
            = t \cdot \langle \theta, v \rangle$ for all $t \in (-\eps,\eps)$,
  and thus $v \in \theta^{\perp}$.
  In other words, $\alpha^{\perp} \subset \theta^{\perp}$, and thus
  $
    \spn \alpha
    = (\alpha^{\perp})^\perp
    \supset (\theta^{\perp})^{\perp}
    \ni \theta = \xi_1 - \xi_2 ,
  $
  as claimed in this step.

  \medskip{}

  \noindent
  \textbf{Step~3:} In this step, we complete the proof.
  As seen in the previous step, there is some $c \in \R$ satisfying
  $c \alpha = (\xi_1 - \xi_2) / (1 - a)$.
  Now, set $\zeta \coloneqq (\xi_2 - a \xi_1) / (1 - a)$ and
  $\kappa \coloneqq f(x_0) - \langle \zeta, x_0 \rangle$, where
  $x_0 \in U \cap S_{\alpha,\beta}$ is arbitrary.
  Finally, define
  \[
    g : \R^d \to \R,
        x \mapsto c \cdot \varrho_a (\langle \alpha, x \rangle +  \beta)
                  + \langle \zeta, x \rangle + \kappa .
  \]
  Because of $x_0 \in S_{\alpha,\beta}$, we then have
  $g(x_0) = \langle \zeta, x_0 \rangle + \kappa = f(x_0)$.
  Furthermore, since $\varrho_a (x) = x$ for $x \geq 0$, we see for all
  $x \in U \cap W_{\alpha,\beta}^{+}$ that
  \begin{equation}
    \begin{split}
      g(x) - f(x_0)
      = g(x) - g(x_0)
      & = c \cdot (\langle \alpha, x \rangle + \beta)
          + \langle \zeta, x - x_0 \rangle \\
      ({\scriptstyle{\text{since } x_0 \in S_{\alpha,\beta},
                     \text{ i.e., } \langle \alpha, x_0 \rangle + \beta = 0}})
      & = c \cdot \langle \alpha, x - x_0 \rangle
          + \langle \zeta, x - x_0 \rangle
        = \Big\langle
            \frac{\xi_1 -  \xi_2}{1 - a} + \frac{\xi_2 - a \, \xi_1}{1 - a}
            \,,\,
            x - x_0
          \Big\rangle \\
      & = \langle \xi_1, x - x_0 \rangle
        = f(x) - f(x_0) .
    \end{split}
    \label{eq:PiecewiseLinearReLURepresentationPositivePartOK}
  \end{equation}
  Here, the last step used that $f(x) = \langle \xi_1, x \rangle + \omega_1$
  for $x \in U \cap W_{\alpha,\beta}^{+}$, and that
  $x_0\in U\cap S_{\alpha,\beta} \subset \overline{U \cap W_{\alpha,\beta}^{+}}$
  by Step~1, so that we get $f(x_0) = \langle \xi_1, x_0 \rangle + \omega_1$ as well.

  Likewise, since $\varrho_a (t) = a \, t$ for $t < 0$, we see
  for $x \in U \cap W_{\alpha,\beta}^{-}$ that
  \begin{equation}
    \begin{split}
      g(x) - f(x_0)
      = g(x) - g(x_0)
      & = a c \cdot (\langle \alpha, x \rangle + \beta)
          + \langle \zeta, x - x_0 \rangle \\
      ({\scriptstyle{\text{since } x_0 \in S_{\alpha,\beta},
                     \text{ i.e., } \langle \alpha, x_0 \rangle + \beta = 0}})
      & = ac \langle \alpha, x - x_0 \rangle + \langle \zeta, x - x_0 \rangle
        = \Big\langle
            a \frac{\xi_1 - \xi_2}{1 - a} + \frac{\xi_2 - a \xi_1}{1 - a}
            \,,\,
            x - x_0
          \Big\rangle \\
      & = \langle \xi_2, x - x_0 \rangle
        = f(x) - f(x_0) .
    \end{split}
    \label{eq:PiecewiseLinearReLURepresentationNegativePartOK}
  \end{equation}

  In combination,
  Equations~\eqref{eq:PiecewiseLinearReLURepresentationPositivePartOK}
  and \eqref{eq:PiecewiseLinearReLURepresentationNegativePartOK} show
  $f(x) = g(x)$ for all
  $x \in U \cap (W_{\alpha,\beta}^{+} \cup W_{\alpha,\beta}^{-})$.
  Since this set is dense in $U$ by Step 1,  we are done.
\end{proof}

With all of these preparations, we can finally prove Theorem~\ref{thm:closedReLU}.

\begin{proof}[Proof of Theorem~\ref{thm:closedReLU}]
Since $\varrho_1 = \mathrm{id}_\R$, the result is trivial for $a = 1$,
since $\cRN_{\varrho_1}^{[-B,B]^d}((d,N_0,1))$ is just the set of all
affine-linear maps $[-B,B]^d \to \R$.
Furthermore, if $a > 1$, then
${\varrho_a (x) = \max \{x, a x\} = a \, \varrho_{a^{-1}} (x)}$,
and hence
$\cRN_{\varrho_a}^{[-B,B]^d}((d,N_0,1)) = \cRN_{\varrho_{a^{-1}}}^{[-B,B]^d}((d,N_0,1))$.
Therefore, we can assume $a < 1$ in the sequel.
For brevity, let $\Omega\coloneqq[-B,B]^d$.
Then, each $\Phi \in \cN((d,N_0,1))$ is of the form
$\Phi = \big( (A_1, b_1), (A_2, b_2) \big)$ with $A_1 \in \R^{N_0 \times d}$,
$A_2 \in \R^{1 \times N_0}$, and $b_1 \in \R^{N_0}$, $b_2 \in \R^1$.

Let $(\Phi_n)_{n\in\N} \subset \cN((d,N_0,1))$ with
\(
  \Phi_n
  = \big(
      (\widetilde{A}_1^n, \widetilde{b}_1^n),
      (\widetilde{A}_2^n, \widetilde{b}_2^n)
    \big)
\)
be such that $f_n \coloneqq \Realization_{\varrho_a}^{\Omega}(\Phi_n)$ converges uniformly to some
$f \in C(\Omega)$.
Our goal is to prove $f \in \cRN_{\varrho_a}^{\Omega}((d,N_0,1))$.
The proof of this is divided into seven steps.

\medskip{}

\noindent
\textbf{Step 1 (Normalizing the rows of the first layer):}
Our first goal is to normalize the rows of the matrices $\widetilde{A}_1^n$;
that is, we want to change the parametrization of the network such that
$\| (\widetilde{A}_1^n)_{i,-} \|_{\ell^2} = 1$ for all $i \in \underline{N_0}$.
To see that this is possible, consider arbitrary
$A \in \R^{M_1 \times M_2} \neq 0$ and $b\in \R^{M_1}$;
then we obtain by the positive homogeneity of ${\varrho_a}$ for all $C >0$ that
\[
  {\varrho_a}(A x + b)
  = C \cdot \varrho_a \left(\frac{A}{C} \, x  + \frac{b}{C}\right)
  \text{ for all } x \in \R^{M_2}.
\]
This identity shows that for each $n \in \N$, we can find a network
\[
  \widetilde{\Phi}_n
  = \big(
      \left( A_1^n, b_1^n \right),
      \left( A_2^n, b_2^n \right)
    \big) \in \cN((d,N_0,1)) ,
  \]
such that the rows of $A_1^n$ are normalized, that is,
$\| (A_1^n)_{i,-} \|_{\ell^2} = 1$ for all $i \in \underline{N_0}$, and such that
\[
  \Realization_{\varrho_a}^{\Omega} \big( \widetilde{\Phi}_n \big)
  = \Realization_{\varrho_a}^{\Omega}(\Phi_n) = f_n
  \quad \text{ for all } n \in \N.
\]

\noindent
\textbf{Step 2 (Extracting a partially convergent subsequence):}
By the Theorem of Bolzano-Weierstra\ss{}, there is a common subsequence of
$(A_1^{n})_{n\in \N}$ and $(b_1^{n})_{n\in \N}$, denoted by
$(A^{n_k}_1)_{k \in \N}$ and $(b_1^{n_k})_{k \in \N}$, converging to
$A_1 \in \R^{N_0 \times d}$ and $b_1 \in [-\infty, \infty]^{N_0}$, respectively.

For $j \in \underline{N_0}$, let $a_{k,j} \in \R^{d}$ denote the $j$-th row of
$A_1^{n_k}$, and let $a_{j} \in \R^d$ denote the $j$-th row of $A_1$.
Note that $\| a_{k,j} \|_{\ell^2} = \| a_j \|_{\ell^2} = 1$
for all $j \in \underline{N_0}$ and $k \in\N$.
Next, let
\[
  J
  \coloneqq \left\{
       j \in \underline{N_0}
       \,:\,
       (b_1)_j \in \{\pm \infty\}
       \text{ or }
       \big[
         (b_1)_j \in \R \text{ and }
         S_{a_j, (b_1)_j} \cap \Omega^\circ = \emptyset
       \big]
     \right\} ,
\]
where $\Omega^\circ = (-B, B)^d$ denotes the interior of $\Omega$.
Additionally, let $J^c \coloneqq \underline{N_0} \setminus J$, and for $j,\ell \in J^c$
write $j \simeq \ell$ iff $(a_j, (b_1)_j) \sim (a_\ell, (b_1)_\ell)$,
with the relation $\sim$ introduced in Definition~\ref{def:SingularityHyperplanes}.
Note that this makes sense, since $(b_1)_j \in \R$ if $j \in J^c$.
Clearly, the relation $\simeq$ is an equivalence relation on $J^c$.
Let $(J_i)_{i = 1,\dots,r}$ denote the equivalence classes of the relation
$\simeq$.
For each $i \in \underline{r}$, choose $\alpha^{(i)} \in S^{d-1}$ and
$\beta^{(i)} \in \R$ such that for each $j \in J_i$ there is a (unique) $\sigma_j \in \{\pm 1\}$
with $(a_j, (b_1)_j) = \sigma_j \cdot (\alpha^{(i)}, \beta^{(i)})$.

\medskip{}

\noindent
\textbf{Step 3 (Handling the case of distinct singularity hyperplanes):}
Note that $r \leq |J^c| \leq N_0$.
Before we continue with the general case, let us consider the special case
where equality occurs, that is, where $r = N_0$.
This means that $J = \emptyset$ (and hence $(b_1)_j \in \R$
and $\Omega^\circ \cap S_{a_j, (b_1)_j} \neq \emptyset$ for all $j \in \underline{N_0}$),
and that each equivalence class $J_i$ has precisely one element;
that is, $(a_j, (b_1)_j) \not\sim (a_\ell, (b_1)_\ell)$
for $j, \ell \in \underline{N_0}$ with $j \neq \ell$.

Therefore, Lemma~\ref{lem:ParametricReLULinearIndependence} shows that the
functions $(h_j |_{\Omega^\circ})_{j = 1,\dots,N_0 + 1}$ with
$h_j \coloneqq h_{a_j, (b_1)_j}^{(a)}|_{\Omega}$ for $j \in \underline{N_0}$ and
$h_{N_0+1} : \Omega \to \R, x \mapsto 1$ are linearly independent.
In particular, these functions are linearly independent when considered on
all of $\Omega$.
Thus, we can define a norm $\|\cdot\|_{\ast}$ on $\R^{N_0 + 1}$ by virtue of
\[
  \|c\|_{\ast}
  \coloneqq \Big\|
       c_{N_0 + 1} + \sum_{j=1}^{N_0} c_j \, h_{a_j, (b_1)_j}^{(a)}
     \Big\|_{L^\infty (\Omega)}
  \qquad \text{for } c = (c_j)_{j = 1,\dots,N_0 + 1} \in \R^{N_0 + 1} .
\]
Since all norms on the finite dimensional vector space $\R^{N_0 + 1}$ are
equivalent, there is some $\tau > 0$ with
$\|c\|_{\ast} \geq \tau \cdot \|c\|_{\ell^1}$ for all $c \in \R^{N_0 + 1}$.

Now, recall that $a_{k,j} \to a_j$ and $b_1^{n_k} \to b_1 \in \R^{N_0}$ as $k \to \infty$.
Since $\Omega$ is bounded, this implies for arbitrary $j \in \underline{N_0}$ and
$h_j^{(k)} \coloneqq h_{a_{k,j}, (b_1^{n_k})_j}^{(a)}$ that
$h_j^{(k)} \to h_{a_j, (b_1)_j}^{(a)}$ as $k \to \infty$, with uniform convergence on $\Omega$.
Thus, there is some $N_1 \in \N$ such that
$\big\| h_j^{(k)} - h_{a_j, (b_1)_j}^{(a)} \big\|_{L^\infty (\Omega)} \leq \tau / 2$
for all $k \geq N_1$ and $j \in \underline{N_0}$.
Therefore, if $k \geq N_1$, we have
\[
  \begin{split}
    \bigg\|
      c_{N_0 + 1} + \sum_{j=1}^{N_0} c_j \, h_j^{(k)}
    \bigg\|_{L^\infty (\Omega)}
    & \geq \bigg\|
            c_{N_0 + 1} + \sum_{j=1}^{N_0} c_j \, h_{a_j, (b_1)_j}^{(a)}
          \bigg\|_{L^\infty (\Omega)}
          - \bigg\|
              \sum_{j=1}^{N_0}
                c_j \, \big( h_{a_j, (b_1)_j}^{(a)} - h_j^{(k)} \big)
            \bigg\|_{L^\infty (\Omega)} \\
    & \geq \tau \cdot \|c\|_{\ell^1}
           - \sum_{j=1}^{N_0}
               |c_j|
               \cdot \big\|
                       h_{a_j, (b_1)_j}^{(a)} - h_j^{(k)}
                     \big\|_{L^\infty (\Omega)} \\
    & \geq \big(\, \tau - \frac{\tau}{2} \,\big) \cdot \|c\|_{\ell^1}
      =    \frac{\tau}{2} \cdot \|c\|_{\ell^1}
      \qquad \text{ for all } c = (c_j)_{j = 1,\dots,N_0 + 1} \in \R^{N_0 + 1} .
  \end{split}
\]
Since
\(
  f_{n_k}
  = \Realization_{\varrho_a}^\Omega \big( \widetilde{\Phi}_{n_k} \big)
  = b_2^{n_k} + \sum_{j=1}^{N_0} \big(A_2^{n_k}\big)_{1,j} \,\, h_j^{(k)}
  \vphantom{\sum_j}
\)
converges uniformly on $\Omega$, we thus see that the sequence consisting of
$(A_2^{n_k}, b_2^{n_k}) \in \R^{1 \times N_0} \times \R \cong \R^{N_0 + 1}$
is bounded.
Thus, there is a further subsequence $(n_{k_{\ell}})_{\ell \in \N}$ such that
$A_2^{n_{k_\ell}} \to A_2 \in \R^{1 \times N_0}$ and
$b_2^{n_{k_\ell}} \to b_2 \in \R$ as $\ell \to \infty$.
But this implies as desired that
\begin{align*}
  f
  = \lim_{\ell \to \infty} f_{n_{k_\ell}}
  & = \lim_{\ell \to \infty}
         \bigg[
           b_2^{n_{k_\ell}}
           + \sum_{j=1}^{N_0}
               \big( A_2^{n_{k_\ell}} \big)_{1,j} \,\, h_j^{(k_\ell)}\big|_{\Omega}
         \bigg] \\
  & = b_2 + \sum_{j=1}^{N_0} (A_2)_{1,j} \,\, h_{a_j, (b_1)_j}^{(a)}|_\Omega
    \in \cRN_{ \varrho_a}^{\Omega}((d,N_0,1)).
\end{align*}

\medskip{}

\noindent
\textbf{Step 4 (Showing that the $j$-th neuron is eventually affine-linear, for $j \in J$):}
Since Step~3 shows that the claim holds in case of $r = N_0$, we will from now on
consider only the case where $r < N_0$.

For $j \in J$, there are two cases: In case of $(b_1)_j \in [0,\infty]$, define
\[
  \phi_j^{(k)} : \R^d \to \R,
                 x \mapsto (A_2^{n_k})_{1,j}
                           \cdot \big[
                                  \langle a_{k,j} , x \rangle + (b_1^{n_k})_j
                                 \big]
  \qquad \text{for all } k \in \N .
\]
If otherwise $(b_1)_j \in [-\infty,0)$, define
\[
  \phi_j^{(k)} : \R^d \to \R,
                 x \mapsto a \cdot (A_2^{n_k})_{1,j}
                           \cdot \big[
                                  \langle a_{k,j} , x \rangle + (b_1^{n_k})_j
                                 \big]
  \qquad \text{for all } k \in \N .
\]

Next, for arbitrary $0 < \delta < B$, we define
$\Omega_{\delta} \coloneqq [-(B - \delta), B - \delta]^d$.
Note that since $S_{\alpha^{(i)}, \beta^{(i)}} \cap \Omega^\circ \neq \emptyset$
for all $i \in \underline{r}$, there is some $\delta_0 > 0$ such that
$S_{\alpha^{(i)}, \beta^{(i)}} \cap ( -(B - \delta), B - \delta )^d
 \neq \emptyset$
for all $i \in \underline{r}$ and all $0 < \delta \leq \delta_0$.
For the remainder of this step, we will consider a fixed
$\delta \in (0, \delta_0]$, and we claim that there is some
$N_2 = N_2 (\delta) > 0$ such that
\begin{equation}
  (b_1)_j \neq 0
  \quad \text{and} \quad
  \sgn \big( \langle a_{k,j}, x \rangle + (b_1^{n_k})_j \big)
  = \sgn \big( (b_1^{n_k})_j \big) \neq 0
  \quad \text{ for all } j \in J, \, k \geq N_2 , \, \text{ and } x \in \Omega_{\delta} ,
  \label{eq:ReLUClosednessLinearPartsSignum}
\end{equation}
where $\sgn x = 1$ if $x > 0$, $\sgn x = -1$ if $x < 0$, and $\sgn 0 = 0$.
Note that once this is shown, it is not hard to see that there is some
$N_3 = N_3 (\delta) \in \N$ such that
\[
  (A_2^{n_k})_{1,j} \cdot \varrho_a
                          \big( \langle a_{k,j} , x \rangle + (b_1^{n_k})_j \big)
  = \phi_j^{(k)} (x)
  \quad \text{ for all }\, j \in J, \, k \geq N_3 , \, \text{ and } x \in \Omega_{\delta}
  ,
\]
simply because $(b_1^{n_k})_j \to (b_1)_j$ and $\varrho_a (x) = x$ if $x \geq 0$,
and $\varrho_a (x) = ax$ if $x < 0$.
Therefore, the affine-linear function
\begin{equation}
  \begin{split}
    & g_{r+1}^{(k)} \coloneqq b_2^{n_k} + \sum_{j \in J} \phi_j^{(k)} : \R^d \to \R \\
    \text{satisfies} \quad
    & g_{r+1}^{(k)} (x)
      = b_2^{n_k}
        + \sum_{j \in J}
            (A_2^{n_k})_{1,j} \,
            \varrho_a \big( \langle a_{k,j}, x \rangle + (b_1^{n_k} )_j \big)
      \quad \text{ for all } k \geq N_3 (\delta) \text{ and } x \in \Omega_{\delta} .
  \end{split}
  \label{eq:ReLUClosednessLinearPartsOnKDelta}
\end{equation}

To prove Equation~\eqref{eq:ReLUClosednessLinearPartsSignum}, we distinguish
two cases for each $j \in J$; by definition of $J$, these are the only two possible cases:

\smallskip{}

\noindent
\textbf{Case 1:} We have $(b_1)_j \in \{ \pm \infty \}$.
In this case, the first part of Equation~\eqref{eq:ReLUClosednessLinearPartsSignum}
is trivially satisfied.
To prove the second part, note that because of $(b_1^{n_k})_j \to (b_1)_j \in \{-\infty,\infty\}$,
there is some $k_j \in \N$ with $|(b_1^{n_k})_j| \geq 2d \cdot B$ for all $k \geq k_j$.
Since we have $\| a_{k,j} \|_{\ell^2} = 1$ and $\| x \|_{\ell^2} \leq \sqrt{d} B \leq d B$
for $x \in \Omega$, this implies
\[
  | \langle a_{k,j}, x \rangle + (b_1^{n_k})_j|
  \geq |(b_1^{n_k})_j| - | \langle a_{k_j}, x \rangle |
  \geq 2d \cdot B - \| x \|_{\ell^2}
  \geq dB > 0
  \qquad \forall \, x \in \Omega = [-B, B]^d \,\text{ and }\, k \geq k_j .
\]
Now, since the function $x \mapsto \langle a_{k,j}, x \rangle + (b_1^{n_k})_j$
is continuous, since $\Omega$ is connected (in fact convex), and since $0 \in \Omega$,
this implies
$\sgn (\langle a_{k,j}, x \rangle + (b_1^{n_k})_j) = \sgn (b_1^{n_k})_j$
for all $x \in \Omega$ and $k \geq k_j$.

\smallskip{}

\noindent
\textbf{Case 2:} We have $(b_1)_j \in \R$, but
$S_{a_j, (b_1)_j} \cap \Omega^\circ = \emptyset$, and hence
$S_{a_j, (b_1)_j} \cap \Omega_{\delta} = \emptyset$.
In view of Lemma~\ref{lem:UEpsilonProperties}, there is thus some
$\eps_{j,\delta} > 0$ satisfying $\Omega_{\delta} \subset U_{a_j, (b_1)_j}^{(\eps_{j,\delta})}$;
that is, $|\langle a_j, x \rangle + (b_1)_j| \geq \eps_{j,\delta} > 0$
for all $x \in \Omega_{\delta}$.
In particular, since $0 \in \Omega_{\delta}$, this implies $|(b_1)_j| \geq \eps_{j,\delta} > 0$
and hence $(b_1)_j \neq 0$, as claimed in the first part
of Equation~\eqref{eq:ReLUClosednessLinearPartsSignum}.

To prove the second part, note that because of $a_{k,j} \to a_j$
and $(b_1^{n_k})_j \to (b_1)_j$ as $k \to \infty$,
there is some $k_j = k_j (\eps_{j,\delta}) = k_j (\delta) \in \N$ such that
$\| a_{k,j} - a_j \|_{\ell^2} \leq \eps_{j,\delta} / (4d B)$ and
$|(b_1^{n_k})_j - (b_1)_j| \leq \eps_{j,\delta} / 4$ for all $k \geq k_j$.
Therefore,
\begin{align*}
  | \langle a_{k,j}, x \rangle + (b_1^{n_k})_j|
  & \geq |\langle a_j , x \rangle + (b_1)_j|
         - |\langle a_j - a_{k,j} , x \rangle + (b_1)_j - (b_1^{n_k})_j| \\
  & \geq \eps_{j,\delta}
         - \| a_j - a_{k,j} \|_{\ell^2} \cdot \| x \|_{\ell^2}
         - |(b_1)_j - (b_1^{n_k})_j| \\
  & \geq \eps_{j,\delta}
         - \frac{\eps_{j,\delta}}{4 d B} \cdot dB
         - \frac{\eps_{j,\delta}}{4}
    =    \frac{\eps_{j,\delta}}{2} > 0
    \quad \text{ for all }\, x \in \Omega_{\delta} \text{ and } k \geq k_j .
\end{align*}
With the same argument as at the end of Case~1, we thus
see $\sgn (\langle a_{k,j}, x \rangle + (b_1^{n_k})_j) = \sgn (b_1^{n_k})_j$
for all $x \in \Omega_{\delta}$ and $k \geq k_j (\delta)$.

\medskip{}

Together, the two cases prove that
Equation~\eqref{eq:ReLUClosednessLinearPartsSignum} holds if we set
$N_2 (\delta) \coloneqq \max_{j \in J} k_j (\delta)$.

\medskip{}

\noindent
\textbf{Step 5 (Showing that the $j$-th neuron is affine-linear on
                $U_{\alpha^{(i)}, \beta^{(i)}}^{(\eps,+)}$
                and on $U_{\alpha^{(i)}, \beta^{(i)}}^{(\eps,-)}$ for $j \in J_i$):}
In the following, we write $U_{\alpha^{(i)}, \beta^{(i)}}^{(\eps, \pm)}$
for one of the two sets $U_{\alpha^{(i)}, \beta^{(i)}}^{(\eps,+)}$
or $U_{\alpha^{(i)}, \beta^{(i)}}^{(\eps,-)}$.
We claim that for each $\eps > 0$, there is some $N_4 (\eps) \in \N$ such that:
\[
  \text{If } i \in \underline{r} , \,\,\,
             j \in J_i \text{ and }
             k \geq N_4 (\eps) ,
  \text{ then }
  \nu_j^{(k)}
  \coloneqq \varrho_a \big(\langle a_{k,j}, \cdot \rangle + (b_1^{n_k})_j\big)
  \text{ is affine-linear on } \Omega \cap U_{\alpha^{(i)}, \beta^{(i)}}^{(\eps,\pm)}
  .
\]
To see this, let $\eps > 0$ be arbitrary, and recall
$J^c = \bigcup_{i=1}^r J_i$.
By definition of $J_i$, and by choice of $\alpha^{(i)}$ and $\beta^{(i)}$,
there is for each $i \in \underline{r}$ and $j \in J_i$ some $\sigma_j \in \{\pm 1\}$ satisfying
\[
  \big(a_{k,j}, (b_1^{n_k})_j \big)
  \xrightarrow[k\to\infty]{} \big( a_j , (b_1)_j \big)
  = \sigma_j \cdot \big(\alpha^{(i)}, \beta^{(i)} \big) .
\]
Thus, there is some $k^{(j)}(\eps) \in \N$ such that
$\| a_{k,j} - \sigma_j \, \alpha^{(i)} \|_{\ell^2} \leq \eps / (4 d B)$ and
$|(b_1^{n_k})_j - \sigma_j \, \beta^{(i)}| \leq \eps / 4$ for all
$k \geq k^{(j)}(\eps)$.

Define $N_4 (\eps) \coloneqq \max_{j \in J^c} k^{(j)}(\eps)$.
Then, for $k \geq N_4 (\eps)$, $i \in \underline{r}$, $j \in J_i$,
and arbitrary $x \in \Omega \cap U_{\alpha^{(i)},\beta^{(i)}}^{(\eps, \pm)}$,
we have on the one hand
$|\sigma_j \cdot (\langle \alpha^{(i)}, x \rangle + \beta^{(i)})| \geq \eps$,
and on the other hand
\[
  \big|
    \big(
      \langle a_{k,j}, x \rangle
      + (b_1^{n_k})_j
    \big)
    - \sigma_j \cdot \big( \langle \alpha^{(i)}, x \rangle + \beta^{(i)} \big)
  \big|
  \leq dB \cdot \| a_{k,j} - \sigma_j \, \alpha^{(i)} \|_{\ell^2}
       + |(b_1^{n_k})_j - \sigma_j \, \beta^{(i)}|
  \leq \eps / 2 ,
\]
since $\| x \|_{\ell^2} \leq \sqrt{d} \cdot B \leq d B$.
In combination, this shows
$|\langle a_{k,j}, x \rangle + (b_1^{n_k})_j| \geq \eps/2 > 0$ for all
$x \in \Omega \cap U_{\alpha^{(i)}, \beta^{(i)}}^{(\eps, \pm)}$.
But since $\Omega \cap U_{\alpha^{(i)}, \beta^{(i)}}^{(\eps, \pm)}$ is connected
(in fact, convex), and since the function
$x \mapsto \langle a_{k,j} , \, x \rangle + (b_1^{n_k})_j$ is continuous,
it must have a constant sign on
$\Omega \cap U_{\alpha^{(i)}, \beta^{(i)}}^{(\eps, \pm)}$.
This easily implies that $\nu_j^{(k)}
= \varrho_a \big(\langle a_{k,j}, \cdot \rangle + (b_1^{n_k})_j\big)$
is indeed affine-linear on $\Omega \cap U_{\alpha^{(i)}, \beta^{(i)}}^{(\eps, \pm)}$
for $k \geq N_4 (\eps)$.

\medskip{}

\noindent
\textbf{Step 6 (Proving the ``almost convergence'' of the sum
                of all $j$-th neurons for $j \in J_i$):}
For $i \in \underline{r}$ define
\[
  g_i^{(k)} :
  \R^d \to \R,
  x \mapsto \smash{\sum_{j \in J_i}}
              (A_2^{n_k})_{1,j} \,\,
              \varrho_a \big( \langle a_{k,j} , x \rangle + (b_1^{n_k})_j \big)
          = \smash{\sum_{j \in J_i}}
              (A_2^{n_k})_{1,j} \,\, \nu_j^{(k)}(x)
          \vphantom{\sum_i}
  .
\]
In combination with Equation~\eqref{eq:ReLUClosednessLinearPartsOnKDelta}, we see
\begin{equation}
  f_{n_k} (x)
  = \Realization_{\varrho_a}^\Omega (\widetilde{\Phi}_{n_k})(x)
  = \vphantom{\sum}\smash{\sum_{\ell=1}^{r+1}} \,\, g_{\ell}^{(k)} (x)
  \qquad \forall \, x \in \Omega_{\delta} \text{ and } k \geq N_3 (\delta) ,
  \label{eq:ReLUClosednessRealizationDecomposition}
\end{equation}
with $g_{r+1}^{(k)} : \R^d \to \R$ being affine-linear.

Recall from Step 4 that
$\Omega_{\delta_0}^\circ \cap S_{\alpha^{(i)}, \beta^{(i)}} \neq \emptyset$ for all
$i \in \underline{r}$, by choice of $\delta_0$.
Therefore, Lemma~\ref{lem:SingularityHyperplanesIntersection} shows
(because of
 $U_{\alpha,\beta}^{(\sigma)} \subset (U_{\alpha,\beta}^{(\eps)})^\circ$
 for $\eps < \sigma$)
for each
$i \in \underline{r}$ that
\[
  K_i \coloneqq \Omega_{\delta_0}^\circ
         \cap S_{\alpha^{(i)}, \beta^{(i)}}
         \cap \bigcap_{\ell \in \underline{r} \setminus \{i\}}
                \big( U_{\alpha^{(\ell)}, \beta^{(\ell)}}^{(\eps_i)} \big)^\circ
      \neq \emptyset
  \qquad \text{for a suitable} \quad \eps_i > 0 .
\]
Let us fix some $x_i \in K_i$ and some $r_i > 0$ such that
$\overline{B}_{r_i} (x_i) \subset
 \Omega_{\delta_0}^\circ
 \cap \bigcap_{\ell \in \underline{r} \setminus \{i\}}
        \big( U_{\alpha^{(\ell)}, \beta^{(\ell)}}^{(\eps_i)} \big)^\circ$;
this is possible, since the set on the right-hand side contains $x_i$ and is open.
Now, since $\overline{B}_{r_i}(x_i)$ is connected, we see for each
$\ell \in \underline{r} \setminus \{i\}$ that either
$\overline{B}_{r_i}(x_i)\subset U_{\alpha^{(\ell)},\beta^{(\ell)}}^{(\eps_i,+)}$ or
$\overline{B}_{r_i}(x_i)\subset U_{\alpha^{(\ell)},\beta^{(\ell)}}^{(\eps_i,-)}$.
Therefore, as a consequence of the preceding step, we see that there is some
$N_5^{(i)} \in \N$ such that $g_\ell^{(k)}$ is affine-linear on
$\overline{B}_{r_i} (x_i)$ for all $\ell \in \underline{r} \setminus \{i\}$
and all $k \geq N_5^{(i)}$.

Thus, setting
$N_5 \coloneqq \max \{ N_3 (\delta_0), \max_{i = 1,\dots,r} N_5^{(i)} \}$, we see
as a consequence of Equation~\eqref{eq:ReLUClosednessRealizationDecomposition}
and because of $\overline{B}_{r_i} (x_i) \subset \Omega_{\delta_0}^\circ$
that for each $i \in \underline{r}$ and any $k \geq N_5$, there is an
affine-linear map $q_i^{(k)} : \R^d \to \R$ satisfying
\begin{equation}
  f_{n_k}(x)
  = \sum_{\ell=1}^{k+1} \, g_\ell^{(k)}(x)
  = g_i^{(k)}(x) + q_i^{(k)}(x)
  \quad \text{ for all } x \in \overline{B}_{r_i}(x_i) \text{ and } k \geq N_5 .
  \label{eq:ReLUClosednessNetworkRewrittenOnSmallBall}
\end{equation}

Next, note that Step~5 implies for arbitrary $\eps > 0$ that
for all $k$ large enough (depending on $\eps$), $g_i^{(k)}$ is affine-linear on
$B_{r_i} (x_i) \cap U_{\alpha^{(i)}, \beta^{(i)}}^{(\eps, \pm)}$.
Since $f(x) = \lim_k f_{n_k}(x) = \lim_{k} g_i^{(k)}(x) + q_i^{(k)}(x)$,
we thus see that $f$ is affine-linear on
$B_{r_i} (x_i) \cap U_{\alpha^{(i)}, \beta^{(i)}}^{(\eps, \pm)}$ for
\emph{arbitrary} $\eps > 0$.
Therefore, $f$ is affine-linear on
$B_{r_i} (x_i) \cap W_{\alpha^{(i)}, \beta^{(i)}}^{\pm}$
and continuous on $\Omega \supset B_{r_i}(x_i)$, and we have
$x_i \in B_{r_i} (x_i) \cap S_{\alpha^{(i)}, \beta^{(i)}} \neq \emptyset$.
Thus, Lemma~\ref{lem:PiecewiseLinearReLURepresentation} shows that there
are $c_i \in \R$, $\zeta_i \in \R^d$, and $\kappa_i \in \R$ such that
\begin{equation}
  f(x) = G_i (x)
  \quad \forall \, x \in B_{r_i} (x_i) ,
  \qquad \text{with} \quad
  G_i : \R^d \to \R,
        x \mapsto c_i \cdot \varrho_a
                            \big(
                              \langle \alpha^{(i)}, x \rangle + \beta^{(i)}
                            \big)
                  + \langle \zeta_i , x \rangle + \kappa_i .
  \label{eq:ReLUClosednessNetworkLimitRewrittenOnSmallBall}
\end{equation}

\smallskip{}

We now intend to make use of the following elementary fact:
If $(\psi_k)_{k \in \N}$ is a sequence of maps $\psi_k : \R^d \to \R$,
if $\Theta \subset \R^d$ is such that each $\psi_k$ is affine-linear on
$\Theta$, and if $U \subset \Theta$ is a nonempty \emph{open} subset such that
$\psi (x) \coloneqq \lim_{k \to \infty} \psi_k (x) \in \R$ exists for all $x \in U$,
then $\psi$ can be uniquely extended to an affine-linear map $\psi : \R^d \to \R$,
and we have $\psi_k (x) \to \psi(x)$ for all $x \in \Theta$, even with locally
uniform convergence.
Essentially, what is used here is that the vector space of affine-linear maps
$\R^d \to \R$ is finite-dimensional, so that the (Hausdorff) topology of
pointwise convergence on $U$ coincides with that of locally uniform convergence
on $\Theta$; see \cite[Theorem~1.21]{RudinFunkana}.

To use this observation, note that
Equations~\eqref{eq:ReLUClosednessNetworkRewrittenOnSmallBall}
and \eqref{eq:ReLUClosednessNetworkLimitRewrittenOnSmallBall} show that
$g_i^{(k)} + q_i^{(k)}$ converges pointwise to $G_i$ on $B_{r_i}(x_i)$.
Furthermore, since $x_i \in S_{\alpha^{(i)}, \beta^{(i)}}$, it is not hard
to see that there is some $\eps_0 > 0$ with
$\big( U_{\alpha^{(i)}, \beta^{(i)}}^{(\eps, \pm)} \big)^{\circ} \cap B_{r_i} (x_i) \neq \emptyset$
for all $\eps \in (0, \eps_0)$; for the details, we refer to Step~1 in the
proof of Lemma~\ref{lem:PiecewiseLinearReLURepresentation}.
Finally, as a consequence of Step~5, we see for arbitrary $\eps \in (0, \eps_0)$
that $g_i^{(k)} + q_i^{(k)}$ and $G_i$ are both affine-linear on
$U_{\alpha^{(i)}, \beta^{(i)}}^{(\eps, \pm)}$, at least for $k$ large enough
(depending on $\eps$).
Thus, the observation from above (with $\Theta = U_{\alpha^{(i)}, \beta^{(i)}}^{(\eps, \pm)}$
and $U = \Theta^{\circ} \cap B_{r_i} (x_i)$) implies that $g_i^{(k)} + q_i^{(k)} \to G_i$
pointwise on $U_{\alpha^{(i)}, \beta^{(i)}}^{(\eps, \pm)}$, for \emph{arbitrary}
$\eps \in (0,\eps_0)$. 

Because of
\(
  \bigcup_{\sigma \in \{\pm\}}
    \bigcup_{0 < \eps < \eps_0}
      U_{\alpha^{(i)}, \beta^{(i)}}^{(\eps,\sigma)}
  = \R^d \setminus S_{\alpha^{(i)}, \beta^{(i)}},
\)
this implies
\begin{equation}
  g_i^{(k)} + q_i^{(k)} \xrightarrow[k \to \infty]{} G_i
  \quad \text{pointwise on} \quad
  \R^d \setminus S_{\alpha^{(i)}, \beta^{(i)}}
  \quad \text{for any} \quad i \in \underline{r} .
  \label{eq:AugmentedPointwiseConvergence}
\end{equation}

\medskip{}

\noindent
\textbf{Step 7 (Finishing the proof):} For arbitrary $\delta \in (0, \delta_0)$,
let us set
\[
  \Lambda_\delta
  \coloneqq \Omega_{\delta}^\circ \setminus \bigcup_{i=1}^r S_{\alpha^{(i)}, \beta^{(i)}} .
\]
Then, Equations~\eqref{eq:ReLUClosednessRealizationDecomposition} and
\eqref{eq:AugmentedPointwiseConvergence} imply for $k \geq N_3 (\delta)$ that
\[
  g_{r+1}^{(k)} - \sum_{i=1}^r q_i^{(k)}
  = \sum_{i=1}^{r+1} g_i^{(k)} - \Big( \sum_{i=1}^r g_i^{(k)} + q_i^{(k)} \Big)
  = f_{n_k} - \Big( \sum_{i=1}^r g_i^{(k)} + q_i^{(k)} \Big)
  \xrightarrow[k\to\infty]{\text{pointwise on } \Lambda_\delta}
  f - \sum_{i=1}^r G_i .
\]
But since $g_{r+1}^{(k)}$ and all $q_i^{(k)}$ are affine-linear,
and since $\Lambda_\delta$ is an open set of positive measure, this implies
that there is an affine-linear map
$\psi : \R^d \to \R, x \mapsto \langle \zeta , x \rangle + \kappa$ satisfying
$f - \sum_{i=1}^r G_i = \psi$ on $\Lambda_\delta$, for arbitrary $\delta \in (0, \delta_0)$.
Note that $\psi$ is independent of the choice of $\delta$, and thus
\[
  f = \psi + \sum_{i = 1}^r G_i
  \quad \text{on} \quad
  \bigcup_{0 < \delta < \delta_0} \Lambda_\delta
  = \Omega^\circ \setminus \bigcup_{i=1}^r S_{\alpha^{(i)}, \beta^{(i)}} .
\]
But the latter set is dense in $\Omega$ (since its complement is a null-set),
and $f$ and $\psi + \sum_{i = 1}^r G_i$ are continuous on $\Omega$.
Hence,
\[
  f(x) = \psi(x) + \sum_{i = 1}^r G_i(x)
    = \Big( \kappa + \sum_{i=1}^r \kappa_i \Big)
      + \Big\langle \zeta + \sum_{i=1}^r \zeta_i , x \Big\rangle
      + \sum_{i=1}^r
          c_i \cdot
              \varrho_a
              \big(
                \langle \alpha^{(i)}, x \rangle + \beta^{(i)}
              \big)
  \quad \text{ for all } x \in \Omega .
\]
Recalling from Steps 3 and 4 that $r < N_0 $, this implies
$f \in \cRN_{\varrho_a}^\Omega((d,r+1,1)) \subset \cRN_{\varrho_a}^\Omega((d,N_0,1))$,
as claimed.
Here, we implicitly used that
\[
  \langle \alpha, x \rangle + \beta
  = \varrho_a \big( \langle \alpha, x \rangle + d B \, \| \alpha \|_{\ell^2} \big)
    + \beta - d B \, \| \alpha \|_{\ell^2}
  \quad \text{ for all } x \in \Omega
                 \text{ and arbitrary } \alpha \in \R^d, \, \beta \in \R ,
\]
since $\langle \alpha, x \rangle + d B \, \| \alpha \|_{\ell^2} \geq 0$
for $x \in \Omega = [-B,B]^d$, so that
$\varrho_a (\langle \alpha, x \rangle + d B \| \alpha \|_{\ell^2})
 = \langle \alpha, x \rangle + d B \| \alpha \|_{\ell^2}$.
\end{proof}

\section{Proofs of the results in Section~\ref{sec:InverseStability}}\label{app:InvStab}

\subsection{Proof of Proposition~\ref{prop:RealizationContinuity}}
\label{app:RealCont}

\textbf{Step 1:} We first show that if $(f_n)_{n \in \N}$ and
$(g_n)_{n \in \N}$ are sequences of continuous functions $f_n : \R^d \to \R^N$
and $g_n : \R^N \to \R^D$ that satisfy $f_n \to f$ and $g_n \to g$ with
locally uniform convergence, then also $g_n \circ f_n \to g \circ f$
locally uniformly.

To see this, let $R, \eps > 0$ be arbitrary.
On $\overline{B_R}(0) \subset \R^d$, we then have $f_n \to f$ uniformly.
In particular, $C \coloneqq \sup_{n \in \N} \sup_{|x| \leq R} | f_n (x) | < \infty$;
here, we implicitly used that $f$ and all $f_n$ are continuous,
and hence bounded on $\overline{B_R}(0)$.
But on $\overline{B_C}(0) \subset \R^N$, we have $g_n \to g$ uniformly,
so that there is some $n_1 \in \N$ with
$| g_n (y) - g(y) | < \eps \vphantom{\sum_j}$
for all $n \geq n_1$ and all $y \in \R^N$ with $| y | \leq C$.
Furthermore, $g$ is uniformly continuous on $\overline{B_C} (0)$, so that
there is some $\delta > 0$ with $| g(y) - g(z) | < \eps$ for all
$y,z \in \overline{B_C} (0)$ with $| y-z | \leq \delta$.
Finally, by the uniform convergence of $f_n \to f$ on $\overline{B_R}(0)$,
we get some $n_2 \in \N$ with $| f_n (x) - f(x) | \leq \delta$ for all
$n \geq n_2$ and all $x \in \R^d$ with $| x | \leq R$.

Overall, these considerations show for $n \geq \max \{n_1, n_2\}$ and
$x \in \R^d$ with $| x | \leq R$ that
\[
  | g_n (f_n (x)) - g(f(x)) |
  \leq | g_n (f_n (x)) - g(f_n(x)) | + | g(f_n (x)) - g(f(x)) |
  \leq \eps + \eps .
\]

\medskip{}

\noindent
\textbf{Step 2:} We show that $\Realization^{\Omega}_{\varrho}$ is continuous.
Assume that a sequence $(\Phi_n)_{n \in \N} \subset \cN((d,N_1,\dots,N_L))$
given by ${\Phi_n = \big( (A_1^{(n)}, b_1^{(n)}), \dots, (A_L^{(n)}, b_L^{(n)}) \big)}$
satisfies ${\Phi_n \to \Phi = \big( (A_1,b_1), \dots, (A_L,b_L) \big)} \in \cN((d,N_1,\dots,N_L))$.
For $\ell \in \FirstN{L-1}$ set
\begin{align*}
  \alpha_\ell^{(n)} : \
  & \R^{N_{\ell - 1}} \to     \R^{N_\ell}, ~
  x                 \mapsto \varrho_\ell(A_\ell^{(n)} \, x + b_\ell^{(n)}), \\
  \alpha_\ell : \
  & \R^{N_{\ell - 1}} \to     \R^{N_\ell}, ~
  x                 \mapsto \varrho_\ell(A_\ell \, x + b_\ell),
\end{align*}
where $\varrho_\ell \coloneqq \varrho \times \cdots \times \varrho$ denotes the
$N_\ell$-fold cartesian product of $\varrho$.
Likewise, set
\[
  \alpha_L^{(n)} : \R^{N_{L-1}} \to     \R^{N_L} , ~
                   x            \mapsto A_L^{(n)} x + b_L^{(n)}
  \qquad \text{and} \qquad
  \alpha_L : \R^{N_{L-1}} \to     \R^{N_L} , ~
             x            \mapsto A_L \, x + b_L .
\]

By what was shown in Step~1, it is not hard to see
for every $\ell \in \FirstN{L}$ that
$\alpha_\ell^{(n)} \to \alpha_\ell$ locally uniformly as $n \to \infty$.
By another (inductive) application of Step~1, this shows
\[
  \Realization^{\Omega}_{\varrho} (\Phi_n)
  = \alpha_L^{(n)} \circ \cdots \circ \alpha_1^{(n)}
  \to \alpha_L \circ \cdots \circ \alpha_1
  =  \Realization^{\Omega}_{\varrho} (\Phi)
\]
with locally uniform convergence. Since $\Omega$ is compact, this implies
uniform convergence on $\Omega$, and thus completes the proof of the first
claim.

\medskip{}

\noindent
\textbf{Step~3:} Let $\varrho_\ell \coloneqq \varrho \times \cdots \times \varrho$
be the $N_\ell$-fold cartesian product of $\varrho$ in case of
$\ell \in \FirstN{L-1}$, and set $\varrho_L \coloneqq \identity_{\R^{N_L}}$.
For arbitrary $x \in \Omega$ and
$\Phi = \big( (A_1,b_1), \dots, (A_L,b_L) \big) \in \cN(S)$,
define inductively $\alpha_x^{(0)} (\Phi) \coloneqq x \in \R^d = \R^{N_0}$, and
\[
  \alpha_x^{(\ell+1)} (\Phi)
  \coloneqq \varrho_{\ell+1} \big( A_{\ell+1} \, \alpha_x^{(\ell)} (\Phi) + b_{\ell+1} \big)
  \in \R^{N_{\ell+1}}
  \quad \text{for} \quad \ell \in \{0,\dots,L-1\} .
\]
Let $R > 0$ be fixed, but arbitrary.
We will prove by induction on $\ell \in \{0, \dots, L\}$ that
\[
  \| \alpha_x^{(\ell)} (\Phi) \|_{\ell^\infty} \leq C_{\ell,R}
  \quad \text{and} \quad
  \| \alpha_x^{(\ell)} (\Phi) - \alpha_x^{(\ell)} (\Psi) \|_{\ell^\infty}
  \leq M_{\ell,R} \cdot \|\Phi - \Psi\|_{\mathrm{total}}
\]
for suitable $C_{\ell,R},M_{\ell,R} > 0$ and arbitrary $x \in \Omega$
and $\Phi,\Psi \in \cN(S)$ with
$\|\Phi\|_{\mathrm{total}}, \|\Psi\|_{\mathrm{total}} \leq R$.

This will imply that $\Realization^{\Omega}_{\varrho}$ is locally Lipschitz,
since clearly $\Realization_\varrho^\Omega (\Phi)(x) = \alpha_x^{(L)} (\Phi)$, and hence
\[
  \|
    \Realization^{\Omega}_{\varrho} (\Phi) - \Realization^{\Omega}_{\varrho} (\Psi)
  \|_{\sup}
  = \sup_{x \in \Omega} | \alpha_x^{(L)}(\Phi) - \alpha_x^{(L)}(\Psi) |
  \leq M_{L,R} \cdot \| \Phi - \Psi\|_{\mathrm{total}} .
\]

The case $\ell = 0$ is trivial:
On the one hand, $| \alpha_x^{(0)}(\Phi) - \alpha_x^{(0)}(\Psi) |
= 0 \leq \| \Phi - \Psi \|_{\mathrm{total}}$.
On the other hand, since $\Omega$ is bounded, we have $| \alpha_x^{(0)} (\Phi) | = | x | \leq C_0$
for a suitable constant $C_0 = C_0 (\Omega)$.

For the induction step, let us write $\Psi = \big( (B_1,c_1), \dots, (B_L,c_L)\big)$, and note that
\[
  \| A_{\ell+1} \, \alpha_x^{(\ell)} (\Phi) + b_{\ell+1} \|_{\ell^\infty}
  \leq N_\ell \| A_{\ell+1} \|_{\max}
       \cdot \|\alpha_x^{(\ell)}(\Phi)\|_{\ell^\infty}
       + \| b_{\ell+1} \|_{\ell^\infty}
  \leq (1 + N_\ell C_{\ell,R}) \cdot \|\Phi\|_{\mathrm{total}}
  \eqqcolon K_{\ell+1,R} .
\]
Clearly, the same estimate holds with $A_{\ell+1},b_{\ell+1}$ and $\Phi$
replaced by $B_{\ell+1}, c_{\ell+1}$ and $\Psi$, respectively.
Next, observe that with $\varrho$ also
$\varrho_{\ell+1}$ is locally Lipschitz.
Thus, there is $\Gamma_{\ell+1,R} > 0$ with
\[
  \| \varrho_{\ell+1} (x) - \varrho_{\ell+1}(y) \|_{\ell^\infty}
    \leq \Gamma_{\ell+1,R} \cdot \| x-y \|_{\ell^\infty}
  \qquad \text{ for all } x,y \in \R^{N_{\ell+1}}
                    \text{ with } \| x \|_{\ell^\infty}, \| y \|_{\ell^\infty} \leq K_{\ell+1,R} .
\]
On the one hand, this implies
\begin{align*}
  \|
    \alpha_x^{(\ell+1)} (\Phi)
  \|_{\ell^\infty}
  & \leq \big\|
           \varrho_{\ell+1} \big( A_{\ell+1} \, \alpha_x^{(\ell)} (\Phi) + b_{\ell+1} \big)
           - \varrho_{\ell+1} (0)
         \big\|_{\ell^\infty}
         + \| \varrho_{\ell+1} (0) \|_{\ell^\infty} \\
  & \leq \Gamma_{\ell+1,R} \, \|
                                A_{\ell+1} \, \alpha_x^{(\ell)} (\Phi) + b_{\ell+1}
                              \|_{\ell^\infty}
         + \| \varrho_{\ell+1} (0) \|_{\ell^\infty}
  \leq \Gamma_{\ell+1,R} \, K_{\ell+1, R} + \| \varrho_{\ell+1} (0) \|_{\ell^\infty}
  \eqqcolon C_{\ell+1, R} .
\end{align*}
On the other hand, we also get
\begin{align*}
  & \|
      \alpha_x^{(\ell+1)} (\Phi) - \alpha_x^{(\ell+1)}(\Psi)
    \|_{\ell^\infty} \\
  & = \|
        \varrho_{\ell+1} (A_{\ell+1} \alpha_x^{(\ell)}(\Phi) + b_{\ell+1})
        - \varrho_{\ell+1} (B_{\ell+1} \alpha_x^{(\ell)}(\Psi) + c_{\ell+1})
      \|_{\ell^\infty} \\
  & \leq \Gamma_{\ell+1,R} \cdot
         \|
            (A_{\ell+1} \alpha_x^{(\ell)}(\Phi) + b_{\ell+1})
            - ( B_{\ell+1} \alpha_x^{(\ell)}(\Psi) + c_{\ell+1})
         \|_{\ell^\infty} \\
  & \leq \Gamma_{\ell+1,R} \cdot
         \left(
           \|
             (A_{\ell+1} - B_{\ell+1}) \alpha_x^{(\ell)} (\Phi)
           \|_{\ell^\infty}
           + \|
               B_{\ell+1} (\alpha_x^{(\ell)}(\Phi) - \alpha_x^{(\ell)}(\Psi))
             \|_{\ell^\infty}
           + \| b_{\ell+1} - c_{\ell+1} \|_{\ell^\infty}
         \right) \\
  & \leq \Gamma_{\ell+1,R} \cdot
         \left(
           N_\ell \cdot \| \Phi - \Psi \|_{\mathrm{total}}
           \cdot \|\alpha_x^{(\ell)}(\Phi)\|_{\ell^\infty}
           + N_\ell \cdot \| \Psi \|_{\mathrm{total}} \cdot
             \|
               \alpha_x^{(\ell)}(\Phi) - \alpha_x^{(\ell)}(\Psi)
             \|_{\ell^\infty}
           + \| \Phi - \Psi \|_{\mathrm{total}}
         \right) \\
  & \leq \Gamma_{\ell+1,R} \cdot (N_\ell C_{\ell,R} + R N_\ell M_{\ell,R} + 1)
         \cdot \| \Phi - \Psi \|_{\mathrm{total}}
    \eqqcolon M_{\ell+1, R} \cdot \| \Phi - \Psi \|_{\mathrm{total}} .
\end{align*}

\medskip{}

\noindent
\textbf{Step~4:} Let $\varrho$ be Lipschitz with Lipschitz constant $M$,
where we assume without loss of generality that $M \geq 1$.
With the functions $\varrho_\ell$ from the preceding step,
it is not hard to see that each $\varrho_\ell$ is $M$-Lipschitz,
where we use the $\| \cdot \|_{\ell^\infty}$-norm on $\R^{N_\ell}$.

Let $\Phi = \! \big( (A_1,b_1), \dots, (A_L,b_L) \big) \! \in \cN(S)$, and
$\alpha_\ell : \R^{N_{\ell-1}} \! \to \R^{N_\ell}, x \mapsto \varrho_\ell (A_\ell \, x + b_\ell)$
for ${\ell \in \FirstN{L-1}}$.
Then, $\alpha_\ell$ is Lipschitz with 
$\Lip (\alpha_\ell) \leq M \cdot \|A_\ell\|_{\ell^\infty \to \ell^\infty}
\leq M \cdot N_{\ell-1} \cdot \| A \|_{\max}
\leq M N_{\ell-1} \cdot \|\Phi\|_{\mathrm{scaling}}$.
Thus, we finally see that
$\Realization^{\Omega}_{\varrho} (\Phi) = \alpha_L \circ \cdots \circ \alpha_1$ is
Lipschitz with Lipschitz constant
$M^L \cdot N_0 \cdots N_{L-1} \cdot \|\Phi\|_{\mathrm{scaling}}^L$.
This proves the final claim of the proposition
when choosing the $\ell^\infty$-norm on $\R^d$ and $\R^{N_L}$.
Of course, choosing another norm than the $\ell^\infty$-norm
can be done, at the cost of possibly enlarging
the constant $C$ in the statement of the proposition.
\hfill$\square$

\subsection{Proof of Theorem~\ref{thm:InverseStability}}
\label{app:InverseStability}

\textbf{Step 1:} For $a > 0$, define
\[
  f_a : \R \to \R, ~x \mapsto \varrho(x+a) - 2 \varrho(x) + \varrho(x-a) .
\]
Our claim in this step is that there is some $a > 0$ with
$f_a \not\equiv \mathrm{const}$.

Let us assume towards a contradiction that this fails; that is, $f_a \equiv c_a$ for all $a > 0$.
Since $\varrho$ is Lipschitz continuous, it is at most of
linear growth, so that $\varrho$ is a tempered distribution.
We will now make use of the \emph{Fourier transform}, which we define by
$\widehat{f}(\xi) = \int_{\R} f(x) \, e^{-2 \pi i x \xi} \, d x$
for $f \in L^1(\R)$, as in \cite{GrafakosClassical,FollandRA},
where it is also explained how the Fourier transform is extended
to the space of tempered distributions.
Elementary properties of the Fourier transform for tempered distributions
(see \cite[Proposition~2.3.22]{GrafakosClassical}) show
\[
  c_a \cdot \delta_0 = \widehat{f_a} = \widehat{\varrho} \cdot g_a
  \qquad \text{with} \qquad
  g_a : \R \to \R, ~\xi \mapsto e^{2\pi i a \xi} -2 + e^{-2\pi i a \xi} .
\]

Next, setting $z(\xi) \coloneqq e^{2\pi i a \xi} \neq 0$, we observe that
\[
  g_a (\xi)
  = z(\xi) - 2 + [z(\xi)]^{-1}
  = [z(\xi)]^{-1} \cdot (z^2(\xi) -2 z(\xi) + 1)
  = [z(\xi)]^{-1} \cdot (z(\xi)-1)^2
  \neq 0 ,
\]
as long as $z(\xi) \neq 1$, that is, as long as $\xi \notin a^{-1} \Z$.

Let $\varphi \in C_c^\infty (\R)$ such that $0 \not \in \supp \varphi$
be fixed, but arbitrary.
This implies $\supp \varphi \subset \R \setminus a^{-1} \Z$ for some sufficiently small $a > 0$.
Since $g_a$ vanishes nowhere on the compact set $\supp \varphi$,
it is not hard to see that there is some smooth, compactly
supported function $h$ with $h \cdot g_a \equiv 1$ on the support of $\varphi$.
All in all, we thus get
\[
  \langle \widehat{\varrho} , \varphi \rangle_{\Schwartz',\Schwartz}
  = \langle
      \widehat{\varrho} \cdot g_a , h \cdot \varphi
    \rangle_{\Schwartz',\Schwartz}
  = \langle \widehat{f_a} , h \cdot \varphi \rangle_{\Schwartz',\Schwartz}
  = c_a \cdot h(0) \cdot \varphi(0) = 0 .
\]
Since $\varphi \in C_c^\infty (\R)$ with $0 \not \in \supp \varphi$ was arbitrary, we have
shown $\supp \widehat{\varrho} \subset \{0\}$.
But by \mbox{\cite[Corollary~2.4.2]{GrafakosClassical}}, this implies that $\varrho$ is a polynomial.
Since the only globally Lipschitz continuous polynomials are affine-linear,
$\varrho$ must be affine-linear, contradicting the prerequisites of the theorem.

\medskip{}

\noindent
\textbf{Step~2:} In this step we construct certain continuous functions $F_n : \R^d \to \R$
which satisfy $\mathrm{Lip}(F_n|_\Omega) \to \infty$ and $F_n \to 0$ uniformly on $\R^d$.
We will then use these functions in the next step to construct the desired networks $\Phi_n$.

We first note that each function $f_a$ from Step 1 is bounded.
In fact, if $\varrho$ is $M$-Lipschitz, then
\begin{equation}
   | f_a (x) |
  \leq | \varrho(x+a) - \varrho(x) | + | \varrho(x-a) - \varrho(x) |
  \leq 2M |a| .
  \label{eq:InverseStabilityHatFunctionBounded}
\end{equation}
Next, recall that $\varrho$ is Lipschitz continuous and not affine-linear.
Therefore, Lemma~\ref{lem:LocallyLipschitzHasNonzeroDerivativeSomewhere}
shows that there is some $t_0 \in \R$ such that $\varrho$ is differentiable at $t_0$
with $\varrho'(t_0) \neq 0$.
Therefore, Proposition~\ref{prop:Identity} shows
that there is a neural network $\Phi \in \cN((1,\dots,1))$ with $L-1$ layers such that
$\psi \coloneqq \Realization^{\R}_{\varrho} (\Phi)$ is differentiable at the origin
with $\psi(0) = 0$ and $\psi'(0) = 1$.
By definition, this means that there is a function $\delta : \R \to \R$
such that $\psi(x) = x + x \cdot \delta(x)$ and $\delta (x) \to 0 = \delta(0)$ as $x \to 0$.

Next, since $\Omega$ has nonempty interior, there exist $x_0 \in \R^d$
and $r > 0$ with $x_0 + [-r,r]^d \subset \Omega$.
Let us now choose $a > 0$ with $f_a \not\equiv \mathrm{const}$
(the existence of such an $a > 0$ is implied by the previous step), and define
\[
  F_n :
  \R^d \to     \R,~
  x    \mapsto \psi\left( n^{-1} \cdot f_a(n^2 \cdot (x-x_0)_1) \right)
  .
\]

Since $f_a$ is not constant, there are $b,c \in \R$ with $b < c$ and $f_a (b) \neq f_a (c)$.
Because of $\delta(x) \to 0$ as $x \to 0$,
we see that there is some $\kappa > 0$ and some $n_1 \in \N$ with
\[
  | f_a(b) - f_a(c) |
  - | f_a(b) | \cdot | \delta(f_a(b) / n) |
  - | f_a (c) | \cdot | \delta(f_a(c) / n) |
  \geq \kappa > 0
  \qquad \text{ for all } n \geq n_1 .
\]

Let us set $x_n \coloneqq x_0 + n^{-2} \cdot (b,0,\dots,0) \in \R^d$ and
$y_n \coloneqq x_0 + n^{-2} \cdot (c,0,\dots,0) \in \R^d$, and observe
$x_n, y_n \in \Omega$ for $n \in \N$ large enough.
We have $| x_n - y_n | = n^{-2} \cdot | b-c |$.
Furthermore, using the expansion $\psi(x) = x + x \cdot \delta(x)$, and noting
$f_a (n^2 (x_n - x_0)_1) = f_a(b)$ as well as
$f_a(n^2 (y_n - x_0)_1) = f_a(c)$, we get
\begin{align*}
   | F_n (x_n) - F_n (y_n) |
  & = | \psi (f_a(b) / n) - \psi (f_a(c) / n) | \\
  & = \left|
        \frac{f_a(b)}{n}
        - \frac{f_a(c)}{n}
        + \frac{f_a(b)}{n} \cdot \delta\left( \frac{f_a(b)}{n} \right)
        - \frac{f_a(c)}{n} \cdot \delta \left( \frac{f_a(c)}{n} \right)
      \right| \\
  & \geq \frac{1}{n} \cdot
         \big(
           | f_a(b) - f_a(c) |
           - | f_a(b) | \cdot | \delta(f_a(b) / n) |
           - | f_a (c) | \cdot | \delta(f_a(c) / n) |
         \big)
  \geq \kappa / n ,
\end{align*}
as long as $n \geq n_1$ is so large that $x_n,y_n \in \Omega$.
But this implies
\begin{equation*}
  \mathrm{Lip}(F_n |_\Omega)
  \geq \frac{| F_n (x_n) - F_n (y_n) |}{| x_n - y_n |}
  \geq \frac{\kappa / n}{n^{-2} \cdot | b-c |}
  =    n \cdot \frac{\kappa}{| b-c |} \xrightarrow[n\to\infty]{} \infty .
\end{equation*}

It remains to show $F_n \to 0$ uniformly on $\R^d$.
Thus, let $\eps > 0$ be arbitrary.
By continuity of $\psi$ at $0$, there is some $\delta > 0$ with $| \psi(x) | \leq \eps$
for $| x | \leq \delta$.
But Equation~\eqref{eq:InverseStabilityHatFunctionBounded} shows
$|n^{-1} \cdot f_a (n^{-2} \cdot (x-x_0)_1)| \leq n^{-1} \cdot 2M|a| \leq \delta$
for all $x \in \R^d$ and all $n \geq n_0$, with $n_0 = n_0(M,a,\delta) \in \N$
suitable.
Hence, $| F_n (x) | \leq \eps$ for all $n \geq n_0$ and $x \in \R^d$.

\medskip{}

\textbf{Step 3:} In this step, we construct the networks $\Phi_n$.
For $n \in \N$ define
\[
  A_1^{(n)} \coloneqq n^2 \cdot \left(
                           \begin{matrix}
                             1      & 0      & \cdots & 0      \\
                             1      & 0      & \cdots & 0      \\
                             1      & 0      & \cdots & 0
                           \end{matrix}
                         \right) \in \R^{3 \times d}
  \quad \text{and} \quad
  b_1^{(n)} \coloneqq \left(
                 \begin{matrix}
                   -n^2 \cdot (x_0)_1 + a \\
                   -n^2 \cdot (x_0)_1     \\
                   -n^2 \cdot (x_0)_1 - a
                 \end{matrix}
               \right) \in \R^3 ,
\]
as well as $A_2^{(n)} \coloneqq n^{-1} \cdot (1, -2, 1) \in \R^{1 \times 3}$ and
$b_2^{(n)} \coloneqq 0 \in \R^1$.
A direct calculation shows
\[
  \Realization^{\R^d}_{\varrho} (\Phi_n^{(0)}) (x)
  = n^{-1} \cdot f_a (n^2 \cdot (x-x_0)_1)
  \quad \text{ for all } x \in \R^d,
        \text{ where } \Phi_n^{(0)} \coloneqq \big(
                                                (A_1^{(n)},b_1^{(n)}),
                                                (A_2^{(n)},b_2^{(n)})
                                              \big).
\]
Thus, with the concatenation operation introduced in
Definition~\ref{def:Concatenation}, the network
$\Phi_n^{(1)} \coloneqq \Phi \, \conc \Phi_n^{(0)}$ satisfies
$\Realization^{\Omega}_{\varrho}(\Phi_n^{(1)}) = F_n|_\Omega$.
Furthermore, it is not hard to see that $\Phi_n^{(1)}$ has $L$ layers and
has the architecture $(d,3,1,\dots,1)$.
From this and because of $N_1 \geq 3$, by Lemma \ref{lem:enlarge}
there is a network $\Phi_n$ with architecture $(d,N_1,\dots,N_{L-1},1)$
and $\Realization^{\Omega}_{\varrho}(\Phi_n) = F_n|_\Omega$. By Step 2, this implies
$\Realization^{\Omega}_{\varrho} (\Phi_n) = F_n|_\Omega \to 0$ uniformly on $\Omega$,
as well as $\mathrm{Lip}(\Realization^{\Omega}_{\varrho} (\Phi_n)) \to \infty$
as $n \to \infty$.

\medskip{}

\textbf{Step 4:} In this step, we establish the final property which is stated in the theorem.
For this, let us assume towards a contradiction
that there is a family of networks $(\Psi_n)_{n \in \N}$ with architecture $S$
and $\Realization^{\Omega}_{\varrho}(\Psi_n) = \Realization^{\Omega}_{\varrho}(\Phi_n)$,
some $C > 0$, and a subsequence $(\Psi_{n_r})_{r \in \N}$ with
$\| \Psi_{n_r} \|_{\mathrm{scaling}} \leq C$ for all $r \in \N$.
In view of the last part of Proposition~\ref{prop:RealizationContinuity},
there is a constant $C' = C'(\varrho,S) > 0$ with
\[
  \mathrm{Lip} \big(\Realization^{\Omega}_{\varrho} (\Phi_{n_r}) \big)
  = \mathrm{Lip} \big( \Realization^{\Omega}_{\varrho} (\Psi_{n_r}) \big)
  \leq C' \cdot \| \Psi_{n_r} \|_{\mathrm{scaling}}^L
  \leq C' \cdot C^L ,
\]
in contradiction to $\mathrm{Lip} \big(\Realization^{\Omega}_{\varrho}(\Phi_n) \big) \to \infty$.
\hfill$\square$

\subsection{Proof of Corollary~\ref{cor:Quotient}}
\label{app:Quotient}

Let us denote the range of the realization map by $R$.
By definition (see \cite[Page~65]{LeeTopologicalManifolds}),
$\Realization^{\Omega}_{\varrho}$ is a quotient map if and only if
\[
  \forall \, M \subset R: \qquad
  M \subset R \text{ open}
  \quad \Longleftrightarrow \quad
  \left(\Realization^{\Omega}_{\varrho}\right)^{-1}(M) \subset \cN(S) \text{ open}.
\]
Clearly, by switching to complements, we can equivalently replace ``open'' by ``closed'' everywhere.

Now, choose a sequence of neural networks $(\Phi_n)_{n \in \N}$ as in
Theorem~\ref{thm:InverseStability}, and set
$F_n \coloneqq \Realization^{\R^d}_{\varrho} (\Phi_n)$.
Since $\mathrm{Lip}(F_n|_{\Omega}) \to \infty$, we have
$F_n |_{\Omega} \not\equiv 0$ for all $n \geq n_0$ with $n_0 \in \N$ suitable.
Define $M \coloneqq \{F_n |_{\Omega} \with n \geq n_0 \} \subset R$.
Note that $M \subset R \subset C(\Omega)$ is \emph{not} closed, since
$F_n |_{\Omega} \to 0$ uniformly, but $0 \in R \setminus M$.
Hence, once we show that $\left(\Realization^{\Omega}_{\varrho}\right)^{-1}(M)$ is closed,
we will have shown that $\Realization^{\Omega}_{\varrho}$ is not a quotient map.

Thus, let $(\Psi_n)_{n \in \N}$ be a sequence in
$\left(\Realization^{\Omega}_{\varrho}\right)^{-1}(M)$ and assume
$\Psi_n \to \Psi$ as $n\to \infty$.
In particular, $\| \Psi_n \|_{\mathrm{scaling}} \leq C$ for
some $C > 0$ and all $n \in \N$.
We want to show $\Psi \in \left(\Realization^{\Omega}_{\varrho}\right)^{-1}(M)$
as well.
Since $\Psi_n \in \left(\Realization^\Omega_{\varrho}\right)^{-1}(M)$, there is
for each $n \in \N$ some $r_n \in \N$ with
$\Realization^{\Omega}_{\varrho} (\Psi_n) = F_{r_n}|_{\Omega}$.
Now there are two cases:

\smallskip{}

\noindent
\textbf{Case~1:} The family $(r_n)_{n \in \N}$ is infinite.
But in view of Proposition~\ref{prop:RealizationContinuity}, we have
\[
  \mathrm{Lip}(F_{r_n} |_{\Omega})
  =    \mathrm{Lip}(\Realization^{\Omega}_{\varrho} (\Psi_n))
  \leq C' \cdot \| \Psi_n \|_{\mathrm{scaling}}^L
  \leq C' \cdot C^L
\]
for a suitable constant $C' = C'(\varrho, S)$, in contradiction
to the fact that $\mathrm{Lip}(F_{r_n}|_{\Omega}) \to \infty$ as
$r_n \to \infty$. Thus, this case cannot occur.

\medskip{}

\noindent
\textbf{Case~2:} The family $(r_n)_{n \in \N}$ is finite.
Thus, there is some $N \in \N$ with $r_n = N$ for infinitely many $n \in \N$,
that is, $\Realization^\Omega_{\varrho}(\Psi_n) = F_{r_n}|_{\Omega} = F_N|_{\Omega}$ for
infinitely many $n \in \N$.
But since $\Realization^{\Omega}_{\varrho} (\Psi_{n})
\to \Realization^{\Omega}_{\varrho}(\Psi)$
as $n \to \infty$ (by the continuity of the realization map), this implies
$\Realization^{\Omega}_{\varrho} (\Psi) = F_N|_{\Omega} \in M$, as desired.
\hfill$\square$

\end{document}